\newtheorem{theorem}{Theorem}
\newtheorem{claim}[theorem]{Claim}
\newtheorem{condition}[theorem]{Condition}
\newtheorem{corollary}[theorem]{Corollary}
\newtheorem{definition}[theorem]{Definition}
\newtheorem{lemma}[theorem]{Lemma}
\newtheorem{remark}{Remark}
\newenvironment{proof}[1][Proof]{\noindent\textbf{#1.} }{\ \rule{0.5em}{0.5em}}
\newcommand{\sgn}{\mathop{\mathrm{sgn}}}
\newcommand{\Pp}{\mathbf{P}}
\newcommand{\E}{\mathbf{E}}
\newcommand{\eps}{\epsilon}
\newcommand{\R}{\mathbb{R}}
\newcommand{\wt}{\widetilde}
\newcommand{\tauh}{\hat{\tau_\epsilon}}
\newcommand{\tauw}{\widetilde{\tau}_\epsilon}
\newcommand{\aone}{{\alpha_1^-}}
\newcommand{\atwo}{{\alpha_2^-}}
\newcommand{\Wc}{\mathcal{W}}
\newcommand{\Span}{\mathop{\rm span}}
\newcommand{\one}{\mathbf{1}}
\newcommand{\Fc}{\mathcal{F}}
\newcommand{\N}{\mathbb{N}}
\newcommand{\Z}{\mathbb{Z}}
\newcommand{\ONE}{{\bf 1}}
\newcommand{\x}{X_\eps}
\newcommand{\sgm}{\tilde \sigma}
\renewcommand\Re{\operatorname{Re}}
\title{Scaling Limit for the Diffusion Exit Problem}
\author{Sergio Angel Almada Monter}
\begin{document}

\maketitle
\listoffigures
\tableofcontents

\chapter{Introduction}\label{ch: Intro}
In this thesis we study the so called exit problem~\cite[Section 4.3]{Freidlin--Wentzell-book} for small noise diffusion. This model belongs to the more general area of random perturbations of dynamical systems, which has been a very active area of research over the last 30 years~\cite{GentzBook},~\cite{Freidlin--Wentzell-book},~\cite{Vares}. The small noise diffusion framework has attracted  the interest of both the  pure and applied mathematics communities. From the mathematical standpoint  it is interesting
because this area has strong interactions with other important branches of mathematics such as probability theory, dynamical systems, or PDE. As regards applied mathematics,  the set of problems relating to small noise diffusion has found applications in climate modeling~\cite{Clima},~\cite{GentzClimate}, electrical engineering~\cite{Zei2},~\cite{Zei1},~\cite{Zei3},  finance~\cite{FinancePaper},~\cite{FinanceBook}, neural dynamics~\cite{rabinovich:188103},~\cite{Biology} among others~\cite{DemboZeitouni}. The main focus of the thesis, the exit problem, was originally motivated by applications on the reaction rate theory of chemical physics~\cite{Quimica}. Moreover, the work presented here, although purely theoretical, was motivated by neural dynamics~\cite{rabinovich:188103}. We proceed to describe the setting, in order to provide a more extensive background.

The setting of this problem is as follows. Given a smooth vector field $b: \R^d \to \R^d$ consider the It\^o equation driven by the $d$-dimensional standard Wiener process $W$:
\begin{align}
\label {eqn: Ito_additive} d\x (t) &= b( \x (t) ) dt + \eps \sigma( \x (t) ) dW(t), \\
\x (0 ) &= x_0. \notag
\end{align}
Here $\sigma:\R^d \to \R^{ d\times d}$ is a smooth uniformly non-degenerate matrix valued function. That is, the matrix $a = \sigma^T \sigma$ is uniformly positive definite. Under these assumptions we can ensure that equation~\eqref{eqn: Ito_additive} has a unique strong solution (see~\cite{Karatzas--Shreve} or~\cite{Protter} for all stochastic analysis references).

Given an initial condition $x_0 \in \R^d$ (or a set of initial conditions), the goal is to characterize some asymptotic properties of $\x$ as $\eps \to 0$. In particular we focus on the exit from a domain problem or exit problem for short. Consider a domain (open, bounded and connected) $D \subset \R^d$ with piecewise smooth boundary (at least $C^2$). The exit problem is the study of the time
\[
\tau_\eps^D (x)= \inf \{ t>0: \x(t) \in \partial D \},
\]
at which $\x$ exits, and the exit distribution $\Pp_{x_0} \{ \x ( \tau_\eps^D ) \in \cdot \} $. In this work we aim for a joint asymptotic result on the distribution of $(\tau_\eps^D, \x ( \tau_\eps^D ) )$ under certain assumptions for $b$.

As we said before, this problem had its origin in chemical physics: it is a glorified model for the speed at which chemical reactions take place. The first model of this kind was proposed by Kramers~\cite{Kramers}, we refer to~\cite{GentzQuimica} for a modern treatment.

From a pure mathematics perspective, the problem became of interest because it provides a framework to compute asymptotic (as $\eps \to 0$) properties for solutions of the Dirichlet problem
\begin{align} \label{eqn: intro-PDE}
\nabla u_\eps (x)  \cdot b (x) + \frac{ \eps^2 }{2} \Delta u_\eps (x)&=0, \quad x \in D ,\\
 u_\eps (x)&=g (x), \quad x \in \partial D.
\end{align}
Indeed, using a relaxed version of the Feynman-Kac theorem~\cite[Theorem 4.4.2]{Karatzas--Shreve}, the solution to this PDE can be written as the average $u_\eps(x)=\mathbf{E}_{x} g ( \x ( \tau_\eps^D) ) $, where $\x$ is the solution of~\eqref{eqn: Ito_additive} with $\sigma=\rm{Id}$. Solutions to other PDE's can be written as a similar average, but for this discussion we choose~\eqref{eqn: intro-PDE} since it is representative of the area. Although the asymptotic study of the function $u_\eps$ is now known to be strongly related to the solution to the exit problem, the first studies relied only on analytic non-probabilistic arguments.  For example, in~\cite{Devinatz},~\cite{Kamin1},~\cite{Kamin2} under some assumptions on the drift $b$, they were able to rigorously write the solution as a formal series in $\eps$. In~\cite{Schuss} it is proved, under the assumption that $b(0)=0$ and that $D$ is contained in the basin of attraction of $0$,  that as $\eps \to 0$, $u_\eps$ converges to a constant. This result is of great importance since it means that the system forgets its initial condition. However, this is not always true. For instance in~\cite{Levinson} using functional analysis and dynamical systems  for the Levinson case (see section~\ref{sec: intro_FW}) it is proved that the limiting function at $x$ depends on the orbit of $x$ under the action of the flow generated by $b$.  In~\cite{Kifer0} and~\cite{Kifer} a combined approach involving probabilistic and purely analytic arguments was put into practice. Very general limit theorems were obtained, but strong restrictions on the non-linearity of the drift $b$ were imposed.

	The standard mindset in tackling this problem from the probabilistic point of view is to think of the SDE that defines $\x$ as a (random) singular perturbation of the system $\dot{x}=b(x)$. In this context it is natural to expect that the methods used to study the exit problem lie in the intersection between probability theory and dynamical systems. Freidlin and Wentzell~\cite{Freidlin--Wentzell-book},~\cite{Vares} were the ones who put together a general theory in this direction. They based their theory on the Large Deviation principle for $\x$.  This result was then used as the building block in constructing what today is known as  the Freidlin-Wentzell theory. The core of this theory strongly relates the exit behavior of $\x$ to the properties of the vector field $b$ by providing two elements:
\begin{enumerate}
\item It defines a function $V:D \times \partial D \to [0,\infty ]$, known as quasi-potential, that characterizes the exit distribution. Indeed, the exit distribution of $\x$ is asymptotically concentrated on the set $V^*_{\x(0)}$ of minimizers of $V(\x(0), \cdot )$. Moreover, under the assumption that $\sigma$ is uniformly non-degenerate, $V$, and hence $V^*_{ \x (0) }$, depends mostly on $b$. For example it can be shown that in the case of $b=\nabla \phi$, $V$ is proportional to $\phi$. This is the reason why the  $V$  function is called quasi-potential.
\item It shows that in the case in which the domain is contained in the basin of attraction of an equilibrium , $\eps^{-2} \log \tau_\eps^D$ converges in probability to the minimum of $V( \x(0), \cdot )$. This result is of vital importance since it gives a hierarchy of transition for the case in which there are several equilibria. See~\cite[Section 6.5]{Freidlin--Wentzell-book} for more background on this particular direction.
\end{enumerate}
The theory came to light with a series of papers beginning with~\cite{FWPaper1} and~\cite{FWPaper2} until the Russian edition of the book~\cite{Freidlin--Wentzell-book} appeared.  See~\cite{KoralovSPA} and ~\cite{KoralovPTRF} for a modern version of the theory, and~\cite{SPDE1},~\cite{Cerra1} and references therein for a stochastic partial differential equations version of the theory. In Section~\ref {sec: intro_FW} we give a brief review of the Freidlin-Wentzell theory necessary to understand the motivation of this work.
	
In contrast with  the Freidlin-Wentzell theory, that mostly relies on the large deviation principle, a modern trend relying on a path-wise approach has emerged in the last years. As a consequence, more detailed phenomena can be captured. That is the case, for example, in~\cite{nhn} in which a heteroclinic network is considered or in~\cite{GentzP1} in which a bifurcation problem is studied. The monograph~\cite{GentzBook} contains several examples in this direction together with applications.

In this thesis, a modern and more complete treatment of two cases is developed: the saddle case in which the vector field has a unique saddle point and the Levinson case in which the deterministic dynamics escape from the domain in a finite time. The two results when combined complete the treatment of the case in which the underlying dynamics admit a heteroclinic network as studied in~\cite{nhn}. We also provide a $1$-dimensional example that explains how to obtain a correction to the exit time for a diffusion conditioned to exit through an unlikely exit point. The approach presented here relies heavily on the underlying dynamical structure, and combines techniques from differential equations, bifurcation theory and martingale theory.

The rest of this chapter is organized as follows. In Section~\ref{sec: intro_FW} we give a brief introduction to Freidlin-Wentzell theory. In Section~\ref{sec: saddle} we study the exit problem in the case the system $\dot{x}=b(x)$ has a saddle point. In Section~\ref{sec: unstable} we study the escape when it takes a finite time in the so called Levinson setting. Applications of these two cases are presented in Sections~\ref{sec: 1d-time-reversal} and~\ref{sec: Intro-PlanarHetero}. The general setting and a brief description of the chapters is given in Section~\ref{Sec: Intro-Notation}.

\section{Background and Motivation}\label{sec: intro_FW}

In this section we gather the background tools that will allow us to explain where our results stand with respect to the classical Freidlin-Wentzell theory.

Consider $\x$, the strong solution to the SDE~\eqref{eqn: Ito_additive}. If seen as a random perturbation, the equation for $\x$ suggests that the process should behave like the flow generated by $b$:
\begin{align}
\label {eqn: det_flow} \frac{d}{dt}S^t x = b(S^tx ), \quad
S^tx = x.
\end{align}
Indeed, through a standard martingale argument, it is easy to see that for any $\delta>0$ there are constants $C_{T,\delta}^{ (1) }$ and $C_{T,\delta}^{(2)}$ such that
\begin{equation} \label{eqn: estimate_basic}
\sup_{ x \in \R^d} \Pp_x \left \{  \sup_{ t \leq T} | \x (t) - S^t x| > \delta \right \} \leq C_{T,\delta}^{ (1) } e^{ - C_{T,\delta}^{ (2) } \eps^{-2} }.
\end{equation}
This inequality can be used to show that $\Pp_x^\eps$, the law of $\x$ on $C([0,T];\R^d)$ conditioned to $\x(0)=x$, converges weakly (on the space $C([0,T];\R^d)$) to the measure concentrated on the orbit of $x$. See~\cite{Blagoveschenskii:MR0139204} for a series expansion in $\eps$ and~\cite{Cas93} for a series expansion of more general stochastic flows.

The question now is to find the optimal constant $C_{T,\delta}^{(2)}$ in~\eqref{eqn: estimate_basic}, or more generally to find a large deviation principle~\cite{DemboZeitouni},~\cite{DenHollanderLDP} or Appendix~\ref{ch: appendix}:
\begin{theorem} [Freidlin-Wentzell~\cite{Freidlin--Wentzell-book} ] \label{thm: LDP}
Let $H_{0,T}^1$ be the space of all absolutely continuous functions from $[0,T]$ to $\R^d$ with square integrable derivatives. Define the functional $I_T^x$ by
\begin{equation}  \label{eqn: intro-LDP_I}
I_T^x (\varphi)= \frac{1}{2} \int_0^T \langle \stackrel{\cdot}{\varphi} (s)-b(\varphi(s) ), a^{-1} (\varphi (s) ) ( \stackrel{\cdot}{\varphi} (s)-b(\varphi(s) ) ) \rangle ds,
\end{equation}
if $\varphi \in H_{0,T}^1$ and $\varphi(0)=x$, and $\infty$ otherwise. Here $b$ is the drift in~\eqref{eqn: Ito_additive} and $a=\sigma^T \sigma$, with $\sigma$ the diffusion matrix in~\eqref{eqn: Ito_additive}.

Then for each $x \in \R^d$ and $T>0$ the family $(\Pp_x^\eps)_{\eps>0}$ satisfies a Large Deviation Principle on $C([0,T];\R^d)$ equipped with uniform norm at rate $\eps^2$ with good rate function $I_T^x$.
\end{theorem}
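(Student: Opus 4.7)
The plan is to establish the three standard ingredients of a Large Deviation Principle: goodness of the rate function $I_T^x$, the upper bound on closed sets, and the lower bound on open sets.

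First I would verify that $I_T^x$ is a good rate function. The integrand is a nonnegative quadratic form in $\dot\varphi - b(\varphi)$, bounded below by a multiple of $|\dot\varphi - b(\varphi)|^2$ since $a = \sigma^T\sigma$ is uniformly positive definite. If $I_T^x(\varphi) \leq \alpha$, Cauchy-Schwarz gives $\|\dot\varphi - b(\varphi)\|_{L^2} \leq C\sqrt\alpha$, and a Gronwall argument using the linear growth of $b$ yields a uniform bound on $|\varphi(t)|$. Combined, these give $|\varphi(t) - \varphi(s)| \leq C(\alpha)\sqrt{|t-s|}$. Hence sublevel sets are uniformly bounded and equicontinuous, so precompactness in $C([0,T];\R^d)$ follows from Arzel\`a-Ascoli, and lower semicontinuity follows from Fatou's lemma applied to the convex quadratic integrand along any uniformly convergent subsequence.

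Next, for the lower bound, fix an open set $G \subset C([0,T];\R^d)$ and $\varphi \in G \cap H_{0,T}^1$ with $\varphi(0) = x$ and $I_T^x(\varphi) < \infty$. The idea is to use Girsanov's theorem to bias the measure so that $\x$ is driven toward $\varphi$. Set $u(s) = \sigma^{-1}(\varphi(s))(\dot\varphi(s) - b(\varphi(s)))$, which is square integrable, and define a new measure $\tilde\Pp$ by
\begin{equation*}
\frac{d\tilde\Pp}{d\Pp} = \exp\!\left( \eps^{-1} \int_0^T \langle u(s), dW(s)\rangle - \tfrac{1}{2}\eps^{-2}\int_0^T |u(s)|^2 \, ds \right).
\end{equation*}
Under $\tilde\Pp$, the process $\tilde W(t) = W(t) - \eps^{-1}\int_0^t u(s)\, ds$ is Brownian, and substituting into~\eqref{eqn: Ito_additive} shows that $\x$ satisfies an SDE whose deterministic ($\eps\to 0$) limit is exactly $\varphi$. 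By an estimate analogous to~\eqref{eqn: estimate_basic} for the shifted drift, $\tilde\Pp\{\x \in G\}\to 1$. Rewriting $\Pp\{\x\in G\}$ as the expectation under $\tilde\Pp$ of the inverse Radon-Nikodym derivative restricted to $\{\x\in G\}$ and applying Jensen's inequality to the logarithm yields $\liminf \eps^2 \log \Pp\{\x\in G\} \geq -I_T^x(\varphi)$; taking the infimum over $\varphi \in G\cap H^1_{0,T}$ finishes the lower bound.

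For the upper bound on closed sets $F$, I would first establish exponential tightness: for each $\alpha > 0$ there is a compact $K_\alpha \subset C([0,T];\R^d)$ with $\limsup_{\eps\to 0} \eps^2 \log \Pp_x^\eps(K_\alpha^c) < -\alpha$, via Bernstein-type exponential inequalities applied to the stochastic integral on small subintervals together with the boundedness of $\sigma$ on compacts. Exponential tightness reduces the upper bound to compact sets. On a compact set one proves the local bound
\begin{equation*}
\limsup_{\eps\to 0} \eps^2 \log \Pp_x^\eps(B(\psi,\delta)) \leq -I_T^x(\psi) + o_\delta(1)
\end{equation*}
for each $\psi$ and small $\delta > 0$, via an exponential Chebyshev argument with the same exponential martingale as above, now with $u$ optimally tuned to $\psi$. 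A finite subcover of the compact set then furnishes the upper bound on $F$.

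The main technical obstacle will be the local upper bound in the presence of a non-constant diffusion matrix $\sigma$. When $\sigma$ is constant, the contraction principle applied to Schilder's theorem through the continuous map sending a Cameron-Martin path $h$ to the solution of $\dot y = b(y) + \sigma \dot h$ suffices. For variable $\sigma$ the solution map $h \mapsto \x$ is continuous only on $H^1$ and not on $C([0,T];\R^d)$, so a quantitative Wong-Zakai-type approximation is required to control $\int \sigma(\x)\, dW$ with exponential accuracy by piecewise linear interpolants of $W$. This approximation, which forces the uniform nondegeneracy assumption on $a$, is the heart of the original Freidlin-Wentzell argument and is the step whose error terms must be kept at order $o(\eps^{-2})$ in the exponent.
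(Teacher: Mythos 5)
The paper does not actually prove this theorem: it is quoted as a classical result of Freidlin and Wentzell, and Appendix~\ref{ch: appendix} explicitly states it ``without a proof'' after observing that the contraction principle from Schilder's theorem fails when $\sigma$ is non-constant and that ``raw approximations have to be made,'' deferring to the standard references. So there is no in-paper argument to compare against; what I can do is assess your sketch on its own. Your outline is the standard modern route (Girsanov change of measure for the lower bound, exponential tightness plus local exponential Chebyshev estimates for the upper bound), and you correctly identify the genuinely hard step --- the exponentially accurate Wong--Zakai / polygonal approximation controlling $\int\sigma(\x)\,dW$ --- as the same obstacle the paper flags. Since you only name this step rather than carry it out, the proposal is an outline of the known proof rather than a self-contained argument; that is the main thing to be aware of.

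Two smaller points that would need tightening if you expanded this. For lower semicontinuity of $I_T^x$, Fatou is not the right tool: on a sublevel set $\{I_T^x\le\alpha\}$, uniform ellipticity of $a$ gives an $L^2$ bound on $\dot\varphi_n$, so $\dot\varphi_n\rightharpoonup\dot\varphi$ weakly along a subsequence, and you conclude from weak lower semicontinuity of the $L^2$ norm applied to $a^{-1/2}(\varphi_n)\bigl(\dot\varphi_n-b(\varphi_n)\bigr)$, using the uniform convergence of $a^{-1/2}(\varphi_n)$ and $b(\varphi_n)$ to pass the ``coefficient'' part through. Second, in the Girsanov lower bound the tilted drift $b(y)+\sigma(y)u(t)$ is merely $L^2$ in $t$ when $\varphi\in H^1_{0,T}$, so the basic concentration estimate~\eqref{eqn: estimate_basic} does not apply verbatim; the usual fix is to first prove the lower bound for $\varphi$ with bounded derivative (so $u$ is bounded and the estimate does apply) and then pass to general $\varphi$ by an approximation argument, noting that such paths are dense and the infimum of $I_T^x$ over the open set $G$ is unaffected.
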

See~\cite{BeC96} and~\cite{NualartLDP} for more large deviations results. In order to have this thesis as self contained as possible, we give a large deviation overview on Appendix~\ref{ch: appendix} .

Informally, Theorem~\ref{thm: LDP} says that if $A \subset C([0,T];\R^d)$ then
\[
\Pp_x \left\{   \x \in A \right\} \asymp e^{- \eps^{-2} \inf_{\varphi \in A} I_T^x (\varphi)}.
\]
Intuitively, due to~\eqref{eqn: intro-LDP_I}, this result suggest that $I_T^x$ serves as a measure on how costly (in terms of probability) is for the system $\x$ not to follow the deterministic trajectory. This interpretation is essential when solving problems that require non-compact time frames, in particular, when studying the exit problem described above.

Regarding $I_T^x$ as a cost function, it make sense to introduce
\begin{equation} \label{eqn: quasipotential}
V(x,y) = \inf_{T>0} \left \{ I_T^x ( \varphi) : \varphi (T)=y, \varphi( [0,T] ) \subset D \cup \partial D \right \}
\end{equation}
as the cost to go from $x$ to $y$ inside $D$. The function $V: D \times  \partial D \to [0, \infty]$, known as the quasipotential, plays an important role on the exit problem we described:
\begin{theorem} [Freidlin-Wentzell~\cite{Freidlin--Wentzell-book}]
Suppose $\x (0)=x_0$ and let \[z=\inf_{y\in \partial D } V(x_0,y).\] Then for every closed set $N \subset \partial D$ such that $\inf_{y\in N } V(x_0,y) > z$,
\[
\lim_{\eps \to 0} \Pp_{x_0} \{ \x (\tau_\eps) \in N \}=0.
\]
\end{theorem}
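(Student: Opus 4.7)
The plan is to run the classical two-sided Freidlin--Wentzell argument, combining the two halves of Theorem~\ref{thm: LDP} through the strong Markov property. Write $z_N = \inf_{y \in N} V(x_0,y)$ and fix $\delta > 0$ so small that $z_N - z > 4\delta$.

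The first ingredient is an LDP upper bound. For fixed $T_0 > 0$, the set of paths $\Phi_N^{T_0}(x) = \{\varphi \in C([0,T_0]; \R^d) : \varphi(0) = x,\ \varphi([0,t]) \subset \overline D,\ \varphi(t) \in N \text{ for some } t \leq T_0\}$ is closed in the uniform norm, and by the very definition~\eqref{eqn: quasipotential} any $\varphi \in \Phi_N^{T_0}(x)$ satisfies $I_{T_0}^x(\varphi) \geq \inf_{y \in N} V(x,y)$. Applying the upper bound half of Theorem~\ref{thm: LDP} and using continuity of $V$ in its first argument yields
$$\Pp_x\{\tau_\eps \leq T_0,\ \x(\tau_\eps) \in N\} \leq e^{-\eps^{-2}(z_N - 2\delta)}$$
for small $\eps$, uniformly over $x$ in a suitable compact neighborhood of $x_0$ in $\overline D$. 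The second ingredient is a matching \emph{lower} bound on the probability of exiting somewhere. Since $z$ is attained on $\partial D \setminus N$ and $N$ is closed, one can select, for each $x \in \overline D$, a smooth path $\psi_x \in H^1_{0,T_0}$ with $\psi_x(0) = x$ reaching $\partial D$ and with action $I_{T_0}^x(\psi_x) \leq z + \delta$; by compactness of $\overline D$ the same $T_0$ works for all starting points. The LDP lower bound on a small tube around $\psi_x$ gives
$$\inf_{x \in \overline D}\Pp_x\{\tau_\eps \leq T_0\} \geq e^{-\eps^{-2}(z + 2\delta)}.$$

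These two estimates glue together by the strong Markov property at $T_0 \wedge \tau_\eps$. Setting $p(x) = \Pp_x\{\x(\tau_\eps) \in N\}$, one obtains
$$p(x) \leq \Pp_x\{\tau_\eps \leq T_0,\ \x(\tau_\eps) \in N\} + \E_x\bigl[ p(\x(T_0))\,\ONE_{\tau_\eps > T_0}\bigr].$$
Taking $P = \sup_{x \in K} p(x)$ over the relevant compact set $K$ on which both bounds above apply, this becomes a renewal inequality $P \leq Q + RP$ with $Q \leq e^{-\eps^{-2}(z_N - 2\delta)}$ and $R \leq 1 - e^{-\eps^{-2}(z+2\delta)}$, so $P \leq Q/(1-R) \leq e^{-\eps^{-2}(z_N - z - 4\delta)}$, which tends to $0$ as $\eps \to 0$.

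The \emph{main obstacle} is the mismatch between the LDP's fixed time horizon and the fact that $\tau_\eps$ can be exponentially large in $\eps^{-2}$ (e.g.\ whenever $D$ contains a stable equilibrium of $b$). One cannot simply choose a single $T$ and apply Theorem~\ref{thm: LDP} once. The renewal step above handles this, but only if the Markov iteration closes on itself: the restart point $\x(T_0)$ must lie, with overwhelming probability, in a compact region where $\inf_{y \in N} V(\cdot,y)$ still exceeds $z + \text{const}$. Establishing this requires the uniformity of the two LDP bounds in the starting point, which in turn rests on the continuity of the quasipotential and the fact that, up to events of probability $e^{-\Omega(\eps^{-2})}$, the trajectory $\x(\cdot)$ stays in a tube around the deterministic flow $S^t x_0$ during each block of length $T_0$.
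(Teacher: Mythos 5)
The paper itself does not prove this statement: it is quoted from~\cite{Freidlin--Wentzell-book} as background, so the comparison has to be against the standard Freidlin--Wentzell argument. Your overall plan---an LDP upper bound for exiting through $N$ within a fixed horizon $T_0$, an LDP lower bound for exiting somewhere within $T_0$, and an iteration via the Markov property to defeat the exponentially long exit time---is exactly the right plan, and you have correctly identified the obstacle. But the fixed-time renewal as you set it up does not close, and that is a genuine gap.

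The inequality $P\le Q+RP$ with $P=\sup_{x\in K}p(x)$ requires $\x(T_0)\in K$ on $\{\tau_\eps>T_0\}$, which forces $K=\bar D$. The upper bound $\Pp_x\{\tau_\eps\le T_0,\ \x(\tau_\eps)\in N\}\le e^{-\eps^{-2}(z_N-2\delta)}$ is \emph{not} uniform over $\bar D$: since $\inf_{y\in N}V(x,y)\to 0$ as $x\to N$, the bound $Q$ fails precisely near the set you are trying to avoid. Your proposed repair---a tube estimate around $S^tx_0$ on each block---does not apply once the block starts at the random point $\x(T_0)$ rather than $x_0$, and it gives no control of the event $\{\tau_\eps>T_0\}\cap\{\x(T_0)\text{ near }N\}$ unless one already shows its probability is at most $e^{-\eps^{-2}(z_N-z)}$, which is essentially the conclusion. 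The classical proof avoids this by iterating not at deterministic times but at the successive hitting times of two concentric small spheres $\partial B_\rho(O)$ and $\partial B_{2\rho}(O)$ around the stable equilibrium $O$; the restart law is then supported on $\partial B_\rho(O)$, a compact set bounded away from $\partial D$ and from $N$, where both LDP estimates are uniform. (Relatedly, your uniform lower bound $\inf_{x\in\bar D}\Pp_x\{\tau_\eps\le T_0\}\ge e^{-\eps^{-2}(z+2\delta)}$ is asserted ``by compactness'' but actually relies on $D$ being contained in the basin of attraction of $O$---this is what makes $V(x,\cdot)=V(O,\cdot)$ for $x\in D$ and lets a single $T_0$ work; the hypothesis should be stated, and is also implicit in the theorem as reproduced here.) If you want to keep the fixed-time scheme, you must excise a $\rho$-neighborhood $N^\rho$ of $N$, prove that entering $N^\rho$ before exiting is at least as costly as exiting through $N$, and run the renewal on $\bar D\setminus N^\rho$.
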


The necessary observation derived from this theorem is that, in the limit, the exit occurs in a neighborhood of the set of minimizers of the quasipotential.  The limitation is that when there are several minimizers, the result doesn't provide any distinction between them. For example, suppose the phase portrait of $S$ is as in Figure~\ref{fig: heteroclinic} and $D$ is a rectangular region as in Figure~\ref{fig: heteroclinic}. Then the set of minimizers consists of $3$ points: $q_1$, $q_2$ and $q_3$. Freidlin-Wentzell theory ensures that, asymptotically,  the exit occurs outside this set with exponentially small probability, but doesn't distinguish between the minimizer. For example, the theory is not able to establish if it is more likely to exit on a neighborhood of $q_1$ or in a neighborhood of $q_2$.   Bakhtin~\cite{nhn} started a theory that will allow us to compute the probability of exiting close to each of the minimizers. To complete this theory is part of the motivation for this work.

\begin{figure}
\centering
\includegraphics[width=4in]{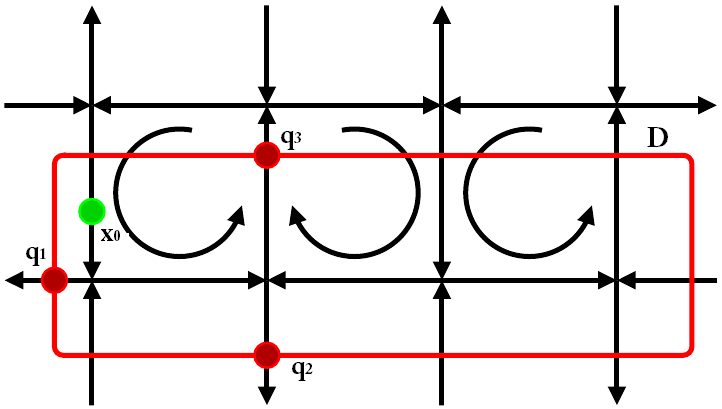}
\caption{Example with several minimizers}
\label{fig: heteroclinic}
\end{figure}

This work (see also ~\cite{NormalForm}) drops some technical assumptions needed in~\cite{nhn} in the planar case. The program makes intensive use of normal form theory and provides small noise estimates for non-linear diffusion. With this work, the asymmetric behavior found in~\cite{nhn} is extended to arbitrary Hamiltonian systems on the plane. Moreover, the present text also (see ~\cite{Levinson}) provides exact scaling corrections when the flow $S$ exits the domain in a finite time. This is a step towards to both Freidlin-Wentzell theory and Bakhtin's heteroclinic result, since extension to the case in which $S$ is asymptotically stable can be carried out by time reversing. In this direction, a $1$ dimensional example is presented in Chapter~\ref{ch: levinson}. This is an open and promising future research area, since it may provide scaling limits for exit points under very general assumptions on $b$.

\section{Escape from a Saddle} \label{sec: saddle}

In this section, we assume that the system has a unique critical point and that point is a saddle. Without loss of generality, suppose that the critical point is the origin; that is, we are assuming that $0 \in \R^d$ is the only point $x \in D\cup \partial D$ such that $b(x)=0$, and the matrix $A=\nabla b(0)$ has spectrum bounded away from zero with at least one eigenvalue with positive real part and one eigenvalue with negative real part. In other words, there is a pair of integers $\nu, \mu \geq 1$ such that the eigenvalues $\lambda_1,...,\lambda_d$ of $A$ satisfy
\[
\Re \lambda_1 = ...= \Re \lambda_\nu > \Re \lambda_{\nu+1} \geq ...\geq \Re \lambda_\mu >0 > \Re \lambda_{\mu+1} \geq ... \geq \Re \lambda_d.
\]
Under this assumptions, it is well known~\cite[Theorem 3.2.1]{Wiggins} that $\bar{D}=D \cup \partial D$ can be decomposed as  $\bar{D} = \{0\}\cup \Wc^u \cup \Wc^c \cup \Wc^s$, where
\begin{align*}
\Wc^u &=\{ x\in \bar{D}:  \lim_{t \to -\infty } S^tx = 0 \text{, and for some } s\geq 0, S^{(-\infty,s)}x \subset D \text{ and } S^{(s,\infty)}x\cap \bar {D} = \emptyset  \}, \\
\Wc^c &=\{ x \in \bar{D}: S^{(s_1,s_2)}x \subset D \text{ and } S^{[s_1,s_2]^c}x \cap \bar{D} = \emptyset, s_1\leq0 \text{ and } s_2\geq0  \} ,
\end{align*}
and,
\[
\Wc^s =\{ x\in \bar{D}:  \lim_{t \to \infty } S^tx = 0 \text{, and for some } s \leq 0, S^{(-\infty,s)}x \cap \bar {D} = \emptyset \text{ and } S^{(s,\infty)}x \subset D \}.
\]
Here, for an interval $A \subset \R$, $S^Ax$ denotes the set
\[
S^A x = \{ S^t x: t\in A \}.
\]
We are ready to state the theorem in~\cite{Kifer} concerning the exit time $\tau_\eps$:
\begin{theorem} \label{thm: kifer}
 If $x \in D \cap \Wc^s$, then
 \[
 -\frac{ \tau_\eps} { \log \eps } \stackrel { \Pp } { \longrightarrow} \frac{1}{\Re \lambda_1} , \quad \eps \to 0.
 \]
Consider the (deterministic) time
\[
T(x)= \inf \{ t>0: S^t x \in \partial D \},
\]
then, if $x \in ( \Wc^c \cup \Wc^u) \cap D$,
\[
\tau_\eps \stackrel { \Pp } { \longrightarrow} T(x), \quad \eps \to 0.
\]
\end{theorem}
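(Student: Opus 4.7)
The plan is to treat the two cases separately. In both cases the strategy is to reduce to controlling $\x$ on time intervals where the small-noise estimate~\eqref{eqn: estimate_basic} suffices; the only extra work is near the saddle, where one must also handle a logarithmically long time window.

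For $x \in (\Wc^c \cup \Wc^u) \cap D$ the deterministic orbit $S^t x$ lies in $D$ for $t \in [0, T(x))$ and leaves $\bar D$ transversally at $T(x)$ (immediate from the defining conditions on $s_1, s_2, s$ in the definitions of $\Wc^c, \Wc^u$). Fix $\eta > 0$. Continuity of the flow at the transversal crossing furnishes $\delta > 0$ such that any continuous path starting at $x$ and lying within $\delta$ of $S^\cdot x$ on $[0, T(x)+1]$ must first meet $\partial D$ at some time in $(T(x) - \eta, T(x) + \eta)$. Invoking~\eqref{eqn: estimate_basic} with $T = T(x) + 1$ shows this event has probability at least $1 - C^{(1)}_{T,\delta} e^{-C^{(2)}_{T,\delta}\eps^{-2}}$, so $\tau_\eps \to T(x)$ in probability.

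For $x \in \Wc^s \cap D$ the deterministic orbit never leaves $D$, and exit is noise-induced. I would split the trajectory into three stages. \emph{Stage I:} since $S^t x \to 0$, pick $r > 0$ small and a deterministic time $T_0 = T_0(r)$ with $S^{T_0} x \in B_{r/2}$; by~\eqref{eqn: estimate_basic} we have $\x(T_0) \in B_r$ except on a set of probability $O(\exp(-c\eps^{-2}))$, at deterministic time cost $T_0 = O(1)$. \emph{Stage II:} shrink $r$ so that $b(y) = Ay + R(y)$ with $R(y) = O(|y|^2)$ on $B_r$, and split coordinates $y = (u, v) \in \R^\mu \times \R^{d-\mu}$ along the unstable and stable invariant subspaces of $A$. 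Writing the unstable component by variation of constants,
\[
u(t) = e^{A_u t} u(0) + \int_0^t e^{A_u(t-s)} R_u(\x(s))\,ds + \eps \int_0^t e^{A_u(t-s)} \sigma_u(\x(s))\,dW(s),
\]
and using $\|e^{A_u t}\| \asymp e^{\Re \lambda_1 t}$, while $\x$ stays in $B_r$ the stochastic integral has typical size $\eps e^{\Re \lambda_1 t}$ and the quadratic term is absorbed by Gronwall. One obtains $|u(t)| \asymp \eps e^{\Re \lambda_1 t} \cdot \xi_\eps$ for a tight random factor $\xi_\eps$, so exit from $B_r$ happens at a random time $\tau_r = (\Re \lambda_1)^{-1} \log(1/\eps) + O_P(1)$. \emph{Stage III:} once $\x \in \partial B_r$ with a macroscopic unstable component, it shadows a local unstable-manifold trajectory, which lies in $\Wc^u$ and hence exits $D$ in deterministic time $O(1)$ by the first case. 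Summing gives $\tau_\eps = (\Re \lambda_1)^{-1} \log(1/\eps) + O_P(1)$, and dividing by $-\log\eps$ yields the theorem.

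The crux is Stage II, where one needs a two-sided concentration estimate on $|u(t)|$ in the presence of state-dependent noise and a quadratic nonlinearity. The upper bound on $\tau_\eps$ comes from a second-moment argument showing $|u(t)| \geq c \eps e^{\Re \lambda_1 t}$ on a typical event; the matching lower bound requires an exponential supermartingale (Doob-type) inequality that rules out an anomalously large excursion before time $(\Re \lambda_1)^{-1} \log(1/\eps)(1 - o(1))$. Making these bounds uniform in the starting point $\x(T_0) \in B_r$ and absorbing the remainder $R$ into controllable errors is where one benefits from the normal-form preparation alluded to in the introduction.
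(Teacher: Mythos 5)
The paper does not prove Theorem~\ref{thm: kifer}; it is quoted verbatim from Kifer~\cite{Kifer} as background, and the paper's own contributions (Theorems~\ref{thm: MioNon} and~\ref{Thm: Main}) are strictly sharper statements established in later chapters. So there is no in-paper proof to compare yours against, and I will assess the sketch on its own terms.

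Your Stage~II --- which you correctly identify as the crux --- has a genuine structural gap. You decompose the state into \emph{linear} coordinates $(u,v)$ along the stable and unstable eigenspaces of $A$ and write the unstable component by variation of constants, claiming ``the quadratic term is absorbed by Gronwall.'' In linear eigenspace coordinates the stable manifold $\Wc^s$ is not $\{u=0\}$ but a curved graph $u=\psi(v)$ with $\psi(v)=O(|v|^2)$, so the unstable component $u$ at entry into $B_r$ has deterministic size of order $r^2$, not $\eps$. Duhamel's formula then contains the term $e^{A_u t}u(0)$ of size $\sim r^2 e^{\Re\lambda_1 t}$, which on its own would force exit from $B_r$ in $O(1)$ time; the only reason this does not happen is that the integral $\int_0^t e^{A_u(t-s)}R_u(\x(s))\,ds$ of the quadratic remainder cancels it almost exactly, precisely because the deterministic orbit stays on $\Wc^s$. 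That is a fine cancellation between two terms of the same order, not a controllable small error, so it cannot be swept into a Gronwall bound: Gronwall applied naively to $R_u$ yields only $|u(t)|\le C\bigl(|u(0)|+\eps\bigr)e^{\Re\lambda_1 t}$, which is dominated by the deterministic initial condition and does not give the claimed $|u(t)|\asymp\eps e^{\Re\lambda_1 t}\,\xi_\eps$. The standard repairs are either to expand around the deterministic orbit so the cancellation is built into the approximation (this is what Theorem~\ref{thm: far} and Lemma~\ref{lemma: Bakhtin-Levinson} do), or to pass to nonlinear Hadamard--Perron or normal-form coordinates in which $\Wc^s$ is a coordinate axis $\{y_1=0\}$ and the unstable drift vanishes identically there --- precisely the content of Theorem~\ref{Lemma: Def_H1&H2}, which gives the bound $|H_1(y)|\le K_1 y_1^2|y_2|$ that \emph{is} Gronwall-absorbable. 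Your other acknowledged gaps (the two-sided concentration estimate and the uniformity in the random entry point) are real but subordinate to this one.
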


In order to state the corresponding theorem for the exit distribution, denote $\Gamma_{\max}$ the generalized eigenspace of $A$ which corresponds to $\lambda_1,...,\lambda_\nu$. The Hadamard--Perron theorem~\cite[Section 2.7]{Perko},~\cite[Theorem 3.2.1]{Wiggins} states that there is a $\nu$-dimensional $S^t$-invariant submanifold $W_{max}$ tangent to $\Gamma_{\max}$ at the origin. Note that the intersection $Q_{\max}=W_{\max} \cap \partial D$ is not empty. Moreover, in the case of $\nu > 1$, $Q_{\max}$ is a $\nu-1$-dimensional manifold, while for $\nu=1$ consists of two points: $Q_{\max}=\{ q_-, q_+ \}$. The result in~\cite{Kifer} reads:

\begin{theorem} \label{thm: kifer_space}
If $x \in D \cap \Wc^s$ and $\nu>1$, then for any relatively open subset $G \subset \partial D$ such that $Q_{\max} \subset G$, it holds that
\[
\lim_{\eps \to 0} \Pp_x \{ \x ( \tau_\eps ) \in G\} =1.
\]

If $\nu=1$ then the measure $\Pp_\eps^x ( \cdot) = \Pp\{ \x (\tau_\eps) \in \cdot | \x (0)=x\}$ converges weakly to the measure $\frac{1}{2} \delta_{q_-} +\frac{1}{2} \delta_{q_+}$, where $\delta_z$ is the probability measure concentrated at $z$.

If $x \in ( \Wc^c \cup \Wc^u) \cap D$, $\Pp_\eps^x ( \cdot)$ converges weakly to the measure $\delta_{S^{T(x)} x }$.
\end{theorem}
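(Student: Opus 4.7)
The plan is to treat the three cases separately. Case~3 ($x \in (\Wc^c \cup \Wc^u) \cap D$) follows almost directly from the basic small-noise estimate~\eqref{eqn: estimate_basic}: the deterministic orbit $\{S^t x : t \in [0,T(x)]\}$ is a compact subset of $\bar D$ that avoids the saddle, so $b$ is bounded away from zero along it, and by the definitions of $\Wc^c$ and $\Wc^u$ the orbit leaves $\bar D$ transversally at $S^{T(x)} x$. Choose $\delta>0$ small enough that every continuous curve staying within $\delta$ of $S^\cdot x$ on $[0, T(x)+1]$ exits $D$ within $\delta$ of $S^{T(x)} x$; applying~\eqref{eqn: estimate_basic} with $T = T(x)+1$ yields weak convergence $\Pp_\eps^x \Rightarrow \delta_{S^{T(x)}x}$.

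For cases~1 and~2 ($x \in \Wc^s$) I would decompose the trajectory into three epochs. \emph{Approach:} fix a small neighborhood $U$ of $0$ in which the linearization $A = \nabla b(0)$ controls the dynamics; since $x \in \Wc^s$, the deterministic orbit enters $U$ at some fixed finite time $T_U$, and~\eqref{eqn: estimate_basic} puts $\x(T_U)$ within $\eps^{1/2}$ of $S^{T_U} x$ with overwhelming probability. \emph{Saddle crossing:} inside $U$ for a duration of order $|\log \eps|$, work in coordinates aligning the local stable and unstable invariant manifolds with the stable and unstable subspaces of $A$; the unstable component $\x^u$ starts at order $\eps$ and is well approximated by the Gaussian stochastic convolution $\eps \int_0^t e^{(t-s)A}\sigma(0)\,dW(s)$ projected onto the unstable subspace, while the stable component decays exponentially. \emph{Exit:} once $\x$ is macroscopically close to $W_{\max}$, it tracks the deterministic flow on $W_{\max}$ until the first crossing of $\partial D$, which occurs in $Q_{\max}$.

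The crux sits in the saddle-crossing epoch. The long-time behaviour of the stochastic convolution is dominated by its projection onto $\Gamma_{\max}$, the generalized eigenspace of the eigenvalues with the largest real part; after the natural exponential rescaling, the direction in which $\x$ leaves $U$ is asymptotically the direction of a nondegenerate Gaussian vector in $\Gamma_{\max}$. When $\nu > 1$ this suffices: the exit direction concentrates on $\Gamma_{\max}$, so $\x(\tau_\eps)$ converges in probability to $W_{\max} \cap \partial D = Q_{\max}$, and any open $G \supset Q_{\max}$ captures almost all of the mass. When $\nu = 1$ the eigenvalue $\lambda_1$ is real (complex eigenvalues occur in conjugate pairs), $\Gamma_{\max}$ is one-dimensional, and the rescaled quantity is a centred scalar Gaussian whose sign is $\pm$ with probability $\tfrac12$ each; the two signs correspond to the two branches of $W_{\max}$ exiting at $q_+$ and $q_-$, yielding the limit $\tfrac12 \delta_{q_-} + \tfrac12 \delta_{q_+}$.

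The main obstacle is making the exponential-amplification picture quantitative over the logarithmic time window: controlling the nonlinear remainder $b(\x) - A\x$ via a Gronwall-type argument so that it does not spoil the Gaussian scaling of $\x^u$, and showing that conditional on the exit direction from $U$, the subsequent trajectory tracks the deterministic orbit on $W_{\max}$ uniformly up to exit. Normal form coordinates (as announced in the introduction) together with martingale estimates on the stochastic convolution are the natural tools for both tasks.
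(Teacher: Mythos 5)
The paper does not prove this theorem itself: it is quoted from Kifer's work, whose original argument is a hybrid of large-deviation bounds and analytic (PDE) techniques, as the introduction itself points out. Your proposal instead sketches a purely pathwise three-epoch decomposition --- deterministic tracking along $\Wc^s$ into a fixed neighborhood $U$ of the saddle, exponential amplification of the Gaussian stochastic convolution inside $U$ over a $|\log\eps|$ window, and deterministic tracking along $\Wc^u$ to exit --- and this is precisely the strategy the paper develops in Chapter~\ref{ch: Saddle} to prove the sharper Theorem~\ref{Thm: Main}, with the three epochs corresponding exactly to Theorem~\ref{thm: far}, Theorem~\ref{Thm: main_small}, and Theorem~\ref{Thm:Transversal_exit}, glued by the strong Markov property. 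So your route is a genuine alternative to Kifer's: it trades the PDE machinery for direct stochastic estimates, and, as the paper shows, it buys quantitative scaling limits rather than mere weak convergence. Your observations that $\lambda_1$ is real when $\nu=1$ (forcing the scalar limit), that the limiting Gaussian is centred because the initial point lies on $\Wc^s$, and that this symmetry produces the $\tfrac12$--$\tfrac12$ split are all correct and match~\eqref{eq:eta_0_+}--\eqref{eq:N_0_+}.

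The one place where the sketch skips the real work is the hand-off between the saddle-crossing and the exit epoch. Saying ``once $\x$ is macroscopically close to $W_{\max}$ it tracks the deterministic flow'' hides the fact that the process leaves $U$ at a point that is itself random at scale $\eps^\beta$, with a possibly asymmetric law, and one must show this random initial condition is tight and satisfies the transversality hypothesis before the Levinson-type exit estimate (Theorem~\ref{Thm:Transversal_exit}) applies. The paper handles this by stopping at the intermediate time $\hat\tau_\eps$ when the unstable coordinate first reaches $\eps^{\alpha p}$ (Lemma~\ref{prop: eta_convergence}), then restarting (Lemma~\ref{Thm:after_tauh}), with the Gronwall control of the nonlinear remainder done in normal-form coordinates (Theorem~\ref{Lemma: Def_H1&H2} and Lemma~\ref{thm: estimate_non_linear}); a naive linearization $b(\x)\approx A\x$ would not close the estimate over the logarithmic time window without further care. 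Your closing paragraph identifies this obstacle, so the gap is one of execution rather than misconception, but it should not be underestimated: it is the heart of the argument, and it is also what forces the paper's restriction to $d=2$ (one-resonance), as explained in Section~\ref{sec: conc-saddle}.
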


In the work by Day~\cite{DaySPA} a refinement to the theorem about the exit time is given in the $2-$dimensional situation. He proved that $\lambda_1 \tau_\eps$ can be written as a sum between $-\log \eps$ and a tight correction. He gave a precise asymptotic description for the distribution of the correction:
\begin{theorem} \label{thm: Day}If $d=2$ and $\x (0 ) \in \mathcal{W}^s \cap D$, then
\[
\lambda_1 \tau_\eps^D + \log \eps \to \mathcal{K} + C_\nu
\]
in distribution. Here $\mathcal{K}$ and $C_\nu$ are independent random variables. Moreover, $\mathcal{K}$ has a density with respect to Lebesgue measure given by
\[
d\mathcal{K} = \frac{2}{\sqrt{ \pi } } e^{ - ( x + e ^{ -2x } ) } dx,
\]
and $C_\nu$ is a Bernoulli random variable with $\Pp \{ C_\nu = C_\pm  \} = 1/2$, where $C_\pm$ are a constants depending only on $b$ and $\sigma$.
\end{theorem}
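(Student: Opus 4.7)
The plan is to linearize near the saddle, reduce the exit problem to a first-passage problem for a linear SDE, and identify the limit by an explicit Gaussian calculation. Fix a small $\rho>0$. Since $x_0\in\Wc^s$ and $S^tx_0\to 0$, the basic estimate \eqref{eqn: estimate_basic} gives that with probability $1-o(1)$ the process $\x$ stays $O(\eps^{1-\delta})$-close to $S^tx_0$ until it enters $B_\rho(0)$. Inside $B_\rho$ a smooth normal-form change of coordinates (cf.~\cite{NormalForm}) conjugates the drift to its linear part $A=\nabla b(0)$, yielding the SDE in diagonal coordinates $(u,s)$ aligned with the unstable/stable directions
\begin{align*}
du &= \lambda_1 u\,dt + \eps\alpha_1\,dW_1 + R^u_\eps\,dt,\\
ds &= \lambda_2 s\,dt + \eps\alpha_2\,dW_2 + R^s_\eps\,dt,
\end{align*}
with $\lambda_1>0>\lambda_2$, constants $\alpha_i$ depending on $\sigma(0)$ and the transformation, and smooth remainders $R^{u,s}_\eps$ of order $|(u,s)|^2$.

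Duhamel's formula gives $u(t)=e^{\lambda_1 t}\bigl(u(0)+\eps\alpha_1 M(t)+\text{remainder}\bigr)$ with $M(t)=\int_0^t e^{-\lambda_1 r}\,dW_1(r)\to M_\infty\sim\Nc(0,1/(2\lambda_1))$ almost surely as $t\to\infty$. Because $\lambda_2<0$, $|s(t)|$ stays $O(\rho)$ with high probability, so the exit from $B_\rho$ is governed by $|u|$ first hitting $\rho$: solving $|u(\tauh)|=\rho$ and taking logarithms yields
\[
\lambda_1\tauh + \log\eps = \log(\rho/|\alpha_1|) - \log|M_\infty| + o_\Pp(1),
\]
provided the remainder and the entry value $u(0)$ are both shown to be $o_\Pp(\eps)$ compared with $\eps\alpha_1 M_\infty$. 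After leaving $B_\rho$ the process tracks the branch of the unstable manifold $W_{\max}$ selected by $\sgn M_\infty$, hitting $\partial D$ at $q_+$ or $q_-$ in an additional deterministic time that depends on $\rho$, with $o_\Pp(1)$ correction by another application of \eqref{eqn: estimate_basic}.

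Summing the three contributions, the $\log\rho$ terms cancel and
\[
\lambda_1\tau_\eps^D + \log\eps \;\Longrightarrow\; -\log|M_\infty| + C_\nu,
\]
where $C_\nu\in\{C_-,C_+\}$ is an explicit constant depending on $b,\sigma,q_\pm$, chosen by $\sgn M_\infty$ with probability $1/2$ each. Setting $\mathcal{K}=-\log(\sqrt{\lambda_1}|M_\infty|)$ (absorbing $\tfrac12\log\lambda_1$ into $C_\pm$), the change of variables $y\mapsto-\log y$ applied to the half-normal density of $\sqrt{\lambda_1}|M_\infty|$, which is $\tfrac{2}{\sqrt\pi}e^{-y^2}$ on $y>0$, produces exactly $\tfrac{2}{\sqrt\pi}e^{-(x+e^{-2x})}$; and independence of $\mathcal{K}$ and $C_\nu$ is inherited from independence of $|M_\infty|$ and $\sgn M_\infty$ for a centered Gaussian.

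The main obstacle will be the pathwise control of the nonlinear remainders $R^{u,s}_\eps$ and of the entry value $u(0)$, uniformly over the interval $[0,\tauh]$ whose length is of order $|\log\eps|/\lambda_1$. Ordinary Gronwall estimates blow up on such a logarithmic timescale, so a careful bootstrap / pathwise expansion in $\eps$ is required to guarantee these contributions remain $o_\Pp(\eps)$ against the leading Gaussian fluctuation $\eps\alpha_1 M_\infty$, and to allow replacement of the pre-limit martingale $M(\tauh)$ by $M_\infty$ jointly with its sign. This is precisely where the normal-form and pathwise-expansion machinery announced in the introduction plays its essential role.
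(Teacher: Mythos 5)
Your proposal follows the same strategy the paper develops in Section~\ref{subsec: sketch} and Chapter~\ref{ch: Saddle} (normal form conjugation near the saddle, Duhamel/first-passage for the unstable coordinate, then pathwise error control on the $|\log\eps|$ timescale), and the density calculation $-\log$ applied to a half-normal with variance $1/2$ is correct. One step as written cannot go through, however: you require the entry value $u(0)$ into $B_\rho$ to be $o_\Pp(\eps)$ so that only $\eps\alpha_1 M_\infty$ survives. For a deterministic start on $\Wc^s$ the noise accumulated during the descent to $B_\rho$ makes $u(0)$ a centered Gaussian of order exactly $\eps$, not $o_\Pp(\eps)$. The correct resolution, matching what the paper does in~\eqref{eq:eta_0_+} where $\eta_0^+=\chi_{0,1}+N^+$, is to \emph{combine} the entrance fluctuation with the in-ball stochastic integral into a single centered Gaussian before taking the logarithm; since $-\log$ of the absolute value of any centered Gaussian differs from $-\log|Z|$ with $Z\sim\Nc(0,1/2)$ only by an additive constant, the density $\frac{2}{\sqrt{\pi}}e^{-(x+e^{-2x})}$ still emerges after that constant is absorbed into $C_\pm$, and independence of $\mathcal{K}$ and $C_\nu$ still follows from the independence of the modulus and sign of the combined Gaussian. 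With that one change the argument is sound.
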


This theorem complements previous work by Mikami~\cite{Mikami} which established the decay in the distribution of $- \frac{\lambda_1}{\log \eps} \tau_\eps^D$:
\begin{theorem} \label{thm: Mikami}For an arbitrary $d \geq 1$ and $\x (0 ) \in \mathcal{W}^s \cap D $ for $T\in(0,1)$ it holds that
\[
\lim_{ \eps \to 0 } \frac{1}{\log \eps} \log \left (  - \log \Pp \left\{ - \frac{\lambda_1}{\log \eps} \tau_\eps^D < T \right \} \right )=T-1,
\]
while for $T>1$,
\[
\lim_{ \eps \to 0 } \frac{1}{\log \eps} \Pp \left\{ - \frac{\lambda_1}{\log \eps} \tau_\eps^D > T \right \} = (T-1)/2.
\]
\end{theorem}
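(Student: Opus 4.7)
My plan is to reduce the problem to tail estimates for a single nondegenerate Gaussian random variable $G$ that encodes the ``random initial condition'' of the trajectory in the dominant unstable direction near the saddle. Because $\x(0)\in\Wc^s$, the deterministic orbit $S^tx_0$ converges exponentially fast to the saddle, and the basic estimate~\eqref{eqn: estimate_basic} lets one localize the analysis inside a fixed ball $B_r$ around $0$ after an initial transient whose contribution to $\tau_\eps^D$ is $O(1)$. The substantive behavior of $\tau_\eps^D$ therefore takes place while $\x$ lingers in $B_r$.

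Inside $B_r$ I would linearize: setting $A=\nabla b(0)$ and comparing $\x$ to the Gaussian process $Z_\eps$ solving $dZ_\eps=AZ_\eps\,dt+\eps\sigma(0)\,dW$, write $\x=Z_\eps+\rho_\eps$ with the remainder $\rho_\eps$ controlled by Gronwall's inequality as long as $\x$ stays in $B_r$. Let $\Pi_{\max}$ denote the projection onto $\Gamma_{\max}$ and let $A_{\max}$ be the restriction of $A$ to $\Gamma_{\max}$; then the dominant component of $Z_\eps$ is $\eps\,e^{A_{\max}t}\int_0^t e^{-A_{\max}s}\Pi_{\max}\sigma(0)\,dW(s)$. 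Since $\Re\lambda_1>0$, this It\^o integral converges in distribution, on the relevant time scale, to a nondegenerate centered Gaussian $G$. Exit from $B_r$ and subsequently from $D$ occurs when this dominant projection first reaches a fixed order-one threshold $\delta$; solving the implicit relation $\delta\asymp\eps\,e^{\lambda_1\tau_\eps^D}|G|$ yields the representation
\[
\lambda_1\,\tau_\eps^D \;=\; -\log\eps\;-\;\log|G|\;+\;O(1)
\]
on an event of overwhelming probability, consistent with the tight correction in Day's Theorem~\ref{thm: Day}.

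From this representation the claimed logarithmic asymptotics follow by translating each deviation event into a tail event for $|G|$. For $T<1$, the event $\{-\lambda_1\tau_\eps^D/\log\eps<T\}$ forces $|G|$ to be atypically large, of order $\eps^{T-1}\to\infty$; the Gaussian upper tail produces a doubly exponentially small probability which, after taking $\log\circ(-\log)$ and normalizing by $\log\eps$, yields the first display. For $T>1$, the complementary event $\{-\lambda_1\tau_\eps^D/\log\eps>T\}$ forces $|G|$ to be atypically small, of order $\eps^{T-1}\to 0$, and the small-ball probability for a nondegenerate Gaussian produces the polynomial decay appearing in the second display.

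The main obstacle is to control the linearization error $\rho_\eps$ and the truncation error of the It\^o integral uniformly over a time window of order $|\log\eps|$ \emph{on the atypical events} on which $|G|$ is very large or very small, rather than under the typical regime. I would handle this by running the Gronwall comparison up to the first time $|\x|$ exits an intermediate scale $\eps^\beta$ with $\beta\in(0,1)$ chosen so that the remainder is still negligible relative to the dominant projection, and then invoking the quasi-deterministic escape along $\Wc^u$ to cover the final $O(1)$ travel to $\partial D$. The Freidlin--Wentzell bounds from Theorem~\ref{thm: LDP} serve to discard trajectories that leave $B_r$ far from $\Wc^u$, since any such scenario carries an $I_T^x$-cost bounded below by a positive constant and is therefore negligible compared to the polynomial and doubly exponential rates under consideration.
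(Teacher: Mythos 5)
First, a remark about scope: the paper does not prove this statement. Theorem~\ref{thm: Mikami} is recalled from Mikami's paper~\cite{Mikami} as background in the literature review of Section~\ref{sec: saddle}; no proof appears in this thesis. So there is no ``paper's own proof'' against which to compare. Your proposed route---reduce to the pathwise representation $\lambda_1\tau_\eps^D=-\log\eps-\log|G|+O_\Pp(1)$ (the one behind Day's Theorem~\ref{thm: Day} and Bakhtin's Theorem~\ref{thm: BakhtinSPA}) and then read off Gaussian tail and small-ball estimates---is in the spirit of those later, pathwise works, not of Mikami's original argument, which is analytic in nature. That is a legitimate alternative strategy, but you should be aware it is not the route Mikami took.

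There is, however, a concrete gap: if you actually carry out the tail computation you sketch, you do \emph{not} recover the stated exponents. From $\lambda_1\tau_\eps^D+\log\eps\approx -\log|G|$, the event $\{-\lambda_1\tau_\eps^D/\log\eps<T\}$ with $T<1$ is $\{|G|\gtrsim\eps^{T-1}\}$, and the Gaussian upper tail $\Pp\{|G|>a\}\asymp e^{-a^2/(2\sigma^2)}$ gives $\log(-\log\Pp)\sim 2(T-1)\log\eps$, i.e.\ the limit is $2(T-1)$, not $T-1$; similarly, the event for $T>1$ is $\{|G|\lesssim\eps^{T-1}\}$, whose small-ball probability is $\asymp\eps^{T-1}$, giving $T-1$, not $(T-1)/2$. (As a sanity check, these are consistent with the tails of the density $\frac{2}{\sqrt{\pi}}e^{-(x+e^{-2x})}$ quoted for $\mathcal{K}$ in Theorem~\ref{thm: Day}, whose right tail is $\sim e^{-x}$ and left tail satisfies $-\log\Pp\{\mathcal{K}<x\}\sim e^{-2x}$.) Note also that the second display in the statement is missing a $\log$: as written, $\frac{1}{\log\eps}\Pp\{\cdot\}\to 0\ne(T-1)/2$. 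So either the thesis mis-transcribes Mikami's constants (perhaps a discrepancy in the normalization $\eps$ versus $\eps^2$ of the noise), or your representation requires an adjustment you have not identified; you should not assert that the displays ``follow'' without confronting this factor-of-two mismatch. Separately, the step you flag as ``the main obstacle''---upgrading a convergence-in-distribution statement for the $O_\Pp(1)$ remainder into uniform control on the doubly exponentially rare events in the first display---is indeed the entire difficulty, and your Gronwall-up-to-$\eps^\beta$ sketch leaves it unresolved: weak convergence of the correction says nothing at scales $e^{-c\eps^{-2(1-T)}}$, and one would need genuine moderate-deviation control of the remainder uniformly over the time window $\sim|\log\eps|$.
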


Theorem~\ref{thm: Mikami} is a good refinement of Theorem~\ref{thm: kifer}, but it doesn't prove that the distribution of the difference $\lambda_1 \tau_\eps^D + \log\eps$ is tight as in Theorem~\ref{thm: Day}. The tails established in Theorem~\ref{thm: Mikami} are consistent with the tails of Theorem~\ref{thm: Day}.

Bakhtin~\cite{Bakhtin-SPA} gave a refinement of the last theorems. He proved that in any dimension the family of random variables $\tau_\eps + \lambda_1^{-1} \log \eps$ converges in distribution and he identified the limit. He also improved the result about the convergence on the exit distribution in Theorem~\ref{thm: kifer_space}. In the case $\nu=1$, he showed that the factor $1/2$ in the limiting distribution $\frac{1}{2} \delta_{q_+} + \frac{1}{2} \delta_{q_-}$ comes from the symmetry of a certain Gaussian random variable.
\begin{theorem}\label{thm: BakhtinSPA}
In the case $\nu=1$, parametrize the manifold $W_{\max}$ as a $C^1$-curve $\gamma$ that can be locally represented as a graph over the (one dimensional) space $\Gamma_{\max}$.
Let the times $t(\pm \delta, q_\pm )$ be the time it takes to the deterministic flow to go from $\gamma(\pm \delta)$ to $q_\pm$:
\[
t(\pm \delta, q_\pm ) = T ( \gamma ( \pm \delta ) ),
\]
with $T(x)$ defined as in Theorem~\ref{thm: kifer}.

Then, the numbers
\[
h_{\pm} = \lim_{\delta \to 0} \left( \frac{\log \delta } {\lambda_1} + t( \pm \delta, q_\pm) \right)
\]
are well defined and, in the case where $\sigma= I$, and $x \in D \cap \Wc^s$,  there is a Gaussian random variable $\mathcal{N}$ and a number $\kappa=\kappa(x)>0$ such that :
\begin{enumerate}
\item As  $\eps \to 0$ the convergence
\[
\x (\tau_\eps) \stackrel{\Pp}{\longrightarrow} q_+ \ONE_{ \{ \mathcal{N} > 0 \} } + q_- \ONE_{ \{ \mathcal{N} < 0 \} },
\]
and
\[
\tau_\eps + \frac{1}{\lambda_1} \log \eps \stackrel{\Pp}{\longrightarrow}  h_+ \ONE_{\{ \mathcal{N} > 0 \}} + h_- \ONE_{ \{ \mathcal{N} < 0 \} } - \frac{1}{\lambda_1} \log ( \kappa \mathcal{N} )
\] hold in probability.
\item As $\eps \to 0$ the distribution of the random vector $( \x (\tau_\eps), \tau_\eps + \frac{1}{\lambda_1} \log \eps)$ converges weakly to the measure
\[
\frac{1}{2} \delta_{q_+} \times \mu_{h_+,\omega} + \frac{1}{2} \delta_{q_-} \times \mu_{h_-,\omega},
\]
where $\mu_{h_\pm,\omega}$ is the distribution of
\[
h_\pm -  \frac{1}{\lambda_1} \log ( \kappa \mathcal{N} ).
\]
\end{enumerate}
\end{theorem}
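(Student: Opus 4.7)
The strategy is to decompose $\tau_\eps^D$ using two stopping times $\sigma_\delta<\theta_\delta$, with $|\x(\sigma_\delta)|=\delta$ and $|\x(\theta_\delta)|=2\delta$, show that the outer segments closely track the deterministic flow by~\eqref{eqn: estimate_basic}, and extract the Gaussian limit from a linear analysis in the saddle segment. I begin with the deterministic claim that $h_\pm$ is well defined. Along the one-dimensional $W_{\max}$ the restricted flow is, after smooth conjugation, of the form $\dot u=\lambda_1 u$, so $t(\pm\delta,q_\pm)+\lambda_1^{-1}\log\delta$ converges as $\delta\to 0$ to a limit $h_\pm$ depending only on the chart near $0$ and the geometry of $\partial D$ at $q_\pm$.

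Next, fix $\delta>0$ small and set $T_\delta^-(x)=\inf\{t:|S^tx|\leq\delta\}$. By~\eqref{eqn: estimate_basic}, $\sigma_\delta=T_\delta^-(x)+o_\Pp(1)$ and $\x(\sigma_\delta)$ lies within $o_\Pp(1)$ of $S^{T_\delta^-(x)}x\in\Wc^s$. On $[\sigma_\delta,\theta_\delta]$ I linearize. With $A=\nabla b(0)$ and stable/unstable splitting $\R^d=\Gamma_{\max}\oplus\Gamma_s$, apply variation of constants together with a Gronwall estimate on the quadratic remainder valid on $\{|\x|\leq 2\delta\}$: the unstable coordinate $u(t)$ satisfies
\[
u(t)=e^{\lambda_1(t-\sigma_\delta)}\!\left[u(\sigma_\delta)+\eps\!\int_{\sigma_\delta}^{t}e^{-\lambda_1(s-\sigma_\delta)}\,d\wt W(s)\right]+R_\eps(t),
\]
where $\wt W$ is the projection of $W$ onto the unstable direction (scalar, since $\sigma=I$ and $\nu=1$) and $R_\eps$ is the nonlinear remainder. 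Since $\x(\sigma_\delta)$ is essentially in $\Wc^s$, $u(\sigma_\delta)$ is of order $\eps$, and one obtains $u(t)=\eps e^{\lambda_1(t-\sigma_\delta)}(\Nc+o_\Pp(1))$ for a centered Gaussian $\Nc$ with strictly positive variance $\kappa^{-2}=\kappa(x)^{-2}$, realized as the a.s.\ limit of the stochastic exponential $\eps^{-1}e^{-\lambda_1(t-\sigma_\delta)}u(t)$ via the It\^o isometry and the fast decay of $e^{-\lambda_1 s}$. Solving $|u(\theta_\delta)|=\delta$ gives
\[
\theta_\delta-\sigma_\delta=-\tfrac{1}{\lambda_1}\log\bigl(\eps\kappa|\Nc|/\delta\bigr)+o_\Pp(1),\qquad \sgn\bigl(u(\theta_\delta)\bigr)\to\sgn(\Nc).
\]
Finally, on $[\theta_\delta,\tau_\eps^D]$ the process tracks the orbit of $\gamma(\sgn(\Nc)\cdot 2\delta)$, so $\tau_\eps^D-\theta_\delta=t(\sgn(\Nc)\delta,q_{\sgn(\Nc)})+o_\Pp(1)$ by~\eqref{eqn: estimate_basic} applied with the random initial point $\x(\theta_\delta)$. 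Summing the three contributions and then letting $\delta\to 0$,
\[
\tau_\eps^D+\tfrac{1}{\lambda_1}\log\eps \;\longrightarrow\; h_{\sgn(\Nc)}-\tfrac{1}{\lambda_1}\log(\kappa|\Nc|),
\]
while $\x(\tau_\eps^D)\to q_{\sgn(\Nc)}$; combined with $\Pp\{\Nc>0\}=\Pp\{\Nc<0\}=\tfrac12$ this yields both (1) and (2).

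The main obstacle is the middle segment. The transit time is $O(\log\eps^{-1})$ while the quadratic nonlinearity contributes $O(\delta^2)$ per unit time, so the product is borderline; pushing it to $o_\Pp(1)$ requires either a careful choice $\delta=\delta(\eps)\to 0$ slowly, or a diagonal argument taking $\eps\to 0$ before $\delta\to 0$. Two further subtleties are (i) verifying $\Pp\{\Nc=0\}=0$, which uses the nondegeneracy afforded by $\sigma=I$ and ensures $\log|\Nc|$ is a.s.\ finite and the sign well defined; and (ii) propagating the uniform-in-initial-condition estimate~\eqref{eqn: estimate_basic} to a stopped and randomized initial datum at $\theta_\delta$, which requires the strong Markov property and tightness of $\x(\theta_\delta)$ near $\gamma(\pm 2\delta)$. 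Once these pieces are in place the convergence assertions follow directly from the decomposition above.
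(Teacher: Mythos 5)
This theorem is a background result cited from \cite{Bakhtin-SPA}; the thesis does not prove it, but only sketches the underlying idea for a linear two-dimensional example in Section~\ref{subsec: sketch} and then generalizes it (with a different method) in Chapter~\ref{ch: Saddle}. Your three-phase decomposition -- approach to the saddle, transit through a $\delta$-neighborhood, exit along $\Wc^u$ -- matches that architecture, and your use of Duhamel's formula in the unstable coordinate to solve for the exit time, the strong Markov restart at $\theta_\delta$, and the observation that $\Pp\{\Nc>0\}=\Pp\{\Nc<0\}=\tfrac12$ are all consistent with the intended argument.

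The gap is the middle phase, and it is not a technical "borderline'' issue you can postpone: it is the central obstruction, and neither of the two escape routes you mention closes it. Applying variation of constants to the unstable coordinate with a generic quadratic remainder $Q(\x)=O(|\x|^2)$, the integral $\int_{\sigma_\delta}^{t}e^{-\lambda_1(s-\sigma_\delta)}Q(\x(s))\,ds$ contains a $v(s)^2$ contribution ($v$ the stable coordinate) which evaluates to $O(\delta^2)$ uniformly in $\eps$ and therefore dominates the $O(\eps)$ Gaussian term for any fixed $\delta$. Letting $\delta=\delta(\eps)\to0$ does not rescue this: the entry datum into the middle phase is $u(\sigma_\delta)=\eps e^{\lambda_1\sigma_\delta}\cdot O_\Pp(1)$ with $\sigma_\delta\sim\lambda_-^{-1}\log(1/\delta)\to\infty$, so your claim that $u(\sigma_\delta)=O(\eps)$ fails when $\delta$ shrinks with $\eps$. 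A Gronwall bound cannot distinguish the (large but deterministic) part of the $O(\delta^2)$ error -- which could in principle be absorbed into $h_\pm$ -- from its stochastic fluctuation, which is what must actually vanish relative to $\eps\Nc$; resolving this requires either a finer deterministic-plus-fluctuation expansion of the remainder, or, as the cited sources do, a smooth conjugation \emph{before} estimating. Bakhtin uses a $C^2$ linearization (valid under a non-resonance hypothesis that is tacit in the statement of the theorem and should be flagged), which annihilates the remainder entirely; the thesis's Chapter~\ref{ch: Saddle} replaces this with a normal-form conjugation (Theorem~\ref{Lemma: Def_H1&H2}) so that the unstable equation's remainder is $O(y_1^2|y_2|)$ with, crucially, no $v^2$ term. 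Note that your deterministic step for $h_\pm$ already invokes a "smooth conjugation'' of the restricted flow to $\dot u=\lambda_1 u$; once you grant that, it is cleaner and correct to apply it to the process as well, which recovers Bakhtin's argument rather than the direct-Gronwall route you propose.
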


The proof of Theorems~\ref{thm: Day} and~\ref{thm: BakhtinSPA} is based on the study of the linear system and then an approximation to the non-linear one. The steps followed for a prototypical $2-$dimensional system is presented in the next section. Part of this thesis focus is on adapting this methodology to the non-linear case.
\subsection{A $2$-dimensional linear example} \label{subsec: sketch}

For a fixed $\delta>0$, consider the domain $D=(-\delta,\delta) \times (-\delta,\delta)\subset \R^2$. Given two positive numbers  $\lambda_\pm >0$, we sketch the solution to the exit problem from $D$ for the diffusion $\x=(x_\eps^1,x_\eps^2)$ given by
\begin{align*}
d\x (t) &= { \rm{diag} }( \lambda_+, - \lambda_- ) \x (t) dt + \eps dW(t),\\
\x(0) &= ( 0, x_0 ) \in D.
\end{align*}
Here, for a column vector $v=(v_1,v_2)$, $v_1$ is the first coordinate and $v_2$ is the second one.

Using It\^o's formula~\cite[Theorem 3.3.3]{Karatzas--Shreve} in each coordinate we write the Duhamel principle for $x_\eps^1$ and $x_\eps^2$ as
\begin{align} \label{eqn: Duh_Intro1}
x_\eps^1 (t) &= \eps e^{ \lambda_+ t } \int_0^t e^{-\lambda_+ s} dW_1 (s), \\ \label{eqn: Duh_Intro2}
x_\eps^2 (t) &= e^{-\lambda_- t } x_0 +  \eps  \int_0^t e^{-\lambda_- ( t- s) } dW_2 (s).
\end{align}
These two identities are the main ingredient in this development.

We will show that $\x$ exits $D$ along $(\delta,0)$ or $(-\delta,0)$. Consider the time at which $x_\eps^1$ exits the interval $(-\delta,\delta)$:
\[
\tau_\eps^\delta= \inf\{t>0: |x_\eps^1 (t) | = \delta \}.
\]
The program is to compute $\tau_\eps^\delta$ based on the path-wise properties of $\x$. Use the identity obtained for $\tau_\eps^\delta$ to characterize $\x(\tau_\eps^\delta)$ from which we will deduce that $\Pp \{ \tau_\eps^\delta = \tau_\eps^D \} \to 1$ as $\eps \to 0$. Hence, we can obtain the limiting behavior of $(\tau_\eps^{D},\x)$ based on the identities we have obtained for $ (\tau_\eps^\delta,\x(\tau_\eps^\delta) )$. This will establish a result in the spirit of Theorem~\ref{thm: BakhtinSPA}.

First, note that $\tau_\eps < \infty$ with probability $1$. This is a classical fact that we prove in Appendix~\ref{ch: app-sec} for completeness. On the set $\{ \tau_\eps^\delta< \infty \}$, define the random variable $\mathcal{N}_\eps$ by
\[
\mathcal{N}_\eps =  \int_0^{\tau_\eps^\delta} e^{-\lambda_+ s} dW_1 (s).
\]
An application of Duhamel's principle~\eqref{eqn: Duh_Intro1} for $x_\eps^1$ and the definition of $\tau_\eps^\delta$ establishes the equality $\delta = \eps e^{ \lambda_+ \tau_\eps^\delta }\left|\mathcal{N}_\eps \right|$ with probability $1$.
This identity implies that, with probability $1$,
\begin{equation} \label{eqn: time_eqn}
\tau_\eps^\delta = -  \frac{1} {\lambda_+}  \log \eps + \frac{1}{\lambda_+} \log \left( \frac{\delta}{\mathcal{N}_\eps} \right).
\end{equation}
Using equality~\eqref{eqn: time_eqn} together with~\eqref{eqn: Duh_Intro1} and~\eqref{eqn: Duh_Intro2} it holds that
\begin{equation} \label{eqn: x1_equal}
x_\eps^1 ( \tau_\eps^\delta  ) = \delta \sgn \left( \mathcal{N}_\eps\right ),
\end{equation}
and
\begin{align}
x_\eps^2 (\tau_\eps^\delta )& =\eps^{ \lambda_- / \lambda_+ } x_0  \left (  \frac{    \left| \mathcal{N}_\eps \right|}{\delta}\right ) ^{ \lambda_- / \lambda_+ } + \eps \int_0^{\tau_\eps^\delta } e^{ -\lambda_- ( t-s) } dW_2 (s),  \label{eqn: asymmetry_linear}
\end{align}
both with probability $1$. Hence, if we can establish tightness for the distribution of the family of random variables $( \mathcal{N}_\eps )_{\eps >0 }$, the fact that $\Pp \{ \tau_\eps^\delta = \tau_\eps^D\}$ converges to $1$ would be a consequence of~\eqref{eqn: x1_equal},~\eqref{eqn: asymmetry_linear} and the tightness of the distribution of the stochastic integral in~\eqref{eqn: asymmetry_linear}.

In order to get the tightness result, we need to analyze the time $\tau_\eps^\delta$ without any reference to~\eqref{eqn: time_eqn}.
We can show two properties (see Appendix~\ref{ch: app-sec} for their proofs):
\begin{enumerate}
\item For every $\delta>0$, $\tau_\eps^\delta \to \infty $ in probability as $\eps \to 0$.
\item As a consequence to the last point, as $\eps \to 0$,
\[
\mathcal{N}_\eps \stackrel{\Pp}{\longrightarrow} \int_0^\infty e^{-\lambda_+ s} dW_1 (s).
\]
\end{enumerate}
Let $\mathcal{N}$ be the limit Gaussian random variable of $(\mathcal{N}_\eps)_{\eps>0}$ in the second observation above. Then, we have proved the following lemma:
\begin{lemma} \label{lemma: intro_x1_conv}
\[
\tau_\eps^\delta+ \frac{1} {\lambda_+}  \log \eps \stackrel{\Pp}{\longrightarrow} \frac{1}{\lambda_+} \log \left( \frac{\delta}{| \mathcal{N} |} \right ), \quad \eps \to 0.
\]
\end{lemma}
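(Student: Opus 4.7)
The key observation is that the lemma is essentially a restatement of the almost-sure identity~\eqref{eqn: time_eqn}, once we have the convergence $\mathcal{N}_\eps \to \mathcal{N}$. So the plan is to take~\eqref{eqn: time_eqn}, rearrange it as
\[
\tau_\eps^\delta + \frac{1}{\lambda_+} \log \eps = \frac{1}{\lambda_+} \log \left( \frac{\delta}{|\mathcal{N}_\eps|} \right) \quad \text{a.s. on } \{\tau_\eps^\delta < \infty\},
\]
and then transport the in-probability convergence of $\mathcal{N}_\eps$ to $\mathcal{N}$ (item 2 in the paragraph preceding the lemma) through the continuous map $x \mapsto \frac{1}{\lambda_+} \log(\delta/|x|)$ by the continuous mapping theorem.

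The two inputs that must be in place before invoking continuous mapping are: first, that $\tau_\eps^\delta < \infty$ with probability $1$ (noted in the text and proved in the appendix) so the identity above holds on a set of full measure for each $\eps$; and second, that $\tau_\eps^\delta \to \infty$ in probability (item 1), which is what allows the truncated stochastic integral $\mathcal{N}_\eps = \int_0^{\tau_\eps^\delta} e^{-\lambda_+ s}\,dW_1(s)$ to approach the improper integral $\mathcal{N} = \int_0^\infty e^{-\lambda_+ s}\,dW_1(s)$. Concretely, one writes
\[
\mathcal{N}_\eps - \mathcal{N} = -\int_{\tau_\eps^\delta}^\infty e^{-\lambda_+ s}\,dW_1(s),
\]
and uses the It\^o isometry together with $\tau_\eps^\delta \to \infty$ in probability to control the right-hand side; this is the content of item~2 and is carried out in the appendix.

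The one subtlety requiring care is that $x \mapsto \log(\delta/|x|)$ is not continuous at $x = 0$, so the continuous mapping theorem must be applied on the event $\{\mathcal{N} \neq 0\}$. Since $\mathcal{N}$ is a nondegenerate centered Gaussian (its variance is $\int_0^\infty e^{-2\lambda_+ s}\,ds = 1/(2\lambda_+) > 0$), we have $\Pp\{\mathcal{N} = 0\} = 0$, so the exceptional set is null and the conclusion follows. I do not expect any serious obstacle here: all the analytic work is packaged into the two properties already asserted, and the remainder is a one-line application of the continuous mapping theorem combined with the almost-sure representation~\eqref{eqn: time_eqn}.
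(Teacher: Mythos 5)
Your proposal is correct and follows the same line the paper takes: rearrange the almost-sure identity~\eqref{eqn: time_eqn}, invoke the two stated facts (finiteness and divergence of $\tau_\eps^\delta$, and $\mathcal{N}_\eps\to\mathcal{N}$ in probability), and push the limit through $x\mapsto\lambda_+^{-1}\log(\delta/|x|)$. The only thing you add beyond what the paper leaves implicit is the observation that this map is discontinuous at $0$ and that $\Pp\{\mathcal{N}=0\}=0$ because $\mathcal{N}$ is a nondegenerate Gaussian — a worthwhile detail to record, and fully in the spirit of the paper's (very terse) "we have proved the following lemma."
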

We apply this lemma to the exit distribution of $\x$. Before that, let us denote $\mathcal{N}_-$ a zero mean Gaussian random variable with variance $(2 \lambda_- ) ^{-1}$ independent of $\mathcal{N}$. It is possible to prove that
\[
\int_0^{ \tau_\eps^D } e^{ - \lambda_- (t -s )} dW(s) \to  \mathcal{N}_-, \quad \eps \to 0,
\]
in distribution. This convergence combined with the convergence of $\tau_\eps^\delta$ and $\mathcal{N}_\eps$ used in~\eqref{eqn: x1_equal} and~\eqref{eqn: asymmetry_linear} implies that on the set $\{ \tau_\eps^D = \tau_\eps^\delta \}$,
 \begin{equation} \label{eqn: intro_exit}
\x (\tau_\eps^D ) = \delta ( \sgn \mathcal{N}_\eps, 0 ) + \eps^{(  \lambda_- / \lambda_+ ) \wedge 1 } (0, \xi_\eps).
\end{equation}
Here $(\xi_\eps)_{\eps>0}$ is a family of random variables that satisfies
\begin{equation} \label{eqn: intro_assy}
\xi_\eps \to \left(  \frac{|\mathcal{N}|}{\delta} \right)^{ \lambda_-/\lambda_+ } x_0 \ONE_{\{ \lambda_- \leq \lambda_+ \}} + \mathcal{N}_- \ONE_{ \{ \lambda_- \geq \lambda_+ \}}, \quad \eps \to 0,
\end{equation}
in distribution. Moreover, it can be shown that when $ \lambda_- < \lambda_+$ this convergence holds in probability.

Hence Theorem~\ref{thm: BakhtinSPA} holds with $\gamma (t) = (t,0)$ up to time re-parametrization.

\subsection{Analysis and generalization} \label{subsec: Intro_analysis}

The simplified argument of last section not only recovers, for this simple linear case, Theorem~\ref{thm: BakhtinSPA} but also provides more information. From~\eqref{eqn: asymmetry_linear} we can see that when $\lambda_- \leq \lambda_+ $ the exit distribution has a bias in the stable direction. In this case, the second coordinate of the exit distribution is not centered. Indeed, from~\eqref{eqn: intro_exit} and ~\eqref{eqn: intro_assy} we can identify 3 cases:
\begin{enumerate}
\item When $\lambda_- > \lambda_+$, the second coordinate of $\x$ converges to $\mathcal{N}_-$. Hence, the exit has a centered distribution in the stable direction. We refer to this case as the symmetric case and is illustrated in Figure ~\ref{fig: sym}.
\item When $\lambda_- = \lambda_+$, the second coordinate of $\x$ converges to $\left(  \frac{|\mathcal{N}|}{\delta} \right)^{ \lambda_-/\lambda_+ } x_0 + \mathcal{N}_-$. Hence, the exit has a bias in the stable direction due to the initial condition $x_0$. We refer to this case as the asymmetric case.
\item When $\lambda_- < \lambda_+$, the second coordinate of $\x$ converges to $\left(  \frac{|\mathcal{N}|}{\delta} \right)^{ \lambda_-/\lambda_+ } x_0$. Hence, the exit has a very strong bias in the stable direction due to the initial condition $x_0$. We refer to this case as the strongly asymmetric case and is illustrated in Figure ~\ref{fig: assim}
\end{enumerate}

The consequences of such an asymmetry, when there is one, have been explored in~\cite{nhn}, and it turned out to be an important improvement to Freidlin-Wentzell theory for a particular situation. We will summarize this improvement in Section~\ref{sec: Intro-PlanarHetero}. For now let us comment about how general the argument of last Section~\ref{subsec: sketch} really is.

\begin{figure}
\centering
\includegraphics[width=3in]{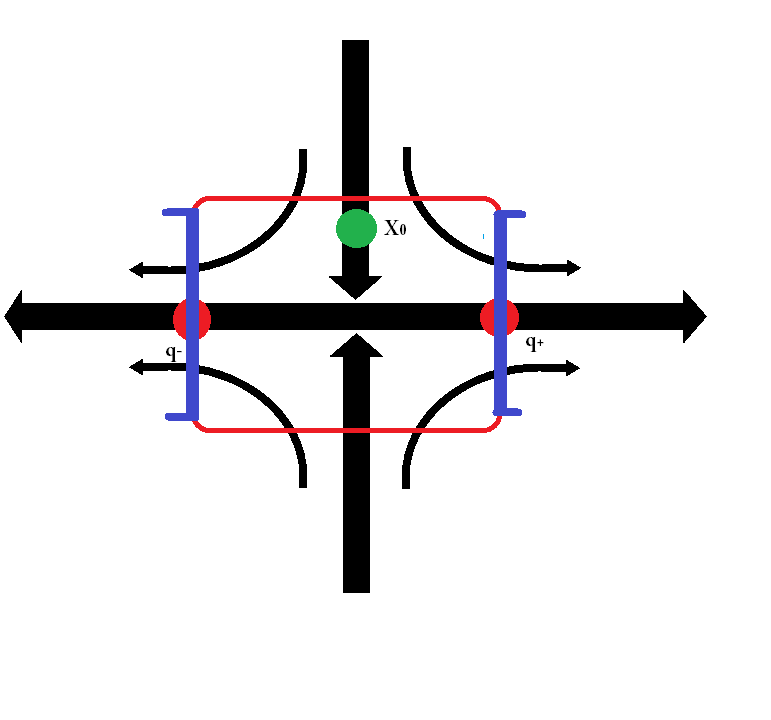}
\caption{Symmetric Case.}
\label{fig: sym}
\end{figure}

The immediate limitation of the argument presented in Section~\ref{subsec: sketch} is that it is mostly based on explicit representations for the solution of $\x$. In~\cite{Bakhtin-SPA},~\cite{nhn} and~\cite{DaySPA} a linear approximation to the original process $\x$ is made. The non-optimal feature of this procedure is that we lose all the identities, and we have just approximations. This is not acceptable if we are interested on computing the properties of the aforementioned asymmetry.

\begin{figure}
\centering
\includegraphics[width=3in]{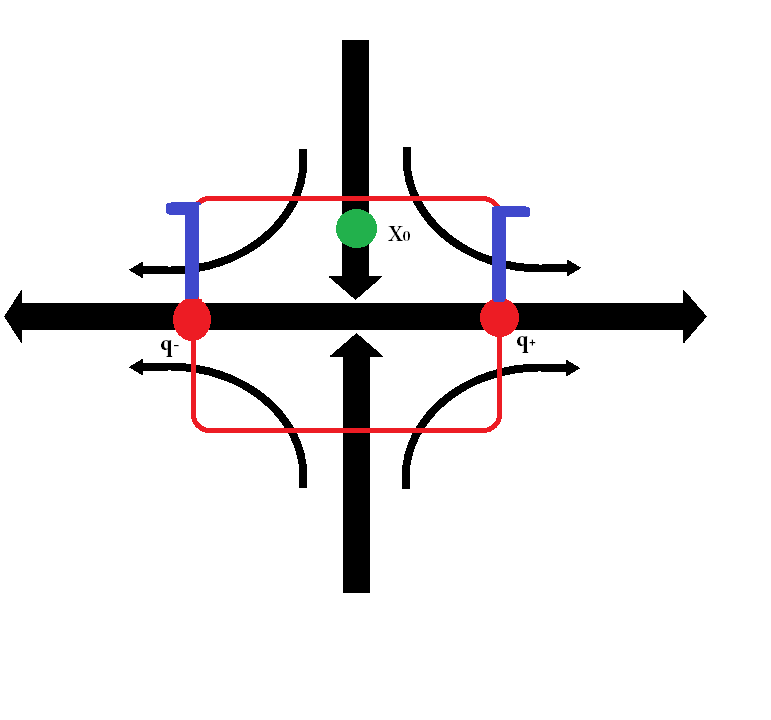}
\caption{ Strongly Asymmetric Case.}
\label{fig: assim}
\end{figure}

A similar but structurally different argument is presented here in Chapter~\ref{ch:  Saddle} ( see~\cite{nhn} also) in which a change of variable is introduced to linearize the system locally. Consider the process $Y_\eps (t) = f  (\x (t) )$ and for the moment assume $f$ to be smooth. Then, the new process $Y_\eps$ solves the SDE:
\[
dY_{\epsilon } = \left( \nabla f^{-1}(Y _{\epsilon }) \right)^{-1} b(f^{-1} (Y_{\epsilon
}))dt+\eps \sgm (Y_{\epsilon })dW +\epsilon ^{2} \Psi (Y_\eps)dt,
\] for some smooth (depending on the function $\sigma$) functions $\Psi: \R^d \to \R^d$ and $\sgm: \R^d \to \R^d \times \R^d$. If we can choose the function $f$ to be such that
\begin{equation} \label{eqn: conjugation_eqn}
\left( \nabla f^{-1}(y) \right)^{-1} b(f^{-1} (y))=Ay,
\end{equation}
then the proof of the theorem will follow almost identically as the case in Section~\ref{subsec: sketch} (only with multiplicative noise instead of additive one) giving the asymmetry for the general case. The existence of such a transformation belongs to the study of conjugation in Ordinary Differential Equations.  The main result of the latter theory (the so called Hartman-Grobman Theorem~\cite[Theorem IX.7.1]{Hartman},~\cite[Section 2.8]{Perko}) guarantees the existence of a homomorphism $f$ that solves ~\eqref{eqn: conjugation_eqn}. This is not enough for our purposes since we need $f$ to be $C^2$ in order to use It\^o's formula.  In~\cite{nhn} is assumed that the transformation $f$ exist and is $C^2$. In order to become aware on how restrictive (if restrictive at all) is this hypothesis, we need to study transformations $f$ that satisfy a relaxed (in a sense explained below) version of~\eqref{eqn: conjugation_eqn}. Such transformations are the main subject of study in normal form Theory~\cite{MR1290117},~\cite{Katok--Hasselblatt},~\cite{Perko},~\cite{Wiggins}. 

We will summarize the main ideas in normal form theory. Following ~\cite{IIlyashenko}, we call a set of complex numbers $\lambda=(\lambda_1, \lambda_2,...,\lambda_d)$ non-resonant if there are no
integral relations between them of the form $\lambda_j=\alpha \cdot \lambda$, where $\alpha =(\alpha _1,...,\alpha_d) \in \mathbb{Z}_{+}^{d}$
is a multi-index with $|\alpha|=\alpha_1 + ...+\alpha_d \geq 2$. Otherwise, we say that it is resonant. Moreover, a resonant $\lambda$ is said to be one-resonant
if all the resonance relations for $\lambda$ follow from a single resonance relation.  A monomial $x^\alpha e_j=x_1^{\alpha _1 }... x_d ^{\alpha _d } e_j$
is called a resonant monomial of order~$R$ if~$\alpha \cdot \lambda = \lambda_j$ and $|\alpha|=R$. Normal form theory asserts (see \cite{MR1290117}, ~\cite{IIlyashenko})
that for any pair of integers $R\geq1$ and $k \geq 1$, there are two neighborhoods of the origin $\Omega_f$ and $\Omega_g$  and a $C^k$-diffeomorphism
$f:\Omega_f\to \Omega_g$ with inverse  $g:\Omega_g\to \Omega_f$
such that
\begin{equation}
\left( \nabla f^{-1} (y) \right)^{-1}b( f^{-1} (y) )=Ay + P(y) + \mathcal{R}(y), \quad y \in \Omega_g \label{eqn: Pre-NormalForm}
\end{equation}
where $P$ is a polynomial containing only resonant monomials of order at most~$R$ and $\mathcal{R}(\zeta)=O(|\zeta|^{R+1})$.
 If $\lambda$ is non-resonant, then $f$ can be chosen so that both $P$ and $\mathcal{R}$ in~\eqref{eqn: Pre-NormalForm} are identically zero.
Moreover, due to~\cite[Theorem~3,Section~2]{IIlyashenko},  if $\lambda$ is one-resonant then $f$ can be chosen so that $\mathcal{R}$ in~\eqref{eqn: Pre-NormalForm} is identically zero.

The result in~\cite{nhn} only includes the non-resonant case. When applied to heteroclinic networks (network of saddles interconnected to each others) this assumption impose a restriction by requiring each critical point to be non-resonant. In particular, typical Hamiltonian systems (that usually present heteroclinic structures) have resonant relations due to the symplectic structure~\cite{MR1290117}.

In this work, we give a complete solution to the $2-$dimensional case with no assumptions about resonance, and with random initial conditions. The theorem (see~\cite{NormalForm} also) informally reads as
\begin{theorem} \label{thm: MioNon}
Let $\x$ be the solution of equation~\eqref{eqn: Ito_additive} with initial condition given by
\[
\x (0) = x_0 + \eps^\alpha \xi_\eps,
\]
where $\alpha \in (0,1)$ and $(\xi_\eps)_{\eps > 0}$ is a family of random vectors that converges weakly to the random vector $\xi_0$. We assume that $x_0 \in \Wc^s$ and $\xi_0$ is such that
\[
\Pp \{ b(x_0) || \xi_0 \}=0,
\]
where $ || $ means collinearity of vectors.

Then, there is a family of random vectors $(\phi _{\epsilon })_{\eps>0}$, a family of random variables $(\psi_{\epsilon })_{\eps>0}$,
and the number
\begin{equation}
\beta=\left\{
\begin{array}{cc}
1, & \alpha \lambda _- \geq \lambda_+ \\
\alpha \frac{\lambda_-}{\lambda _+}, & \alpha \lambda _- <\lambda _+ \\
\end{array}%
\right.
\end{equation}
such that%
\begin{equation*}
X_\eps (\tau _\eps^{D})=q_{\sgn(\psi_{\epsilon })}+\epsilon
^{\beta}\phi _{\epsilon }.
\end{equation*}%
The random vector
\[\Theta_\eps=\left(\psi_\eps, \phi_\eps, \tau _{\epsilon }^{D}+\frac{\alpha }{\lambda _{+}}\ln \epsilon\right)\]
converges in distribution as $\epsilon\to 0$ to the limit $\Theta_0$ that can be identified. The exit distribution exhibits the behavior presented at the beginning of Section~\ref{subsec: Intro_analysis}.
\end{theorem}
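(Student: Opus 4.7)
The plan is to mimic the strategy from Section~\ref{subsec: sketch}, with a suitable $C^2$ normal-form transformation $f$ replacing the explicit linearity. In dimension two the spectrum $\{\lambda_+,-\lambda_-\}$ is at worst one-resonant, so~\cite[Theorem~3, Section~2]{IIlyashenko} provides a local $C^k$ diffeomorphism $f$ with
$$
\bigl(\nabla f^{-1}(y)\bigr)^{-1}b(f^{-1}(y))=Ay+P(y),
$$
where $A=\mathrm{diag}(\lambda_+,-\lambda_-)$ and $P$ is a (possibly zero) polynomial in resonant monomials. Setting $Y_\eps=f(\x)$ and applying It\^o's formula produces the SDE
$$
dY_\eps=\bigl(AY_\eps+P(Y_\eps)+\eps^2\Psi(Y_\eps)\bigr)dt+\eps\sgm(Y_\eps)\,dW.
$$
Because $x_0\in\Wc^s$, after an orthogonal normalization the point $f(x_0)$ lies on the stable coordinate axis $\{Y^1=0\}$, and a Taylor expansion of $f$ gives the initial data $Y_\eps^1(0)=\eps^\alpha\eta_\eps+O(\eps^{2\alpha})$, where $\eta_\eps$ is the first component of $\nabla f(x_0)\xi_\eps$; the non-collinearity of $\xi_0$ with $b(x_0)$ ensures that $\eta_\eps$ is nondegenerate in the limit.

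Next I introduce the auxiliary exit time $\tauw=\inf\{t:|Y_\eps^1(t)|=\delta\}$ for a fixed small $\delta>0$ and apply Duhamel coordinate-wise. For the unstable coordinate, the identity
$$
Y_\eps^1(t)=e^{\lambda_+ t}\bigl(Y_\eps^1(0)+\eps\Nc_\eps(t)\bigr)+\text{(resonant)}+\text{error},\qquad \Nc_\eps(t)=\int_0^t e^{-\lambda_+ s}\sgm_{11}(Y_\eps(s))\,dW_1(s),
$$
together with $\tauw=O(|\ln\eps|)$ and the convergence of $\Nc_\eps(\tauw)$ to a Gaussian limit $\Nc_+$, forces
$$
\delta\sgn\bigl(Y_\eps^1(\tauw)\bigr)=e^{\lambda_+\tauw}\bigl(\eps^\alpha\eta_\eps+\eps\Nc_\eps(\tauw)+o(\eps^\alpha)\bigr);
$$
since $\alpha<1$, the initial-condition term dominates, yielding tightness of $\tauw+\alpha\lambda_+^{-1}\ln\eps$ and identifying $\psi_\eps$ as the corresponding perturbation of $\eta_\eps$. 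For the stable coordinate, Duhamel together with $e^{-\lambda_-\tauw}\sim|\eta_\eps|^{\lambda_-/\lambda_+}\eps^{\alpha\lambda_-/\lambda_+}$ gives
$$
Y_\eps^2(\tauw)=e^{-\lambda_-\tauw}Y_\eps^2(0)+\eps\int_0^{\tauw}e^{-\lambda_-(\tauw-s)}\sgm_{22}(Y_\eps(s))\,dW_2(s)+\text{(resonant)},
$$
and comparing the two magnitudes $\eps^{\alpha\lambda_-/\lambda_+}$ and $\eps$ produces the announced dichotomy $\beta=1$ when $\alpha\lambda_-\ge\lambda_+$ and $\beta=\alpha\lambda_-/\lambda_+$ otherwise, exactly as in the trichotomy of Section~\ref{subsec: Intro_analysis}. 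A short deterministic continuation argument, using that $Y_\eps(\tauw)$ lies close to the unstable axis, then shows $\Pp\{\tau_\eps^D=\tauw+O(1)\}\to 1$ and that $\x(\tau_\eps^D)$ sits in an $\eps^\beta$-neighborhood of $q_{\sgn\psi_\eps}$; pushing the joint convergence back through $f^{-1}$ gives the stated limit of $\Theta_\eps$.

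The main obstacle I anticipate is controlling the resonant polynomial $P$ and the multiplicative-noise corrections $\sgm(Y_\eps)-\sgm(0)$ uniformly up to $\tauw$. Acting on the unstable coordinate, $P$ contributes terms of the form $Y_\eps^1\cdot(Y_\eps^1)^{n_1}(Y_\eps^2)^{n_2}$ with $n_1\lambda_+=n_2\lambda_-$, which could a priori compete with the leading linear growth on the log-scale time horizon $|\ln\eps|/\lambda_+$. The plan is to absorb them by writing a variation-of-constants formula along the frozen trajectory and running a stopping-time bootstrap that compares the moments of $(Y_\eps^1(t),Y_\eps^2(t))$ against the deterministic scales $\eps^\alpha e^{\lambda_+ t}$ and $\eps^{\alpha\lambda_-/\lambda_+}\vee\eps$ throughout the run, showing that $P$ and $\Psi$ produce only $o(1)$ corrections to the Duhamel identities above. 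Carrying this bootstrap past $\tauw$ without losing the exact scalings is the technical heart of the proof.
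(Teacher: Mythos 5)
Your plan correctly identifies the normal-form transformation and the near-identity structure of the SDE in the transformed coordinates, and the target Duhamel identities you write down are the right objects; however, there are three concrete places where the proposal diverges from the paper's argument in ways that leave real gaps.

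\textbf{Missing transport step along $\Wc^s$.} You apply $f$ directly to $x_0$, writing $Y_\eps(0)=f(\x(0))$ and expanding $f$ around $x_0$. But the normal-form diffeomorphism $f$ of~\cite[Theorem~3, Section~2]{IIlyashenko} exists only on a small neighborhood $\Omega_f$ of the saddle, whereas $x_0\in\Wc^s$ may lie far outside $\Omega_f$. The paper's proof (Theorem~\ref{thm: far}) first propagates the scaled initial condition $\x(0)=x_0+\eps^\alpha\xi_\eps$ along $\Wc^s$ for a finite deterministic time $T$ (large enough that $S^T x_0\in\Wc^s\cap\Omega_f$) via a first-order expansion in $\eps$, producing a new initial condition $S^T x_0+\eps^\alpha\bar\xi_\eps$ with $\bar\xi_0=\Phi_{x_0}(T)\xi_0+\mathbf{1}_{\{\alpha=1\}}N$. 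This step both justifies using $f$ and rules out the degenerate collinearity $\Pp\{\bar\xi_0\parallel b(S^T x_0)\}>0$ at the entry point. Without it your Taylor expansion of $f$ at $x_0$ is simply not available.

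\textbf{Single stopping time versus the intermediate scale $\eps^{\alpha p}$.} You run a single Duhamel/bootstrap argument up to $\tauw=\inf\{t:|Y_\eps^1(t)|=\delta\}$, and your proposed comparison scales are $\eps^\alpha e^{\lambda_+ t}$ for $Y_\eps^1$ and the \emph{constant} scale $\eps^{\alpha\lambda_-/\lambda_+}\vee\eps$ for $Y_\eps^2$. The latter is wrong at early times, when $Y_\eps^2$ is still of order $|y_2|$, not $\eps^{\alpha\lambda_-/\lambda_+}$; any Gronwall-type bootstrap against that constant scale would fail, because the resonant term $\hat H_1=O(Y_1^2|Y_2|)$ is fed by the $O(1)$ initial value of $Y_\eps^2$ exactly when the estimate must be tight. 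The paper resolves this by introducing the intermediate stopping time $\hat\tau_\eps=\inf\{t:|Y_{\eps,1}(t)|=\eps^{\alpha p}\}$ with $p$ satisfying~\eqref{eqn: p_prop}, so that the near-origin analysis is split into two distinct regimes: before $\hat\tau_\eps$, $Y_\eps^1$ is still small and $Y_\eps^2$ stays near the deterministic stable trajectory (Lemma~\ref{thm: estimate_non_linear}, Lemma~\ref{prop: eta_convergence}); after $\hat\tau_\eps$, the process is restarted with $Y_\eps^2$ already at scale $\eps^{\alpha(1-p)\lambda_-/\lambda_+}$ and the heavy Gronwall estimate~\eqref{eqn:R_eps} on the resonant remainder $R_\eps$ is carried out at that scale (Lemma~\ref{Thm:after_tauh}). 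That two-regime decomposition is the device that makes the bootstrap close; your single-stage variant, as described, does not.

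\textbf{Exit from $D$ is not a short continuation.} You describe the step from $\tauw$ to $\tau_\eps^D$ as a ``short deterministic continuation argument.'' In the paper this is the Levinson case (Theorem~\ref{Thm:Transversal_exit}), whose proof occupies Chapter~\ref{ch: levinson}: it requires the finite-time expansion Lemma~\ref{lemma: Bakhtin-Levinson}, the rectification estimate near the exit point (Lemma~\ref{lemma: rectification}), and the geometric projection $\pi_b,\pi_M$ to identify the joint limit of $(\tau_\eps^D-T,\x(\tau_\eps^D)-q)$. In particular, the correct scaling exponent at the exit point (and hence the resulting $\phi_\eps$) is recovered only after composing the Levinson result with the near-saddle one; you cannot simply assert that $\x(\tau_\eps^D)$ sits in an $\eps^\beta$-neighborhood of $q_{\sgn\psi_\eps}$ without this step.

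In short, your outline has the right overall shape, but it elides the transport step along $\Wc^s$, replaces the two-stage intermediate-scale analysis by a single-stage bootstrap whose stated comparison scales would not close, and understates the Levinson step; each of these is a substantive piece of the paper's argument rather than a technical detail.
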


The proof of this theorem is divided in two steps. One, is the study of the diffusion $\x$ when is close to the origin. The second, is the study of the diffusion $\x$ far from the origin. Here, the meaning of close and far relies on whether or not the system can be conjugated to its Normal Form.

By the study of $\x$ close to the critical point, we mean $\x \in B$, where $B$ is a neighborhood of the origin in which normal form conjugation is valid. In this case, the analysis has two parts. The first part is when the diffusion starts along the stable direction. To study this part, we study the diffusion until the the projection of $\x$ along the stable direction dominates over the noise level. We achieve this by posing the problem as an exit problem from the strip $[-\eps^{\bar{\alpha} }, \eps^{ \bar{\alpha} }] \times B$, for some $\bar{\alpha} \in (0, \alpha)$. For the second part we study the exit problem from $B$ with the initial condition being the exit distribution obtained in the first part. In order for this program to be successful, we require very precise path-wise expressions for the diffusion. By using the general form of the resonances we are able to preserve the essence of the argument in Section~\ref{subsec: sketch}.

The analysis of the system far from the origin is used twice. The first time is when the system starts along the stable manifold $\Wc^s$.  The second time is when $\x$ is about to exit the domain and it probably has some bias. Our study is based on a series expansion in powers of $\eps>0$. This expansion is inspired by~\cite{Blagoveschenskii:MR0139204}, but it requires additional geometric arguments. These results are of independent interest, so we start a new section to describe them.

\section{Levinson Case.} \label{sec: unstable}

This section is devoted to the Levinson case. We will first review the history of the problem and then outline our contribution.

Given an initial condition $x_0 \in D$, Levinson condition is a hypothesis associated to the flow $S$ and the domain $D$. This case was originally formulated in~\cite{Levinson:MR0033433} with a PDE flavor, we state the condition as presented by Freidlin~\cite[Chapter 2]{Freidlin-lectures:MR1399081}.

\begin{condition}[Levinsion] \label{cond: levinson}
The flow $S$ satisfies the Levinson condition at $x_0 \in D$ with respect to $D$ if the following holds:
\begin{enumerate}
 \item The exit time
 \[
T(x_0)= \inf \{ t>0: S^t x_0 \in \partial D \} ,
\]
is finite.
 \item The flow $S^tx_0$ leaves the domain immediately after $T(x)$. That is, there is a $\delta>0$ such that $S^{T(x_0) + s} \not \in D \cup \partial D$ for all $s \in (0,\delta)$.
\end{enumerate}
We say that the domain $D\subset \R^d$ satisfies Levinson condition if properties 1 and 2 are satisfied at each $x\in D$.
\end{condition}
As mentioned in the introduction, Levinson~\cite{Levinson:MR0033433},~\cite{LevinsonLectures} was originally interested in studying the behavior of the solution of the PDE:
\begin{align}
\nabla u_\eps (x)  \cdot b (x) + \frac{ \eps^2 }{2} \Delta u_\eps (x)&=0, \quad x \in D , \notag \\
 u_\eps (x)&=g (x), \quad x \in \partial D. \label{eqn: intro-Lev-PDE}
\end{align}
The claim is that the solution to this PDE has to converge to the solution of the unperturbed PDE:
\begin{theorem}[~\cite{Levinson:MR0033433}]\label{thm: intro-lev-levinson}
Under the Levinson condition~\ref{cond: levinson}, there is a unique (maybe generalized) solution to both, the perturbed problem~\eqref{eqn: intro-Lev-PDE} and the unperturbed problem
\begin{align*}\
\nabla u_0 (x)  \cdot b (x)&=0, \quad x \in D ,\\
 u_0 (x)&=g (x), \quad x \in \partial D.
\end{align*}
Let $u_0: \R^d \to \R^d$ be a solution to the unperturbed problem. Then, if $g:\partial D \to \R^d$ is smooth,
\[
u_\eps ( x )= u_0 (x) + \eps v_\eps (x),
\]
where $v_\eps(x)$ is a locally bounded function for each $\eps>0$.
\end{theorem}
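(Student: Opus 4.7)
The plan is to build both $u_\eps$ and $u_0$ from their natural probabilistic/characteristic representations and then run a pathwise expansion of $\x$ in $\eps$ to extract the $O(\eps)$ correction. For the perturbed problem, uniform ellipticity of $\tfrac{\eps^2}{2}a$ together with smoothness of $b$ gives a unique classical solution by standard Dirichlet theory, and Feynman--Kac yields $u_\eps(x) = \E_x g(\x(\tau_\eps^D))$. For the unperturbed equation, the PDE is first-order transport, so any continuous solution must be constant along orbits of $b$; combined with Condition~\ref{cond: levinson} this forces the unique continuous solution to be $u_0(x) = g(S^{T(x)}x)$, which is well-defined on $D$ because $T(x) < \infty$.

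To show $u_\eps \to u_0$ pointwise I would invoke the basic estimate~\eqref{eqn: estimate_basic} on the horizon $[0, T(x)+1]$ to get $\sup_{t \leq T(x)+1}|\x(t) - S^t x| \to 0$ in probability, and combine it with the strict-exit clause of the Levinson condition, which forces the flow to cross $\partial D$ transversally at $S^{T(x)}x$. This yields $\tau_\eps^D \to T(x)$ and $\x(\tau_\eps^D) \to S^{T(x)}x$ in probability, and then continuity of $g$ together with the a priori bound $\|u_\eps\|_\infty \leq \|g\|_\infty$ delivers the pointwise limit.

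To sharpen the rate to $O(\eps)$ I would write a first-order Blagoveschenskii-type expansion
\[
\x(t) = S^t x + \eps Z(t) + \eps^2 R_\eps(t),
\]
where $Z$ is the Gaussian process solving the variational equation $dZ = Db(S^t x)Z\,dt + \sigma(S^t x)\,dW$ with $Z(0) = 0$, and $R_\eps$ is bounded in $L^p$ uniformly on $[0, T(x)+1]$ for small $\eps$. Transversality of the flow at $S^{T(x)}x$ lets one apply an implicit-function argument to the boundary-hitting equation and conclude $\tau_\eps^D = T(x) + \eps \xi_\eps$ with $\xi_\eps$ locally $L^p$-bounded. Substituting back and Taylor-expanding $g$ to first order gives $g(\x(\tau_\eps^D)) = g(S^{T(x)}x) + \eps \Lambda_\eps + o(\eps)$ for a family $\Lambda_\eps$ that is locally $L^1$-bounded uniformly in $\eps$, so that $v_\eps(x) := \eps^{-1}(u_\eps(x) - u_0(x)) = \E_x \Lambda_\eps + o(1)$ is locally bounded.

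The main obstacle is uniform control of $\tau_\eps^D$ at the scale $\eps$: one must exclude the diffusion re-entering $D$ during an $O(\eps)$-window around $T(x)$, since a second crossing would destroy the linear expansion of the exit time. The fix is to combine transversality at $S^{T(x)}x$ with a localized estimate, exponentially small in $\eps^{-2}$, on oscillations of $\x - S^\cdot x$ near $T(x)$, ruling out re-entries with probability $1 - O(\eps^N)$ for every $N$. A secondary difficulty is that the maps $x \mapsto T(x)$ and $x \mapsto S^{T(x)}x$ need not be smooth uniformly up to $\partial D$, which is precisely why the conclusion asserts only local boundedness of $v_\eps$ rather than a global bound.
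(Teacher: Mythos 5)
Your proposal is correct, but note that the paper does not itself prove Theorem~\ref{thm: intro-lev-levinson}: the statement is attributed to Levinson~\cite{Levinson:MR0033433}, and the surrounding text only \emph{sketches} the probabilistic route you take --- the stochastic representation $u_\eps(x)=\E_x g(\x(\tau_\eps^D))$, the a priori estimate~\eqref{eqn: estimate_basic}, and a Blagoveschenskii-type expansion of $\x$ to extract the $O(\eps)$ rate. What you write out matches closely what the paper carries out in detail for the refined version Theorem~\ref{thm: Main-Levinson} in Chapter~\ref{ch: levinson}: your first-order expansion $\x(t)=S^tx+\eps Z(t)+\eps^2R_\eps(t)$ is Lemma~\ref{lemma: Bakhtin-Levinson}, the transversality of $b(z)$ to $\partial D$ at $z=S^{T(x)}x$ is exactly Condition~\ref{cond: levinson}, and the extraction of the exit-time correction is Lemma~\ref{prop: tau_conv}. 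The one genuine difference in implementation is that you appeal to an implicit-function argument at the boundary-hitting equation, while the paper instead rectifies the flow near $z$ (Lemma~\ref{lemma: rectification}, replacing $S^{\pm t}$ by constant-velocity motion up to a quadratic error) and parametrizes the boundary locally as the graph of a $C^2$ function $F$ over $T_zM$, then solves the hitting equation $\pi_b(X_\eps(t)-z)=F(\pi_M(X_\eps(t)-z))$ directly; the two are essentially equivalent, but the paper's version produces the projections $\pi_b$ and $\pi_M$ in the final answer with no extra work. Your ``re-entry'' concern is handled in the paper by Lemma~\ref{lm:omega_eps_to_1}, which confines $\tau_\eps$ to an $\eps^\gamma$-window around $T$ with $\gamma\in(\alpha/2,\alpha)$; the mechanism is just the separation of scales $\eps^\alpha\ll\eps^\gamma\ll 1$, and one does not need the exponentially small re-entry bound you propose --- tightness of the expansion over that window already suffices.
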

The proof of this theorem is based on a series expansion of $u_\eps$ along the characteristics of~\eqref{eqn: intro-Lev-PDE}. In order to prove the result with this idea, several technical challenges had to be overcome in~\cite{Levinson:MR0033433}. Contrastingly, as pointed out in~\cite{Freidlin-lectures:MR1399081}, the probabilistic approach here is simpler and cleaner. Indeed, once we know inequality~\eqref{eqn: estimate_basic}, the convergence in Theorem~\ref{thm: intro-lev-levinson} is an immediate consequence of the stochastic representation $u_\eps(x)=\mathbf{E}_{x} g ( \x ( \tau_\eps^D) ) $, where $\x$ solves~\eqref{eqn: Ito_additive} with $\sigma=\rm{Id}$. To get the exact behavior a series expansion of the processes $\x$ has to be made as in~\cite{Blagoveschenskii:MR0139204} and~\cite[Chapter 2]{Freidlin--Wentzell-book}.

In this thesis we develop a path-wise approach to this problem. We give a geometrical characterization of the exit point $\x ( \tau_\eps)$, and joint properties of $(\x ( \tau_\eps),\tau_\eps ) $ are obtained. We start by obtaining a generalization of the series expansion given in~\cite{Blagoveschenskii:MR0139204}. This result serves as backbone of our proof and has independent interest on itself.

In order to present our result, we need further notation. Write $b$ as
\begin{equation*}
b(x)=b(y)+\nabla b(y)(x-y)+Q_1(y,x-y),\quad x,y\in\R^d, 
\end{equation*}
where
\begin{equation*}
|Q_1(u,v)|\leq K  |v|^2 
\end{equation*}
 for some constant $K>0$ and any $u,v \in \R^d$.

Denote by $\Phi_x(t)$ the linearization of $S$ along the
orbit of $x$:
\begin{equation}
\frac{d}{dt}\Phi_x(t)=A(t)\Phi_x(t)\text{, \ }\Phi_x(0)=I, \label{eqn: intro-lev-Phi_def}
\end{equation}
where $A(t)=\nabla b(S^tx)$ and $I$ is the identity matrix. We can state our first lemma:
\begin{lemma} \label{lemma: important}
Consider the initial value problem
\begin{align}
\label{eqn: intro-Mod1}
dX_\eps(t)& = \left( b( X_\eps(t) )+\eps^{\alpha_1} \Psi_\eps(X_\eps(t)) \right)dt + \eps \sigma(X_\eps(t))dW\\
\label{eqn: intro-Mod2}
X_{\epsilon }(0) & =x_{0}+\epsilon ^{\alpha_2 }\xi _{\epsilon },\quad\eps>0.
\end{align}
where, for each $\eps$, $\Psi_\eps$ is a deterministic Lipschitz vector field on $\R^d$  converging uniformly to
a limiting Lipschitz vector field $\Psi_0$. Both $\alpha_1$ and $\alpha_2$ are positive scaling exponents. The family of random variables $(\xi_\eps)_{\eps>0}$ converges, as before, to $\xi_0$ in distribution as $\eps \to 0$.

Let
\begin{align*}
\phi_\eps (t) &=\eps^{\alpha_2-\alpha}\Phi_{x_0}(t) \xi_\eps+  \eps^{\alpha_1-\alpha}\Phi_{x_0}(t) \int_0^t \Phi_{x_0} (s)^{-1} \Psi_0( S^s x_0) ds  \\
&\quad + \eps^{1-\alpha} \Phi_{x_0}(t) \int_0^t \Phi_{x_0} (s)^{-1}\sigma( S^s x_0) dW(s),
\end{align*} and
\begin{align*}
\phi_0 (t) &= \mathbf{1}_{\{\alpha_2= \alpha\}}  \Phi_{x_0}(t) \xi_0 + \mathbf{1}_{\{\alpha_1= \alpha\}}\Phi_{x_0}(t) \int_0^t \Phi_{x_0} (s)^{-1} \Psi_0( S^s x) ds \\
& \quad +\mathbf{1}_{\{1=\alpha\}} \Phi_{x_0}(t)\int_{0}^{t}\Phi_{x_0}^{-1}(s)\sigma(S^s x_0)dW(s), \quad t>0.
\end{align*}
Then,
\[
X_\eps(t)=S^tx_0 + \eps^\alpha \varphi_\eps (t)
\]
holds almost surely for every $t>0$,
where $\varphi_\eps(t)=\phi_\eps(t)+ r_\eps(t)$, and $r_\eps$ converges to 0 uniformly over compact time intervals in probability. Moreover, for any $T>0$, $\phi_\eps \to \phi_0$,
in distribution in $C[0,T]$ equipped with uniform norm.
\end{lemma}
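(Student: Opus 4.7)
The plan is to set $Y_\eps(t)=X_\eps(t)-S^tx_0$ and rewrite the SDE satisfied by $Y_\eps$ via Taylor-expanding $b$ around the deterministic orbit:
\[
b(X_\eps)-b(S^tx_0)=A(t)Y_\eps+Q_1(S^tx_0,Y_\eps),
\]
with $A(t)=\nabla b(S^tx_0)$. This yields a linear SDE for $Y_\eps$ with forcing made of $Q_1(S^tx_0,Y_\eps)$, $\eps^{\alpha_1}\Psi_\eps(X_\eps)$, and $\eps\sigma(X_\eps)\dot W$, together with the random initial datum $\eps^{\alpha_2}\xi_\eps$. Variation of parameters using the fundamental matrix $\Phi_{x_0}$ of \eqref{eqn: intro-lev-Phi_def} then gives an explicit integral representation of $Y_\eps$. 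Dividing by $\eps^\alpha$ with $\alpha=\min(\alpha_1,\alpha_2,1)$ produces $\varphi_\eps$, which naturally splits as $\phi_\eps+r_\eps$ where $\phi_\eps$ is the ``frozen-orbit'' approximation in the statement (obtained by replacing $X_\eps$ by $S^sx_0$ and $\Psi_\eps$ by $\Psi_0$, and dropping the $Q_1$ term), and $r_\eps$ collects the three error pieces.

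Next I would establish a sharp a priori size for $Y_\eps$. First, a direct Gronwall argument combined with estimate \eqref{eqn: estimate_basic} (with a negligible modification to absorb the bounded-Lipschitz drift $\eps^{\alpha_1}\Psi_\eps$) shows $\sup_{t\leq T}|Y_\eps(t)|\to 0$ in probability. Bootstrapping: on the event $\{\sup|Y_\eps|\leq\eps^{\alpha/2}\}$, the quadratic term $Q_1(S^sx_0,Y_\eps)$ contributes $O(|Y_\eps|^2)$ to the right-hand side; applying Gronwall's inequality to $t\mapsto\sup_{s\leq t}|Y_\eps(s)|/\eps^\alpha$ together with Doob/It\^o-isometry moment bounds on the stochastic integral yields the sharp rate $\sup_{t\leq T}|Y_\eps(t)|=O_\Pp(\eps^\alpha)$. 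This bootstrap from $o_\Pp(1)$ to the sharp rate $O_\Pp(\eps^\alpha)$ is the main obstacle.

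Once the sharp rate is available, the three components of $r_\eps$ are controlled in turn. The $Q_1$-contribution is bounded by $C\eps^{-\alpha}\int_0^T|Y_\eps|^2\,ds=O_\Pp(\eps^\alpha)$. The $\Psi$-contribution splits as
\[
\eps^{\alpha_1-\alpha}\Phi_{x_0}(t)\!\!\int_0^t\!\!\Phi_{x_0}^{-1}(s)\bigl[\Psi_\eps(X_\eps)-\Psi_0(X_\eps)\bigr]ds+\eps^{\alpha_1-\alpha}\Phi_{x_0}(t)\!\!\int_0^t\!\!\Phi_{x_0}^{-1}(s)\bigl[\Psi_0(X_\eps)-\Psi_0(S^sx_0)\bigr]ds,
\]
the first tending to $0$ uniformly by $\Psi_\eps\to\Psi_0$ locally uniformly and $\eps^{\alpha_1-\alpha}\leq 1$, the second of order $\eps^{\alpha_1-\alpha}\sup|Y_\eps|=O_\Pp(\eps^{\alpha_1})\to 0$ by the Lipschitz property of $\Psi_0$. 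For the stochastic piece, Doob's inequality and It\^o's isometry give
\[
\E\sup_{t\leq T}\Bigl|\eps^{1-\alpha}\Phi_{x_0}(t)\int_0^t\Phi_{x_0}^{-1}(s)[\sigma(X_\eps)-\sigma(S^sx_0)]\,dW(s)\Bigr|^2\leq C\eps^{2(1-\alpha)}\E\!\int_0^T|Y_\eps|^2ds=O(\eps^{2-2\alpha+2\alpha})=O(\eps^2),
\]
after localization by a stopping time if needed to convert $O_\Pp(\eps^\alpha)$ into an $L^2$ estimate. Hence $r_\eps\to 0$ uniformly on $[0,T]$ in probability.

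Finally, for $\phi_\eps\Rightarrow\phi_0$ in $C([0,T];\R^d)$: the initial-condition term $\eps^{\alpha_2-\alpha}\Phi_{x_0}(t)\xi_\eps$ converges in distribution to $\ONE_{\{\alpha_2=\alpha\}}\Phi_{x_0}(t)\xi_0$ by the continuous mapping theorem applied to $\xi_\eps\Rightarrow\xi_0$; the drift-correction term is deterministic and converges pointwise (uniformly in $t\in[0,T]$) to $\ONE_{\{\alpha_1=\alpha\}}\Phi_{x_0}(t)\int_0^t\Phi_{x_0}^{-1}(s)\Psi_0(S^sx_0)\,ds$; and the stochastic integral $\Phi_{x_0}(t)\int_0^t\Phi_{x_0}^{-1}(s)\sigma(S^sx_0)\,dW(s)$ is a fixed continuous Gaussian process, whose prefactor $\eps^{1-\alpha}$ is either identically $1$ (when $\alpha=1$) or tends to zero, giving the factor $\ONE_{\{1=\alpha\}}$. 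Joint convergence in $C([0,T])$ follows from Slutsky, since the $\xi_\eps$-term and the stochastic term are independent and the middle term is deterministic. Combining with $r_\eps\to 0$ gives the claim.
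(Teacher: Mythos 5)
Your overall decomposition---subtract $S^tx_0$, Taylor-expand $b$ (and $\Psi_\eps$, $\sigma$) along the deterministic orbit, apply variation of constants with $\Phi_{x_0}$, and split into the frozen-orbit piece $\phi_\eps$ plus a remainder $r_\eps$---is exactly the structure of the paper's proof. The Slutsky argument for $\phi_\eps\Rightarrow\phi_0$ and your treatment of the $\Psi$ and stochastic error terms, once a sharp rate on $Y_\eps$ is available, are also sound.

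The gap lies precisely in the step you flag: passing from $\sup_{t\le T}|Y_\eps(t)|=o_\Pp(1)$ to working ``on the event $\{\sup|Y_\eps|\le\eps^{\alpha/2}\}$.'' A qualitative $o_\Pp(1)$ bound does not imply that this quantitative event has probability tending to one, and the estimate \eqref{eqn: estimate_basic} as stated holds for a \emph{fixed} threshold $\delta$ with $\delta$-dependent constants; letting the threshold shrink like $\eps^{\alpha/2}$ while also absorbing the extra drift $\eps^{\alpha_1}\Psi_\eps$ and the random initial displacement $\eps^{\alpha_2}\xi_\eps$ is additional work that you have not supplied. As written, you cannot legitimately restrict to that event, so the bootstrap never gets off the ground.

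The paper dispenses with any preliminary $o_\Pp(1)$ estimate. It introduces the stopping time $l_\eps(\delta)=\inf\{t>0:|\Delta_\eps^t|\ge\eps^{\alpha\delta}\}$ for a fixed $\delta\in(1/2,1)$, derives the remainder bounds up to $T\wedge l_\eps(\delta)$ (which is unconditional, since before the stopping time the deviation is, by construction, at most $\eps^{\alpha\delta}$), and then closes the argument self-referentially: on $\{l_\eps(\delta)<T\}$ one has
\[
1=\eps^{-\alpha\delta}\sup_{t\le T\wedge l_\eps(\delta)}|\Delta_\eps^t|\le\eps^{\alpha(1-\delta)}\sup_{t\le T}|\Theta_\eps(t)|+\eps^{-\alpha\delta}\sup_{t\le T\wedge l_\eps(\delta)}|\Theta'_\eps(t)|,
\]
and the right-hand side tends to $0$ in probability by tightness of $\Theta_\eps$ together with the stopped remainder estimates, which is a contradiction; hence $\Pp\{l_\eps(\delta)<T\}\to 0$ and one may replace $T\wedge l_\eps(\delta)$ by $T$. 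Taking $\delta>1/2$ makes the quadratic contribution $O(\eps^{2\alpha\delta})=o(\eps^\alpha)$ directly, so one bootstrap stage suffices; your threshold $\eps^{\alpha/2}$ corresponds to $\delta=1/2$ and would require a second iteration to upgrade $\eps^{\alpha/2}$ to $\eps^\alpha$. Replace your event-restriction step with this stopping-time argument and the rest of your proposal goes through.
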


The reason to consider~\eqref{eqn: intro-Mod1} instead of the more standard~\eqref{eqn: Ito_additive} will become evident in Section~\ref{sec: 1d-time-reversal}. For now, let us observe that, although inequality~\ref{eqn: estimate_basic} may not hold, it is still true that
\[
\sup_{ t \leq T} | \x (t) - S^t x_0 | \stackrel{ \Pp } { \longrightarrow} 0, \quad \eps \to 0.
\]
Hence, under the Levinson Condition~\ref{cond: levinson} the exit of $\x$ from $D$ will occur on a finite (still random) time and very close to the deterministic exit $z=S^{T(x_0)}x_0$. A better understanding of this convergence is the main result in this section.

The main theorem provides a scaling limit to the distribution of $(\tau_\eps, \x ( \tau_\eps) )$. In order to understand the theorem, we regard the boundary of $D$ as an hypersurface $M$ in $\R^d$. In general, for an hypersurface $M$ in $\R^d$, denote the tangent space of $M$ at the point $z \in M$ as $T_z M$. Further, we denote the (algebraic) projection onto $\Span(b(z))$ as $\pi_b:\R^d \to \R $ , and the (geometric) projection onto $T_zM$ along $\Span(b(z))$ as $\pi_M :\R^d \to T_zM$.  In other words, for any vector $v\in \R^d$, $\pi_b v\in \R$ and $\pi_M v\in T_zM$ are the unique number and vector such that
\begin{equation} \label{eqn: intro-lev}
 v=\pi_b v \cdot b(z)+ \pi_M v.
\end{equation}
With this notation in mind, we are ready to state the theorem:

\begin{theorem} \label{thm: Intro-Main-Levinso} Let $M$ be an hypersurface in $\R^d$. Let $\x$ be the solution of~\eqref{eqn: intro-Mod1} with initial condition~\eqref{eqn: intro-Mod2}. Consider $\tau_\eps$ and $T(x_0)$ the exit time from $M$ of $\x$ and $S$ respectively. If $\alpha=\alpha_1 \wedge \alpha_2\wedge 1$ and $z=S^{T(x_0)} x_0$, then
\begin{equation}
 \eps^{-\alpha}(\tau_\eps-T, X_\eps(\tau_\eps)-z) \to (-\pi_b \phi_0(T), \pi_M \phi_0(T)),
\end{equation}
in distribution. Here $\pi_b$ and $\pi_M$ are as in~\eqref{eqn: intro-lev}.
\end{theorem}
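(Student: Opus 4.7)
My starting point is Lemma~\ref{lemma: important} applied on a compact interval $[0,T+\delta']$, where $\delta'>0$ is chosen small enough that the deterministic orbit $S^t x_0$ has moved into the complement of $\overline D$ and stays at positive distance from $M$ for $t\in(T,T+\delta']$; such $\delta'$ exists by the second clause of the Levinson condition~\ref{cond: levinson}. On this interval the expansion
\[
\x(t)=S^t x_0+\eps^\alpha\varphi_\eps(t),\qquad \varphi_\eps=\phi_\eps+r_\eps,
\]
holds almost surely, with $r_\eps\to 0$ uniformly on $[0,T+\delta']$ in probability and $\phi_\eps\to\phi_0$ in distribution in $C[0,T+\delta']$. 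By a Skorokhod coupling it is enough to prove the convergence on an enlarged probability space where $\varphi_\eps\to\phi_0$ uniformly on $[0,T+\delta']$ almost surely; the conclusion of the theorem is a statement about distributions, so this reduction is harmless.

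The next step is to fix a local $C^2$ defining function $F$ for $M$ on a neighborhood of $z=S^{T(x_0)}x_0$, with $\nabla F(z)\neq 0$. The Levinson condition rules out tangential crossing: if $\nabla F(z)\cdot b(z)=0$ were allowed, the deterministic orbit could not instantaneously separate from $\partial D$ after time $T$. Hence $\nabla F(z)\cdot b(z)\neq 0$, and the projections in~\eqref{eqn: intro-lev} can be identified explicitly as $\pi_b v=(\nabla F(z)\cdot v)/(\nabla F(z)\cdot b(z))$ and $\pi_M v=v-(\pi_b v)\,b(z)$. This identification is what will let me pass from a scalar equation (on the zero set of $F$) to the claimed decomposition.

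The heart of the argument is a fixed-point analysis in the natural scaling $\tau_\eps=T+\eps^\alpha s_\eps$. Substituting and Taylor expanding,
\[
0=F(\x(\tau_\eps))=\eps^\alpha\,\nabla F(z)\cdot\bigl(s_\eps b(z)+\varphi_\eps(\tau_\eps)\bigr)+O\!\left(\eps^{2\alpha}s_\eps^2+\eps^{2\alpha}|\varphi_\eps(\tau_\eps)|^2\right),
\]
where I have used $S^{T+\eps^\alpha s_\eps}x_0=z+\eps^\alpha s_\eps b(z)+O(\eps^{2\alpha}s_\eps^2)$ and $F(z)=0$. Dividing by $\eps^\alpha$ and using transversality and the uniform convergence $\varphi_\eps(\tau_\eps)\to\phi_0(T)$ (once $s_\eps$ is shown to stay bounded), the implicit function theorem gives a unique root $s_\eps$ for each small $\eps$, and $s_\eps\to-\pi_b\phi_0(T)$. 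Plugging back, $\x(\tau_\eps)-z=\eps^\alpha(s_\eps b(z)+\phi_0(T))+o(\eps^\alpha)$, which after using the definition of $\pi_b,\pi_M$ gives $\eps^{-\alpha}(\x(\tau_\eps)-z)\to\pi_M\phi_0(T)$. The joint convergence is immediate from the continuous mapping theorem on the coupled space.

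The principal obstacle is to justify $\tau_\eps=T+O_\Pp(\eps^\alpha)$ so that the local expansion around $z$ actually applies. No premature exit can occur on $[0,T-\eta]$ for any $\eta>0$, because there $\inf_{t\leq T-\eta}\mathrm{dist}(S^t x_0,\partial D)>0$ while $\sup_{t\leq T-\eta}|\x(t)-S^t x_0|\leq \eps^\alpha\sup|\varphi_\eps|\to 0$ in probability. Upper control follows by a symmetric argument on $[T+\eps^\alpha K,T+\delta']$: for $K$ large, $S^t x_0$ is bounded away from $M$ there, forcing $F(\x(t))$ to keep a definite sign. Thus $\eps^{-\alpha}(\tau_\eps-T)$ is tight, and passing to subsequences reduces matters to the fixed-point analysis in the previous paragraph, where uniqueness of the root forces the full limit.
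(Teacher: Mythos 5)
Your proposal is correct in substance and reaches the same conclusion, but it organizes the argument differently from the paper, in ways worth noting.

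\textbf{Similarities.} Both arguments have the same backbone: invoke the expansion lemma (your Lemma~\ref{lemma: important}, the paper's Lemma~\ref{lemma: Bakhtin-Levinson}) to write $X_\eps(t)=S^tx_0+\eps^\alpha\varphi_\eps(t)$ on a compact time window, parametrize $M$ locally near $z$, show a rough localization $\tau_\eps\approx T$, and then Taylor-expand the hitting equation to pull out the $\eps^\alpha$ coefficient. The algebra you write after dividing by $\eps^\alpha$ is exactly the mechanism in the paper's Lemma~\ref{prop: tau_conv}.

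\textbf{Where you differ, and what it buys you.} (i) You expand once on $[0,T+\delta']$; the paper splits into the events $A_\eps=\{\tau_\eps\ge T\}$ and $B_\eps=\{\tau_\eps<T\}$ and uses two separate one-sided expansions (Lemma~\ref{lemma: X_before_after}), restarting at time $T$ via the strong Markov property for the forward piece. Since Lemma~\ref{lemma: important} is valid on any compact interval, your single expansion is legitimate and slightly cleaner. (ii) You use a scalar defining function $F$ with $\nabla F(z)\ne 0$ and read off $\pi_b v=\frac{\nabla F(z)\cdot v}{\nabla F(z)\cdot b(z)}$, $\pi_M v=v-(\pi_b v)b(z)$; the paper instead parametrizes $M$ as a graph $y\mapsto z+y+F(y)b(z)$ over $T_zM$ with $DF(0)=0$. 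These are equivalent and both exploit $|F|=O(|\cdot|^2)$ to kill the quadratic error. (iii) You pass to a Skorokhod coupling so that $\varphi_\eps\to\phi_0$ uniformly a.s.; this is fine since $r_\eps\to 0$ in probability gives joint distributional convergence of $(\phi_\eps,r_\eps)\to(\phi_0,0)$, but note that the paper avoids this and thereby also obtains convergence \emph{in probability} when $\alpha_2>\alpha$ or $\xi_\eps\to\xi_0$ in probability; the coupling route gives only distributional convergence, which is what the stated theorem asks for, so this is a deliberate and acceptable weakening. (iv) You argue tightness of $\eps^{-\alpha}(\tau_\eps-T)$ directly; the paper first establishes the intermediate localization $|\tau_\eps-T|\le\eps^\gamma$ with $\gamma\in(\alpha/2,\alpha)$ (Lemma~\ref{lm:omega_eps_to_1}) and then refines. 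Your direct argument, via the sign of $F(X_\eps(T+\eps^\alpha K))$, is sound but tacitly relies on the same crude preliminary control (to keep the path inside the chart of $F$ on the relevant window), so the content is the same.

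\textbf{One imprecision worth flagging.} You invoke the implicit function theorem to extract $s_\eps$. Strictly, $s\mapsto F(X_\eps(T+\eps^\alpha s))$ is not $C^1$ (it contains a stochastic integral), so the IFT does not literally apply, and no uniqueness of the root is needed or available without further work. What your argument actually uses, and what works, is: tightness of $s_\eps$ plus the scalar identity $0=s_\eps\,\nabla F(z)\cdot b(z)+\nabla F(z)\cdot\varphi_\eps(\tau_\eps)+o_\Pp(1)$, which can be solved for $s_\eps$ and passed to the limit along subsequences, with uniqueness of the subsequential limit giving full convergence. Replace the IFT phrasing with that subsequence argument and the proof is complete.
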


\section{Applications} \label{sec: intro-app}

\subsection{Conditioned diffusions in 1 dimension} \label{sec: 1d-time-reversal}

Throughout this section, we restrict ourselves to the $1$-dimensional situation. In particular, let, for each $\eps>0$, $X_\eps$ be a weak solution of the following ($1$ dimensional) SDE:
\begin{align*}
 dX_\eps(t)&=b(X_\eps(t))dt+\eps\sigma(X_\eps(t)) dW(t),\\
 X_\eps(0)&=x_0,
\end{align*}
where $b$ and $\sigma$ are $C^1$ functions on $\R$, such that $b(x)<0$ and $\sigma(x)\ne 0$ for all $x$ in an interval $[a_1,a_2]$
containing $x_0$. We want to study the exit of such an interval $D=[a_1,a_2]$, that is
\[
 \tau_\eps=\inf\{t\ge0:\ X_\eps(t)=a_1\ \text{\rm or}\ a_2\}.
\]
Let $B_\eps=\{X_\eps(\tau_\eps)=a_2\}$, and note that, since $b<0$, $\lim_{\eps\to0}\Pp(B_\eps)=0$. More precise estimates
on the asymptotic behavior of $\Pp(B_\eps)$ can be obtained in terms of large deviations. However, our interest is to study the process
$X_\eps$ conditioned on the rare event $B_\eps$.

In this case $T(x_0)$, the time it takes for the flow $S$ generated by $-b$ starting at $x_0$ to reach $a_2$, is given by
\[
T(x_0)=-\int_{x_0}^{a_2}\frac{1}{b(x)}dx.
\]
It is known from our basic Lemma~\ref{lemma: important} that $\tau_\eps \to T(x_0)$ as $\eps \to 0$ in probability. But the correction was not known so far.

The idea is to condition the diffusion to the event $B_\eps$ and note that this conditioned process solves a martingale problem (hence is a diffusion) and the result from Section~\ref{sec: unstable} are applicable. Hence, we have the lemma:

\begin{lemma} Conditioned on $B_\eps$, the process $X_\eps$ is a diffusion with the same
diffusion coefficient as the unconditioned process, and with the
drift coefficient given by
\[
b_\eps(x)= b(x)+\eps^2\sigma^2(x)\frac{h_\eps(x)}{\int_{a_1}^x h_\eps(y)dy},
\]
where
\begin{equation*}
 h_\eps(x)=\exp\left\{-\frac{2}{\eps^2}\int_{a_1}^x \frac{b(y)}{\sigma^2(y)}dy\right\}.
\end{equation*}
\end{lemma}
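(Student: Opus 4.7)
My plan is to realize the conditioned law as a Doob $h$-transform of the unconditioned diffusion, where the relevant harmonic function is the hitting probability $u_\eps(x):=\Pp_x(B_\eps)$. Concretely, the argument splits into three steps.

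\textbf{Step 1: identify $u_\eps$ via a boundary value problem.} By an optional stopping argument applied to $u_\eps(X_\eps(t\wedge\tau_\eps))$ (equivalently, the relaxed Feynman--Kac theorem cited in the introduction), $u_\eps$ is the unique bounded $C^2$ solution on $(a_1,a_2)$ of
\[
L_\eps u_\eps(x):=\tfrac{\eps^2}{2}\sigma^2(x)\,u_\eps''(x)+b(x)\,u_\eps'(x)=0,\qquad u_\eps(a_1)=0,\ u_\eps(a_2)=1.
\]
Setting $v=u_\eps'$ reduces this to a first-order linear ODE with integrating factor proportional to $h_\eps(x)=\exp\{-\tfrac{2}{\eps^2}\int_{a_1}^x b(y)/\sigma^2(y)\,dy\}$. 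Integrating twice and imposing the boundary conditions gives the explicit formulas
\[
u_\eps(x)=\frac{\int_{a_1}^x h_\eps(y)\,dy}{\int_{a_1}^{a_2} h_\eps(y)\,dy},\qquad u_\eps'(x)=\frac{h_\eps(x)}{\int_{a_1}^{a_2} h_\eps(y)\,dy}.
\]
In particular $u_\eps>0$ on $(a_1,a_2]$.

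\textbf{Step 2: perform the $h$-transform.} Since $L_\eps u_\eps=0$, It\^o's formula applied to $\log u_\eps(X_\eps(t\wedge\tau_\eps))$ shows that the exponential process
\[
Z_\eps(t):=\frac{u_\eps(X_\eps(t\wedge\tau_\eps))}{u_\eps(x_0)}
\]
is a positive $\Pp$-martingale with $\E Z_\eps(\tau_\eps)=\Pp(B_\eps)$, and it is precisely the density of $\Pp(\cdot\mid B_\eps)$ with respect to $\Pp$ on $\mathcal{F}_{t\wedge\tau_\eps}$. Computing the martingale representation, $Z_\eps$ is generated by the Girsanov kernel $\eps\sigma(X_\eps)u_\eps'(X_\eps)/u_\eps(X_\eps)$, so under the conditional law the process
\[
\wt W(t):=W(t)-\int_0^t \eps\sigma(X_\eps(s))\,\frac{u_\eps'(X_\eps(s))}{u_\eps(X_\eps(s))}\,ds
\]
is a Brownian motion, and $X_\eps$ satisfies the SDE
\[
dX_\eps(t)=\Bigl(b(X_\eps(t))+\eps^2\sigma^2(X_\eps(t))\,\frac{u_\eps'(X_\eps(t))}{u_\eps(X_\eps(t))}\Bigr)dt+\eps\sigma(X_\eps(t))\,d\wt W(t).
\]
Plugging the formulas from Step 1 for $u_\eps$ and $u_\eps'$ yields exactly $b_\eps$ as stated, while the diffusion coefficient is unchanged.

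\textbf{Main obstacle.} The only delicate point is that $u_\eps$ vanishes at the boundary point $a_1$, so the drift correction $\eps^2\sigma^2 u_\eps'/u_\eps$ blows up there, and one must justify the Girsanov change of measure up to the exit time. A clean workaround is to localize: introduce stopping times $\tau_\eps^{(n)}:=\inf\{t\ge0:X_\eps(t)\in\{a_1+1/n,a_2\}\}$, apply Girsanov on each $[0,\tau_\eps^{(n)}]$ (where the drift correction is bounded, so Novikov's condition is automatic), and pass to the limit $n\to\infty$ using monotone convergence for the martingales and the fact that under the $h$-transformed law the drift is strongly repelling at $a_1$, so $\tau_\eps^{(n)}\uparrow\tau_\eps$ almost surely and $X_\eps$ never reaches $a_1$.
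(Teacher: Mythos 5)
Your proof is correct and reaches the same conclusion as the paper, but the two arguments implement the Doob $h$-transform at different levels. Step~1 (solving the boundary value problem for $u_\eps$) is identical to the paper's. Where the paper then computes the infinitesimal generator of the conditioned process directly from the Markov semigroup --- writing $\E_x[f(X_\eps(t))\mid B_\eps]=u_\eps^{-1}(x)\E_x f(X_\eps(t))u_\eps(X_\eps(t))+o(t)$, taking $t\to0$ to get $\bar L_\eps f=u_\eps^{-1}L_\eps(fu_\eps)$, and reading off the drift --- you instead identify $Z_\eps(t)=u_\eps(X_\eps(t\wedge\tau_\eps))/u_\eps(x_0)$ as the Radon--Nikodym density of the conditional law and invoke Girsanov to rewrite the SDE under the new measure. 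The two routes are equivalent in spirit. The generator computation is shorter and sidesteps the singularity of $u_\eps'/u_\eps$ at $a_1$ entirely, since the limit $t\to0$ is local and only uses $\Pp_x\{\tau_\eps<t\}=o(t)$ for $x$ in the open interval. Your Girsanov route makes the change of measure explicit (which is useful if one wants pathwise statements) but, as you correctly flag in the last paragraph, genuinely needs a localization argument near $a_1$, and you would also need to check uniform integrability of the localized martingales, not just monotone convergence of the stopping times. One small slip: you write $\E Z_\eps(\tau_\eps)=\Pp(B_\eps)$, but since $u_\eps(X_\eps(\tau_\eps))=\mathbf{1}_{B_\eps}$ and $u_\eps(x_0)=\Pp(B_\eps)$, the correct statement is $\E Z_\eps(\tau_\eps)=1$, which is exactly the martingale normalization.
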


With this lemma and the help of an analogy of Laplace's method the main theorem in this direction is:

\begin{theorem} Conditioned on $B_\eps$, the distribution of $\eps^{-1}(\tau_\eps-T(x_0))$ converges weakly to a centered
Gaussian
distribution with variance
\[
- \int_{x_0}^{a_2} \frac{ \sigma^2 (y) }{ b^3 (y) } dy.
\]
\end{theorem}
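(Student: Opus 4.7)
The plan is to use the provided characterization of the conditioned process as a diffusion with drift $b_\eps$, reduce it via a Laplace-type expansion to a small perturbation of the deterministic flow generated by $-b$, and then invoke Lemma~\ref{lemma: important} and Theorem~\ref{thm: Intro-Main-Levinso} to extract the Gaussian scaling limit.

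First, I would perform an asymptotic analysis of $b_\eps$. Writing $h_\eps(x) = \exp\{\eps^{-2}H(x)\}$ with $H(x) = -2\int_{a_1}^x b(y)/\sigma^2(y)\,dy$, the hypothesis $b < 0$ gives $H'(x) = -2b(x)/\sigma^2(x) > 0$ on $[a_1,a_2]$, so the integrand of $\int_{a_1}^x h_\eps(y)\,dy$ is exponentially concentrated near the upper endpoint. A standard Laplace expansion yields
\[
\int_{a_1}^{x} h_\eps(y)\,dy = h_\eps(x)\,\frac{\eps^2 \sigma^2(x)}{-2b(x)}\bigl(1 + O(\eps^{2})\bigr),
\]
uniformly for $x$ bounded away from $a_1$. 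Substituting into the formula for $b_\eps$ and simplifying gives $b_\eps(x) = -b(x) + \eps^{2} \Psi_\eps(x)$, where $\Psi_\eps$ converges locally uniformly (together with its derivative) to a smooth limit $\Psi_0$.

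Second, I would apply Lemma~\ref{lemma: important}. The conditioned process satisfies the SDE
\[
dX_\eps(t) = \bigl(-b(X_\eps(t)) + \eps^{2}\Psi_\eps(X_\eps(t))\bigr)dt + \eps\sigma(X_\eps(t))\,dW(t),\qquad X_\eps(0)=x_0,
\]
which fits the setting of the lemma with drift field $-b$ in place of $b$, exponents $\alpha_1 = 2$, $\alpha_2 = \infty$ (deterministic initial condition), and therefore $\alpha = \alpha_1\wedge\alpha_2\wedge 1 = 1$. Since $\alpha_1 > \alpha$, the indicator $\mathbf{1}_{\{\alpha_1 = \alpha\}}$ vanishes and the $\Psi_0$ contribution drops out of $\phi_0$, leaving
\[
\phi_0(t) = \Phi_{x_0}(t)\int_0^{t}\Phi_{x_0}(s)^{-1}\sigma(S^s x_0)\,dW(s),
\]
where $S$ and $\Phi_{x_0}$ are now associated to the flow of $-b$. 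In one dimension $\Phi_{x_0}(t) = b(S^t x_0)/b(x_0)$, since $\tfrac{d}{dt}S^t x_0 = -b(S^t x_0)$ makes $-b(S^t x_0)$ itself a solution of the linearized equation.

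Third, I would invoke Theorem~\ref{thm: Intro-Main-Levinso} with $M=\{a_2\}$. Since $T_{a_2}M = \{0\}$ forces $\pi_M \equiv 0$ and $\pi_b v = v/(-b(a_2))$ (the theorem's ``$b$'' is our $-b$), the theorem yields
\[
\eps^{-1}\bigl(\tau_\eps - T(x_0)\bigr) \;\longrightarrow\; -\pi_b\phi_0(T(x_0)) \;=\; \frac{1}{b(a_2)}\,b(a_2)\int_0^{T(x_0)} \frac{\sigma(S^s x_0)}{b(S^s x_0)}\,dW(s),
\]
a centered Gaussian. The variance is $\int_0^{T(x_0)}\sigma^2(S^s x_0)/b^2(S^s x_0)\,ds$; changing variables via $y = S^s x_0$, $dy = -b(y)\,ds$, converts this into $-\int_{x_0}^{a_2}\sigma^2(y)/b^3(y)\,dy$, which is positive because $b<0$.

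The main technical obstacle is Step~1: the Laplace expansion of $b_\eps$ must be done carefully enough that the remainder $\Psi_\eps$ is Lipschitz with a locally uniform limit on a neighborhood of the deterministic orbit $\{S^t x_0 : 0 \le t \le T(x_0)\}$, as Lemma~\ref{lemma: important} requires. Since the orbit moves strictly from $x_0$ toward $a_2$ and stays bounded away from $a_1$, the concentration estimate for $\int_{a_1}^x h_\eps(y)\,dy$ is uniform there, and the needed smoothness of $\Psi_\eps$ follows from differentiating the Laplace expansion term by term (using that $b$ and $\sigma$ are $C^1$ and $b<0,\sigma\ne 0$ on $[a_1,a_2]$).
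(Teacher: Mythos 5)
Your proposal follows the paper's strategy quite closely: characterize the conditioned process as a diffusion whose drift is a small perturbation of $-b$, invoke the Levinson-case results (Lemma~\ref{lemma: important}/Theorem~\ref{thm: Intro-Main-Levinso}), compute $\Phi_{x_0}(t)=b(S^tx_0)/b(x_0)$, and change variables to get the variance. The final computations agree with the paper (the sign discrepancy in the limiting random variable is immaterial since both sides are centered Gaussians).

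The one genuine difference is in Step~1, and it matters for the amount of work you sign yourself up for. You take $\alpha_1 = 2$ and so must exhibit a full Laplace expansion $b_\eps(x) = -b(x) + \eps^2\Psi_\eps(x)$ with $\Psi_\eps$ converging \emph{uniformly to a Lipschitz limit} $\Psi_0$ --- this requires identifying the $O(\eps^2)$ coefficient (the next term beyond leading order in the Laplace asymptotic) and its $x$-derivative, which is strictly more than what Lemma~\ref{lemma: b_approx} delivers (that lemma only gives the uniform $O(\eps^2)$ bound on $|b_\eps + b|$). The paper avoids this by choosing $\alpha_1 = \beta \in (1,2)$ strictly below $2$ and writing the drift as $-b + \eps^\beta\Psi_{\eps,\beta}$ with $\Psi_{\eps,\beta} = \eps^{-\beta}(b_\eps + b) = O(\eps^{2-\beta}) \to 0$ uniformly, so $\Psi_0 \equiv 0$ and nothing further needs to be proved about the fine structure of the Laplace remainder. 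Since $\alpha_1 > 1 = \alpha$ in both routes, the $\Psi_0$ term drops out of $\phi_0$ regardless, so your finer expansion buys nothing in the end. In short: your argument is correct, but the choice $\alpha_1 = 2$ forces you to establish a refined asymptotic whose content is then thrown away; the paper's $\beta \in (1,2)$ trick is the economical move.
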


The result is of relevance not only because of the correction itself, but also, because is the first step of analysis for diffusions conditioned on rare events. Such a tool may lead to a general theory of correction in small noise systems.

\subsection{Planar Heteroclinic Networks} \label{sec: Intro-PlanarHetero}

A further application of our results is to the theory of Noisy Heteroclinic Networks first proposed in~\cite{nhn}. Our presentation applies only to the $2$-dimensional situation. See~\cite{BakhtinDS} for a survey in this direction.

In Section~\ref{sec: app-hetero-intuitive} we give an intuitive presentation of the argument in~\cite{nhn}. The general theory developed in~\cite{nhn} is presented in Section~\ref{sec:Hetero-result}. In subsection~\ref{sec: Hetero_Current} relations of this result to the current text are highlighted. In this section, we also introduce the idea of a random Poincar\'e map.

\subsubsection{Intuitive argument}\label{sec: app-hetero-intuitive}
We study the exit problem of the diffusion~\eqref{eqn: Ito_additive} from a domain $D$. Consider the vector field $b:\R^2 \to \R^2$ which has finite set of critical points $\mathcal{Z}= (\zeta_k)_{k=1}^N$ inside $\bar{D}$.  We assume that $S$ admits an heteroclinic network.

A heteroclinic network for the flow $S$ is an invariant that contains at most countable number of saddles connected with each other. For simplicity, in this section we suppose that $S$ admits an heteroclinic network with a finite set of critical points $\mathcal{Z}= (\zeta_k)_{k=1}^N$ inside $\bar{D}$. Precisely, we assume the following:
\begin{enumerate}
\item Each critical point $\zeta_k$ is a saddle point of the flow $S$. That is, $b(\zeta_k)=0$ and the matrix $A_k = \nabla b ( \zeta_k)$ has two eigenvalues: $\lambda_k^+>0$ and $-\lambda_k^-<0$.
\item The flow $S$ generated by $b$ admits an heteroclinic structure in $\bar{D}$. We give the technical description of this assumption. For each critical point $z_k \in \mathcal{Z}$, let $\mathcal{W}_k^s$ be the $1$-dimensional locally stable manifold and $\mathcal{W}_k^u$ the $1$-dimensional locally unstable manifold. Take a $\delta > 0$ small enough so that normal form conjugation and Hadamard--Perron invariant manifold theorem holds for a ball $B_k=B_\delta( \zeta_k )$ of radius $\delta>0$ centered at each critical point . Denote $\{q_k^+, q_k^- \} =\mathcal{W}_k^u \cap B_\delta ( \zeta_k )$. The hypothesis that $b$ admits an heteroclinic structure means that for each integer $1 \leq k \leq N$, there is an integer $n^\pm_k \in \{ 1,...,N\}$ such that
\[
\lim_{t \to \infty } S^t q_k^ \pm = \zeta_{n^\pm_k}.
\]
\item All non degeneracy assumption made in Section~\ref{sec: saddle} hold for each critical point.
\end{enumerate}

Suppose the starting point for the diffusion $\x(0)$ is a deterministic point in $\mathcal{W}_1^s$.  Theorem~\ref{thm: MioNon} implies that with high probability the diffusion will exit $B_1$ approximately along
$q_1^+$ or $q_1^-$ with equal probability. Moreover, Theorem~\ref{thm: MioNon} tells us how to compute the scaling exponent of the additive exit correction term and the asymptotic distribution of this correction term.

For this discussion, we suppose that the diffusion exits $B_1$ asymptotically close to $q_1^+$. The exit from $B_1$ will now be the initial condition in Lemma~\ref{lemma: important}. Applying this lemma (with $\Psi_\eps \equiv 0$) for sufficiently large $T$, we can derive the asymptotic representation of the entrance distribution for $B_{n_1^+}$, which satisfies the properties imposed to the initial condition in Theorem~\ref{thm: MioNon}. Observe that Lemma~\ref{lemma: important} also implies that the scaling exponent of the additive correction in the entrance distribution for $B_{n_1^+}$ is the same as in the exit distribution for $B_1$. Moreover, this lemma implies that the asymptotic distribution of the additive correction term has in the entrance to $B_{n_1^+}$ is the evolved (under the linearization of $S$) version of the asymptotic distribution of the correction term at the exit from $B_1$. In particular, any bias on the exit of $B_1$ gets translated, by the linearization of the flow, to the entrance of $B_{n_1^+}$. Let $i_1=n_1^+$. Then Theorem~\ref{thm: MioNon} applies again to derive that asymptotically the exit distribution from $B_{n_1^+}$ is concentrated mostly along $q_{i_1^+}$ or $q_{i_1^-}$, but with possible unequal probability. We can proceed like this iteratively along any sequence of saddle points $z_1, z_{i_1},...,z_{i_r}$ such that for any $j$, $i_{j+1}=n^+ (i _j)$ or $i_{j+1}=n^- (i _j)$.

The result of this procedure allows us to conclude that the system evolves in a Markov fashion (choosing the next saddle with probability $1/2$ independently of the history of the process) until it meets a saddle point at which the exit distribution becomes asymmetric. After that the choice of the two heteroclinic connections is not Markov anymore. The choice of the two heteroclinic connections may become Markov again if the system meets a saddle in which the symmetry is reestablished. We will illustrate how this phenomenon affects the exit distribution.

\begin{figure}[lineheight]
\centering
\includegraphics[width=0.9\textwidth]{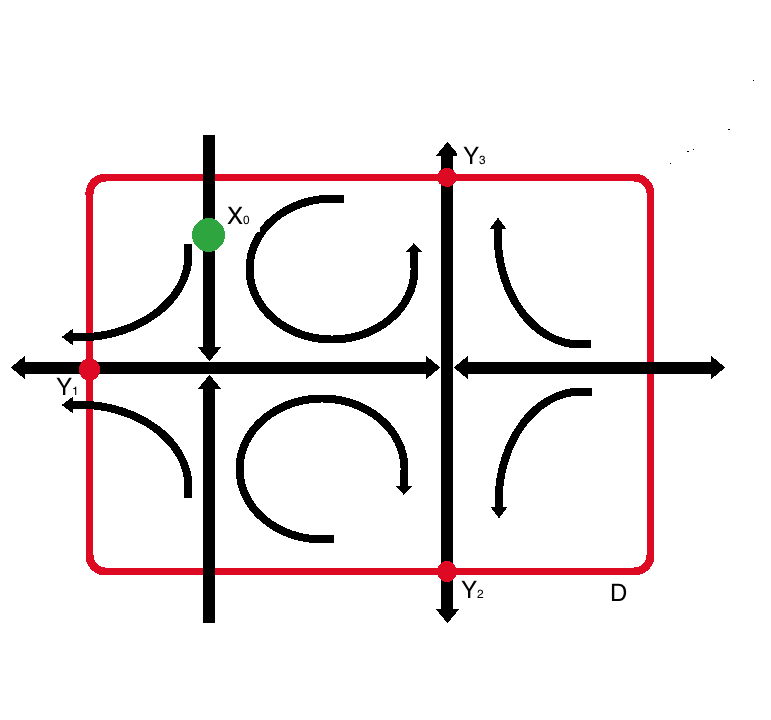}
\caption{Heteroclinic network example}
\label{fig: heteroclinicEx}
\end{figure}

\subsubsection{Planar heteroclinic network with two nodes} \label{sec: two-nodes}

Let us give a concrete example. Consider the system in which $b$ has two critical points $\{\zeta_1,\zeta_2\}$ and phase space of the flow $S$ is as depicted in Figure~\ref{fig: heteroclinicEx}, with $D$ being a rectangle around the two critical points. Let $\x (0)=x_0$ be on the locally stable manifold $\mathcal{W}^s_1$ of $\zeta_1$. Consider $\{q_k^+, q_k^- \} =\mathcal{W}_k^u \cap B_\delta ( \zeta_k )$, where $B_\delta ( \zeta_k )$ is a ball of sufficiently small radius around $\zeta_k$ and $\mathcal{W}_k^u$ the locally unstable manifold of $\zeta_k$. There is an orbit connecting $\zeta_1$ with $\zeta_2$:
\[
\lim_{ t\ \to \infty} S^t q_1^- = \zeta_2.
\]
Recall from Section~\ref{sec: intro_FW} that as $\eps \to $ the exit distribution concentrates near the minimizers of $V(x,\cdot)$ over the boundary of $\partial D$. For a heteroclinic network this means that the exit concentrates at all points in the boundary that can be reached from $x$ along a sequence of heteroclinic connections. In this case, these points are 
\begin{align*}
y_1 =\lim_{ t\ \to \infty} S^t q_1^+, \quad
y_2 =\lim_{ t\ \to \infty} S^t q_2^+,
\end{align*}
and $y_3 =\lim_{ t \to \infty} S^t q_2^-$. Then, the exit measure will weakly converge, as $\eps \to 0$, to
\begin{equation} \label{eqn: hetero-limit-distr}
p_1 \delta_{y_1}+ p_2 \delta_{y_2} + p_3 \delta_{y_3},
\end{equation}
where $p_1,p_2$ and $p_3$ are positive numbers that sum up to $1$. In the spirit of Theorem~\ref{thm: MioNon} a direct application of our results imply a scaling limit to this convergence. 
\begin{lemma}
Consider the system just described. There is a family of random variables $(\theta_\eps)_{\eps>0}$, a family of random vectors $( \phi_\eps)_{\eps>0}$ and the random variable $\alpha_0 $ such that $\Pp \{ \theta_\eps \in \{1,2,3\} \}=\Pp \{ \alpha_0 \in (0,1] \}=1$, and
\[
\x ( \tau_\eps^D ) = y_{\theta_\eps} + \eps^{\alpha_0} \phi_\eps,
\]
for every $\eps>0$. The random vector $(\theta_\eps,\phi_\eps)$ converges in distribution to $(\theta_0, \phi_0)$. 
\end{lemma}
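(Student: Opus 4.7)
The strategy is to iterate the two building blocks already proved: Lemma~\ref{lemma: important}, which describes $\x$ along a regular (Levinson) piece of orbit up to an $\eps^\alpha$-scale correction, and Theorem~\ref{thm: MioNon}, which describes one passage through a saddle ball. The geometry in Figure~\ref{fig: heteroclinicEx} forces exactly one potential saddle concatenation (through $\zeta_1$, then possibly through $\zeta_2$), so the induction is finite and can be written out explicitly.

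First, starting from the deterministic point $x_0\in\Wc_1^s$, apply Lemma~\ref{lemma: important} (with $\alpha_1=\alpha_2=1$ and $\xi_\eps\equiv 0$) up to the first time $\x$ enters the normal-form ball $B_1$ around $\zeta_1$. This yields a representation of the entrance distribution as $\gamma_1(T_1)+\eps\,\phi^{(1)}_\eps$ with $\phi^{(1)}_\eps$ converging in distribution to a Gaussian random vector $\phi^{(1)}_0$ by Lemma~\ref{lemma: important}. The non-collinearity assumption required by Theorem~\ref{thm: MioNon} follows because $b(\gamma_1(T_1))$ points along $\Wc_1^s$ while the covariance of $\phi^{(1)}_0$ is non-degenerate (uniform ellipticity of $\sigma$). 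A direct application of Theorem~\ref{thm: MioNon} then gives the exit from $B_1$ in the form $q_1^{\sgn\psi^{(1)}_\eps}+\eps^{\beta_1}\phi^{\mathrm{out},1}_\eps$, with $(\psi^{(1)}_\eps,\phi^{\mathrm{out},1}_\eps)$ converging jointly in distribution; here $\beta_1\in(0,1]$ is determined by the eigenvalue ratio at $\zeta_1$ and by $\alpha=1$.

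Second, split into cases according to $\sgn\psi^{(1)}_\eps$. On $\{\psi^{(1)}_\eps>0\}$ the trajectory $S^tq_1^+$ reaches $y_1\in\partial D$ in finite time, so applying Lemma~\ref{lemma: important} a second time (now with $\alpha_2=\beta_1$, $\xi_\eps=\phi^{\mathrm{out},1}_\eps$) propagates the correction along this branch and Theorem~\ref{thm: Intro-Main-Levinso} identifies $(\tau_\eps,\x(\tau_\eps))$ near $(T_1+T(q_1^+),y_1)$ up to an $\eps^{\beta_1}$-correction; set $\theta_\eps=1$ on this event. On $\{\psi^{(1)}_\eps<0\}$ the trajectory $S^tq_1^-$ travels to $\zeta_2$; Lemma~\ref{lemma: important} again transports the $\eps^{\beta_1}$-correction up to the entrance of $B_2$, where the limiting correction is $\Phi_{q_1^-}(\cdot)\phi^{\mathrm{out},1}_0$, \emph{which is no longer centered nor independent of the stable direction}. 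Nevertheless its distribution is absolutely continuous on the line transverse to $\Wc_2^s$, so the collinearity hypothesis of Theorem~\ref{thm: MioNon} holds almost surely, and a second application of Theorem~\ref{thm: MioNon} (with $\alpha=\beta_1$ and random initial datum $\phi^{\mathrm{in},2}_\eps$) gives exit from $B_2$ of the form $q_2^{\sgn\psi^{(2)}_\eps}+\eps^{\beta_2}\phi^{\mathrm{out},2}_\eps$ with a new exponent $\beta_2$ determined by $\alpha=\beta_1$ and the eigenvalues at $\zeta_2$. A final application of Lemma~\ref{lemma: important} along each of the two branches $S^tq_2^\pm$ carries the correction to $y_2$ or $y_3$, and sets $\theta_\eps\in\{2,3\}$ accordingly.

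Define $\phi_\eps$ as the $\eps^{-\alpha_0}$-rescaled deviation of $\x(\tau_\eps^D)$ from $y_{\theta_\eps}$, where $\alpha_0$ is the exponent obtained on the branch actually followed ($\beta_1$, $\beta_2$, or any of them along the two paths reaching $y_2,y_3$); note $\alpha_0$ is a $\{\beta_1,\beta_2\}$-valued random variable depending only on $\sgn\psi^{(1)}_\eps$, which is measurable with respect to a random vector whose limit has been identified. Convergence in distribution of $(\theta_\eps,\phi_\eps)$ follows from joint convergence of $(\psi^{(1)}_\eps,\phi^{(1)}_\eps,\psi^{(2)}_\eps,\phi^{\mathrm{out},2}_\eps)$ together with the continuous mapping theorem applied to the deterministic linearizations $\Phi_{q_j^\pm}$. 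The last point to verify is that $\tau_\eps^D$ really equals the sum of the intermediate passage times with probability tending to $1$; this follows because on each intermediate piece Lemma~\ref{lemma: important} gives $\x$ uniformly close to the deterministic orbit, which does not touch $\partial D$ before the prescribed exit. The main obstacle is step three: the entrance to $B_2$ carries a bias inherited from the first saddle, and one must check that the hypotheses of Theorem~\ref{thm: MioNon} (in particular the non-collinearity of $b(x_0)$ and the limiting initial random vector) remain in force; this requires only that the limit of $\phi^{\mathrm{in},2}_\eps$ be supported outside the line $\R\, b(\zeta_2\text{-entry})$, which holds because the Gaussian component of the first passage is non-degenerate and is transported by an invertible linearization.
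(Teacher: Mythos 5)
Your proposal is correct and follows the same iteration the paper sketches for this example in Sections~\ref{sec: app-hetero-intuitive} and~\ref{sec: Hetero_Current}: transport by Lemma~\ref{lemma: important} (a.k.a.\ Theorem~\ref{thm: far}), cross each saddle ball with Theorem~\ref{thm: MioNon}, glue by the strong Markov property, and finish with the Levinson-type exit result (Theorem~\ref{Thm:Transversal_exit}/Theorem~\ref{thm: Intro-Main-Levinso}). The only minor presentational difference is that the paper applies the transport lemma at a fixed sufficiently large time $T$ and restarts by the strong Markov property, rather than at the hitting time of the next ball, but the effect is the same.
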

The random vector $(\theta_0, \phi_0)$ in principle can be obtained explicitly. It is clear from~\eqref{eqn: hetero-limit-distr} that $p_i=\Pp \{ \theta_0 = i\}$, $i=1,2,3$.

Let $\{ \lambda_+, - \lambda_- \}$ be the set of eigenvalues of $\nabla b ( \zeta_1 )$ and $\{ \mu_+, - \mu_- \}$ the set of eigenvalues of $\nabla b ( \zeta_2 )$ (see Figure~\ref{fig: heteroclinicEx}). Using the three observations made at the beginning of Section~\ref{subsec: Intro_analysis} several cases can be considered (see~\cite{BakhtinDS} for further explanation):
\begin{itemize}
\item If $\mu_+ < \mu_-$ , and $ \lambda_+ < \lambda_-$ then the system is symmetric with $p_1=1/2$, $p_2=p_3=1/4$, and $\alpha_0=1 $. Here symmetric means that the random vector $\phi_0$ has no bias along the direction of $\mathcal{W}_1^s$ if $\theta_0=1$ or along the direction $\mathcal{W}^u_1$ if $\theta_0 \in \{2,3\}$. Asymmetric means that there is a bias in any of the aforementioned cases.
\item If $\mu_+ \lambda_+ < \mu_- \lambda_-$, and $\lambda_+ \geq \lambda_-$, the system is symmetric if $\theta_0 \in \{ 2,3\}$, strongly asymmetric if $\lambda_+ < \lambda_-$ and $\theta_0=1$, and asymmetric if $\lambda_+ = \lambda_-$ and $\theta_0=1$. Moreover, $p_1=1/2$, $p_2=0,p_3=1/2$, when $\lambda_- < \lambda_+$, and $p_1=1/2, p_2=0, p_3=1/2$, when $\lambda_- = \lambda_+$. The random variable $\alpha_0$ is given by 
\[
\alpha_0= \frac{\lambda_-}{\lambda_+ } \delta_{ \{1\} } ( \theta_0 ) +   \delta_{ \{2,3\} } ( \theta_0 ).
\]
\item If $\mu_+  \lambda_+ > \mu_- \lambda_-$, and $\lambda_+ > \lambda_-$, the system is strongly asymmetric and $p_1=1/2$, $p_2=0, p_3=1/2$, and 
\[
\alpha_0=\frac{\lambda_-}{\lambda_+ } \delta_{ \{1\} } ( \theta_0 ) +   \mu_- \lambda_- / (\mu_+ \lambda_+) \delta_{ \{2,3\} } ( \theta_0 ) .
\]
\item If $\mu_+ = \mu_- $ and $\lambda_+ = \lambda_-$, the system is asymmetric and $p_1=1/2$, $p_2\in (0,p_3), p_3<1/2$, and $\alpha_0=1$.
\item If $\mu_+  > \mu_- $, and $\lambda_+ = \lambda_-$, the system is asymmetric if $\theta_0=1$ and strongly asymmetric otherwise. Moreover, $p_1=1/2$, $p_2\in (0,p_3), p_3<1/2$, and 
\[
\alpha_0=\delta_{ \{1\} } ( \theta_0 ) + ( \mu_- / \mu_+ ) \delta_{ \{2,3\} } ( \theta_0 ). 
\] 
\item If $\mu_+  = \mu_- $, and $\lambda_+ > \lambda_-$, the system is strongly asymmetric if $\theta_0 =1$, and asymmetric otherwise. Moreover, $p_1=1/2$, $p_2=0, p_3=1/2$, and $\alpha_0=\lambda_- / \lambda_+$.
\end{itemize}

A formalization of this argument based on a weak convergence result is done in~\cite{nhn}. In such, the limiting behavior of the rescaled process
\[
Z_\eps(t) = \x ( t \log ( \eps^{-1} ) )
\]
is obtained. Notice how this rescaled process instantaneously jumps along saddles. Hence if a weak convergence result has to be established, we need to introduce a new topology. Indeed, the standard Skorokhod topology does not allow to capture the curves along which the jumps are made. We state the weak convergence result in the next section.

\subsubsection{Weak convergence result}\label{sec:Hetero-result}
In order to present the weak convergence result for the rescaled version of $\x$, we need to introduce a new topology.

Consider all paths $\gamma:[0,1] \to [0,\infty) \times \R^2$ such that the first coordinate $\gamma^0$ is nondecreasing. Equip the space of paths with the equivalence relation $\sim$, where $\gamma_1 \sim \gamma_2$ if and only if there is a path $\gamma^*$ and non-decreasing surjective functions $\lambda_1,\lambda_2 : [0,1] \to [0,1]$ such that $\gamma_i = \gamma^* \circ \lambda_i$. The set of curves $\mathbf{X}$ is the quotient of the space of paths with the equivalence relation $\sim$. Actually the set $\mathbf{X}$ can be regarded as a Polish space:
\begin{lemma}[~\cite{nhn}]
$\mathbf{X}$ can be made into a metric Polish space with distance function
\[
 \rho ( \Gamma_1, \Gamma_2 ) = \inf_{ \gamma_1 \in \Gamma_1 , \gamma_2 \in \Gamma_2 } \sup_{ s \in [0,1] } | \gamma_1 (s) - \gamma_2(s) |.
\]
\end{lemma}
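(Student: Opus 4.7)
The plan is to verify in turn that $\rho$ is a metric on $\mathbf{X}$, that $(\mathbf{X},\rho)$ is separable, and that it is complete. Symmetry, non-negativity, finiteness (the images of any two representatives are compact), and reflexivity $\rho(\Gamma,\Gamma)=0$ (via a common representative) are all immediate, so the heart of the first part is the triangle inequality. Given $\Gamma_1,\Gamma_2,\Gamma_3$ and $\eps>0$, I would select representatives $(\gamma_1^a,\gamma_2^a)\in\Gamma_1\times\Gamma_2$ realizing $\rho(\Gamma_1,\Gamma_2)$ up to an additive $\eps$, and $(\gamma_2^b,\gamma_3^b)\in\Gamma_2\times\Gamma_3$ realizing $\rho(\Gamma_2,\Gamma_3)$ up to an additive $\eps$. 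Since $\gamma_2^a\sim\gamma_2^b$, a common super-path $\gamma_2^*$ and non-decreasing surjections $\lambda^a,\lambda^b:[0,1]\to[0,1]$ exist with $\gamma_2^a=\gamma_2^*\circ\lambda^a$ and $\gamma_2^b=\gamma_2^*\circ\lambda^b$. The technical core is to put $\gamma_1^a$, $\gamma_2^*$, and $\gamma_3^b$ on a common time-scale: I would thread a monotone curve $(u(s),v(s))\in[0,1]^2$ with $\lambda^a(u(s))=\lambda^b(v(s))$ for all $s$, so that $\gamma_1^a\circ u$ and $\gamma_3^b\circ v$ are synchronized against a single parametrization of $\gamma_2^*$; the triangle inequality for the uniform norm on this common scale then gives $\rho(\Gamma_1,\Gamma_3)\le\rho(\Gamma_1,\Gamma_2)+\rho(\Gamma_2,\Gamma_3)+2\eps$.

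For the identity of indiscernibles, if $\rho(\Gamma_1,\Gamma_2)=0$ I would fix representatives $\gamma_i$ of $\Gamma_i$ and choose reparametrizations $\lambda_i^{(n)}$ with $\sup_s|\gamma_1\circ\lambda_1^{(n)}(s)-\gamma_2\circ\lambda_2^{(n)}(s)|<1/n$. The accompanying reparametrizations live in the compact subset of $C([0,1],[0,1])$ of non-decreasing maps, so an Arzel\`a--Ascoli extraction yields limits that serve as the $\lambda^a,\lambda^b$ of a super-path witnessing $\Gamma_1=\Gamma_2$. For separability, I would use the countable collection of piecewise-linear paths with rational breakpoints in $[0,1]$ and rational vertex values in $[0,\infty)\times\R^2$, with the first-coordinate monotonicity enforced by cumulative rounding; uniform approximation of any representative by such a path establishes density.

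For completeness, given a Cauchy sequence $(\Gamma_n)$ I would pass to a subsequence with $\rho(\Gamma_{n_k},\Gamma_{n_{k+1}})<2^{-k-1}$ and recursively select representatives $\gamma^{(k)}\in\Gamma_{n_k}$ such that $\sup_s|\gamma^{(k+1)}(s)-\gamma^{(k)}(s)|<2^{-k}$, using the common-time-scale device from the triangle inequality to keep all choices mutually consistent. The resulting sequence is uniformly Cauchy in $C([0,1];[0,\infty)\times\R^2)$ and converges to some continuous $\gamma^*$ whose first coordinate is still non-decreasing, defining a limit $\Gamma^*\in\mathbf{X}$ for the original sequence. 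The main obstacle I anticipate is precisely the threading argument for the triangle inequality: non-decreasing surjections can have plateaus and are not invertible, so one cannot simply compose inverses of $\lambda^a$ and $\lambda^b$, and the Fr\'echet-style construction of a monotone path through $[0,1]^2$ is the delicate but standard device that makes all three parts of the proof go through.
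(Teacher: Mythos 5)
The paper does not give its own proof of this lemma; it is imported from~\cite{nhn} (``Refer to~\cite{nhn} for more information about this space''), so there is no in-text argument against which to compare your plan, and I am evaluating it on its own merits.

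Your overall structure is sound and you correctly single out the Fr\'echet-style ``threading through $[0,1]^2$'' as the load-bearing technical device: it underlies the triangle inequality (and, more silently, the very transitivity of $\sim$). Two points, however, need repair or clarification.
First, the assertion that the non-decreasing surjections $[0,1]\to[0,1]$ form a compact subset of $C([0,1],[0,1])$ is false: the family $\lambda_n(t)=t^n$ has no uniformly convergent subsequence, so the class is not equicontinuous and Arzel\`a--Ascoli does not apply directly. The standard fix is to first make the pair $(\lambda_1^{(n)},\lambda_2^{(n)})$ jointly Lipschitz by precomposing with the inverse of the strictly increasing map $s\mapsto \bigl(\lambda_1^{(n)}(s)+\lambda_2^{(n)}(s)+s\bigr)/3$ (this does not change the sup-distance between the reparametrized curves), after which the reparametrizations are $3$-Lipschitz and Arzel\`a--Ascoli does apply; the alternative route via Helly's selection theorem only gives a possibly discontinuous monotone limit and then one must argue around the jumps.
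Second, the paper's equivalence $\sim$ is stated via a common \emph{super}-path, $\gamma_i = \gamma^*\circ\lambda_i$, whereas your extraction in the indiscernibility step naturally produces a common \emph{sub}-path, i.e.\ non-decreasing surjections $\mu_1,\mu_2$ with $\gamma_1\circ\mu_1 = \gamma_2\circ\mu_2$. These two formulations are equivalent for continuous bounded paths, but the passage between them (``de-threading'') is precisely the same kind of nontrivial step as threading and should be stated as a separate lemma rather than folded silently into ``yields limits that serve as the $\lambda^a,\lambda^b$ of a super-path.'' Finally, in the completeness recursion, choosing $\gamma^{(k+1)}$ close to $\gamma^{(k)}$ generally forces a re-threading that replaces $\gamma^{(k)}$ by a time-changed version; this is harmless because a simultaneous time change of all previously chosen $\gamma^{(1)},\dots,\gamma^{(k)}$ preserves their mutual sup-distances, but ``keep all choices mutually consistent'' should be made explicit along exactly these lines.
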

Refer to~\cite{nhn} for more information about this sapce.

In order to state the result, we give a non-technical introduction to the notion of entrance-exit maps introduced in~\cite{nhn}. Let $\mathcal{P}$ be the set of probability measures in $\R^2$, define $\rm{out}=(0,\infty) \times [0,1] \times \R^2 \times (0,1] \times \mathcal{P}$ and
\begin{align*}
\rm{Out}_k=\{ \left( (t_-,p_-,x_-,\beta_-,F_-),(t_+,p_+,x_+,\beta_+,F_+)  \right) \in \rm{out}^2: \\
  t_-=t_+ , x_\pm=q_k^\pm, p_-+p_+=1, \beta_-=\beta_+ \}.
\end{align*}
Then we have the following definitions.
\begin{definition}
For each $k$, an entrance-exit map is a map \
\[
\Psi_k : \{q_k^+,q_k^-\} \times (0,1] \times \mathcal{P} \to \rm{ Out}_k,
\]
where the domain of $\Psi_k$ satisfies some regularity assumptions(see~\cite[page 10]{nhn}~)~. We denote $\Psi_k=( \Psi_k ^+, \Psi_k ^- )$.
\end{definition}

\begin{definition}
Suppose $x_0 \in \mathcal{W}^s_k$, for some $1 \leq k \leq N$. The sequence $\mathbf{z}=(\theta_0, z_{i_1},...,\theta_{r-1},z_{i_r},\theta_r)$ is admissible for $x_0$ (refered as $x_0$-admissible) if
\begin{enumerate}
 \item $\theta_0$ is the orbit of $x_0$ with $S^t x_0 \to z_{i_1} $, as $t \to \infty$;
 \item for each $j\in\{1,..,r \}$,  $\theta_j$ is either the orbit of $q_{i_j}^+$ or the orbit $q_{i_j}^-$;
 \item for each $j\in\{1,..,r \}$, $i_{j+1}$ is either $n_{i_j}^+$ or $n_{i_j}^-$ according to whether $theta_j$ is the  orbit of $q_{i_j}^+$ or $q_{i_j}^-$.
\end{enumerate}
\end{definition}
With each admissible sequence $\mathbf{z}$ we associate the sequence
\[
 \eta( \mathbf{z} ) = ( ( x_0',\alpha_0,\mu_0), ( t_1,p_1,x_1,\alpha_1,\mu_1),...,(t_r,p_r,x_r,\alpha_r,\mu_r) ),
\]
where $x_0'=S^{t'(x_0) }x_0, \alpha_0=1$,
\[
 t'(x_0)=\inf \{ t>0: S^t x_0 \in B_\delta(\zeta_{i_1} ) \},
\]
and the rest of the entries are given by
\[
(t_j,p_j,x_j,\alpha_j,\mu_j)=\left \{ \begin{tabular} { l r }
                                        $\Psi_{i_j}^+ ( x_{j-1},\alpha_{j-1},\mu_{j-1})$, & $i_j=n_{i_j}^+$ \\
					$\Psi_{i_j}^- ( x_{j-1},\alpha_{j-1},\mu_{j-1})$, & $i_j=n_{i_j}^-$ \\
                                      \end{tabular}
 \right. .
\]

To each admissible sequence $\mathbf{z}$ we can associate a piecewise constant curve $\Gamma ( \mathbf{z} )$ by identifying it with the path of curves such that spend time $t_j$ at the point $x_j$ and jump to the next point along the path $\theta_j$. Also we can associate probabilities through the relationship
\[
 \pi ( \mathbf{z} ) = p_1 ... p_r.
\]

Note how the set of all admissible sequences for $x_0$ has the structure of a binary tree partially ordered by inclusion. We say that a set of admissible sequences $L$ of $x_0$ is free if no two sequences of $L$ are comparable with respect to this partial order. Additionally, if any sequence not in $L$ is comparable to one sequence from $L$ then $L$ is called complete. It is clear that for any free set $\pi ( L) := \sum_{\mathbf{z} \in L } \pi ( \mathbf{z} ) \leq 1 $, while for a complete set $\pi ( L) =1$.

The main theorem is:
\begin{theorem} \label{thm: nhn}
Suppose that $\x (0)=x_0$ is in the heteroclinic invariant. For each $\eps > 0$, define the process $Z_\eps(t) = \x ( t | \log ( \eps ) | )$. Then, for any conservative set $L$ of $x_0$-admissible sequences, there is a family of stopping times $(T_\eps)_{\eps>0}$ such that the distribution of the graph $\Gamma_{ Z_\eps(t), t < T_\eps }$ converges weakly in $( \mathbb{X}, \rho)$ to the measure $M_{x_0, L}$ concentrated on the set
\[
\{ \Gamma ( \mathbf{z} ): \mathbf{z} \in L \}
\]
 and satisfying $M_{\x (0), L } \{ \Gamma ( \mathbf{z} ) \} = \pi( \mathbf{z} ) $.
\end{theorem}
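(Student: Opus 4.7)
The plan is to prove Theorem~\ref{thm: nhn} by induction on the depth of the admissible sequences in $L$, iteratively composing the two building block results of the thesis: Theorem~\ref{thm: MioNon} (local behavior near each saddle) and Lemma~\ref{lemma: important} (propagation along a finite-time orbit between saddles). The strong Markov property at the successive entrance times into the balls $B_\delta(\zeta_{i_j})$ lets us restart the analysis using the exit distribution from the previous ball as the (random) initial condition at the next one.

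First I would treat the initial segment. Since $x_0$ lies in the heteroclinic invariant, after the deterministic time $t'(x_0)$ the unperturbed flow $S^tx_0$ enters $B_\delta(\zeta_{i_1})$ along $\mathcal{W}^s_{i_1}$. Lemma~\ref{lemma: important}, applied with $\alpha_2=1$ and $\Psi_\eps\equiv 0$, gives $X_\eps(t'(x_0)) = x_0' + \eps\phi_0(t'(x_0)) + o(\eps)$ in distribution, so the entrance distribution at $B_\delta(\zeta_{i_1})$ satisfies the hypotheses of Theorem~\ref{thm: MioNon} with scaling exponent $\alpha=1$ and the required non-collinearity with $b(x_0')$ inherited from the transversality of $\mathcal{W}^s_{i_1}$ to $\partial B_\delta(\zeta_{i_1})$. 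The inductive step is the same two-move combination: from the entrance datum $(x_{j-1},\alpha_{j-1},\mu_{j-1})$ at $B_\delta(\zeta_{i_j})$ we apply Theorem~\ref{thm: MioNon} to get an exit of the form $q_{i_j}^{\sgn(\psi_\eps)} + \eps^{\beta}\phi_\eps$ at time (after recentering by $\alpha_{j-1}\lambda_+^{-1}\log\eps$) a random variable identified in that theorem; the $\sgn(\psi_\eps)$ selects the branch $n^\pm_{i_j}$ of the tree, contributing the factor $p_j$ to $\pi(\mathbf{z})$, while the new scaling exponent $\alpha_j$ is computed by the formula of Theorem~\ref{thm: MioNon}. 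Then Lemma~\ref{lemma: important}, applied on the finite orbit segment connecting the exit point to $B_\delta(\zeta_{i_{j+1}})$, advances the expansion to the next entrance datum $(x_j,\alpha_j,\mu_j)$. This precisely realizes the recursion $\eta(\mathbf{z})$ defined before the theorem statement.

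The time rescaling by $|\log\eps|$ is what makes the logarithmic sojourns near each saddle of Theorem~\ref{thm: MioNon} converge to the finite times $t_j$ in $\eta(\mathbf{z})$, while the deterministic finite-time transits handled by Lemma~\ref{lemma: important} collapse to instantaneous jumps in the rescaled timescale. I would define $T_\eps$ as the first time the rescaled process has completed a sequence in $L$ (more precisely, the entrance time into $B_\delta$ of a saddle lying immediately past the tip of some $\mathbf{z}\in L$, or the first exit time from $D$ if the tip is a terminal exit point); because $L$ is conservative, the union of the events selecting each $\mathbf{z}\in L$ has total probability converging to $1$. With the probabilities along each branch being independent (by the strong Markov property) and determined at each saddle by the $\sgn(\psi_\eps)$ of Theorem~\ref{thm: MioNon}, the finite-dimensional distribution of the data $\eta$ converges to that prescribed by the entrance-exit maps, giving weight $\pi(\mathbf{z})$ to branch $\mathbf{z}$.

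The main obstacle is the convergence in the curve topology $(\mathbf{X},\rho)$ rather than in the more usual Skorokhod topology: I have to show that the \emph{graph} $\Gamma_{Z_\eps(t),\,t<T_\eps}$ concentrates near the piecewise constant skeleton $\Gamma(\mathbf{z})$. This requires a tightness argument that controls the transits between saddles, which in rescaled time become vanishingly short but trace out the connecting heteroclinic orbits — exactly the data that $\rho$ is designed to see. The uniform-in-$\eps$ closeness of $X_\eps$ to $S^{\cdot}x$ on each transit (from the estimate $\sup_{t\leq T}|X_\eps(t)-S^tx_0|\to 0$ in probability, refined by Lemma~\ref{lemma: important}) together with a parametrization choice $\lambda_\eps$ that stretches these transits to unit intervals lets one bound $\rho(\Gamma_{Z_\eps},\Gamma(\mathbf{z}))$ by a sum of a finite number of sup-norm errors on compact segments, each of which converges to zero in probability. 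Combining this tightness with the identification of finite-dimensional distributions via the recursion above yields the stated weak convergence to $M_{x_0,L}$.
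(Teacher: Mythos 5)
The paper does not actually prove Theorem~\ref{thm: nhn}: it is stated in Section~\ref{sec:Hetero-result} as the main result of~\cite{nhn} (note the label), and the thesis's contribution, as explained in Section~\ref{sec: Hetero_Current}, is to re-prove the two building blocks (Theorem~\ref{thm: MioNon} and Lemma~\ref{lemma: important}) without the non-resonance hypothesis so that the iteration underlying~\cite{nhn} goes through in greater generality. With that caveat, your proposal correctly reproduces the iteration argument the paper itself sketches in Section~\ref{sec: app-hetero-intuitive}: alternate applications of the saddle-exit result and the finite-time propagation lemma, glued by the strong Markov property at successive entrance times to the balls $B_\delta(\zeta_{i_j})$, with the $|\log\eps|$ rescaling turning the logarithmic sojourns into the finite times $t_j$ and the transits into instantaneous jumps.

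Two points in your sketch deserve more care. First, to invoke Theorem~\ref{thm: MioNon} with $\alpha<1$ at each saddle beyond the first you must verify that the non-collinearity hypothesis propagates along the chain; the paper actually establishes this mechanism (the conclusions of Theorems~\ref{thm: far} and~\ref{Thm: main_small} are engineered precisely so the output of one serves as the input of the next), and the reason is not ``transversality of $\Wc^s$ to $\partial B_\delta$'' as you state but that the exit correction $\phi'_0 = \nabla g(\sgn(\eta_0^+)\delta e_1)(\theta e_2)$ is almost surely in the $\nabla g\,e_2$ direction, transverse to $b$ on $\Wc^u$, with $\theta$ atom-free at $0$. Second, the weak convergence in the non-standard quotient space $(\mathbf{X},\rho)$ is the genuinely hard point and the whole reason~\cite{nhn} introduces that topology; your final paragraph treats it as a routine tightness-plus-fdd argument, but the construction of the reparametrizations $\lambda_\eps$ that inflate the vanishing transit intervals while keeping control of the curve traced out is exactly where the substantive work of~\cite{nhn}'s proof lives, and it is not reproduced anywhere in this thesis, so the proposal as written is a correct high-level strategy but not a self-contained proof.
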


\subsubsection{Contributions made in the case $S$ admits a heteroclinic network} \label{sec: Hetero_Current}
In this section we outline our contribution for the case in which $S$ admits an heteroclinic network.

The iteration procedure described in Section~\ref{sec: app-hetero-intuitive} was first proposed in~\cite{nhn}. It is the central idea in proving the main results in~\cite{nhn}. This iteration is carried out in~\cite{nhn} by using an equivalent version of Theorem~\ref{thm: MioNon}. This version was proved under the hypothesis that the non-linear system can be locally conjugated to a linear system by a $C^2$ transformation. That is, in~\cite{nhn} Theorem~\ref{thm: nhn} is proved under the following hypothesis:
\begin{condition} \label{con: intro_C2}
At each critical point $\zeta \in \mathcal{Z}$ there are non-resonant conditions.
\end{condition} 
As discussed in Section~\ref{sec: saddle} this is in general not the case, and examples of saddle point that do not satisfy this condition are known~\cite{MeyerExamples}.  
In this work, we completely remove condition~\ref{con: intro_C2} in the $2$-dimensional situation.

On the other hand, observe that the iteration procedure is Section~\ref{sec: app-hetero-intuitive} is based on the computation of a random map. This map is such that, for a domain $V$, to any given initial distribution of the diffusion $\x$, it gives the exit distribution of $\x$ from $V$. We call this map a random Poincar\'e map. For $V \subset \R^2$, let $\Pi_{V}$ be the set of probability measures with support on $V$. Then, the random Poincar\'e map for $D$, $\Upsilon_D: \Pi_{D} \to \Pi_{\partial D}$ is the (deterministic) map such that $\Upsilon_D \mathbf{Q}=\Pp_{\mathbf{Q}} \{ \x ( \tau_\eps^D \}$, where $\Pp_{\mathbf{Q}}$ is the original probability measure conditioned on $\x(0)$ being distributed as $\mathbf{Q}$.  The iteration in Section~\ref{sec: app-hetero-intuitive} illustrates the use of this map. Notice that this methodology applies regardless the type of equilibria that the system exhibits. We base our proof of Theorems~\ref{thm: MioNon} and~\ref{thm: Intro-Main-Levinso} on a similar idea. Hence, it is worth to study small noise perturbations with this direction in mind. 

As an example of a Poincar\'e map, consider our example in Section~\ref{sec: two-nodes}. The exit distribution is the composition of the Poincar\'e maps $\Upsilon_{D_5} \circ ...\circ \Upsilon_{D_1} \delta_{x_0}$, where $D_i$ are ilustrated in Figures~\ref{fig:animals} and~\ref{fig:animals3}, and we are conditioning on exit along $y_3$.

\begin{figure}
  \centering
 \includegraphics[width=1.0\textwidth]{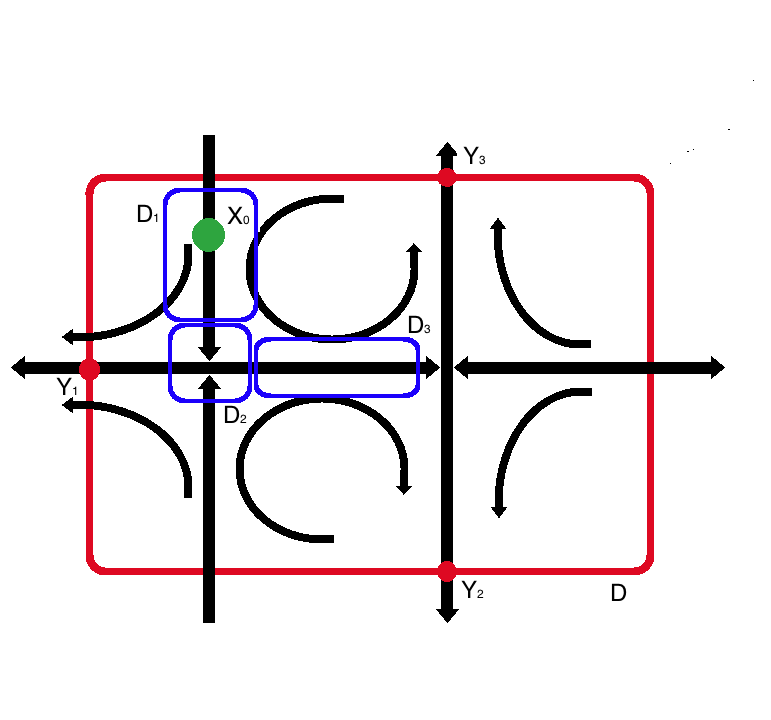} \\         
  \caption{Illustration of the domains $D_1$, $D_2$, and $D_3$ used to compute the Poincar\'e maps in the case of a heteroclinic network with 2 nodes conditioned on exit along $y_3$: escape from the first saddle.}
  \label{fig:animals}
\end{figure}

\begin{figure}
  \centering
  \includegraphics[width=1.0\textwidth]{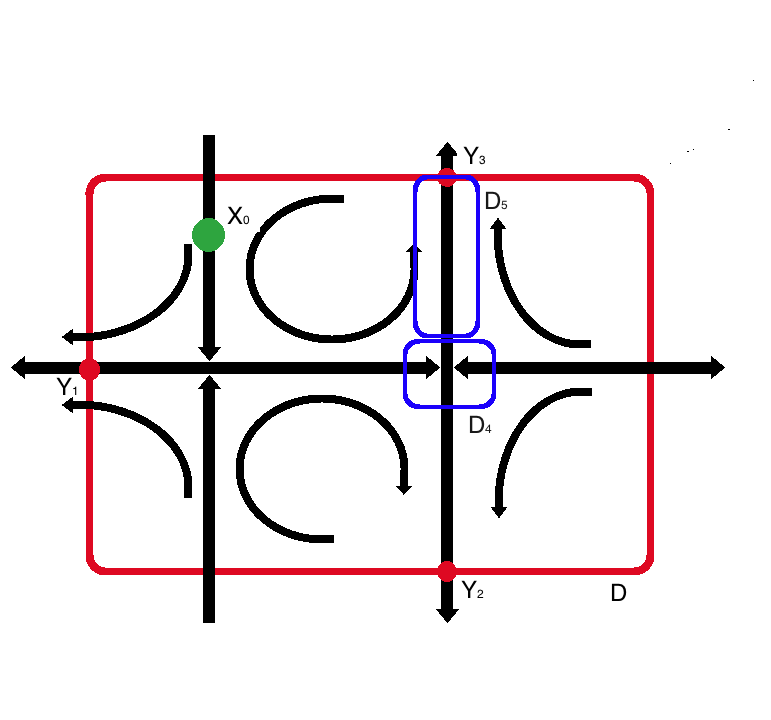}
  \caption{Illustration of the domains $D_3$ and $D_4$ used to compute the Poincar\'e maps in the case of a heteroclinic network with 2 nodes condition on exit along $y_3$: escape from the second saddle.}
  \label{fig:animals2}
\end{figure}

\section{General Setting}
\label{Sec: Intro-Notation}

The objective of this section is to establish the general setting and notation, although each chapter has the necessary modifications and additions to the following.

Let $(\Omega, \mathcal{F}, \Pp)$ be a complete probability space (every subset of every measurable null set is measurable) and $W$ be a $d$-dimensional standard Brownian Motion on it. Let $( \mathcal{F}^W_t )_{t>0}$ be the filtration generated by $W$ which satisfies the usual hypothesis~\cite[Section I.5]{Protter} . We assume that $(\Omega, \mathcal{F}, \Pp)$ is rich enough to accommodate a family of random vectors $(\xi_\eps)_{\eps\geq 0}$ in $\R^d$ such that the sigma algebra generated by $\xi_\eps$ is independent of $\mathcal{F}_\infty^W$ for each $\eps>0$. For each $\eps>0$, we consider the left continuous filtration
\[
\mathcal{G}^\eps_t=\sigma(\xi_\eps)\vee \mathcal{F}_t^W,
\]
as well as the collection of null sets
\[
N_0=\{ Z \subset \Omega : \exists G \in \mathcal{G}^\eps_\infty \text{ with } Z \subset G \text{ and } \Pp\{G\}=0 \}.
\]
Let us create the augmented filtration $\mathcal{F}_t^\eps=\sigma(\mathcal{G}_t^\eps \cup N_0)$ for $t \in [0,\infty)$, and $\mathcal{F}^\eps_\infty=\sigma( \cup_{ t \geq 0 } \mathcal{F}_t^\eps )$. It can be shown that $W$ is a brownian motion with respect to $( \mathcal{F}^\eps_t)_{t\geq0}$, the path of $W$ is independent of $\xi_\eps$ and $( \mathcal{F}^\eps_t)_{t\geq0}$ satisfies the usual hypothesis, for every $\eps>0$.

Throughout the text, we suppose that the family of random variables $(\xi_\eps)_{\eps\geq 0}$ satisfies $\xi_\eps \to \xi_0$ in distribution, and that $\xi_\eps$ has a finite second moment for each $\eps>0$.

Consider a $C^\infty$-smooth vector field~$b$ on $\R^d$ and a $C^2$-smooth matrix valued function $\sigma:\R^d \to \R^{d \times d}$. Consider the It\^o stochastic differential equation
\begin{equation}
dX_\eps = b( X_\eps )dt + \eps \sigma(X_\eps)dW \label{eqn: PrincipalEquation}
\end{equation}
equipped with initial condition
\begin{equation} \label{eqn: intro_initial_principal}
\x(0)=x_0 + \eps^\alpha \xi_\eps,
\end{equation}
where $\alpha \in (0,1]$. Hypothesis regarding the point $x_0 \in \R^d$ will be given in each chapter. We assume that both $b$ and $\sigma$ are uniformly Lipschitz and bounded, i.e.,\ there is a constant $L>0$ such that
\begin{align*}
|\sigma(x)-\sigma(y)| \vee |b(x)-b(y)|& \leq L|x-y|,\quad x,y\in \R^2, \\
|\sigma(x)| \vee |b(x)|& \leq L,\quad x\in \R^2,
\end{align*}
where $|\cdot|$ denotes the Euclidean norm for vectors and Hilbert--Schmidt norm for matrices. Further assume that the matrix function $a=\sigma \sigma^*$ is uniformly positive definite. These conditions can be weakened, but we prefer this setting to avoid multiple localization procedures throughout the text. These assumptions imply~\cite[Theorems 5.2.5 and 5.2.9]{Karatzas--Shreve} that equation~\eqref{eqn: PrincipalEquation} has a strong solution with strong uniqueness on the filtered probability space $(\Omega, \mathcal{F}, \Pp , ( \mathcal{F}^\eps_t)_{t\geq 0} )$ with initial condition~\eqref{eqn: intro_initial_principal} for each $\eps>0$. Let us recall the definition of strong uniqueness and strong solution for completeness.
\begin{definition}
A strong solution of the stochastic differential equation~\eqref{eqn: PrincipalEquation} with initial condition~\eqref{eqn: intro_initial_principal} on the filtered probability space $(\Omega, \mathcal{F}, \Pp , ( \mathcal{F}^\eps_t)_{t\geq 0} )$ is a process $\x=\{ \x(t); 0 \leq t < \infty \}$ with continuous sample paths and with the following properties:
\begin{enumerate}
\item $\x$ is adapted to the filtration $( \mathcal{F}^\eps_t)_{t\geq 0}$,
\item $\Pp \{ \x (0 ) = x_0 + \eps^\alpha \xi_\eps \}=1$,
\item $\Pp \{ \int_0^t (|b_i ( \x (s) )| + \sigma_{i,j} ( \x (s) )^2 )ds < \infty \} =1$ for every $1 \leq i,j \leq d$ and $ t\geq 0$,
\item the integral version of~\eqref{eqn: PrincipalEquation}
\[
\x(t)=\x(0)+\int_0^t b( \x (s) )ds + \eps \int_0^t \sigma (\x(s))dW(s); 0 \leq t < \infty,
\]
holds with probability $1$.
\end{enumerate}

Given two strong solutions $\x$ and $\widetilde{X}_\eps$ of~\eqref{eqn: PrincipalEquation} with initial condition~\eqref{eqn: intro_initial_principal} relative to the same brownian motion $W$. Then, we say that strong uniqueness holds whenever $\Pp \{ \x (t) =  \widetilde{X}_\eps (t) ; 0 \leq t < \infty \}=1$.

\end{definition}
For a general background on stochastic differential equations see, for example,~\cite[Chapter 5]{Karatzas--Shreve}.

The flow generated by $b$ is denoted by $S=(S^tx)_{(t,x)\in\R \times \R^d}$. That is, $S^t x$ satisfies
\begin{equation*}
\frac{d}{dt}S^{t}x=b(S^{t}x),\quad S^{0}x=x.
\end{equation*}
The linearization of $S$ along the orbit of $x$ is denoted by $\Phi_x(t)$:
\begin{equation}
\frac{d}{dt}\Phi_x(t)=A(t)\Phi_x(t)\text{, \ }\Phi_x(0)=I, \label{eqn: intro-Phi_def}
\end{equation}
where $A(t)=\nabla b(S^tx)$ and $I$ is the identity matrix. Here $\nabla$ is the derivative operator, that is, for a differentiable vector field $h:\R^d \to \R^d$, $\nabla h$ is the $\R^{d \times d}$ matrix derivative of $h$.

Throughout the text $D$ is a domain (open, connected and bounded) in $\R^d$ with piecewise $C^2$ boundary. 

The exit problem for the diffusion process $X_\eps$ in $D$ is studied. We are interested in the joint asymptotic properties (as $\eps \to 0$) of $(\x ( \tau_\eps^D ), \tau_\eps^D )$, where $\tau _{\epsilon }^{D}$ is the
stopping time defined by
\begin{equation*}
\tau _{\epsilon }^{D}=\tau _{\epsilon }^{D}(x_{0})=\inf \{t>0:X_{\epsilon
}(t)\in \partial D\}.
\end{equation*}.

Specific hypotheses on the vector field will be given in each chapter. On the other hand, the abstract formulation will not be given in each chapter, instead we assume this technical formulation to hold throughout the text.

\subsection{Organization of the Text}

The organization of the rest of the text closely mimics the presentation given in this chapter.

In Chapter~\ref{ch: Saddle} the planar (i.e. $d=2$) exit problem is studied under the assumption that $S$ has a unique saddle at the origin. That is, $0 \in \R^2$ is the only critical point $b(0)=0$ and the eigenvalues $\lambda_+$, $- \lambda_-$ of the of the matrix $\nabla b(0)$ are such that $\lambda_\pm >0$.  The exit problem is studied conditioned that the process $\x$ starts on the stable manifold $\mathcal{W}^s$ of $0$.

In Chapter~\ref{ch: levinson} the Levinson case in arbitrary dimensions is considered. We also proved Lemma~\ref{lemma: important} stated in this chapter, and use it intesively in the proof of the Levinson case. In this section, we also study the $1$-dimensional example discussed in Section~\ref{sec: 1d-time-reversal} of this chapter.

In Chapter~\ref{ch: conclusion} we present a short survey on how the techniques in this text can be extended. Several open problems are also discussed.

\chapter{Saddle Point}\label{ch: Saddle}
In this chapter we study the stochastic process $\x$ when the underlying deterministic system $S$ has a unique saddle point. 

In Section~\ref{Sec: Notation_Saddle} we introduce the setting, which relies on the setting presented in Section~~\ref{Sec: Intro-Notation} of Chapter~\ref{ch: Intro}. In
Section~\ref{sec:statement_main} we state the main theorem and split the proof into several parts. 
In Section~\ref{sec: Normal_Forms} we introduce a simplifying change of coordinates in a small neigborhood of the saddle point.  The analysis of the transformed process in Section~\ref{sec: Main}
is based upon two results. Their proofs are given in Sections~\ref{sec:close_to_saddle} 
and~\ref{sec:unstable_manifold}.

\section{Setting}
\label{Sec: Notation_Saddle}
For this chapter we consider the general formulation made in Section~\ref{Sec: Intro-Notation} of Chapter~\ref{ch: Intro}, except that we restrict ourselves to the $2-$dimensional situation. The process $\x$ is the strong solution of~\eqref{eqn: PrincipalEquation}, under the assumptions made on the $C^\infty$-smooth vector field~$b$, the $C^2$-smooth matrix valued function $\sigma:\R^2 \to \R^{2 \times 2}$ and the standard $2$-dimensional Wiener process $W$ in Section~\ref{Sec: Intro-Notation} of Chapter~\ref{ch: Intro}.

We will study the exit problem from the domain $D \subset \R^2$ with piecewise $C^2$ boundary.
Assume that the origin $0$ belongs to $D$ and it is a unique fixed point for $S$ in $\bar D$, 
or, equivalently, a unique critical point for $b$ in $\bar D$. Therefore, 
\begin{equation*}
b(x)=Ax+Q(x),
\end{equation*}%
where $A=\nabla b(0)$ and $Q$ is the non-linear
part of the vector field satisfying $|Q(x)|=O(|x|^2)$, $x\to 0$. 

Suppose that $0$ is a hyperbolic critical point, i.e.\ the matrix $A$ has two eigenvalues $\lambda_+$ and $-\lambda_-$ satisfying $-\lambda_- < 0 < \lambda_+$. Without loss of generality, we suppose
that the canonical vectors are the eigenvectors for the matrix, so that $A=\mathop{\mathrm{diag}}(\lambda_+, -\lambda_-)$.

For an interval $J \subset \R$, let $S^J x$ denote the set
\[
S^J x = \{ S^t x: t\in J \}.
\]
According to the Hadamard--Perron Theorem (see e.g.~\cite[Section~2.7]{Perko}),
the curves $\mathcal{W}^{s}$ and $\mathcal{W}^{u}$ defined via
\begin{align*}
\Wc^u &=\{ x\in \bar{D}:  \lim_{t \to -\infty } S^tx = 0 \text{, and for some } s\geq 0, S^{(-\infty,s)}x \subset D \text{ and } S^{(s,\infty)}x\cap \bar {D} = \emptyset  \},
\end{align*}
and,
\[
\Wc^s =\{ x\in \bar{D}:  \lim_{t \to \infty } S^tx = 0 \text{, and for some } s \leq 0, S^{(-\infty,s)}x \cap \bar {D} = \emptyset \text{ and } S^{(s,\infty)}x \subset D \}.
\]
are smooth, invariant under $S$ and tangent to $e_2$ and, respectively, to $e_1$ at $0$. The curve
$\Wc^{s}$ is called the stable manifold of $0$, and $\Wc^{u}$ is called the unstable manifold of $0$.

We assume that $\mathcal{W}^u$
intersects $\partial D$ transversally at points $q_{+}$ and $q_{-}$ such that
the segment of $\mathcal{W}^u$ connecting $q_{-}$ and $q_{+}$ lies entirely inside $D$ and 
contains~$0$.

We fix a point $x_{0}\in\mathcal{W}^s\cap D$ and
equip~\eqref{eqn: PrincipalEquation} with the initial condition
\begin{equation}
X_{\epsilon }(0)=x_{0}+\epsilon ^{\alpha }\xi _{\epsilon },\quad\eps>0,
\label{eq:initial_condition}
\end{equation}%
where $\alpha \in (0,1]$ is fixed, and $(\xi _{\epsilon })_{\epsilon >0}$ is
a family of random vectors independent of $W$, such that for some random
vector $\xi _{0}$, $\xi _{\epsilon }\rightarrow \xi _{0}$ as $\epsilon 
\rightarrow 0$ in distribution. 

If $\alpha\ne1$, then we impose a further technical
condition
\begin{equation}
 \Pp\{ \xi _0\parallel b(x_0)\} = 0,  \label{eqn: intial_hyp}
\end{equation}
where $\parallel$ denotes collinearity of two vectors.

\section{Main Result.}\label{sec:statement_main}
 The main result of the present chapter is the following:

\begin{theorem}
\label{Thm: Main} In the setting described above, 
there is a family of random vectors $(\phi _{\epsilon })_{\eps>0}$, a family of random variables $(\psi_{\epsilon })_{\eps>0}$,
and a number 
\begin{equation}
\beta=\left\{ 
\begin{array}{cc}
1, & \alpha \lambda _- \geq \lambda_+ \\ 
\alpha \frac{\lambda_-}{\lambda _+}, & \alpha \lambda _- <\lambda _+ \\
\end{array}%
\right.
\label{eq:alpha_0}
\end{equation}
such that%
\begin{equation*}
X_\eps (\tau _\eps^{D})=q_{\sgn(\psi_{\epsilon })}+\epsilon
^{\beta}\phi _{\epsilon }.
\end{equation*}%
The random vector 
\[\Theta_\eps=\left(\psi_\eps, \phi_\eps, \tau _{\epsilon }^{D}+\frac{\alpha }{\lambda _{+}}\ln \epsilon\right)\]
converges in distribution as $\epsilon\to 0$.
\end{theorem}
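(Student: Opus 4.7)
I follow the three-stage pathwise strategy outlined in Section~\ref{subsec: Intro_analysis}: (i) transport $\x$ from the initial condition on $\Wc^s$ into a fixed small ball $B=B_\delta(0)$ around the saddle; (ii) analyze $\x$ inside $B$ until it exits near one of $\Wc^u\cap\partial B=\{q_+^\flat,q_-^\flat\}$; (iii) propagate from $q_\pm^\flat$ along the corresponding unstable orbit to $q_\pm\in\partial D$. The transport stages (i) and (iii) are handled by Lemma~\ref{lemma: important} applied with $\Psi_\eps\equiv 0$: for a fixed $t_1>0$ with $S^{t_1}x_0\in B/2$, one has $\x(t_1)=S^{t_1}x_0+\eps^\alpha\phi_\eps(t_1)+o(\eps^\alpha)$ with $\phi_\eps(t_1)$ converging in distribution to $\Phi_{x_0}(t_1)\xi_0$ when $\alpha<1$, plus an independent Gaussian when $\alpha=1$.

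\textbf{Reduction near the saddle.} On $B$ apply the planar normal form theorem from Section~\ref{sec: saddle}: there is a $C^k$-diffeomorphism $f$ with $(\nabla f^{-1}(y))^{-1}b(f^{-1}(y))=Ay+P(y)$, where $P\equiv 0$ in the non-resonant case and $P$ is a polynomial of resonant monomials in the one-resonant case (the only other possibility in $d=2$). Setting $Y_\eps=f(\x)$ and applying It\^o's formula gives the perturbed linear SDE
\begin{equation*}
dY_\eps=\bigl(AY_\eps+P(Y_\eps)\bigr)dt+\eps\,\tilde\sigma(Y_\eps)dW+\eps^2\Psi(Y_\eps)dt,
\end{equation*}
with $A=\mathrm{diag}(\lambda_+,-\lambda_-)$. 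Since $f$ straightens $\Wc^s$ to the $e_2$-axis, $Y_\eps(t_1)=(\eps^\alpha\hat\xi_\eps,y_{t_1})+o(\eps^\alpha)$ with $y_{t_1}=f(S^{t_1}x_0)_2\ne 0$ and $\hat\xi_\eps:=(\nabla f(S^{t_1}x_0)\phi_\eps(t_1))_1$. Because the tangent to $\Wc^s$ at $S^{t_1}x_0$ is collinear with $\Phi_{x_0}(t_1)b(x_0)$, hypothesis~\eqref{eqn: intial_hyp} guarantees that the limit $\hat\xi_0$ is nonzero almost surely.

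\textbf{Exit from $B$ and final assembly.} Let $\rho_\eps=\inf\{t\ge t_1:|Y_\eps^1(t)|=\delta\}$. In the non-resonant case Duhamel gives
\begin{equation*}
Y_\eps^1(t)=\eps^\alpha e^{\lambda_+(t-t_1)}\Bigl(\hat\xi_\eps+\eps^{1-\alpha}\int_{t_1}^t e^{-\lambda_+(s-t_1)}\tilde\sigma_{1,\cdot}(Y_\eps(s))dW(s)\Bigr)+o(\eps^\alpha);
\end{equation*}
letting $\mathcal{N}_\eps$ denote the bracket at $t=\rho_\eps$ and using $|Y_\eps^1(\rho_\eps)|=\delta$ yields the identity $\rho_\eps=-(\alpha/\lambda_+)\ln\eps+\lambda_+^{-1}\ln(\delta/|\mathcal{N}_\eps|)+t_1$. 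One first verifies $\rho_\eps\to\infty$ in probability as in Section~\ref{subsec: sketch}; then $\mathcal{N}_\eps$ converges in distribution to a nondegenerate limit $\mathcal{N}$ equal to $\hat\xi_0$ when $\alpha<1$, and to $\hat\xi_0$ plus an independent Gaussian when $\alpha=1$. Setting $\psi_\eps:=\mathcal{N}_\eps$ determines which of $q_\pm^\flat$ is approached. For the stable coordinate Duhamel gives
\begin{equation*}
Y_\eps^2(\rho_\eps)=e^{-\lambda_-(\rho_\eps-t_1)}y_{t_1}+\eps\int_{t_1}^{\rho_\eps}e^{-\lambda_-(\rho_\eps-s)}\tilde\sigma_{2,\cdot}(Y_\eps(s))dW(s)+o(\eps^\beta),
\end{equation*}
where $e^{-\lambda_-(\rho_\eps-t_1)}\sim\eps^{\alpha\lambda_-/\lambda_+}(|\mathcal{N}_\eps|/\delta)^{\lambda_-/\lambda_+}$ and the stochastic integral is $O(\eps)$ with a Gaussian limit independent of $\mathcal{N}$. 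The dominant order is thus $\eps^\beta$ with $\beta=\min(1,\alpha\lambda_-/\lambda_+)$ matching~\eqref{eq:alpha_0}, producing the three regimes (symmetric, asymmetric, strongly asymmetric) of Section~\ref{subsec: Intro_analysis}. Pulling back by $f^{-1}$ and applying Lemma~\ref{lemma: important} once more to propagate from $f^{-1}(Y_\eps(\rho_\eps))$ along $\Wc^u$ to $\partial D$ yields $\x(\tau_\eps^D)=q_{\sgn\psi_\eps}+\eps^\beta\phi_\eps$, and joint weak convergence of $(\psi_\eps,\phi_\eps,\tau_\eps^D+(\alpha/\lambda_+)\ln\eps)$ follows because the transport times in (i) and (iii) are fixed constants.

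\textbf{Main obstacle.} The technical core is the resonant case: the polynomial $P(Y)$ couples $Y_\eps^1$ and $Y_\eps^2$, so Duhamel is no longer a closed identity. The remedy is to iterate the Duhamel representation, treating $P(Y_\eps)$ perturbatively and bounding remainders by Gr\"onwall-type estimates on $\{\sup_{t\in[t_1,\rho_\eps]}|Y_\eps(t)|\le\delta\}$. The explicit form of the resonance relations in $d=2$ ensures that resonant monomials contribute only lower-order terms in the Duhamel expansion, preserving the scaling exponents while at most modifying the limit random variables $\mathcal{N}$ and $\phi_0$ by a computable deterministic transformation. A secondary delicate point is controlling the stochastic integrals above during the diverging window $[t_1,\rho_\eps]$; this requires a martingale argument combined with the observation that their integrands decay fast enough to yield $L^2$ limits.
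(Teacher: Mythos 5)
Your three-stage plan (transport along $\Wc^s$, analysis near the saddle via normal form, propagation along $\Wc^u$) is exactly the decomposition the paper uses, and you correctly identify the one-resonant normal form as the non-linear model near the origin. However, there is a genuine gap in the central stage, and it is not a technicality that Gronwall quietly handles. You write Duhamel for $Y_\eps^1$ directly up to the exit time $\rho_\eps$ from the $\delta$-box with an $o(\eps^\alpha)$ remainder, but that uniform-in-$t$ remainder bound is precisely what must be established, and it cannot be established in one shot: the Gronwall constant over $[t_1,\rho_\eps]$ picks up $e^{\lambda_+(\rho_\eps-t_1)}\sim\delta\eps^{-\alpha}/|\mathcal{N}_\eps|$, which destroys the estimate. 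Likewise, bounding $\int e^{\lambda_+(t-s)}P_1(Y_\eps(s))\,ds$ requires knowing that $|Y_\eps^2|$ is already small near $\rho_\eps$ (so that the resonant coupling $|P_1|\lesssim |Y_1|^2|Y_2|$ is small even when $|Y_1|$ is of order $\delta$), yet the decay of $Y_\eps^2$ is itself what you are trying to prove.

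The paper closes this loop by introducing an intermediate stopping time $\tauh=\inf\{t:|Y_{\eps,1}(t)|=\eps^{\alpha p}\}$ for a $p\in(0,1)$ satisfying the two-sided constraint $1-\lambda_+/\lambda_-<p<\lambda_-/(\lambda_++\lambda_-)$, and runs two separate bootstrap arguments: on $[0,\tauh]$ the unstable coordinate is still small (order $\eps^{\alpha p}$), so $Y_\eps^2$ stays near $e^{-\lambda_- t}y_2$ and the martingale integrand can be replaced by its deterministic limit; on $[\tauh,\tau_\eps]$ the process is restarted from level $\eps^{\alpha p}$, and now $Y_\eps^2$ has already decayed to $O(\eps^{\alpha(1-p)\lambda_-/\lambda_+})$ so the resonant term is harmless while $Y_\eps^1$ grows to $\delta$. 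The two-sided constraint on $p$ is exactly what makes both Gronwall loops close and what ultimately produces the exponent $\beta$ in~\eqref{eq:alpha_0}. Your ``main obstacle'' paragraph gestures at iterating Duhamel perturbatively but does not supply this staging, so the argument as written does not go through. A secondary imprecision: the resonant monomials do not ``modify $\mathcal{N}$ and $\phi_0$ by a computable deterministic transformation'' — the point of the estimates is that they contribute only $o_{\Pp}$-negligible corrections, so the limits are the same as in the non-resonant case.
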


The distribution of $\psi_{\epsilon }$,$\phi _{\epsilon }$, and the distributional limit of $\Theta_\eps$
will be described precisely.

The proof of Theorem~\ref{Thm: Main} has essentially three parts involving the analysis of diffusion (i) along $\mathcal{W}^s$;
(ii) in a small neighborhood of the origin; (iii) along~$\mathcal{W}^u$.  

In order to study the first part, we need to introduce $\Phi _{x}(t)$ as the linearization of $S$ along the
orbit of $x\in\R^2$, i.e.\ we define $\Phi_{x}(t)$ to be the
solution to the matrix ODE
\begin{equation*}
\frac{d}{dt}\Phi _{x}(t)=A(t)\Phi _{x}(t)\text{, \ }\Phi _{x}(0)=I,
\end{equation*}
where $A(t)=\nabla b(S^tx)$.
We have the following theorem:

\begin{theorem} 
\label{thm: far}
Let $x \in \R^2$ and $\left( \xi_\eps \right)_{\eps>0}$ be a family of random vectors independent of $W$ and convergent in distribution, as $\eps \to 0$, to $\xi_0$. Suppose
$\alpha\in(0,1]$ and let $X_\eps$ be 
the solution of the SDE~\eqref {eqn: PrincipalEquation} with initial condition $X_{\epsilon }(0)=x+\epsilon ^{\alpha }\xi_{\epsilon }$. Then, for every $T>0$, the following representation holds true:
\begin{equation*}
X_{\epsilon }(T)=S^{T}x+\epsilon ^{\alpha }\bar{\xi}_{\epsilon },\quad \eps>0,
\end{equation*}%
where 
\begin{equation*}
\bar{\xi}_{\epsilon }\overset{\mathop{Law}}{\longrightarrow }\bar{\xi}_{0},\quad \eps\to 0,
\end{equation*}%
with%
\begin{equation*}
\bar{\xi}_{0}=\Phi _x(T)\xi_{0}+\mathbf{1}_{\{\alpha =1\}}N,
\end{equation*}%
$N$ being a Gaussian vector:
\begin{equation*}
N=\Phi _x (T)\int_{0}^{T}\Phi _x (s)^{-1}\sigma(S^s x)dW(s).
\end{equation*}
If $\alpha=1$ or assumption~\eqref{eqn: intial_hyp} holds, then $\Pp\{\bar \xi_0\parallel b({S^Tx})\}= 0$.
\end{theorem}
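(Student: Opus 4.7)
The plan is to rescale the discrepancy between $X_\eps$ and the deterministic orbit. Define $Y_\eps(t) = \eps^{-\alpha}(X_\eps(t) - S^t x)$, so that the claimed representation amounts to $\bar\xi_\eps = Y_\eps(T)$ with $Y_\eps(0) = \xi_\eps$. Writing $b(u) - b(v) = \nabla b(v)(u-v) + R(u,v)$ with $|R(u,v)| \le L|u-v|^2$ (by smoothness and boundedness of $\nabla^2 b$), It\^o's formula yields
\[
dY_\eps(t) = A(t)\,Y_\eps(t)\,dt + \eps^{\alpha}\,\widetilde R_\eps(t)\,dt + \eps^{1-\alpha}\,\sigma(X_\eps(t))\,dW(t),
\]
where $A(t) = \nabla b(S^t x)$ and $|\widetilde R_\eps(t)| \le L|Y_\eps(t)|^2$. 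Variation of parameters with the fundamental matrix $\Phi_x$ gives
\[
Y_\eps(T) = \Phi_x(T)\xi_\eps + \eps^{\alpha}\Phi_x(T)\int_0^T \Phi_x(s)^{-1}\widetilde R_\eps(s)\,ds + \eps^{1-\alpha}\Phi_x(T)\int_0^T \Phi_x(s)^{-1}\sigma(X_\eps(s))\,dW(s).
\]

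First I would establish tightness of $\sup_{t \le T}|Y_\eps(t)|$. On the event $\{\sup_{s \le t}|Y_\eps(s)| \le M\}$ the quadratic remainder $\eps^\alpha|\widetilde R_\eps|$ is bounded by $\eps^\alpha L M^2$ and therefore negligible; combining Burkholder--Davis--Gundy with a Gronwall estimate produces a uniform second-moment bound on the localized process, and sending $M \to \infty$ gives tightness of the original one. Granted tightness, the middle term above is $O(\eps^\alpha)$ in probability and disappears in the limit.

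Next I pass to the limit in the remaining pieces. Since $X_\eps \to S^{\cdot} x$ uniformly on $[0,T]$ in probability (a standard small-noise estimate in the spirit of~\eqref{eqn: estimate_basic}) and $\sigma$ is Lipschitz, the stochastic integral $\int_0^T \Phi_x(s)^{-1}\sigma(X_\eps(s))\,dW(s)$ converges in $L^2$ to $\int_0^T \Phi_x(s)^{-1}\sigma(S^s x)\,dW(s)$, a centered Gaussian vector. If $\alpha<1$ the prefactor $\eps^{1-\alpha}$ kills this contribution; if $\alpha=1$ it survives and yields exactly the Gaussian $N$ from the statement. Coupled with $\xi_\eps \to \xi_0$ in distribution, and using the independence of $\xi_\eps$ from $W$ to upgrade to joint convergence, this delivers $\bar\xi_\eps \to \Phi_x(T)\xi_0 + \mathbf{1}_{\{\alpha=1\}}N$ in distribution.

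For the non-collinearity claim I would use the identity $b(S^t x) = \Phi_x(t)\,b(x)$, which holds because both sides solve $\dot z = A(t)z$ with value $b(x)$ at $t=0$. When $\alpha\in(0,1)$ one has $\bar\xi_0 = \Phi_x(T)\xi_0$; invertibility of $\Phi_x(T)$ reduces the event $\{\bar\xi_0 \parallel b(S^T x)\}$ to $\{\xi_0 \parallel b(x)\}$, which is null by~\eqref{eqn: intial_hyp}. When $\alpha=1$, the Gaussian vector $N$ has a non-degenerate covariance (because $\sigma\sigma^{*}$ is uniformly positive definite and $\Phi_x$ is invertible), so conditionally on $\xi_0$ the vector $\bar\xi_0$ has a density on $\R^2$ and the line spanned by the fixed vector $b(S^T x)$ is negligible. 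The main obstacle I expect is the bootstrapping step for tightness: $\widetilde R_\eps$ feeds back quadratically into $Y_\eps$, but the smallness of the prefactor $\eps^\alpha$ together with stopping-time localization keeps the argument from being circular.
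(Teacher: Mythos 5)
Your proposal is correct and follows essentially the same route the paper takes: the paper proves a more general version of this statement as Lemma~\ref{lemma: Bakhtin-Levinson} (with an additional drift perturbation $\eps^{\alpha_1}\Psi_\eps$), and the core of that proof is exactly your decomposition — subtract $S^t x$, expand $b$ to second order, write the Duhamel representation with $\Phi_x$, and localize with a stopping time to control the quadratic remainder. The only cosmetic difference is the localization scale (the paper stops $\Delta_\eps^t = X_\eps(t) - S^t x$ at level $\eps^{\alpha\delta}$ with $\delta \in (1/2,1)$ rather than stopping $Y_\eps = \eps^{-\alpha}\Delta_\eps$ at a fixed level $M$), and the non-collinearity argument you give, using $b(S^T x) = \Phi_x(T)b(x)$ plus non-degeneracy of $N$'s covariance, is the natural one and matches what the paper needs.
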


\medskip

The second part of the analysis is the core of the chapter. Theorem~\ref{Thm: main_small} below
describes the behavior of the process in a small neighborhood $U$
of the origin. Notice that
since $x_0\in \Wc^s$, one can choose~$T$ large enough to ensure that that $S^Tx_0\in \Wc^s\cap U$. Therefore,
 the conditions of the following result are met if we use the terminal distribution of Theorem~\ref{thm: far} (applied to
the initial data given by~\eqref{eq:initial_condition}) as the initial
distribution.

\begin{theorem}\label{Thm: main_small}
There are two neighborhoods of the origin $U\subset U'\subset D $, two positive numbers $\delta<~\delta'$, 
and $C^2$ diffeomorphism $f:U'\to (-\delta',\delta')^2$, such that $f(U)=(-\delta,\delta)^2$
and the following property holds:
 
Suppose $x\in \Wc^s\cap U$, and $(\xi_\eps)_{\eps>0}$ is a family of random variables independent of $W$ and convergent in distribution, as $\eps\to0$, to $\xi_0$, where $\xi_0$ satisfies~\eqref{eqn: intial_hyp} with respect to $x$. Assume that
$\alpha\in(0,1]$ and that $X_\eps$ solves~\eqref{eqn: PrincipalEquation} with initial condition~
\begin{equation}
X_\eps(0)=x+\eps^\alpha\xi_\eps,
\label{eq:small_entrance_scaling}
\end{equation} 
where $\xi_\eps$ satisfies condition~\eqref{eqn: intial_hyp} with respect to $x$.

There is also a family of random vectors $(\phi'_\eps)_{\eps>0}$, and a family of random variables $(\psi'_\eps)_{\eps>0},$ such that
\begin{equation*}
X_{\epsilon }(\tau _{\epsilon }^{U}) = g( \sgn(\psi'_\eps)\delta e_1)+\epsilon ^{\beta} \phi' _\eps,
\end{equation*}
where $g=f^{-1}$, $\beta$ is defined in~\eqref{eq:alpha_0}, and 
the random vector 
\[\Theta'_\eps=\left(\psi'_\eps, \phi'_\eps, \tau_{\epsilon }^{U}+\frac{\alpha }{\lambda _{+}}\ln \epsilon\right)\]
converges in distribution as $\epsilon\to 0$.
\end{theorem}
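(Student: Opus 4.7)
The strategy is to mimic the pathwise computation of the linear example of Section~\ref{subsec: sketch} after straightening the nonlinear drift with a normal--form transformation. Since $A=\mathop{\mathrm{diag}}(\lambda_+,-\lambda_-)$ has one positive and one negative eigenvalue, the spectrum $\{\lambda_+,-\lambda_-\}$ is at most one--resonant, so the cited normal--form results (with Ilyashenko's refinement in the one--resonant case) furnish neighborhoods $U\subset U'$ of the origin, scales $0<\delta<\delta'$, and a $C^2$--diffeomorphism $f:U'\to(-\delta',\delta')^2$ with $f(U)=(-\delta,\delta)^2$ that conjugates the drift to $Ay+P(y)$, where $P$ contains only resonant monomials. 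Setting $Y_\eps=f(X_\eps)$ and applying It\^o's formula,
\begin{equation*}
dY_\eps=\bigl(AY_\eps+P(Y_\eps)\bigr)\,dt+\eps\tilde\sigma(Y_\eps)\,dW+\eps^2\Psi(Y_\eps)\,dt,
\end{equation*}
for smooth $\tilde\sigma,\Psi$. Invariance of $\Wc^s$ forces $f(\Wc^s\cap U')\subset\{y_1=0\}$, so $Y_\eps(0)=(0,f_2(x))+\eps^\alpha\nabla f(x)\xi_\eps+O(\eps^{2\alpha})$; hypothesis~\eqref{eqn: intial_hyp}, together with the fact that $b(x)\in T_x\Wc^s$ is mapped by $\nabla f(x)$ to a multiple of $e_2$, guarantees that the first coordinate of $\nabla f(x)\xi_0$ is almost surely non--zero.

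The essential structural observation is that each resonant monomial of $P_1$ contains a factor $y_1$ (and likewise $P_2$ contains a factor $y_2$), so $P_j(y)=y_j\tilde P_j(y)$ and $Y_\eps^j$ satisfies a scalar linear SDE with state--dependent rate. Its variation of constants formula reads
\begin{equation*}
Y_\eps^1(t)=e^{\Lambda_\eps^+(t)}\!\left[Y_\eps^1(0)+\eps\!\int_0^t e^{-\Lambda_\eps^+(s)}\tilde\sigma_{1\cdot}(Y_\eps(s))\,dW(s)+\eps^2\!\int_0^t e^{-\Lambda_\eps^+(s)}\Psi_1(Y_\eps(s))\,ds\right],
\end{equation*}
with $\Lambda_\eps^+(t)=\lambda_+ t+\int_0^t\tilde P_1(Y_\eps(s))\,ds$, and an analogous formula with $-\lambda_-$ holds for $Y_\eps^2$. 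Fix $\bar\alpha\in(0,\alpha)$ and introduce the stopping time $\sigma_\eps=\inf\{t>0:|Y_\eps^1(t)|=\eps^{\bar\alpha}\}$. On $[0,\sigma_\eps]$ the process remains in a ball of radius $O(\eps^{\bar\alpha})$, so $\tilde P_1(Y_\eps)=O(\eps^{2\bar\alpha})$; since one checks $\sigma_\eps=O(\log\eps^{-1})$, this gives $\Lambda_\eps^+(\sigma_\eps)=\lambda_+\sigma_\eps+o_\Pp(1)$. Evaluating the identity at $\sigma_\eps$ therefore yields
\begin{equation*}
\eps^{\bar\alpha}=e^{\lambda_+\sigma_\eps}\bigl(\eps^\alpha|(\nabla f(x)\xi_\eps)_1|+o_\Pp(\eps^\alpha)\bigr),
\end{equation*}
so $\sigma_\eps+(\alpha-\bar\alpha)\lambda_+^{-1}\log\eps$ converges in probability to a non--degenerate limit, while the analogous formula for $Y_\eps^2$ gives $Y_\eps^2(\sigma_\eps)=e^{-\lambda_-\sigma_\eps}f_2(x)+O_\Pp(\eps)$, which is of order $\eps^\beta$ with $\beta$ as in~\eqref{eq:alpha_0}.

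On $[\sigma_\eps,\tau_\eps^U]$ we have $|Y_\eps^1|\geq\eps^{\bar\alpha}\gg\eps$, the noise is subleading, and the process is a small--noise perturbation of the normal--form flow. Applying Lemma~\ref{lemma: important} of Chapter~\ref{ch: levinson} with scaling exponent $\bar\alpha$ and with the terminal distribution of Phase~A as initial data produces the representation $Y_\eps(\tau_\eps^U)=\sgn(\psi'_\eps)\delta e_1+\eps^\beta\tilde\phi_\eps$ and $\tau_\eps^U-\sigma_\eps=\lambda_+^{-1}\log(\delta/\eps^{\bar\alpha})+O_\Pp(1)$, with $\psi'_\eps$ inherited from $\sgn(Y_\eps^1(\sigma_\eps))$ and $(\tilde\phi_\eps)$ tight. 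Summing the two phases makes the $\bar\alpha$--dependence cancel and $\tau_\eps^U+\alpha\lambda_+^{-1}\log\eps$ tight and jointly convergent with $(\psi'_\eps,\tilde\phi_\eps)$; pulling back by the $C^2$ map $g=f^{-1}$ via a Taylor expansion gives $\phi'_\eps=\nabla g(\sgn(\psi'_\eps)\delta e_1)\tilde\phi_\eps+o_\Pp(1)$, proving the claim. The main obstacle is controlling the nonlinear corrections $\int_0^{\sigma_\eps}\tilde P_1(Y_\eps)\,ds$ and $\eps^2\!\int_0^{\sigma_\eps}e^{-\Lambda_\eps^+(s)}\Psi_1(Y_\eps)\,ds$ finely enough that they are genuinely $o_\Pp(1)$ at the logarithmic timescale of $\sigma_\eps$; this requires a self--consistent bootstrap on the size of $Y_\eps$ throughout Phase~A, exploiting that each resonant factor $\tilde P_j$ carries an extra power of $|Y_\eps|$ beyond what the exponential weight $e^{-\Lambda_\eps^+(s)}$ already dampens.
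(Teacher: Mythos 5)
Your overall architecture matches the paper's: conjugate to normal form, observe the factorization $P_j(y)=y_j\tilde P_j(y)$ coming from the structure of resonant monomials, split the evolution at an intermediate stopping time where $|Y^1_\eps|$ reaches a scale $\eps^{\bar\alpha}$, and let the $\bar\alpha$-dependence cancel in the total exit time. The ``state-dependent integrating factor'' $\Lambda_\eps^\pm(t)$ that absorbs the resonant nonlinearity into the exponent is a clean variant of the paper's plain Duhamel formula (which keeps only $e^{\pm\lambda_\pm t}$ and estimates $\int e^{\mp\lambda_\pm s}H_j\,ds$ separately). This part is a genuinely different bookkeeping and in principle viable, although the supporting estimate you quote is off: on $[0,\sigma_\eps]$ the process is \emph{not} in a ball of radius $O(\eps^{\bar\alpha})$ --- $Y^2_\eps$ starts at $y_2\neq 0$ and decays like $e^{-\lambda_- t}y_2$, so the correct bound is $\tilde P_1(Y_\eps)=O(|Y^1_\eps||Y^2_\eps|)=O(\eps^{\bar\alpha})$, not $O(\eps^{2\bar\alpha})$. (The conclusion $\Lambda_\eps^+(\sigma_\eps)=\lambda_+\sigma_\eps+o_\Pp(1)$ still holds because the integrand also carries the $e^{-\lambda_- s}$ decay, but the reasoning as written is wrong.) Likewise, $Y^2_\eps(\sigma_\eps)$ is of order $\eps^{(\alpha-\bar\alpha)\lambda_-/\lambda_+}\wedge\eps$, not $\eps^\beta$; the $\beta$-scaling only emerges at $\tau_\eps^U$ after Phase~B.

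The serious gap is Phase~B. You invoke Lemma~\ref{lemma: important} on the interval $[\sigma_\eps,\tau_\eps^U]$, but that lemma is a \emph{finite-time} approximation theorem: it controls $X_\eps(t)-S^t x_0$ uniformly over compact $t$-intervals and requires an initial condition of the form $x_0+\eps^{\alpha_2}\xi_\eps$ with a fixed deterministic base point $x_0$. Neither hypothesis holds here. The Phase~B interval has length $\tau_\eps^U-\sigma_\eps\sim -\bar\alpha\lambda_+^{-1}\log\eps\to\infty$, and the starting point $(\pm\eps^{\bar\alpha},Y^2_\eps(\sigma_\eps))$ collapses to a point on the stable axis rather than staying near a nonzero deterministic center. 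Even informally, Lemma~\ref{lemma: important} cannot produce a hitting-time identity of the form $\tau_\eps^U-\sigma_\eps=\lambda_+^{-1}\log(\delta/\eps^{\bar\alpha})+O_\Pp(1)$; that identity has to come out of a Duhamel representation of $Y^1_\eps$ on the logarithmic time scale, together with a Gronwall bootstrap on the nonlinear remainder (this is precisely the content of Lemma~\ref{Thm:after_tauh} and the $R_\eps$-analysis in Section~\ref{sec:unstable_manifold}, which are not replaceable by the finite-time lemma). Relatedly, the choice of the intermediate scale is not arbitrary in $(0,\alpha)$: the paper constrains $p$ by~\eqref{eqn: p_prop} (equivalently $\bar\alpha=\alpha p$ with $1-\lambda_+/\lambda_-<p<\lambda_-/(\lambda_++\lambda_-)$) and uses each inequality in a different phase; without such a constraint the nonlinear corrections in Phase~B need not be subleading. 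So the proposal needs a dedicated Phase~B argument in the spirit of Lemma~\ref{Thm:after_tauh}; as written, the step ``apply Lemma~\ref{lemma: important}'' is a wrong move, not just an omitted detail.
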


The notation for $\Theta'_\eps$ and its components is chosen to match the notation involved in the
statement of Theorem~\ref{Thm: Main}. 
Random elements $\psi' _{\epsilon }$,$\phi'_{\epsilon }$ and the distributional limit of $\Theta'_\eps$
will be described precisely, see~\eqref{eq:limiting_distr_for_Theta_prime}. 
Obviously, the symmetry or asymmetry in the limiting distribution of $\psi'_\eps$ results in the symmetric or asymmetric choice of exit direction so that the exits in the positive
and negative directions are equiprobable or not. On the other hand, the limiting distribution of $\phi'_\eps$ determining the asymptotics of the exit point can also be symmetric
or asymmetric which results in the corresponding features of the random choice of the exit  direction at the next saddle point visited by the diffusion. 

In Section~\ref{sec: Main} we prove Theorem~\ref{Thm: main_small} using the approach based on normal forms.

The last part of the analysis is devoted to the exit from $D$ along $\Wc^u$. We need the following statement which is a specific case of the main result of Chapter~\ref{ch: levinson}.

\begin{theorem}\label{Thm:Transversal_exit} In the setting of Theorem~\ref{thm: far}, assume additionally that (i) $q=S^Tx\in \partial D$; (ii) there is no
$t\in[0,T)$ with $S^tx\in\partial D$;  (iii) $b(q)$ is tranversal (i.e.\ not tangent) to~$\partial D$ at~$q$. Then
\begin{equation}
\tau_\eps^D\stackrel{\Pp}{\to} T,\quad \eps\to 0, 
\end{equation}
and
\begin{equation}
\eps^{-\alpha} (X_\eps(\tau_\eps^D)-q) \stackrel{Law}{\to}\pi \bar \xi_0 ,\quad \eps\to0,
\end{equation}
where $\pi$ denotes the projection along $b(q)$ onto the tangent line to $\partial D$ at $q$. 
\end{theorem}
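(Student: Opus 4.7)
The plan is to reduce this statement to a direct application of Lemma~\ref{lemma: important} combined with the transversality hypothesis, essentially recovering a special case of Theorem~\ref{thm: Intro-Main-Levinso}. By Lemma~\ref{lemma: important} (applied with $\Psi_\eps\equiv 0$), or equivalently by propagating Theorem~\ref{thm: far} to a closed interval of length $T+\eta$, I have the scaling representation $X_\eps(t)=S^tx+\eps^\alpha \varphi_\eps(t)$, where $\varphi_\eps$ converges in distribution, uniformly on compact time intervals, to the limiting process $\phi_0$ of Lemma~\ref{lemma: important}, with $\phi_0(T)=\bar\xi_0$ in the notation of Theorem~\ref{thm: far}.

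Near $q$ the boundary $\partial D$ is locally a smooth hypersurface, so it can be written as the zero set of a smooth function $F$ with $\nabla F(q)\neq 0$; the transversality hypothesis translates to $\nabla F(q)\cdot b(q)\neq 0$. I would look for the exit time in the form $\tau_\eps^D=T+\eps^\alpha s_\eps$ and substitute this ansatz into $F(X_\eps(T+\eps^\alpha s))=0$. Taylor-expanding $F$ around $q$ and using $F(q)=0$, the equation becomes, after dividing by $\eps^\alpha$,
\[
\nabla F(q)\cdot b(q)\,s+\nabla F(q)\cdot \varphi_\eps(T)+R_\eps(s)=0,
\]
where $R_\eps(s)$ is an explicit remainder that vanishes uniformly on bounded sets of $s$ in probability, thanks to the smoothness of $F,b,\sigma$ combined with the uniform convergence supplied by Lemma~\ref{lemma: important}. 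The limiting equation $\nabla F(q)\cdot b(q)\,s+\nabla F(q)\cdot \bar\xi_0=0$ has the unique root $s_0=-\pi_b\bar\xi_0$, where $\pi_b v$ denotes the scalar coefficient in the unique decomposition $v=\pi_b v\cdot b(q)+\pi v$. A random implicit function argument (or a Skorokhod representation plus continuity) then yields $s_\eps\to s_0$ in distribution.

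One step that requires care is verifying that $\tau_\eps^D$ really is close to $T$ and not a much earlier exit time. For this I would invoke assumption~(ii): since the compact set $\{S^tx:0\le t\le T-\eta\}$ stays a positive distance from $\partial D$ for each $\eta>0$, the elementary estimate~\eqref{eqn: estimate_basic}, applied on a slightly enlarged domain in which $b,\sigma$ remain bounded and Lipschitz, gives $\Pp\{\tau_\eps^D<T-\eta\}\to 0$. Together with the implicit function step, this pins down $\tau_\eps^D=T+\eps^\alpha s_\eps+o(\eps^\alpha)$ in probability, which in particular gives the claim $\tau_\eps^D\to T$ in probability. Substituting the representation for $\tau_\eps^D$ back into $X_\eps$ finally yields
\[
\eps^{-\alpha}(X_\eps(\tau_\eps^D)-q)=s_\eps\,b(q)+\varphi_\eps(T)+o(1)\longrightarrow -\pi_b\bar\xi_0\cdot b(q)+\bar\xi_0=\pi\bar\xi_0
\]
in distribution, which is the claimed limit. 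I expect the main obstacle to be the random implicit function step: to handle it cleanly I would first establish tightness of $s_\eps$ directly from the asymptotically linear equation (using that $\nabla F(q)\cdot b(q)\neq 0$ bounds $|s|$ in terms of $|\varphi_\eps(T)|$, which is tight), then identify the limit along any weakly convergent subsequence by uniqueness of the root of the limiting linear equation, and conclude by the standard tightness-plus-uniqueness criterion.
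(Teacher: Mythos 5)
Your route is essentially the one the paper takes when it proves this statement as Theorem~\ref{thm: Main-Levinson} in Chapter~\ref{ch: levinson}: start from the expansion $X_\eps(t)=S^tx+\eps^\alpha\varphi_\eps(t)$, localize the boundary near $q$, reduce to a scalar equation for the rescaled time correction using transversality, and conclude via tightness-plus-uniqueness. Your choice of a level-set defining function $F$ with $\nabla F(q)\cdot b(q)\neq 0$, as opposed to the paper's graph parametrization $y\mapsto z+y+F(y)\,b(z)$ over $T_zM$, is a cosmetic difference; both lead to the same scalar equation.

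There is, however, a genuine gap in how you localize the exit time. From the exponential estimate~\eqref{eqn: estimate_basic} you get $\Pp\{\tau_\eps^D<T-\eta\}\to 0$ for each \emph{fixed} $\eta>0$, and an analogous upper bound would give $\Pp\{\tau_\eps^D>T+\eta\}\to 0$. But ``$|\tau_\eps^D-T|<\eta$ for fixed $\eta$'' is not strong enough to justify the ansatz $\tau_\eps^D=T+\eps^\alpha s_\eps$ with $s_\eps$ tight, because the Taylor remainders you are absorbing into $R_\eps(s)$ (both the $O(t^2)$ deviation of $S^{T+t}x$ from $q+tb(q)$ and the second-order term of $F$) are of size $\eta^2$ at the edge of the window, which for fixed $\eta$ dominates $\eps^\alpha$. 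Your closing plan — ``establish tightness of $s_\eps$ directly from the asymptotically linear equation'' — is circular at exactly this point: the asymptotically linear equation is valid only on a set of $s$ where the remainder is already known to be small, which requires already knowing that $\tau_\eps^D-T$ is much smaller than any fixed $\eta$. The paper resolves this by introducing an intermediate scale $\eps^\gamma$ with $\gamma\in(\alpha/2,\alpha)$ and proving $\Pp\{|\tau_\eps-T|>\eps^\gamma\}\to 0$ (Lemma~\ref{lm:omega_eps_to_1}); the lower bound there uses the Hausdorff distance estimate $d(\{S^tx:0\le t\le T-\delta\},M)\ge c\delta$ \emph{together with} the $O_\Pp(\eps^\alpha)$ uniform bound on $X_\eps-S^\cdot x$ from Lemma~\ref{lemma: Bakhtin-Levinson}, which is strictly stronger than the large-deviation estimate at a fixed $\delta$. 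The constraint $\gamma>\alpha/2$ is precisely what makes the $O(\eps^{2\gamma})$ Taylor remainder $o(\eps^\alpha)$, while $\gamma<\alpha$ ensures the window actually captures $\tau_\eps^D$. You should replace your LDP step with this sharper localization before the implicit-function argument; after that, the rest of your proof goes through.
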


Now Theorem~\ref{Thm: Main} follows from the consecutive application of Theorems~\ref{thm: far} through~\ref{Thm:Transversal_exit} and with 
the help of the strong Markov property. In fact, in this chain of theorems, the conclusion of 
Theorem~\ref{thm: far} ensures that the conditions of Theorem~\ref{Thm: main_small} hold, and
the conclusion of the latter ensures that the conditions of Theorem~\ref{Thm:Transversal_exit} hold.
Notice that
the total time needed to exit~$D$ equals the sum of times described in the three theorems.
Notice also that at each step we can compute the limiting initial and terminal distributions explicitly. Theorems~\ref{thm: far} and~\ref{Thm:Transversal_exit} contain the respective formulas 
in their formulations, and the explicit limiting distribution for $\Theta'_\eps$ of Theorem~\ref{Thm: main_small} is computed in~\eqref{eq:limiting_distr_for_Theta_prime}.

\section {Simplifying change of coordinates}
\label{sec: Normal_Forms}

\subsection{Smooth Transformation and Normal Forms} 

In this section we give a brief review of the theory of Normal Forms. In particular, we focus on the neighborhood of a saddle point for the deterministic flow $S$. 

The idea is to find a local change of variables $\theta:\R^2 \to \R^2$ such that $z(t)=\theta ( S^t x )$ satisfies $\dot{z}=Az$ with the appropriate initial condition. First, note that $z$ satisfies the equation
\begin{align*}
\frac{d}{dt}z(t) &= \nabla  \theta ( S^t x  ) b ( S^t x ) \\
&= \nabla  \theta ( z(t) )^{-1} b ( \theta^{-1} ( z(t) ) ), \quad z(0)=\theta(x).
\end{align*}
Hence, the goal is to find a transformation $\theta:\R^2 \to \R^2$ that leaves $\nabla  \theta ( z )^{-1} b ( \theta^{-1} ( z ) )$ as simple as possible (ideally equal to $Az$).

We start with some notions. For a multi-index $\alpha =(\alpha _{1},\alpha
_{2})\in \mathbb{Z}_{+}^{2}$ and a base $\{e_{1},e_{2}\}$ of $\mathbb{R}^{2}$%
 (as a vector space over $\mathbb{R}$) we denote the monomial $x^{\alpha
}e_{i}=x_{1}^{\alpha _{1}}x_{2}^{\alpha _{2}}e_{i}$.
\begin{definition}
For a non-negative integer $r$,
the space of linear combinations (over $\mathbb{R}$) of monomials $x^{\alpha
}e_{i}$ with $|\alpha |=\alpha _{1}+\alpha _{2}=r$, is called the space of Homogenous Polynomials in $2$ variables of degree $r$. This space is denoted as $\mathcal{H}_{r}$. In other words, $
\mathcal{H}_{r}$ is,
\begin{equation*}
\mathcal{H}_{r}=\rm{span}_{\mathbb{R}}\left\{ x^{\alpha }e_{j}:\alpha \in \mathbb{%
Z}_{+}^{2},|\alpha |=r \text{ and } 1\leq j\leq 2\right\} .
\end{equation*}%
\end{definition}
It is easy to see that $\mathcal{H}_{r}$ is isomorphic (as a vector space
over the real numbers) to $\mathbb{R}^{2(r+1)}$.

Using this notation, use Taylor's classical theorem to decompose the function $b:\mathbb{R}^{2}\rightarrow \mathbb{R}^{2}$  as%
\begin{equation}
b(z)=Az+b_{2}(z)+...+b_{R}(z),  \label{eqn: b_expansion}
\end{equation}%
with $b_{i}\in \mathcal{H}_{i}$ for $1\leq i \leq R$, and $b_R (x) = O ( |x|^R )$ as $|x| \to 0$. 

Suppose that $z=\theta _{k}(\zeta )$, where 
$\theta _{k}$ is the near identity transformation 
\begin{equation} \label{eqn: theta}
\theta _{k}(\zeta )=\zeta+h_{k}(\zeta ), \quad h_{k}\in \mathcal{H}_{k}, \quad k \geq 2.
\end{equation}%
Note that $\theta _{k}$ is a topological diffeomorphism in a small open
neighborhood of the origin $\Omega _{k}$. Throughout we restrict the analysis inside $\Omega_k$. A Taylor approximation shows that the inverse of $\theta_k$ satisfies
\begin{align} \notag
\theta_k^{-1} ( \zeta) &= \zeta - h_k ( \zeta ) + O ( |\zeta|^{ 2 k -1 } ) \\
&=  \zeta - h_k ( \zeta ) + O ( |\zeta|^{ k + 1 } ). \label{eqn: thetaInverse} 
\end{align}
Further application of Taylor's approximation together with the condition that $k\geq2$, imply that for any $\zeta \in \Omega _{k}$,
\begin{align*}
\nabla \theta _{k}(\zeta )^{-1} &=I-\nabla h_{k}(\zeta )+O(|\zeta |^{2 ( k-1 )}) \\
&= I-\nabla h_{k}(\zeta )+O(|\zeta |^{k}).
\end{align*}
Also, from~\eqref{eqn: thetaInverse}, we obtain that for any $i=1,...,R-1$, 
\[
b_{i}(\theta^{-1} _{k}(\zeta )) =b_{i}(\zeta )+O(|\zeta |^{k+1}).
\]
Using~\eqref{eqn: b_expansion} and this bounds, we get that 
\begin{eqnarray*}
(\nabla\theta _{k}(\zeta ))^{-1}b(\theta _{k}^{-1}(\zeta )) &=&A\zeta +b_{2}(\zeta
)+...+b_{k-1}(\zeta ) \\
&&+(b_{k}(\zeta )-\mathcal{L}_{A}^{k}h_{k}(\zeta ))+O(|\zeta |^{k+1}),
\end{eqnarray*}%
where we defined the operator $\mathcal{L}_{A}^{k}:\mathcal{H}%
_{k}\rightarrow \mathcal{H}_{k}$ by 
\begin{equation} \label{eqn: L_def}
\mathcal{L}_{A}^{k}h(\zeta )=h(\zeta )A\zeta -A\nabla h(\zeta ).
\end{equation}
It is clear that the following theorem holds:
\begin{theorem} \label{thm: Normal}
Let $\mathcal{ R } ( \mathcal{L}_{A}^{k} ) \subset \mathcal{H}_k$ be the range of the operator $\mathcal{L}_{A}^{k}:\mathcal{H}%
_{k}\to \mathcal{H}_{k}$. Take $\mathcal{ I }_k \subset \R^2$ be any subspace such that $\mathcal{H}_k=\mathcal{ R } ( \mathcal{L}_{A}^{k} ) \oplus \mathcal{ I }_k$. Then, there is a sequence of near identity transformations of the form~\eqref{eqn: theta} and nested neighborhoods of the origin $\Omega_{k+1} \subset \Omega_{k}$, such that $z(t)= \theta_r \circ \dots \circ \theta_2 (S^t x) $ satisfies
\[
\frac{d}{dt} z (t) = Az(t) + b_2 ( z(t) ) + \dots + b_r ( z(t) ) + O ( |z|^{ r+1 } ),
\]
inside $\Omega_r$, and $b_k \in \mathcal{ I }_k$, $k=1,...,r$.

An equation written in this form is said to be in Normal Form up to order $r$. 
\end{theorem}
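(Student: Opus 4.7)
The plan is a straightforward induction on $r$, with the inductive step supplied by the change-of-variables calculation displayed immediately before the theorem. I would formulate the induction as follows: for each $r\geq 2$ there exist near-identity maps $\theta_2,\ldots,\theta_r$ of the form~\eqref{eqn: theta} and nested neighborhoods of the origin $\Omega_2\supset \Omega_3\supset \cdots \supset \Omega_r$ on which the composition $\Theta_r:=\theta_r\circ\cdots\circ\theta_2$ is a smooth diffeomorphism onto its image, and the transformed trajectory $z(t)=\Theta_r(S^t x)$ satisfies $\dot z = A z + b_2(z) + \cdots + b_r(z) + O(|z|^{r+1})$ inside $\Omega_r$, with $b_k\in \mathcal{I}_k$ for every $2\leq k \leq r$.

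The base case $r=2$ is essentially contained in the calculation preceding the theorem. Write $b(x)=Ax+\tilde b_2(x)+O(|x|^3)$ with $\tilde b_2\in\mathcal{H}_2$, decompose $\tilde b_2=p_2+q_2$ according to $\mathcal{H}_2=\mathcal{R}(\mathcal{L}_A^2)\oplus\mathcal{I}_2$, and pick any $h_2\in\mathcal{H}_2$ with $\mathcal{L}_A^2 h_2=p_2$; such a preimage exists by the definition of the range. Applying the formula displayed above the theorem with $k=2$ to $\theta_2(\zeta)=\zeta+h_2(\zeta)$ produces the transformed field $A\zeta + q_2(\zeta) + O(|\zeta|^3)$ with $q_2\in\mathcal{I}_2$. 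Since $\theta_2$ is near identity at the origin, one may fix $\Omega_2$ on which $\theta_2$ is a smooth diffeomorphism and the remainder bound is uniform.

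For the inductive step, assume the conclusion through order $r-1$, so that in the coordinates supplied by $\Theta_{r-1}$ the vector field reads $A\zeta + b_2(\zeta) + \cdots + b_{r-1}(\zeta) + c_r(\zeta) + O(|\zeta|^{r+1})$, where $b_k\in\mathcal{I}_k$ for $2\leq k\leq r-1$ and $c_r\in\mathcal{H}_r$ is the possibly non-resonant homogeneous term of order $r$. Apply the same calculation as in the paragraph above the theorem with $k=r$ and $\theta_r(\zeta)=\zeta+h_r(\zeta)$, $h_r\in\mathcal{H}_r$. Two points need checking: that the pre-existing resonant terms $b_2,\ldots,b_{r-1}$ are untouched, and that the order-$r$ term transforms into $c_r-\mathcal{L}_A^r h_r$. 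Both follow from the same Taylor bookkeeping carried out in the paper: a correction from $h_r$ or $\nabla h_r$ applied to a homogeneous term of degree $j\geq 2$ contributes at degree $\geq j+r-1\geq r+1$, while the linear part $A\zeta$ contributes exactly $-\mathcal{L}_A^r h_r$ at degree $r$ and nothing below. Decomposing $c_r=p_r+q_r$ with $p_r\in \mathcal{R}(\mathcal{L}_A^r)$ and $q_r\in\mathcal{I}_r$, and picking any $h_r$ with $\mathcal{L}_A^r h_r=p_r$, makes the order-$r$ term equal to $q_r\in\mathcal{I}_r$, as required. Shrinking to $\Omega_r\subset \Omega_{r-1}$ guarantees that $\theta_r$, and hence $\Theta_r$, is a diffeomorphism and that the $O(|\zeta|^{r+1})$ estimate is uniform there.

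I do not anticipate a genuine obstacle: the heart of the proof is the algebraic identity for the order-$r$ coefficient already derived immediately before the theorem, and the remainder is Taylor-expansion bookkeeping together with the one-line linear algebra needed to choose a preimage of $p_r$ under $\mathcal{L}_A^r$. The only subtlety worth flagging is that the formula above the theorem was derived for the original field $b$, whereas the inductive step applies it to the $\Theta_{r-1}$-image of $b$; however, that image is again smooth with linear part $A$ at the origin and admits a homogeneous-polynomial expansion, so the derivation carries over verbatim at each level of the induction.
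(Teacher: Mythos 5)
Your proof is correct and follows the same route the paper implicitly takes: the theorem is stated immediately after the change-of-variables calculation that furnishes the inductive step, and your argument is simply the induction on $r$ that turns that calculation into a complete proof. The two details you flag explicitly — that lower-order resonant terms are left untouched because any correction from $h_r$ or $\nabla h_r$ lands at degree at least $r+1$, and that the calculation must be re-applied at each level to the already-transformed field (which is still smooth with linear part $A$) — are exactly the bookkeeping points the paper leaves implicit.
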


The idea is to characterize the image of the operator $\mathcal{L}_{A}^{k}$
and simplify each non-linear part of $b$, starting from $b_{2}$ and all the
way up to $b_{R}.$ In order to achieve this, we remark that, $x^{\alpha
}e_{j}$ is an eigenvector of $\mathcal{L}_{A}^{k}$ for any $\alpha \in 
\mathbb{Z}_{+}^{2}$: 
\begin{equation*}
\mathcal{L}_{A}^{k}x^{\alpha }e_{j}=(\lambda ^{T}\alpha -\lambda
_{j})x^{\alpha }e_{j}\text{,}
\end{equation*}%
for $\lambda =(\lambda _{+},\lambda _{-})$. This motivates the following definition:
\begin{definition} 
 A pair of complex numbers $\lambda=(\lambda_1, \lambda_2)$ is said to be non-resonant if there are no 
integral relations between them of the form $\lambda_j=\alpha \cdot \lambda$, where $\alpha =(\alpha _1,\alpha_2) \in \mathbb{Z}_{+}^{2}$ 
is a multi-index with $|\alpha|=\alpha_1 + \alpha_2 \geq 2$. Otherwise, we say that $\lambda=(\lambda_1, \lambda_2)$ is resonant. 

A resonant $\lambda$ is said to be one-resonant if all the resonance relations for $\lambda$ follow from a single resonance relation.  

A monomial $x^\alpha e_j=x_1^{\alpha _1 } x_2 ^{\alpha _2 } e_j$ is called a resonant monomial of order~$R$ if~$\alpha \cdot \lambda = \lambda_j$ and $|\alpha|=R$.
\end{definition}

In the spirit of Theorem~\ref{thm: Normal} it is clear (see~\cite{IIlyashenko},\cite{MR1290117}) that for any pair of integers $R\geq1$ and $k \geq 1$, there are two neighborhoods of the origin $\Omega_f$ and $\Omega_g$  and a $C^k$-diffeomorphism $f:\Omega_f\to \Omega_g$ with inverse  $g:\Omega_g\to \Omega_f$ such that  
\begin{equation}
\left( \nabla g (y) \right)^{-1}b( g (y) )=Ay + P(y) + \mathcal{R}(y), \quad y \in \Omega_g \label{eqn; Pre-NormalForm}
\end{equation} 
where $P$ is a polynomial containing only resonant monomials of order at most~$R$ and $\mathcal{R}(\zeta)=O(|\zeta|^{R+1})$. Moreover, the so called Poincar\'e theorem~\cite[Theorem 2.2.4]{MR1290117} asserts that if $\lambda$ is non-resonant, then $f$ can be chosen so that both $P$ and $\mathcal{R}$ in~\eqref{eqn; Pre-NormalForm} are identically zero.  
If $\lambda$ is one-resonant then~\cite[Theorem~3,Section~2]{IIlyashenko}  says that $f$ can be chosen so that $\mathcal{R}$ in~\eqref{eqn; Pre-NormalForm} is identically zero. 
More precisely:
\begin{lemma} \label{lemma: Ill} 
For any $k \geq 1$, there are two neighborhoods of the origin $\Omega_f$ and $\Omega_g$  and a $C^k$-diffeomorphism 
$f:\Omega_f\to \Omega_g$ with inverse  $g:\Omega_g\to \Omega_f$
such that  
\begin{equation}
\left( \nabla g(y) \right)^{-1}b( g (y) )=Ay + P(y), \quad y \in \Omega_g, \label{eqn: NormalForm}
\end{equation}
where $P$ is a polynomial that contains only resonant monomials.
\end{lemma}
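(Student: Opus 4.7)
The plan is to combine the explicit formal-normalization procedure of Theorem~\ref{thm: Normal} with Sternberg's $C^k$ linearization (normalization) theorem for hyperbolic fixed points. The first step produces a polynomial normal form modulo a remainder flat to high order; the second step disposes of the remainder by a further finitely-smooth change of coordinates.

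More concretely, I would first fix $k$ and choose $R=R(k,\lambda_+,\lambda_-)$ large (its value to be determined at the end). For each $j=2,\dots,R$ I would construct $h_j\in\mathcal{H}_j$ and the near-identity transformation $\theta_j(\zeta)=\zeta+h_j(\zeta)$ of~\eqref{eqn: theta} as follows. The homological operator $\mathcal{L}_A^j$ defined in~\eqref{eqn: L_def} acts diagonally on the monomial basis $\{x^\alpha e_i\}_{|\alpha|=j,\ i=1,2}$ of $\mathcal{H}_j$ with eigenvalues $\alpha\cdot\lambda-\lambda_i$, and these vanish exactly on the resonant monomials. Hence one decomposes $\mathcal{H}_j=\mathcal{R}(\mathcal{L}_A^j)\oplus\mathcal{I}_j$ with $\mathcal{I}_j$ spanned by the resonant monomials of degree $j$, and one may solve for $h_j$ so that the degree-$j$ part of the transformed drift lies entirely in $\mathcal{I}_j$. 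Setting $f_R=\theta_R\circ\cdots\circ\theta_2$ and $g_R=f_R^{-1}$, one obtains a $C^\infty$ diffeomorphism of a neighborhood of $0$ for which
\[
(\nabla g_R(y))^{-1} b(g_R(y)) = Ay + P(y) + \widetilde{\mathcal{R}}(y),
\]
where $P$ is a polynomial containing only resonant monomials and $\widetilde{\mathcal{R}}(y)=O(|y|^{R+1})$.

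Second, I would invoke Sternberg's theorem on $C^k$ normalization at a hyperbolic fixed point (as presented in~\cite{Hartman} or in the dynamical-systems references cited in the chapter): since $A=\mathrm{diag}(\lambda_+,-\lambda_-)$ has no eigenvalue on the imaginary axis, the origin is hyperbolic for the polynomial normal form $Ay+P(y)$, and there exists an integer $N=N(k,\lambda_+,\lambda_-)$ such that any smooth vector field agreeing with $Ay+P(y)$ to order $N$ at the origin is $C^k$-conjugate to $Ay+P(y)$ in a neighborhood of $0$. Choosing $R\geq N$ in the first step and applying Sternberg's theorem to $Ay+P(y)+\widetilde{\mathcal{R}}(y)$ gives a $C^k$ diffeomorphism $\Psi$, tangent to the identity at the origin, that conjugates this field to $Ay+P(y)$. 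Taking $f=\Psi^{-1}\circ f_R$ and $g=g_R\circ\Psi$, and restricting to sufficiently small neighborhoods $\Omega_f,\Omega_g$ of $0$, yields exactly~\eqref{eqn: NormalForm}.

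The first step is essentially a linear-algebra calculation following the pattern already laid out in Theorem~\ref{thm: Normal}. The main obstacle is the second step: even though Sternberg's theorem is classical, one has to keep careful track of the quantitative relation between $R$, $k$, and the spectral gap of $A$ in order to solve the cohomological equation $\mathcal{L}_A u = v$ modulo the resonant directions with controlled loss of derivatives. Typically this is carried out via a homotopy (path) method or a fixed-point argument in weighted H\"older spaces, in which hyperbolicity of $A$ provides exponential contraction along the stable manifold and exponential dilation along the unstable one; the combination of these two supplies the inverse of $\mathcal{L}_A$ once the flatness order $R$ exceeds a threshold depending on $k$ and on $\lambda_\pm$. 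It is precisely the finite-regularity requirement $C^k$ (rather than $C^\infty$ or analytic) that makes this feasible without any Diophantine assumption on the ratio $\lambda_+/\lambda_-$.
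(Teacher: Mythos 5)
Your argument is correct, but it takes a genuinely different route from the paper.

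The paper's proof consists of two observations plus a citation: first, a $2$-dimensional hyperbolic spectrum $(\lambda_+,-\lambda_-)$ is automatically either non-resonant or \emph{one-resonant}, because any resonance relation among two real numbers of opposite sign is generated by a single integer relation $p\lambda_+ = q\lambda_-$; second, Ilyashenko's theorem on finitely-smooth normal forms in the one-resonant case (cited as~\cite[Theorem~3, Section~2]{IIlyashenko}) already asserts that the flat remainder $\mathcal{R}$ in~\eqref{eqn; Pre-NormalForm} can be eliminated by a $C^k$ change of coordinates, while Poincar\'e's theorem handles the non-resonant case. The lemma is then a direct restatement of those theorems specialised to the planar saddle. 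You instead bypass one-resonance entirely: you carry out the formal normalization of Theorem~\ref{thm: Normal} to some order $R$, which is just linear algebra on the homological operator $\mathcal{L}_A^j$, and then you kill the remaining $O(|y|^{R+1})$ tail by invoking the Sternberg--Chen jet-determinacy theorem (two $C^\infty$ vector fields with the same hyperbolic linear part that agree to sufficiently high finite order at the fixed point are $C^k$-conjugate, with the required order $N(k,\lambda_\pm)$ finite and effective). Both routes lean on a hard classical theorem, but they are different theorems, and the trade-off is instructive: the paper's route is shorter and exploits a low-dimensional coincidence, whereas your route applies verbatim in any dimension and for any resonance structure, at the cost of a loss-of-derivatives bookkeeping that Ilyashenko's one-resonant theorem has already absorbed. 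One small point worth making explicit in your first step: since $\lambda_\pm$ are real, $\mathcal{L}_A^j$ is diagonal on the monomial basis with real eigenvalues $\alpha\cdot\lambda-\lambda_i$, so $\mathcal{H}_j = \mathcal{R}(\mathcal{L}_A^j)\oplus\ker\mathcal{L}_A^j$ and the complement $\mathcal{I}_j$ may indeed be taken to be the span of the resonant monomials, which is what makes $P$ purely resonant rather than resonant modulo a choice of splitting.
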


This is the core result we use to study the stochastic case in the next section.

\subsection{Change of Variables in the Stochastic Case} \label{sec: saddle-changevar}
In this section we start analyzing the diffusion in the neighborhood of the saddle point. The first step is to
find a smooth coordinate change that would simplify the system. This can be done with the help of the theory of normal forms presented on the last section.

Let $g$ be a $C^\infty-$diffeomorphism of a neighborhood of the origin with inverse~$f$. When $X_\eps$ is close to the origin and belongs to the image of that neighborhood under $g$, we can  use It\^o's formula to see that $Y_\eps=f(X_\eps)$ satisfies 
\begin{align*}
dY_{\epsilon } &=\nabla f(X_{\epsilon })dX_{\epsilon }+\frac{1}{2}%
[\nabla f(X_{\epsilon }),X_{\epsilon }] \\
&=\nabla f(g (Y _{\epsilon }))b(g (Y_{\epsilon
}))dt+\epsilon \sgm (Y_{\epsilon })dW +\epsilon ^{2}\Psi (Y_{\epsilon })dt,
\end{align*}%
for some smooth function $\Psi :\mathbb{R}^{2}\rightarrow \mathbb{R}^{2}$ and $\sgm =\left ((\nabla f)\circ g \right) \sigma$. 
Here the square brackets mean quadratic covariation.
Since $\nabla f \circ g =(\nabla g)^{-1}$, we can rewrite the above SDE as in the deterministic case as
\begin{equation}
dY_{\epsilon }=\left( \left( \nabla g (Y_{\epsilon })\right)
^{-1}b(g (Y_{\epsilon }))+\epsilon ^{2}\Psi (Y_{\epsilon
})\right) dt+\epsilon \sgm (Y_{\epsilon })dW.  \label{eqn: Eqn_zeta}
\end{equation}

In order to simplify the drift term in this equation, we rely on Lemma~\ref{lemma: Ill}. First,
note that $(\lambda_+,-\lambda_-)$ is either non-resonant or one-resonant (resonant cases that are not one-resonant are possible in higher dimensions where
pairs of eigenvalues get replaced by vectors of eigenvalues). The non-resonant case (in any dimension) was studied in~\cite{nhn}. 
In this paper, we extend the analysis of~\cite{nhn} to the non-resonant case, i.e.\ the one-resonant case, given that we are working in 2 dimensions. 

To find all resonant monomials of a given order $r\ge 2$, we have to find all the integer solutions to the two $2 \times 2$ systems of equations:%
\begin{align*}
\alpha _{1}\lambda _{+}-\alpha _{2}\lambda _{-} &=\pm\lambda _{\pm}, \\
\alpha _{1}+\alpha _{2} &=r.
\end{align*}
Therefore, the power multi-indices of a resonant monomial of order $r$ has to coincide with one of the following:
\begin{eqnarray}
(\alpha _{1}^{+}(r),\alpha _{2}^{+}(r)) &=&\frac{1}{\lambda _{+}+\lambda _{-}%
}(\lambda _{+}+r\lambda _{-},(r-1)\lambda _{+}),
\label{eqn: alpha_plus} \\
(\alpha _{1}^{-}(r),\alpha _{2}^{-}(r)) &=&\frac{1}{\lambda _{+}+\lambda _{-}%
}((r-1)\lambda _{-},r\lambda _{+}+\lambda _{-}),  \label{eqn: alpha_minus}
\end{eqnarray}%
Let us make some elementary observations on integer solutions of these equations for $r\ge 2$.
\begin{enumerate}
\item None of the solution indices can be $0$.  Moreover, neither $\alpha_1^+(r)$ nor $\alpha_2^-(r)$ can be equal to $1$. 
\item As functions of $r$, $\alpha_i^\pm (r)$ are increasing.
\item Expressions~\eqref{eqn: alpha_plus} and~\eqref{eqn: alpha_minus} cannot be an integer for $r=2$. 
\item The term $P=(P_1,P_2)$ in~\eqref{eqn: NormalForm} satisfies $P_1(y)=O(y_1^2 |y_2|)$ and~$P_2(y)=O(|y_1| y_2^2)$. 
This observation is a consequence of observations 1 and 3 since they imply that resonant multi-indices have to satisfy $\alpha^+(r)\ge (2,1)$ and $\alpha^-(r)\ge (1,2)$ coordinatewise.
\item If at least one of the coordinates $y_1$ and $y_2$ is zero, then  $P(y_1,y_2)=0$. This is a direct consequence of the previous observation.
\end{enumerate}
Given all these considerations, the main theorem of this section is a simple consequence of Lemma~\ref{lemma: Ill}.
\begin{theorem}
\label{Lemma: Def_H1&H2} In the setting described in Section~\ref{Sec: Notation},
there is a number $\delta'>0$, a neighborhood of the origin $U'$, and a $C^2$-diffeomorphism $f:U'\to (-\delta',\delta')$
with inverse  $g:(-\delta',\delta')^2\to U'$ such that  the following property holds.

If $X_\eps(0)\in U$, then the stochastic process $Y_\eps=(Y_{\eps,1},Y_{\eps,2})$ given by 
\[
Y_\eps(t)=f (X_\eps(t\wedge \tau_\eps^{U}))
\]
satisfies the following system of SDEs up to $\tau_\eps^{U}$ : 
\begin{align}
\label{eq:SDE_changed_coord1}
dY_{\epsilon,1 } &=\left(  \lambda _{+}Y_{\epsilon,1 }+H_{1}(Y_{\epsilon}, \eps) \right) dt+\epsilon \sgm_{1}(Y_{\epsilon})dW \\
\label{eq:SDE_changed_coord2}
dY_{\epsilon,2 } &=\left( -\lambda _{-}Y_{\epsilon,2 }+H_{2}(Y_{\epsilon}, \eps) \right) dt+\epsilon \sgm_{2}(Y_\epsilon)dW,
\end{align}
where $\sgm_i:(-\delta',\delta')^2 \to \R$ are $C^1$ functions for $i=1,2$. 
The functions $H_i$ are given by $H_i=\hat H_i + \eps^2 \Psi_i$, where 
$\Psi_i:(-\delta',\delta')^2 \to \R^2$ are continuous bounded functions, and  $\hat H_i: (-\delta',\delta')^2 \times [0,\infty)$ are polynomials, so that for some constant $K_{1}>0$ 
and for any $y\in (-\delta',\delta')^2$,
\begin{align*}
|\hat H_{1}(y)| &\leq K_{1}|y_1|^{\alpha_1^+}|y_2|^{\alpha_2^+}, \\
|\hat H_{2}(y)| &\leq K_{1}|y_1|^\aone |y_2|^\atwo.
\end{align*}
Here, the integer numbers $\alpha_i^\pm$, $i=1,2$, are such that $(\alpha_1^+,\alpha_2^+)$ is of the form ~\eqref{eqn: alpha_plus} 
for some choice of $r=r_1\ge 3$, and
and $(\alpha_1^-,\alpha_2^-)$ is of the form~\eqref{eqn: alpha_minus} for some choice $r=r_2\ge 3$. In particular,
\begin{align*}
|H_{1}(y,\eps)| &\leq K_{1}y_1^{2}|y_2|+K_{2}\epsilon ^{2},\\
|H_{2}(y,\eps)| &\leq K_{1}|y_1|y_2^{2}+K_{2}\epsilon ^{2},
\end{align*}
for some constants $K_{1}>0$ and $K_{2}>0$.
 
\end{theorem}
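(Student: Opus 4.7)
The statement is essentially a direct combination of the deterministic normal form machinery (Lemma~\ref{lemma: Ill}) with It\^o's formula, plus careful bookkeeping of the resonant monomials. My plan is as follows.

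First, apply Lemma~\ref{lemma: Ill} with $k=2$ to obtain neighborhoods $\Omega_f,\Omega_g$ of the origin and a $C^2$-diffeomorphism $f:\Omega_f\to\Omega_g$ with inverse $g$ satisfying $(\nabla g(y))^{-1}b(g(y))=Ay+P(y)$ on $\Omega_g$, where $A=\mathop{\mathrm{diag}}(\lambda_+,-\lambda_-)$ and $P$ is a polynomial containing only resonant monomials. Since $\Omega_g$ is open and contains $0$, choose $\delta'>0$ so that $(-\delta',\delta')^2\subset\Omega_g$, set $U'=g((-\delta',\delta')^2)$, then pick any $\delta\in(0,\delta')$ and put $U=g((-\delta,\delta)^2)$.

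Next, for $t\le \tau_\eps^{U'}$ the process $X_\eps$ stays inside $\Omega_f$, so $Y_\eps=f(X_\eps)$ is well defined and $C^2$-smoothness of $f$ lets us apply It\^o's formula to each component of $f$, as already derived in equation~\eqref{eqn: Eqn_zeta}:
\begin{equation*}
dY_\eps=\bigl((\nabla g(Y_\eps))^{-1}b(g(Y_\eps))+\eps^2\Psi(Y_\eps)\bigr)dt+\eps\sgm(Y_\eps)dW,
\end{equation*}
where $\sgm=(\nabla f)\circ g\cdot\sigma$ is $C^1$ on $(-\delta',\delta')^2$ and $\Psi$ is the It\^o second-order correction $\tfrac12\mathrm{tr}[\sigma^T(\nabla^2 f)\sigma]\circ g$, continuous and bounded there. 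Substituting the normal-form identity yields the drift $AY_\eps+P(Y_\eps)+\eps^2\Psi(Y_\eps)$; splitting coordinatewise gives~\eqref{eq:SDE_changed_coord1}--\eqref{eq:SDE_changed_coord2} with $\hat H_i=P_i$ and the stated $\Psi_i$. The fact that we actually stop at $\tau_\eps^U<\tau_\eps^{U'}$ only ensures $Y_\eps$ stays in $(-\delta,\delta)^2\subset(-\delta',\delta')^2$, where all coefficients are smooth and bounded.

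Finally, I would verify the polynomial bounds on $\hat H_1=P_1$ and $\hat H_2=P_2$. A monomial appearing in $P_j$ must be resonant of the form $y^\alpha e_j$ with $\alpha\cdot(\lambda_+,-\lambda_-)=\pm\lambda_\pm$, hence with multi-index of type~\eqref{eqn: alpha_plus} (for $j=1$) or~\eqref{eqn: alpha_minus} (for $j=2$), parametrized by some $r\ge 2$. By observations 1 and 3 listed immediately after~\eqref{eqn: alpha_minus}, the smallest admissible order is some integer $r_j\ge 3$, and $\alpha_i^\pm(r)$ is nondecreasing in $r$. Therefore every monomial in $P_1$ is divisible by $y_1^{\alpha_1^+}y_2^{\alpha_2^+}$ (for $(\alpha_1^+,\alpha_2^+)$ defined via $r=r_1$), and the quotient, being a polynomial, is bounded on $(-\delta',\delta')^2$; this gives the first bound with some $K_1$. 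The analogous argument handles $P_2$. The coarser bound $|H_1(y,\eps)|\le K_1 y_1^2|y_2|+K_2\eps^2$ then follows from observation 4 together with the boundedness of $\Psi_1$, and similarly for $H_2$.

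The proof is thus essentially a packaging exercise. The only mildly delicate point is keeping track of the admissible multi-indices: one must confirm that in two dimensions resonance is either absent or one-resonant (so that Lemma~\ref{lemma: Ill} really does eliminate the $\mathcal{R}$ remainder) and that observations~1 and~3 genuinely force $r_j\ge 3$. Everything else is a mechanical application of It\^o's formula and polynomial bookkeeping on the bounded box $(-\delta',\delta')^2$.
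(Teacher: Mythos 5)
Your proposal is correct and follows essentially the same route as the paper: the paper derives the SDE for $Y_\eps=f(X_\eps)$ via It\^o's formula in equation~\eqref{eqn: Eqn_zeta}, invokes Lemma~\ref{lemma: Ill} for the one-resonant normal form, lists observations 1--5 about the resonant multi-indices, and then simply states that Theorem~\ref{Lemma: Def_H1&H2} "is a simple consequence" of these ingredients --- exactly the bookkeeping you spell out. The only minor point worth flagging is that you correctly identify the crux (that in two dimensions the pair $(\lambda_+,-\lambda_-)$ is either non-resonant or one-resonant, so Ilyashenko's theorem kills the remainder $\mathcal{R}$ entirely) and correctly use observation~2 (monotonicity of $\alpha_i^\pm(r)$ in $r$) to get divisibility of every monomial in $P_j$ by the lowest-order resonant one; these are precisely the facts the paper relies on implicitly.
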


\section{Proof of Theorem~\ref{Thm: main_small}}
\label{sec: Main}
In this section we derive Theorem~\ref{Thm: main_small} from several auxiliary statements. Their proofs are postponed to later sections.

Theorem~\ref{Lemma: Def_H1&H2} allows to work with process $Y_\eps=f(X_\eps)$ instead of $X_\eps$ while $Y_\eps$ stays in $(-\delta',\delta')^2$

If we take $\delta\in(0,\delta')$, then for the initial conditions considered in Theorem~\ref{Thm: main_small} and given in~\eqref{eq:small_entrance_scaling},
\[
 \Pp\{X_\eps(0)\in U'\}\to 1,\quad\eps\to 0,
\]
i.e.,
\[
 \Pp\{Y_\eps(0)\in (-\delta',\delta')^2\}\to 1,\quad\eps\to 0.
\]
Moreover, denoting $f(x)$ by $y=(0,y_2)$ we can write
\[
 Y_\eps(0)=y+\eps^\alpha \chi_\eps = (\eps^\alpha \chi_{\eps,1},\ y_2+\eps^\alpha \chi_{\eps,2}),\quad \eps>0,
\]
where $\chi_\eps=(\chi_{\eps,1},\chi_{\eps,2})$ is a random vector convergent in distribution to $\chi_0=(\chi_{0,1},\chi_{0,2})=\nabla f(x)\xi_0$. Due to the hypothesis in Theorem~\ref{Thm: main_small}, we notice that the distribution of $\chi_{0,1}$ has no atom at $0$.

Let us take any $p \in (0,1)$ such that 
\begin{equation}
 1-\frac{\lambda _{+}}{\lambda _{-}}<p<  \frac{\lambda_{-}}{\lambda_{+}+\lambda_{-}},  
\label{eqn: p_prop}
\end{equation}
and define the following stopping time: 
\[
\hat\tau _{\epsilon}=\inf \{t:|Y_{\epsilon,1 }(t)|=\epsilon^{\alpha p}\}.
\]

Up to time $\hat \tau_\eps$, the process $X_\eps$ mostly evolves along the stable manifold~$\mathcal{W}^{s}$.
After $\hat \tau_\eps$, it evolves mostly along the unstable manifold $\mathcal{W}^{u}$. Process $Y_\eps$ evolves
accordingly, along the images of $\Wc^s$ and $\Wc^u$ coinciding with the coordinate axes.

Let us introduce random variables $\eta_\eps^{\pm}$ via
\begin{align*}
\eta _{\epsilon }^{+}&=\epsilon ^{-\alpha }e^{-\lambda _{+}\hat\tau
_{\epsilon }}Y_{\epsilon,1 }(\hat\tau_{\epsilon }),\\ 
\eta
_{\epsilon }^{-}&=\epsilon ^{-\alpha (1-p)\lambda _{-}/\lambda
_{+}}Y_{\epsilon,2 }(\hat\tau _{\epsilon }).
\end{align*}
Also we define the distribution of random vector $(\eta _{0}^{+ },\eta_0^-)$ via
\begin{eqnarray}
\eta _{0}^{+} &=&\chi_{0,1}+\mathbf{1}_{\{\alpha =1\}}N^{+}, \label{eq:eta_0_+}\\
\eta _{0}^{-} &=&|\eta _{0}^{+}|^{\lambda _{-}/\lambda _{+}}y_{2},\notag
\end{eqnarray}%
where
\begin{equation}
N^+=\int_{0}^{\infty}e^{-\lambda _-s}\sgm_{1}(0,e^{-\lambda _{-}s}y_{2})dW
\label{eq:N_0_+}
\end{equation}
is independent of $\chi_{0,1}$.

\begin{lemma}
\label{prop: eta_convergence} If the first inequality in~\eqref{eqn: p_prop} holds, then
\begin{equation}
\label{eq:exit_at_tauh_along_axis_1} 
\Pp\{Y_{\eps,1}(\tauh) = \eps^{\alpha p} \sgn \eta^+_\eps\}\to 1,\quad \eps\to 0.
\end{equation}
and
\begin{equation}
 \left(\eta_\eps^+,\eta_\eps^-, \hat \tau_\eps+\frac{\alpha }{\lambda _{+}}(1-p)\log \epsilon\right)\ 
\stackrel{Law}{\longrightarrow}\ \left(\eta_0^+,\eta_0^-, -\frac{1}{\lambda _{+}}\log |\eta _{0 }^{+}| \right),\quad \eps\to 0. 
\end{equation}
\end{lemma}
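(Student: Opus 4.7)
The plan is to read off all three convergences from explicit Duhamel (variation-of-parameters) representations for the two components of $Y_\eps$ up to time $\tauh$, combined with the a priori control $|Y_{\eps,1}(s)|\le \eps^{\alpha p}$ on $[0,\tauh]$ (built into the definition of the stopping time) and an essentially deterministic description of $Y_{\eps,2}$ on the same interval. Applying the integrating factor $e^{-\lambda_+ t}$ to~\eqref{eq:SDE_changed_coord1} and dividing by $\eps^\alpha$ gives
\begin{equation*}
\eps^{-\alpha} e^{-\lambda_+ t} Y_{\eps,1}(t) = \chi_{\eps,1} + \eps^{-\alpha}\int_0^t e^{-\lambda_+ s} H_1(Y_\eps(s),\eps)\, ds + \eps^{1-\alpha}\int_0^t e^{-\lambda_+ s} \sgm_1(Y_\eps(s))\, dW(s),
\end{equation*}
so that evaluating at $t=\tauh$ expresses $\eta_\eps^+$ as a perturbation of $\chi_{\eps,1}$. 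The analogous integrating factor $e^{\lambda_- t}$ applied to~\eqref{eq:SDE_changed_coord2} gives
\begin{equation*}
Y_{\eps,2}(t) = e^{-\lambda_- t}(y_2+\eps^\alpha \chi_{\eps,2}) + \int_0^t e^{-\lambda_-(t-s)} H_2(Y_\eps(s),\eps)\,ds + \eps\int_0^t e^{-\lambda_-(t-s)} \sgm_2(Y_\eps(s))\,dW(s),
\end{equation*}
from which a Gronwall-type argument shows $Y_{\eps,2}(s)$ stays close to $e^{-\lambda_- s}y_2$ uniformly on $[0,\tauh]$, in particular staying in $(-\delta,\delta)$, so $\Pp\{\tauh\le\tau_\eps^U\}\to 1$.

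Next I would control the error terms in the representation of $\eta_\eps^+$. Using the polynomial bound $|\hat H_1(Y_\eps(s))|\le K_1|Y_{\eps,1}(s)|^{\alpha_1^+}|Y_{\eps,2}(s)|^{\alpha_2^+}$ from Theorem~\ref{Lemma: Def_H1&H2} together with the stable-coordinate approximation gives $|\hat H_1(Y_\eps(s))|\le C\eps^{\alpha p\alpha_1^+}e^{-\lambda_-\alpha_2^+ s}$, so the drift contribution is $O(\eps^{\alpha(p\alpha_1^+-1)})+O(\eps^{2-\alpha})=o(1)$ under the range of $p$ fixed in~\eqref{eqn: p_prop}. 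For the stochastic integral, when $\alpha<1$ the prefactor $\eps^{1-\alpha}$ together with an Ito-isometry bound drives its $L^2$-norm to zero. When $\alpha=1$, one first shows $\tauh\to\infty$ in probability (immediate since $Y_{\eps,1}$ must grow from scale $\eps$ to scale $\eps^p$ with $p<1$), then uses the uniform approximation $Y_\eps(s)\to(0,e^{-\lambda_- s}y_2)$ together with dominated convergence for stochastic integrals to identify the limit with the Gaussian $N^+$ of~\eqref{eq:N_0_+}; its independence from $\chi_{0,1}$ is inherited from the assumed independence of $(\xi_\eps)_{\eps>0}$ from $W$. Combining yields $\eta_\eps^+\to\eta_0^+$ in distribution.

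The remaining conclusions then follow by elementary manipulations. The identity $|Y_{\eps,1}(\tauh)|=\eps^{\alpha p}$, combined with $Y_{\eps,1}(\tauh)=\eps^\alpha e^{\lambda_+\tauh}\eta_\eps^+$, yields simultaneously~\eqref{eq:exit_at_tauh_along_axis_1} on the event $\{\tauh\le\tau_\eps^U\}$ and the deterministic identity
\begin{equation*}
\tauh+\frac{\alpha(1-p)}{\lambda_+}\log\eps = -\frac{1}{\lambda_+}\log|\eta_\eps^+|,
\end{equation*}
whose right-hand side converges to $-\lambda_+^{-1}\log|\eta_0^+|$ by the continuous mapping theorem, noting that $\eta_0^+$ has no atom at $0$ (for $\alpha<1$ this uses that~\eqref{eqn: intial_hyp} translates into $\chi_{0,1}$ being atomless at $0$, and for $\alpha=1$ the Gaussian $N^+$ supplies the atomlessness). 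Substituting this expression for $\tauh$ into the leading term of the Duhamel formula for $Y_{\eps,2}$ gives $e^{-\lambda_-\tauh}y_2=\eps^{\alpha(1-p)\lambda_-/\lambda_+}|\eta_\eps^+|^{\lambda_-/\lambda_+}y_2+o(\eps^{\alpha(1-p)\lambda_-/\lambda_+})$, with the nonlinear and noise corrections controlled as in the previous paragraph and the lower bound in~\eqref{eqn: p_prop} being exactly what ensures the $\eps$-scale noise is dominated by this deterministic leading term; after multiplication by $\eps^{-\alpha(1-p)\lambda_-/\lambda_+}$ we obtain $\eta_\eps^-\to|\eta_0^+|^{\lambda_-/\lambda_+}y_2=\eta_0^-$. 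Joint convergence of the triple is automatic since all three limiting components are expressed as continuous functions of the pair $(\chi_{0,1},N^+)$. The main technical obstacle is the stochastic-integral identification in the case $\alpha=1$, where one must simultaneously let $\tauh\to\infty$ and replace $Y_\eps$ by its deterministic limit under the integral; this requires quantitative control of the approximation to $Y_{\eps,2}$ over growing time intervals, and the choice of $p$ in~\eqref{eqn: p_prop} is precisely what makes the various scaling exponents compatible.
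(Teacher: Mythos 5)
Your architecture matches the paper's — Duhamel representations in both coordinates, a stable-coordinate approximation for $Y_{\eps,2}$ on $[0,\tauh]$ (essentially Lemma~\ref{lemma: y_convergence} and Corollary~\ref{cor: tau_eps<infty}), and the explicit algebraic identities trading the exit-level condition for $\tauh$, $\eta_\eps^\pm$. But the bound you give for the nonlinear drift contribution to $\eta_\eps^+$ is insufficient, and the mechanism the paper invents to close that estimate is entirely absent from your sketch. You estimate $|\hat H_1(Y_\eps(s))|$ using only $|Y_{\eps,1}(s)|\leq\eps^{\alpha p}$ together with $|Y_{\eps,2}(s)|\lesssim e^{-\lambda_- s}$, arriving at a drift contribution of order $\eps^{\alpha(p\alpha_1^+ - 1)}$ in $\eta_\eps^+$, and you assert this is $o(1)$ under~\eqref{eqn: p_prop}. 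That is false in general: the only structural lower bound is $\alpha_1^+\geq 2$, the first inequality of~\eqref{eqn: p_prop} (the only one this lemma assumes) is vacuous whenever $\lambda_+\geq\lambda_-$, and even invoking the second inequality does not save you. For instance $\lambda_+=n\lambda_-$ with $n\geq 2$ gives $\alpha_1^+ = 2$ for the lowest-order resonance while~\eqref{eqn: p_prop} forces $p<1/(n+1)$, so $p\alpha_1^+<2/(n+1)<1$ for every admissible $p$, and your exponent is strictly negative.

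The estimate that actually works needs the finer pointwise control $e^{-\lambda_+ s}|Y_{\eps,1}(s)| = O_\Pp(\eps^\alpha)$ on $[0,\tauh]$ (Lemma~\ref{thm: estimate_non_linear}), which tracks the exponential growth of $Y_{\eps,1}$ from scale $\eps^\alpha$ instead of merely capping it at $\eps^{\alpha p}$. With it, $e^{-\lambda_+ s}|\hat H_1|\lesssim\bigl(e^{-\lambda_+ s}|Y_{\eps,1}(s)|\bigr)\cdot|Y_{\eps,1}(s)|\cdot|Y_{\eps,2}(s)| = O_\Pp(\eps^{\alpha(1+p)})$, and after dividing by $\eps^\alpha$ and integrating over a logarithmic time the drift contribution is $O_\Pp(\eps^{\alpha p}|\log\eps|)\to 0$. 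However, Lemma~\ref{thm: estimate_non_linear} is proved only up to a deterministic horizon of order $|\log\eps|$; applying it up to the stopping time $\tauh$ requires first showing $\tauh$ is not too large. That is exactly Lemma~\ref{prop: tau_bar_convergence}, whose proof is a nontrivial bootstrap through a linear comparison process $u_\eps$ and an intermediate stopping time $\tauw$. Your sketch proves only the lower bound $\tauh\to\infty$ and never controls $\tauh$ from above, so the a priori estimate you need is never available. The same circularity recurs when you dispatch the $H_2$ drift term in $\eta_\eps^-$ ``as in the previous paragraph'': the required slowly-growing refinement~\eqref{eq:y_slowly_growing} also hinges on bounding $\tauh$ from above.
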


We prove this lemma in Section~\ref{sec:close_to_saddle}. Along with the strong Markov property, it allows to reduce the study of the evolution of $Y_\eps$ after $\tauh$ to studying
the solution of system~\eqref{eq:SDE_changed_coord1}--\eqref{eq:SDE_changed_coord2} with initial condition
\begin{equation}
\label{eq:restart_at_tauh}
Y_{\epsilon }(0)=(\epsilon^{\alpha p}\sgn\eta _{\epsilon }^{+},\epsilon ^{\alpha (1-p)\lambda
_{-}/\lambda _{+}}\eta _{\epsilon }^{-}),
\end{equation}
where 
\begin{equation}
\label{eq:condition_for_theorem_along_unstable}
(\eta^+_\eps,\eta^-_\eps)\stackrel{Law}{\longrightarrow}(\eta^+_0,\eta^-_0),\quad\eps\to 0.
\end{equation}
We denote
\begin{equation}
 \tau _{\epsilon }=\tau _{\epsilon }(\delta )=\inf \{t\ge 0:|Y_{\epsilon,1}(t)|=\delta\}. \label{eqn: tau_def}
\end{equation}
Our next goal is to describe the behavior of $Y(\tau_\eps)$. To that end, we introduce a random variable $\theta$ via
\begin{equation}
\label{eq:theta}
\theta\stackrel{Law}{=}\begin{cases}
N, & \alpha \lambda _{-}> \lambda _{+}, \\
\left( \frac{|\eta _{0}^{+}|}{\delta }\right) ^{\lambda _{-}/\lambda
_{+}}y_{2}+N, & \alpha \lambda _{-}= \lambda _{+}, \\ 
\left( \frac{|\eta _{0}^{+}|}{\delta }\right) ^{\lambda _{-}/\lambda
_{+}}y_{2}, & \alpha \lambda _{-}< \lambda _{+}.%
\end{cases}
\end{equation} 
where the distribution of $N$ conditioned on $\eta_0^+$, on $\{\sgn \eta_0^+=\pm1\}$ is
centered Gaussian with variance
\begin{equation*}
\sigma _{\pm}=\int_{-\infty}^{0}e^{2\lambda _{-}s}
\left|\sgm_2(\pm \delta e^{\lambda _{+}s},0)\right|^2ds.
\end{equation*}%
Let us also recall that $\beta$ is defined in~\eqref{eq:alpha_0}.

\begin{lemma}\label{Thm:after_tauh} Consider the solution to system~\eqref{eq:SDE_changed_coord1}--\eqref{eq:SDE_changed_coord2}
equipped with initial conditions~\eqref{eq:restart_at_tauh} satisfying~\eqref{eq:condition_for_theorem_along_unstable}.
If the second inequality in~\eqref{eqn: p_prop} holds, then
\begin{equation}
\label{eq:exit_through_unstabale}
 \Pp\{|Y_{\eps,1}(\tau_\eps)|=\delta\}\to 1,\quad \eps\to 0,
\end{equation}
\begin{equation}
 \tau_\eps+\frac{\alpha p}{\lambda_+}\log\eps\stackrel{\Pp}{\longrightarrow}
\frac{1}{\lambda_+}\log\delta,
\label{eq:tau_eps_convergence}
\end{equation}
\begin{equation}
\epsilon ^{-\beta}Y_{\epsilon,2}(\tau _{\epsilon })\overset{Law}{\longrightarrow }\theta.
\label{eq:asymptotics_for_Y_2_tau}
\end{equation}%
Moreover, if $\beta<1$, then the convergence in probability also holds.
\end{lemma}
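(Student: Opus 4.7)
The strategy is to reduce the SDE system~\eqref{eq:SDE_changed_coord1}--\eqref{eq:SDE_changed_coord2} to its linear part by showing that all nonlinearities contribute negligibly, and then to extract the asymptotics of $\tau_\eps$ from the first coordinate and of $Y_{\eps,2}(\tau_\eps)$ from Duhamel's formula for the second coordinate. First, by Duhamel,
\begin{align*}
Y_{\eps,1}(t) &= e^{\lambda_+ t}\Bigl(Y_{\eps,1}(0) + I_1^H(t) + \eps I_1^W(t)\Bigr),\\
Y_{\eps,2}(t) &= e^{-\lambda_- t}Y_{\eps,2}(0) + I_2^H(t) + \eps I_2^W(t),
\end{align*}
where $I_i^H$ is the deterministic integral of the resonant remainder $H_i$ against the corresponding exponential kernel, and $I_i^W$ the matching stochastic integral of $\sgm_i$. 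These identities hold up to the exit time from $(-\delta',\delta')^2$ in which Theorem~\ref{Lemma: Def_H1&H2} applies.

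The first, and main, task is to show that $Y_{\eps,1}(t)$ stays essentially equal to $e^{\lambda_+ t}Y_{\eps,1}(0)$ up to $\tau_\eps$. I plan to introduce an auxiliary stopping time $\sigma_\eps$ at which either $|Y_{\eps,1}|$ first exceeds $\delta$ or $|Y_{\eps,2}|$ first exceeds $\eps^{\alpha(1-p)\lambda_-/\lambda_+-\kappa}$ for a small $\kappa>0$, and run all estimates up to $\sigma_\eps$. On this event, the linear decay of the kernel $e^{-\lambda_-(t-s)}$ and standard martingale/Gronwall arguments give $|Y_{\eps,2}(t)|\lesssim \eps^{\alpha(1-p)\lambda_-/\lambda_+}$ with high probability, so the bound $|H_1|\lesssim Y_{\eps,1}^2|Y_{\eps,2}|+\eps^2$ yields $|I_1^H(t)|\lesssim \eps^{\alpha(1-p)\lambda_-/\lambda_+}+\eps^2$, while $|\eps I_1^W(t)|=O(\eps)$ by Itô's isometry. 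Both are $o(\eps^{\alpha p})$ exactly because of the right-hand inequality of~\eqref{eqn: p_prop}, which gives $\alpha(1-p)\lambda_-/\lambda_+>\alpha p$. Thus $Y_{\eps,1}(t)=e^{\lambda_+ t}Y_{\eps,1}(0)(1+o_\Pp(1))$ uniformly on $[0,\sigma_\eps]$. This implies both~\eqref{eq:tau_eps_convergence} and $\sigma_\eps=\tau_\eps$ with probability tending to $1$, so $|Y_{\eps,2}(\tau_\eps)|\ll \delta$ and~\eqref{eq:exit_through_unstabale} follows.

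With the time asymptotic $\tau_\eps+\frac{\alpha p}{\lambda_+}\log\eps\to\frac{1}{\lambda_+}\log\delta$ in hand, I evaluate $Y_{\eps,2}(\tau_\eps)$. The first Duhamel term becomes $e^{-\lambda_-\tau_\eps}Y_{\eps,2}(0)=\eps^{\alpha\lambda_-/\lambda_+}\delta^{-\lambda_-/\lambda_+}\eta_\eps^-(1+o_\Pp(1))$, which by~\eqref{eq:condition_for_theorem_along_unstable} and the definition of $\eta_0^-$ converges in law to $\eps^{\alpha\lambda_-/\lambda_+}(|\eta_0^+|/\delta)^{\lambda_-/\lambda_+}y_2$. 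The $H_2$ integral is controlled using $|H_2|\lesssim |Y_{\eps,1}|Y_{\eps,2}^2+\eps^2$ together with the already obtained trajectory estimates; a direct integration against $e^{-\lambda_-(\tau_\eps-s)}$ shows it is $o_\Pp(\eps^\beta)$. The delicate piece is the stochastic integral $\eps I_2^W(\tau_\eps)$: substituting $r=\tau_\eps-s$, its conditional variance given $\sgn\eta_\eps^+=\pm 1$ is $\eps^2\int_0^{\tau_\eps}e^{-2\lambda_- r}|\sgm_2(Y_\eps(\tau_\eps-r))|^2\,dr$. I split this at $r=T$: the tail $r>T$ contributes at most $\eps^2 e^{-2\lambda_- T}/(2\lambda_-)$, which is $o(\eps^2)$ uniformly for $T$ large; on $[0,T]$, the approximation $Y_{\eps,1}(\tau_\eps-r)\to \pm\delta e^{-\lambda_+ r}$, $Y_{\eps,2}(\tau_\eps-r)\to 0$ (which follows from the control of the first coordinate together with the exit condition $|Y_{\eps,1}(\tau_\eps)|=\delta$) and continuity of $\sgm_2$ let me apply a stable convergence/Itô-isometry argument to show that $\eps^{-1}I_2^W(\tau_\eps)$ converges in distribution to a random variable $N$ which, conditioned on $\sgn\eta_0^+=\pm 1$, is centered Gaussian with variance $\sigma_\pm$ given by the integral displayed in the statement, and which is independent of $\eta_0^+$ because the time-reversed tail $[0,T]$ is asymptotically independent of the initial data.

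Combining the three contributions and comparing the scales $\eps^{\alpha\lambda_-/\lambda_+}$ and $\eps$ produces the trichotomy in~\eqref{eq:theta}: the deterministic term dominates when $\alpha\lambda_-<\lambda_+$, the noise dominates when $\alpha\lambda_->\lambda_+$, and both contribute when $\alpha\lambda_-=\lambda_+$. This is exactly~\eqref{eq:asymptotics_for_Y_2_tau}. Moreover, when $\beta<1$ the noise is strictly subdominant, so $\eps^{-\beta}Y_{\eps,2}(\tau_\eps)$ differs from its deterministic-part approximation by $O_\Pp(\eps^{1-\beta})\to 0$; since $\eta_\eps^-\to \eta_0^-$ in probability in this regime (the convergence $(\eta_\eps^+,\eta_\eps^-)\to(\eta_0^+,\eta_0^-)$ inherits the stronger mode from Lemma~\ref{prop: eta_convergence} in the asymmetric case), the convergence in~\eqref{eq:asymptotics_for_Y_2_tau} upgrades to convergence in probability, as claimed. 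The central technical obstacle is the stable/distributional convergence of the backward stochastic integral $I_2^W(\tau_\eps)$, since $\tau_\eps$ is a random, $\eps$-dependent terminal time, and the limit must be identified as independent of $\eta_0^+$ with the conditional variance $\sigma_{\sgn\eta_0^+}$; the window-splitting at $r=T$ together with the uniform control of the trajectory near exit is what makes this manageable.
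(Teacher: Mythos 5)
Your overall architecture matches the paper's proof quite closely: control both coordinates up to a suitable stopping time via Duhamel, deduce that the exit must occur through $|Y_{\eps,1}|=\delta$ and extract the time asymptotics from the first coordinate, then evaluate $Y_{\eps,2}(\tau_\eps)$ as a sum of an initial-condition term, a nonlinear-drift integral, and a stochastic integral, and finally obtain the trichotomy by comparing $\eps^{\alpha\lambda_-/\lambda_+}$ with $\eps$. The time-reversal and window-splitting treatment of $N_\eps^-(\tau_\eps)$ is also the right idea; the paper simply delegates this step to Lemma 8.9 of the heteroclinic-network paper while you sketch it directly, which is fine.

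There is, however, one genuine gap: your claim that the $H_2$ integral $R_\eps(\tau_\eps)=\int_0^{\tau_\eps}e^{-\lambda_-(\tau_\eps-s)}\hat H_2(Y_\eps(s))\,ds$ can be shown to be $o_\Pp(\eps^\beta)$ by ``direct integration'' using the trajectory bound $|Y_{\eps,2}|\lesssim\eps^{\alpha(1-p)\lambda_-/\lambda_+}$. That computation only yields
\[
\sup_{t\le\tau_\eps}|R_\eps(t)|=O_\Pp\!\left(\eps^{2\alpha(1-p)\lambda_-/\lambda_+}\right),
\]
and this is \emph{not} always $o(\eps^\beta)$. Indeed, when $\alpha\lambda_-<\lambda_+$ (so $\beta=\alpha\lambda_-/\lambda_+$) the requirement becomes $p<1/2$, and when $\alpha\lambda_-\ge\lambda_+$ (so $\beta=1$) it becomes $p<1-\lambda_+/(2\alpha\lambda_-)$; but the lower constraint $p>1-\lambda_+/\lambda_-$ in~\eqref{eqn: p_prop} forces $p\ge 1/2$ whenever $\lambda_-\ge 2\lambda_+$, and one cannot always tune $p$ to make the rough bound work. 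The paper explicitly flags the estimate above as ``not sufficient'' and then carries out a bootstrap: it writes a Gronwall-type integral inequality for $e^{\lambda_- t}|R_\eps(t)|$ using the decomposition $Y_{\eps,2}(t)=e^{-\lambda_- t}\cdot(\text{initial part})+\eps M_\eps(t)+R_\eps(t)$ and the rough bound on $R_\eps$ itself, which upgrades the estimate to $O_\Pp\bigl(\eps^{\alpha(2-p)\lambda_-/\lambda_+}+\eps^{2-q}\bigr)$; this \emph{is} $o(\eps^\beta)$ for any admissible $p$. You would need to supply this (or an equivalent) iteration to close the argument; as written, the ``direct integration'' step fails precisely in the strongly stable regime $\lambda_-\ge 2\lambda_+$.
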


A proof of this lemma is given in Section~\ref{sec:unstable_manifold}.

Now Theorem~\ref{Thm: main_small} follows from Lemmas~\ref{prop: eta_convergence} and~\ref{Thm:after_tauh}.
In fact, the strong Markov property and~\eqref{eq:exit_at_tauh_along_axis_1} imply
\[
\Pp\{\tau_\eps^U=\tauh+\tau_\eps(\delta)\}\to 1,\quad \eps\to 0,
\]
so that the asymptotics for $\tau_\eps^U$ is defined by that of $\tauh$ and $\tau_\eps(\delta)$. It is also clear that
one can set $\psi'_\eps=\eta^+_\eps$,
and $\phi'_\eps=\nabla g(\sgn(\eta^+_\eps)\delta e_1)Y_\eps(\tau_\eps)$, so that the limiting distribution of $\Theta'_\eps$ is given by
\begin{equation}
 \left(\eta^+_0,\ \nabla g(\sgn(\eta^+_0)\delta e_1)(\theta e_2),\ \frac{1}{\lambda_+}\log\frac{\delta}{|\eta_0^+|}\right),
\label{eq:limiting_distr_for_Theta_prime}
\end{equation}
where random variables $\eta_0^+$ and $\theta$ are defined in~\eqref{eq:eta_0_+} and~\eqref{eq:theta}

\section{Proof of Lemma~\ref{prop: eta_convergence} }\label{sec:close_to_saddle}

In this section we shall prove  Lemma~\ref{prop: eta_convergence} using several auxiliary lemmas. We start with some terminology.

\begin{definition}\rm
\label{def: Op} Given a family $(\xi_{\epsilon })_{\epsilon >0}$ of random 
variables or random vectors and a function $h:(0,\infty )\rightarrow (0,\infty )$ we say that $%
\xi_{\epsilon }=O_{\mathbf{p}}(h(\epsilon ))$ if for some $\eps_0>0$
distributions of $\left( \xi_{\epsilon}/h(\epsilon ) \right )_{0< \epsilon < \eps_0}$, form a tight family,
i.e.,\ for
any $\delta>0$ there is a constant $K_\delta>0$ such that 
\begin{equation*}
\mathbf{P}\left\{ |\xi_{\epsilon }|>K_{\delta }h(\epsilon )\right\} <\delta ,\quad 0<\epsilon <\epsilon _{0}.
\end{equation*}
\end{definition}

\begin{definition} \rm
A family of random variables or random vectors~$(\xi_\eps)_{\eps>0}$ is called slowly growing as $\eps\to 0$ (or just slowly growing) if
$\xi_\eps=O_{\Pp}(\eps^{-r})$ for all $r>0$. 
\end{definition}

Our first lemma estimates the martingale component of the solution of SDEs~\eqref{eq:SDE_changed_coord1} and~\eqref{eq:SDE_changed_coord2}.
 Let us define
\begin{eqnarray*}
S_{\epsilon }^{+}(T) &=&\sup_{t\leq  T }\left\vert
\int_{0}^{t}e^{-\lambda _{+}s}\sgm _{1}(Y_{\epsilon}(s))dW(s)\right\vert,\quad T>0, \\
S_{\epsilon }^{-}(T) &=&\sup_{t\leq T }\left\vert
\int_{0}^{t}e^{-\lambda _{-}(t-s)}\sgm_{2}(Y_{\epsilon}(s))dW(s)\right\vert,\quad T>0.
\end{eqnarray*}%

\begin{lemma}
\label{lemma: stoch_est}
Suppose $(\tau_\eps)_{\eps>0}$ is a family of stopping times (w.r.t.\ the natural
filtration of $W$). Then  
\[
S_{\epsilon }^{+}(\tau_\eps )=O_{\mathbf{P}}(1).
\]
If additionally $(\tau_\eps)_{\eps>0}$ is slowly growing, then $S_{\epsilon }^{-}(\tau_\eps )$ is
also slowly growing.
\end{lemma}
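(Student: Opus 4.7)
The plan is to handle $S_\eps^+$ and $S_\eps^-$ by different methods, reflecting the fact that only the first is directly a martingale.

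\textbf{Control of $S_\eps^+$.} Since $\sgm$ is continuous on the closure of $(-\delta',\delta')^2$, both its components are bounded there by some constant $\hat L$. The process
\[
M(t)=\int_0^t e^{-\lambda_+ s}\sgm_1(Y_\eps(s))\,dW(s)
\]
is a continuous martingale whose quadratic variation satisfies $\langle M\rangle_\infty\le \hat L^2/(2\lambda_+)$. By Doob's $L^2$-maximal inequality, $\E\bigl[\sup_{t\ge 0}M(t)^2\bigr]\le 4\E[\langle M\rangle_\infty]\le 2\hat L^2/\lambda_+$, so the family $(\sup_{t\ge 0}|M(t)|)_{\eps>0}$ is $L^2$-bounded and hence tight. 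Since $S_\eps^+(\tau_\eps)\le \sup_{t\ge 0}|M(t)|$ for every stopping time, this yields $S_\eps^+(\tau_\eps)=O_\Pp(1)$ uniformly in the choice of $\tau_\eps$.

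\textbf{Control of $S_\eps^-$.} The process $Z(t)=\int_0^t e^{-\lambda_-(t-s)}\sgm_2(Y_\eps(s))\,dW(s)$ fails to be a martingale because the integrand depends on the upper limit $t$. It\^o's formula shows that $Z$ solves the Ornstein--Uhlenbeck-type equation $dZ=-\lambda_-Z\,dt+\sgm_2(Y_\eps)\,dW$; the It\^o isometry gives the uniform-in-$t$ bound $\E[Z(t)^2]\le \hat L^2/(2\lambda_-)$. To bound the supremum, split $[0,T]$ into the unit intervals $I_k=[k,k+1]$, $k=0,\dots,\lceil T\rceil-1$, and decompose on $I_k$,
\[
Z(t)=e^{-\lambda_-(t-k)}Z(k)+\int_k^t e^{-\lambda_-(t-s)}\sgm_2(Y_\eps(s))\,dW(s).
\]
The second summand equals $e^{-\lambda_-(t-k)}$ times a martingale starting at $0$ on $I_k$, so its $L^2$ supremum over $I_k$ is bounded by a constant independent of $k$ by Doob's inequality. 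Combining this with the uniform $L^2$ control of $Z(k)$ and applying Markov's inequality together with a union bound over $k$ yields
\[
\Pp\bigl(S_\eps^-(T)>A\bigr)\le \frac{CT}{A^2},\qquad A>0,
\]
so $S_\eps^-(T)=O_\Pp(T^{1/2})$.

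\textbf{Slow growth and main obstacle.} To pass from this deterministic-$T$ estimate to the stopping-time version, fix any $r'>0$ and set $r=2r'$. Slow growth of $(\tau_\eps)$ yields, for each $\delta>0$, a constant $K_\delta$ with $\Pp(\tau_\eps>K_\delta\eps^{-r})<\delta$; on the complementary event $S_\eps^-(\tau_\eps)\le S_\eps^-(K_\delta\eps^{-r})=O_\Pp(\eps^{-r'})$ by the previous step. Since $\delta$ is arbitrary, this proves $S_\eps^-(\tau_\eps)=O_\Pp(\eps^{-r'})$ for every $r'>0$, i.e., $S_\eps^-(\tau_\eps)$ is slowly growing. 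The main technical point is the non-martingale character of $Z$, which prevents a single-shot application of Doob; the localization-plus-union-bound workaround gives only polynomial growth in $T$, but this is comfortably sufficient because slow growth is a weak requirement that tolerates any polynomial rate.
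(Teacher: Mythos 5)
Your proof is correct and follows essentially the same route as the paper: the bound on $S_\eps^+$ rests on the square-integrability of the kernel $e^{-\lambda_+ s}$ together with the boundedness of $\sgm_1$ on $(-\delta',\delta')^2$, while the bound on $S_\eps^-$ uses a unit-interval decomposition, a union bound, and the slow growth of $\tau_\eps$ to absorb the polynomially many summands. The only cosmetic differences are that you bound $S_\eps^+(\tau_\eps)$ by the global supremum and apply Doob's $L^2$-maximal inequality (where the paper applies BDG and Chebyshev with the stopped integral), and for $S_\eps^-$ you restart the process at each integer to isolate a martingale on $I_k$ (where the paper factors $e^{-\lambda_- t}$ out of the running integral from $0$); both organizations lead to the same constants.
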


\begin{proof}
Let us start with the proof for $S_\eps^+$. Use BDG inequality (see~\cite[Theorem 3.3.28]{Karatzas--Shreve}) and It\^o's isometry to see that for every constant $K>0$,
\begin{align*}
\Pp \left \{   S_\eps^+ ( \tau_\eps ) > K \right \} & \leq \frac{1}{K^2} \mathbf{E} S_\eps^+ ( \tau_\eps )  \\
& \leq \frac{C_1}{K^2} \mathbf{E} \int_0^{\tau_\eps} e^{ -  2 \lambda_+ s } \sgm _1 ( Y_\eps(s))ds.
\end{align*}
Since $Y_\eps (t) = f (\x (t \wedge \tau_\eps^U ) )$, the process $t \mapsto \sgm _1 ( Y_\eps(t))$ is almost surely bounded. Hence, integrability of the exponential $ t \mapsto e^{-2 \lambda_+ t}$ implies that for any $\delta>0$, there is a $K_\delta>0$ such that 
\[
 \sup_{ \eps > 0 } \Pp \left \{   S_\eps^+ ( \tau_\eps ) > K_\delta \right \} \leq \delta,
\]
proving the first part of the lemma.

For the second part, fix $\delta>0$ and $r>0$. For every $0 < \rho < 2r$, there is $K_\rho >0$ and $\eps_0>0$ such that 
\[
\sup_{0 < \eps < \eps_0 } \Pp \left \{   \eps^\rho \tau_\eps > K_\rho  \right \} < \delta/2.
\]
Then, for an arbitrary $K>0$, $0< \eps < \eps_0$ and $0 < \rho < 2r$, it holds that
\begin{align*}
 \Pp \left\{ \eps^r S_\eps^- (\tau_\eps) > K \right \} &\leq \Pp \left\{ \tau_\eps > \eps^{-\rho} K_\rho \right \} + \Pp \left\{ \eps^r S_\eps^- (\tau_\eps) > K, \tau_\eps \leq \eps^{-\rho} K_\rho \right\}\\
&\leq \delta/2 + \sum_{k=1}^{ \lceil K_\rho \eps^{ -\rho}  \rceil } \Pp \left \{ \eps^{r} \sup_{ (k-1) \leq t < k} \left |  \int_{0}^{t}e^{-\lambda _{-}(t-s)}\sgm_{2}(Y_{\epsilon}(s))dW(s) \right | > K \right \}.
\end{align*}
In order to bound each probability in the last sum, proceed as for the other case:
\begin{align*}
&\Pp \left \{ \eps^{r} \sup_{ (k-1) \leq t < k} \left |  \int_{0}^{t}e^{-\lambda _{-}(t-s)}\sgm_{2}(Y_{\epsilon}(s))dW(s) \right | > K \right \} \\
& \leq \Pp \left \{ \eps^{r} e^{ - (k-1) \lambda_- } \sup_{ 0 \leq t < k} \left |  \int_{0}^{t}e^{\lambda _- s}\sgm_{2}(Y_{\epsilon}(s))dW(s) \right | > K \right \} \\
& \leq \frac{\eps^{2r} e^{ - 2 (k-1) \lambda_- }}{K^2} \mathbf{E} \int_{0}^ {k } e^{2 \lambda _- s} | \sgm_{2}(Y_{\epsilon}(s)) | ^2ds \\
& \leq \frac{\eps^{2r} C_2}{K^2},
\end{align*}
for some constant $C_2 >0 $. Hence, there is a constant $C_3 >0$ such that 
\[
 \Pp \left\{ \eps^r S_\eps^- (\tau_\eps) > K \right \} \leq \delta/2 + \frac{C_3}{K^2} \eps^{ 2r - \rho },
\]
which implies the result and finishes the proof.
\end{proof}

\begin{lemma}
\label{thm: estimate_non_linear} Suppose $Y_\eps$ is the solution of equations
 (\ref{eq:SDE_changed_coord1})--(\ref{eq:SDE_changed_coord2}) with initial conditions given by%
\begin{equation}
Y_{\epsilon,1 }(0)=\epsilon ^{\alpha }\chi _{\epsilon,1 }\quad \text{\rm  and }\quad Y_{\epsilon,2}(0)
=y_{2}+\epsilon ^{\alpha }\chi _{\epsilon,2 },  \label{eqn: Initial}
\end{equation}%
where distributions of random variables $(\chi _{\epsilon,1 })_{\epsilon >0}$ and $(\chi _{\epsilon,2
})_{\epsilon >0}$ form tight families. 
Let us fix any $R>0$ and denote $l_\eps= \tau_\eps^U\wedge(- \frac{\alpha}{\lambda_+}\log\eps + R)$ for $\eps>0$. Then
\[
\sup_{t\leq l_\eps}e^{-\lambda
t}|Y_{\epsilon,1 }(t)| = O_\Pp (\eps ^\alpha),
\]  and the family
\[
\left ( \eps^{-\alpha }\sup_{t\leq l_\eps}|Y_{\epsilon,2 }(t)-e^{-\lambda t }(y_{2}+\epsilon ^{\alpha }\chi
_{\epsilon,2})| \right )_{\eps>0}
\] is slowly growing.
\end{lemma}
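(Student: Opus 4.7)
The plan is to use variation-of-constants (Duhamel's principle) to rewrite each coordinate as a linear evolution plus two corrections (a drift correction coming from $H_i$ and a stochastic correction from $\sgm_i$), and then run a bootstrap argument on the first coordinate that exploits the fact that, thanks to Theorem~\ref{Lemma: Def_H1&H2}, $H_1$ is polynomially small in $Y_{\eps,1}$ with exponent $\alpha_1^+\ge 2$. Setting $Z_\eps(t)=e^{-\lambda_+ t}Y_{\eps,1}(t)$, one finds
\[
 Z_\eps(t)=\eps^\alpha\chi_{\eps,1}+\int_0^t e^{-\lambda_+ s}H_1(Y_\eps(s),\eps)\,ds+\eps\int_0^t e^{-\lambda_+ s}\sgm_1(Y_\eps(s))\,dW(s),
\]
and an analogous formula for the difference $Y_{\eps,2}(t)-e^{-\lambda_- t}(y_2+\eps^\alpha\chi_{\eps,2})$. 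By Lemma~\ref{lemma: stoch_est}, the $\sgm_1$-integral is $O_\Pp(1)$ uniformly in $t\le l_\eps$, so the stochastic contribution to $Z_\eps$ is $O_\Pp(\eps)=O_\Pp(\eps^\alpha)$; tightness of $\chi_{\eps,1}$ handles the initial term.

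For the first coordinate, introduce the stopping time $\sigma_\eps(K)=\inf\{t:|Z_\eps(t)|\ge K\eps^\alpha\}$ and work up to $\sigma_\eps(K)\wedge l_\eps$, where by the definition of $\tau_\eps^U$ we have $|Y_{\eps,2}|\le\delta$. Using the refined bound $|H_1(y,\eps)|\le K_1|y_1|^{\alpha_1^+}|y_2|^{\alpha_2^+}+K_2\eps^2$, we estimate
\[
 \int_0^{\sigma_\eps(K)\wedge l_\eps}e^{-\lambda_+ s}|H_1|\,ds\le K_1\delta^{\alpha_2^+}K^{\alpha_1^+}\eps^{\alpha_1^+\alpha}\cdot\frac{e^{(\alpha_1^+-1)\lambda_+ l_\eps}}{(\alpha_1^+-1)\lambda_+}+O(\eps^2).
\]
Because $l_\eps\le -(\alpha/\lambda_+)\log\eps+R$, the exponential factor contributes $\eps^{-(\alpha_1^+-1)\alpha}e^{(\alpha_1^+-1)\lambda_+ R}$, and the whole bound collapses to $C_1K^{\alpha_1^+}\eps^\alpha$. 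Hence on $\{\sigma_\eps(K)\le l_\eps\}$, continuity forces $K\le M_\eps+C_1K^{\alpha_1^+}$, where $M_\eps=O_\Pp(1)$ collects the tight contributions of $\chi_{\eps,1}$ and the stochastic integral (via Lemma~\ref{lemma: stoch_est}). Since $\alpha_1^+\ge 2$ and $\delta$ can be shrunk in Theorem~\ref{Lemma: Def_H1&H2} to make $C_1K^{\alpha_1^+-1}\le 1/2$ for any prescribed $K$, we obtain $K/2<M_\eps$ on this event; choosing $K$ large along tightness of $M_\eps$ gives $\Pp\{\sigma_\eps(K)\le l_\eps\}<\tilde\delta$ uniformly in small $\eps$, proving the first claim.

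For the second coordinate we now have $|Y_{\eps,1}(s)|\le e^{\lambda_+ s}\cdot O_\Pp(\eps^\alpha)$. Plugging this into $|H_2|\le K_1|y_1|y_2^2+K_2\eps^2$ and using $|Y_{\eps,2}|\le\delta$ on $U$,
\[
 \int_0^{l_\eps}e^{-\lambda_-(l_\eps-s)}|H_2|\,ds\le K_1\delta^2\cdot O_\Pp(\eps^\alpha)\cdot e^{-\lambda_- l_\eps}\int_0^{l_\eps}e^{(\lambda_++\lambda_-)s}\,ds+\frac{K_2\eps^2}{\lambda_-}.
\]
The right-hand side is $O_\Pp(\eps^{\alpha-\alpha\lambda_-/\lambda_+})$, which is polynomial in $\eps^{-1}$, hence slowly growing. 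The stochastic piece is $\eps S_\eps^-(l_\eps)$, and since $l_\eps$ is slowly growing (as $l_\eps=O(\log\eps^{-1})$), Lemma~\ref{lemma: stoch_est} makes this slowly growing as well. Dividing by $\eps^\alpha$ therefore produces a slowly growing family, establishing the second claim.

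The main obstacle is the bootstrap closure for $Z_\eps$: a naive Gronwall applied with the rough bound $|H_1|\le K_1\delta|Y_{\eps,1}|^2+K_2\eps^2$ produces an integrating factor $e^{\lambda_+ t}$ which, over $t\le l_\eps$, costs a factor $\eps^{-\alpha}$ and destroys the $O_\Pp(\eps^\alpha)$ scaling. The key is to use the genuine polynomial degree $\alpha_1^+\ge 2$ so that $\int e^{(\alpha_1^+-1)\lambda_+ s}|Z_\eps|^{\alpha_1^+}ds$ has enough powers of $Z_\eps$ to absorb the blow-up of $e^{(\alpha_1^+-1)\lambda_+ l_\eps}$ against $\eps^{\alpha_1^+\alpha}$, leaving a clean $\eps^\alpha$. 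Making the resulting constant $C_1$ small requires shrinking $\delta$ in Theorem~\ref{Lemma: Def_H1&H2}, an adjustment that does not affect any of the preceding constructions.
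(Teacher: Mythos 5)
Your plan has the right skeleton — Duhamel, a bootstrap stopping time, and Lemma~\ref{lemma: stoch_est} to control the martingale terms — but there is a genuine gap in how you estimate the nonlinear drift contributions, and it breaks both halves of the argument.

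The central problem is that you estimate $|Y_{\eps,2}(s)|$ by the fixed constant $\delta$ inside the drift integrals, rather than running a \emph{joint} bootstrap on both coordinates simultaneously. The paper's stopping time $\beta$ controls $Y_{\eps,1}$ and $Y_{\eps,2}$ at the same time, so that on $[0,\beta]$ one has $|Y_{\eps,2}(s)|\lesssim e^{-\lambda_- s}|y_2|+\eps^{\alpha-q}$. The exponential decay $e^{-\lambda_- s}$ is then used to offset the exponential growth $e^{\lambda_+ s}$ coming from the bound on $Y_{\eps,1}$, and the resulting drift contribution to $Z_\eps=e^{-\lambda_+ t}Y_{\eps,1}(t)$ is $o(\eps^\alpha)$ (the paper's (\ref{eqn: ineq_1}): $O(\eps^{2\alpha-q})+O(\eps^{2\alpha-\alpha(1-\lambda_-/\lambda_+)^+}|\log\eps|)$, both strictly smaller than $\eps^\alpha$). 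Because this contribution is $o(\eps^\alpha)$ rather than $O(\eps^\alpha)$, the constant in front is irrelevant and the bootstrap closes without any manipulation of $\delta$. Your estimate, by contrast, yields $C_1K^{\alpha_1^+}\eps^\alpha$ — exactly order $\eps^\alpha$ — so the constant matters. You then propose to shrink $\delta$ so that $C_1K^{\alpha_1^+-1}\le 1/2$, but this cannot be done for a fixed $\delta$: the $O_\Pp$ statement requires a $K$ that can be taken arbitrarily large as the tolerance $\tilde\delta\to 0$, while your constraint caps $K$ at $(2C_1)^{-1/(\alpha_1^+-1)}$, a number determined by $\delta$. Shrinking $\delta$ as a function of $\tilde\delta$ is not an adjustment to the proof of Theorem~\ref{Thm: main_small}; it changes $U$, and therefore $\tau_\eps^U$ and $l_\eps$, with the tolerance — an illegitimate order of quantifiers.

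The second coordinate fails for the same reason, and in addition your arithmetic is off. With $|Y_{\eps,1}(s)|\le O_\Pp(\eps^\alpha)e^{\lambda_+ s}$ and $|Y_{\eps,2}|\le\delta$, one has
\[
e^{-\lambda_- t}\int_0^t e^{(\lambda_++\lambda_-)s}\,ds\;\le\;\frac{e^{\lambda_+ t}}{\lambda_++\lambda_-}\;\le\;\frac{\eps^{-\alpha}e^{\lambda_+ R}}{\lambda_++\lambda_-}\quad\text{for }t\le l_\eps,
\]
so the drift integral is $K_1\delta^2\cdot O_\Pp(\eps^\alpha)\cdot O(\eps^{-\alpha})=O_\Pp(1)$, not $O_\Pp(\eps^{\alpha-\alpha\lambda_-/\lambda_+})$. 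Dividing by $\eps^\alpha$ then gives $O_\Pp(\eps^{-\alpha})$, which is not slowly growing (slowly growing requires $O_\Pp(\eps^{-r})$ for \emph{every} $r>0$). Even the order you claim, divided by $\eps^\alpha$, would give $O_\Pp(\eps^{-\alpha\lambda_-/\lambda_+})$, still not slowly growing. The paper avoids this by using the refined bound $|H_2|\le K_1|y_1|^{\aone}|y_2|^{\atwo}$ \emph{together with} the bootstrap bound $|Y_{\eps,2}|^{\atwo}\le K_\alpha\eps^{\atwo(\alpha-q)}+K_\alpha e^{-\atwo\lambda_- t}$; the resonance relation $\aone\lambda_+-(\atwo-1)\lambda_-=0$ then annihilates the dangerous exponential in the integrand, producing $O(\eps^{\alpha\aone}|\log\eps|)+O(\eps^{(\alpha-q)\atwo})$, which is $o(\eps^{\alpha-q})$ because $\aone\ge1$ and $\atwo\ge 2$. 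This is what permits the margin $2K\eps^{\alpha-q}$ for \emph{arbitrary} $q\in(0,\alpha)$ and hence the slowly growing conclusion. In short, the missing idea is the simultaneous two-coordinate bootstrap coupled with the decay of $Y_{\eps,2}$ and the resonance relation; replacing $|Y_{\eps,2}|$ by $\delta$ discards exactly the mechanism that makes the drift contributions lower order.
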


\begin{proof} The tightness property implies that without loss of generality we can assume that $|\chi_{\epsilon,1 }|,|\chi_{\epsilon,2 }|<C$ for some constant $C>0$ and every $\epsilon >0$.

Let us fix $\gamma >0$. We can use Lemma~\ref{lemma: stoch_est} to take $c=c(\gamma
/3)>0$ such that 
\[
\mathbf{P}\{S_{\epsilon }^{+ }(l_\eps)>c\}<\gamma /2,
\]
and 
\[
\Pp\{ S_\eps^-(l_\eps)>c\eps^{-q}\}<\gamma/2,
\]
where $q$ is an arbitrary number satisfying $0<q<\alpha$. Let us introduce a constant $K=(3c)\vee C$ and
stopping times 
\begin{align*}
\beta _{+} &=\inf \left\{ t\ge 0:e^{-\lambda _{+}t}|Y_{\epsilon,1 }(t)|\geq
2K\epsilon ^{\alpha }\right\} , \\
\beta _{-} &=\inf \left\{ t\ge 0:|Y_{\epsilon,2 }(t)-e^{-\lambda
_{-}t}(y_2+\epsilon ^{\alpha }\chi _{\epsilon,2 })|\geq 2K\epsilon ^{\alpha - q}\right\} , \\
\beta&=\beta _{+}\wedge \beta_- \wedge l_\eps.
\end{align*}
We start with an estimate for $Y_{\epsilon,1 }$. Duhamel's principle 
for \eqref{eq:SDE_changed_coord1}, Theorem~\ref{Lemma: Def_H1&H2} and Lemma~\ref{lemma: stoch_est} imply that the estimate%
\begin{align}
\sup_{t\leq \beta }e^{-\lambda _{+}t}|Y_{\epsilon,1 }(t)| &\leq \epsilon
^{\alpha }K+K_{1}\int_{0}^{\beta }e^{-\lambda _{+}s}Y_{\epsilon,1
}(s)^{2}|Y_{\epsilon,2 }(s)|ds+K_{2}\frac{\epsilon ^{2}}{\lambda _{+}} +\epsilon S_{\epsilon }^{+}(\beta )  \notag \\
&\leq \epsilon ^{\alpha }K+K_{1}\int_{0}^{\beta }e^{-\lambda
_{+}s}Y_{\epsilon,1 }(s)^{2}|Y_{\epsilon,2 }(s)|ds+K_{2}\frac{\epsilon ^{2}}{\lambda _{+}}  +\epsilon \frac{K}{3} \label{eqn: x_k_espression}
\end{align}%
holds with probability at least $1-\gamma /2$. We analyze each term in the
RHS of equation \eqref{eqn: x_k_espression}.

Let us start with the integral in \eqref{eqn: x_k_espression}. For $%
s\leq \beta $, we see that 
\begin{align}
Y_{\epsilon,1 }(s)^{2}|Y_{\epsilon,2 }(s)| &\leq 4K^{2}\epsilon ^{2\alpha
}e^{2\lambda _{+}s}\left( |Y_{\epsilon,2 }(s)-e^{-\lambda
_{-}s}(y_2+\epsilon ^{\alpha }\chi _{\epsilon,2 })|+e^{-\lambda
_{-}s}|y_2+\epsilon ^{\alpha }\chi _{\epsilon ,2}|\right)   \notag \\
&\leq 8K^{3}\epsilon ^{3\alpha - q } e^{2\lambda _{+}s}+4K^{2}\epsilon ^{2\alpha
}e^{(2\lambda _{+}-\lambda _{-})s}(|y_2|+\epsilon ^{\alpha }C).
\notag 
\end{align}
Therefore,
\begin{align} \notag
K_{1}\int_{0}^{\beta }e^{-\lambda _{+}s}Y_{\epsilon,1 }(s)^{2} |Y_{\epsilon,1 }(s)|ds 
&\leq \frac{8K^{3}K_{1}e^{\lambda_+R}}{\lambda _{+}}  \epsilon ^{2\alpha - q } \\ 
&+4K_{1}K^{2}  \epsilon ^{2\alpha }(|y_{2}|+\epsilon ^{\alpha} C)\int_0^\beta e^{(\lambda_+ - \lambda _{-}) s} ds  \notag \\
&\leq K\epsilon ^{\alpha }/12 +  5K_{1}K^{2}  \epsilon ^{2\alpha }|y_{2}| \int_0^\beta e^{(\lambda_+ - \lambda _{-}) s} ds \label{eqn; bound_x} 
\end{align}
for all $\eps >0$ small enough. Notice that this is a rough estimate, the constants on the r.h.s.\ are not optimal but sufficient for our purposes. This also applies to some
other estimates in this proof.

Let us estimate the integral on the r.h.s.\  of \eqref{eqn; bound_x}. When $\lambda_+ > \lambda_-$, the integral is bounded by
\[
\frac{1}{\lambda_+ - \lambda_-} e^{ (\lambda_+ - \lambda_- ) \beta} \leq 
\frac{e^{(\lambda_+ - \lambda_-)R}}{\lambda_+ - \lambda_-} \eps^{ - \alpha+\alpha \lambda_- / \lambda_+};
\]
if $\lambda_+ < \lambda_-$, then the integral on the r.h.s of~\eqref{eqn; bound_x}
is bounded by~$(\lambda_- - \lambda_+)^{-1}$; if $\lambda_+=\lambda_-$, then the integral is bounded by 
$2\alpha\lambda_+^{-1}|\log \eps|$. Hence, for some constant $K_{\lambda_+,\lambda_-}>0$ and $\epsilon >0$ small enough,
\begin{align} 
\notag
K_{1}\int_{0}^{\beta }e^{-\lambda _{+}s}Y_{\epsilon,1 }(s)^{2}|Y_{\epsilon,2 }(s)|ds 
&\leq K\epsilon ^{\alpha }/12 + K_{\lambda_+,\lambda_-} \eps^{2\alpha -\alpha(1-\lambda_-/\lambda_+)^+} |\log\eps| \\
&\leq K \eps^\alpha /6 \label{eqn: ineq_1}.
\end{align}%
Also, for $\epsilon >0$ small enough, 
\begin{equation}
K_{2}\epsilon ^{2}/\lambda _{+}+\eps K/3 <K\epsilon ^{\alpha }/2.  \label{eqn: ineq_2}
\end{equation}%
From~\eqref{eqn: x_k_espression}, \eqref{eqn: ineq_1} and \eqref{eqn: ineq_2} we get that for all $\eps>0$ small enough, the event   
\begin{equation*}
A=\left\{ \sup_{t\leq \beta }e^{-\lambda _{+}t}|Y_{\epsilon,1 }(t)|\leq
5K\epsilon ^{\alpha }/3\right\} 
\end{equation*}%
is such that $\Pp (A)>1-\gamma /2$.

Let us now consider $Y_{\epsilon,2 }(t)$ and denote 
\[
Z_{\epsilon }(t)=Y_{\epsilon,2 }(t)-e^{-\lambda_{-}t}(y_{2}+\epsilon ^{\alpha }\chi _{\epsilon,2 }).
\]
Duhamel's principle for $Y_{\eps,2}$, the definition of $\beta $, Theorem~\ref{Lemma: Def_H1&H2} and  Lemma~\ref{lemma: stoch_est}
imply that the inequalities
\begin{align}
\notag
\sup_{t\leq \beta }|Z_{\epsilon }(t)| &\leq K_1 \sup_{t \leq \beta}\int_{0}^{t}e^{-\lambda _{-}(t-s)}|Y_ {\eps,1}(s)|^\aone |Y_{\eps,2}(s)|^\atwo ds+K_2 \eps^{2} / \lambda_-  
+\epsilon S_{\epsilon }^{-}(\beta )  \\
&\leq K_1 \sup_{t \leq \beta}\int_{0}^{t}e^{-\lambda _{-}(t-s)}|Y_ {\eps,1}(s)|^\aone |Y_{\eps,2}(s)|^\atwo ds \notag \\
& \hspace{1.5 in} + \eps^{\alpha-q} \left(  K_2 \eps^{2-\alpha+q} / \lambda_- +\epsilon^{1-\alpha+q} S_{\epsilon }^{-}(\beta ) \right)
\notag \\
&\leq 2^\aone \eps^{\alpha \aone} K^\aone K_1 \sup_{t \leq \beta} e^{-\lambda_- t} \int_{0}^{t} e^{(\lambda _{-} + \aone \lambda_+ )s}|Y_{\eps,2}(s)|^\atwo ds + \eps^{\alpha-q}K/2  \label{eqn: y_k_espression}
\end{align}%
hold with probability at least $1-\gamma /2$ and for all $\epsilon >0$ small enough.
We analyze the integral term in \eqref{eqn: y_k_espression}. Note that, from the definition of $\beta$, and the inequality $(a+b)^r \leq 2^{r-1} ( a^r + b^r )$ we have that for any $t \leq \beta$ and any $\eps >0$ small enough,
\begin{align*}
|Y_{\eps,2}(t)|^\atwo &\leq 2^{\atwo-1} Z_\eps(t)^\atwo + 2^{\atwo -1 }e^{-\atwo \lambda_- t}|y_2 + \eps^\alpha \chi_{\eps,2}|^\atwo \\
& \leq 2^{2\atwo-1} K^\atwo \eps^{ (\alpha -q)\atwo } + 2^{2( \atwo -1)}e^{-\atwo \lambda_- t}|y_2|^\atwo \\
& \hspace{2.255 in}+2^{2( \atwo -1)}\eps^{\alpha \atwo }e^{-\atwo \lambda_- t}|\chi_{\eps,2}|^\atwo \\ 
&\leq \eps^{\atwo ( \alpha - q)} 2^{2 (\atwo -1)} \left(  2K^\atwo + \eps^{q \atwo} |\chi_{\eps,2}|^\atwo \right) + 2^{2(\atwo-1)} e^{- \atwo \lambda_- t} |y_2|^\atwo.
\end{align*}
Hence there is a constant $K_\alpha>0$ such that
\[
|Y_{\eps,2}(t) | ^\atwo \leq \eps^{\atwo ( \alpha - q)} K_\alpha + K_\alpha e^{-\atwo \lambda_- t},\quad t\leq \beta.
\] 
Using the last inequality, the definition of $\beta$, and the fact  $\aone \lambda _+ - (\atwo - 1)\lambda_-=0$ from Theorem~\ref{Lemma: Def_H1&H2}, we get
\begin{align}
\notag
\eps^{\alpha \aone} e^{-\lambda_- t} & \int_{0}^{t} e^{(\lambda _{-} + \aone \lambda_+ )s}| Y_{\eps,2}(s)|^\aone ds \\
\notag &\leq \eps^{\alpha( \aone + \atwo)} e^{\lambda_+ \aone \beta} \frac{K_\alpha \eps^{-q\atwo}}{\lambda_- + \aone \lambda_+}
 + K_\alpha \eps^{\alpha \aone }\int_{0}^{t} e^{(\aone \lambda _+ - (\atwo - 1)\lambda_- )s}ds \\
& \leq \eps^{ (\alpha - q) \atwo} \frac{K_\alpha e^{\lambda_+\alpha_1^- R}}{\lambda_- + \aone \lambda_+} + K_\alpha \eps^{\alpha \aone } \beta.
\label{eqn: integral_y_1}
\end{align}
Again, from Theorem~\ref{Lemma: Def_H1&H2} we know that $\aone \geq 1$ and $\atwo \geq 2$ which together with~\eqref{eqn: integral_y_1} imply that for all $\eps >0$ small enough
\begin{equation}
2^\aone \eps^{\alpha \aone} K^\aone K_1 \sup_{t \leq \beta} e^{-\lambda_- t} \int_{0}^{t} e^{(\lambda _{-} + \aone \lambda_+ )s}|Y_{\eps,2}(s)|^\atwo ds \leq K \eps^{\alpha - q} /6.
\label{eqn: y_integral_final}
\end{equation}
Using~\eqref{eqn: y_integral_final} and~\eqref{eqn: y_k_espression} we conclude that
the event 
\begin{equation*}
B=\left\{ \sup_{t\leq \beta }|Y_{\epsilon,2 }(t)-e^{-\lambda
_{-}t}(y_{2}+\epsilon ^{\alpha }\chi _{\epsilon,2 })|\leq 2K\epsilon ^{\alpha-q
}/3\right\} 
\end{equation*}%
is such that $\mathbf{P}(B) \geq 1-\gamma /2,$ for all $\epsilon >0$ small
enough.

The proof will be complete once we show that $\beta =l_\eps$ with probability at least $1-\gamma $. The latter is a consequence of the following
chain of inequalities that hold for all $\eps>0$ small enough: 
\begin{align*}
\mathbf{P}\{\beta _{+}\wedge \beta _{-} \leq l_\eps \}&\leq \mathbf{P}\left( \{\beta _{+}\wedge \beta _{-}\leq l_\eps\} \cap A\cap B \right)+\mathbf{P}(A^{c})+\mathbf{P}(B^{c}) \\
&\leq \mathbf{P}\left( \{\beta _{+}\wedge \beta _{-}\leq l_\eps \} \cap A\cap B\right)+\gamma \\
&\leq \mathbf{P}\left(  \{\beta _{+}\leq \beta _{-}\wedge l_\eps\} \cap A\right)+\mathbf{P}\left( \{\beta _{-}\leq \beta _{+}\wedge l_\eps\} \cap B \right)+\gamma \\
&=\mathbf{P}\{2\leq 5/3\}+\mathbf{P}\{2\leq 2/3\}+\gamma =\gamma .
\end{align*} 
\end{proof}

\medskip

Let us now analyze the evolution of the process $Y_\eps$ up to time $\tauh \wedge \tau_\eps^U$. We start with an application of Duhamel's principle: 
\begin{align}
Y_{\epsilon,1 }(t) &=e^{\lambda _{+}t}Y_{\epsilon,1 }(0)+\int_{0}^{t}e^{\lambda
_{+}(t-s)}H_{1}(Y_{\epsilon }(s),\eps)ds+\epsilon e^{\lambda _{+}t}%
{N}_{\epsilon }^{+}(t),  \label{eqn: x_duhamel} \\
Y_{\epsilon,2 }(t) &=e^{-\lambda _{-}t}Y_{\epsilon,2}(0)+\int_{0}^{t}e^{-\lambda _{-}(t-s)} H_{2}(Y_{\epsilon
}(s),\eps)ds+\epsilon {N}_{\epsilon }^{-}(t),  \label{eqn: y_duhamel}
\end{align}%
where ${N}_{\epsilon }^{\pm}(t)$ are defined by
\begin{align}
{N}_{\epsilon }^{+}(t) &=\int_{0}^{t}e^{-\lambda _{+}s}\sgm
_{1}(Y_{\epsilon }(s))dW(s),\notag \\
{N}_{\epsilon }^{-}(t) &=\int_{0}^{t}e^{-\lambda _{-}(t-s)}\sgm
_{2}(Y_{\epsilon }(s))dW(s). \label{eqn: def_N-}
\end{align}

\begin{lemma}
\label{lemma: y_convergence}
\[
\sup_{t \leq \hat{\tau_\eps} }
|Y_{\eps,2}(t) - e^{ -\lambda_- t} y_2|=O_{\Pp}(\eps^{\alpha p}).
\] 
\end{lemma}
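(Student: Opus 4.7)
The plan is to apply Duhamel's principle to equation~\eqref{eq:SDE_changed_coord2}, write
\[
Y_{\eps,2}(t) - e^{-\lambda_- t}y_2 = e^{-\lambda_- t}\eps^{\alpha}\chi_{\eps,2} + \int_0^t e^{-\lambda_-(t-s)}H_2(Y_\eps(s),\eps)\,ds + \eps N_\eps^-(t),
\]
and bound the three terms separately on $[0,\hat\tau_\eps]$. The first term is uniformly $O_\Pp(\eps^{\alpha})$ since $(\chi_{\eps,2})$ is tight, and $\eps^{\alpha}=o(\eps^{\alpha p})$ because $p<1$.

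For the drift integral I would use the bound $|H_2(y,\eps)|\le K_1|y_1|y_2^2+K_2\eps^2$ from Theorem~\ref{Lemma: Def_H1&H2}. Up to time $\hat\tau_\eps$ the definition of $\hat\tau_\eps$ gives $|Y_{\eps,1}(s)|\le\eps^{\alpha p}$, while $|Y_{\eps,2}(s)|\le\delta'$ since $Y_\eps$ is stopped at $\tau_\eps^U$ and therefore lies in $(-\delta',\delta')^2$. Hence $|H_2(Y_\eps(s),\eps)|\le C\eps^{\alpha p}$ for all such $s$, and multiplying by the exponential kernel and integrating yields an $O(\eps^{\alpha p})$ bound, uniformly in $t\le\hat\tau_\eps$. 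Note that the extra $\eps^2$ is negligible since $\alpha p<1$.

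For the noise term I would invoke Lemma~\ref{lemma: stoch_est}. This requires the stopping time to be slowly growing; I would therefore first show $\hat\tau_\eps\le l_\eps$ with probability tending to one, where $l_\eps=\tau_\eps^U\wedge\bigl(-\tfrac{\alpha}{\lambda_+}\log\eps+R\bigr)$ for sufficiently large fixed $R$. This is where the transversality hypothesis~\eqref{eqn: intial_hyp} enters: applying Lemma~\ref{thm: estimate_non_linear} and the Duhamel formula for $Y_{\eps,1}$ one sees that
\[
e^{-\lambda_+ l_\eps}Y_{\eps,1}(l_\eps)=\eps^{\alpha}\chi_{\eps,1}+O_\Pp(\eps),
\]
so $|Y_{\eps,1}(l_\eps)|\approx \eps^{\alpha}e^{\lambda_+ l_\eps}|\chi_{\eps,1}|=e^{\lambda_+ R}|\chi_{\eps,1}|$, and since the weak limit of $\chi_{\eps,1}$ has no atom at $0$, this quantity can be made larger than $\eps^{\alpha p}$ with probability close to one by choosing $R$ large; continuity of $Y_{\eps,1}$ then forces $\hat\tau_\eps\le l_\eps$ with high probability. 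Since $l_\eps$ is slowly growing, so is $\hat\tau_\eps$, and Lemma~\ref{lemma: stoch_est} gives $\eps S_\eps^-(\hat\tau_\eps)=O_\Pp(\eps^{1-r})$ for every $r>0$. Choosing $r<1-\alpha p$ makes this $o_\Pp(\eps^{\alpha p})$, and adding the three estimates completes the proof.

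The main obstacle is the last step: verifying that $\hat\tau_\eps$ is slowly growing. This is not automatic because a priori $\hat\tau_\eps$ could be very large (or even infinite) if $Y_{\eps,1}$ failed to reach the threshold. Ruling this out relies on the nondegeneracy condition~\eqref{eqn: intial_hyp}, which prevents the leading coefficient $\chi_{0,1}$ from vanishing and therefore ensures exponential growth of $Y_{\eps,1}$ dominates over the noise on the relevant time scale. The remaining estimates on the drift and martingale pieces are essentially routine consequences of the structure provided by Theorem~\ref{Lemma: Def_H1&H2} and Lemma~\ref{lemma: stoch_est}.
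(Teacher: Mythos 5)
Your Duhamel decomposition and the bounds on the initial-condition term and the drift integral match the paper's proof. The interesting divergence is in the martingale term. You correctly observe that invoking Lemma~\ref{lemma: stoch_est} to get $\eps S_\eps^-(\hat\tau_\eps)=o_\Pp(\eps^{\alpha p})$ requires $\hat\tau_\eps$ to be slowly growing; the paper's own proof simply cites Lemma~\ref{lemma: stoch_est} and moves on, leaving this implicit (the slow growth of $\hat\tau_\eps$ is in fact only established later, in Lemma~\ref{prop: tau_bar_convergence}, so the paper is effectively running a bootstrap that it does not spell out at this point). So your instinct to close that gap explicitly is sound.

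However, the fix you propose is itself incomplete. First, Lemma~\ref{thm: estimate_non_linear} gives only the one-sided bound $\sup_{t\le l_\eps}e^{-\lambda_+t}|Y_{\eps,1}(t)|=O_\Pp(\eps^\alpha)$; it does \emph{not} yield the signed approximation $e^{-\lambda_+l_\eps}Y_{\eps,1}(l_\eps)=\eps^\alpha\chi_{\eps,1}+O_\Pp(\eps)$ that you assert. To get that you must return to Duhamel for $Y_{\eps,1}$, bound $\int_0^{l_\eps}e^{-\lambda_+s}H_1\,ds$ using Theorem~\ref{Lemma: Def_H1&H2} together with the conclusion of Lemma~\ref{thm: estimate_non_linear}, and bound $\eps N_\eps^+(l_\eps)$ by the first part of Lemma~\ref{lemma: stoch_est} -- this is exactly what the paper does inside Lemma~\ref{prop: tau_bar_convergence}. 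Second, when $\alpha=1$ the hypothesis~\eqref{eqn: intial_hyp} is not imposed and $\eps^\alpha\chi_{\eps,1}$ is the same order as the $O_\Pp(\eps)$ noise, so ``no atom at $0$ for $\chi_{0,1}$'' is not by itself enough; you also need non-degeneracy of the Gaussian martingale contribution $N_\eps^+$ to ensure the sum is bounded away from zero in probability. Third, your estimate $\eps^\alpha e^{\lambda_+l_\eps}|\chi_{\eps,1}|=e^{\lambda_+R}|\chi_{\eps,1}|$ presumes $l_\eps=-\frac{\alpha}{\lambda_+}\log\eps+R$, but $l_\eps$ could equal $\tau_\eps^U$; the case where $U$ is exited before the deterministic cap (and possibly through the $Y_2$-direction) has to be ruled out separately, which is part of what Lemma~\ref{thm: estimate_non_linear} and the downstream Corollary~\ref{cor: tau_eps<infty} are for. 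So your diagnosis of the subtlety is correct and is a genuine point the paper glosses over, but the patch you sketch would need the same machinery the paper deploys in Lemma~\ref{prop: tau_bar_convergence} to actually go through.
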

\begin{proof}
Duhamel's principle, Theorem~\ref{Lemma: Def_H1&H2},  and the definition of~$\tauh$
imply that for some $K>0$,
\begin{align*}
|Y_{\eps,2} (t) - e^{ -\lambda_- t} y_2| &\leq \eps^\alpha |\chi_{\eps,2}| +\int_0^t e^{- \lambda_- (t - s)} \left(K_1|Y_{\eps,1} (s)|Y_{\eps,2}^2 (s) + K_2\eps^2 \right) ds + \eps S_\eps^- (t)\\
&\leq \eps^\alpha |\chi_{\eps,2}| + K \eps^{ \alpha p} + \eps^{\alpha p} \left( \eps^{1-\alpha p } S_\eps^- (\tauh) \right)
\end{align*}
for any $t \in (0, \tauh )$. The result follows since by Lemma~\ref{lemma: stoch_est}
the r.h.s.\ is $O_\Pp (\eps^{\alpha p})$
\end{proof}

As a simple corollary of this lemma, the first statement in Theorem~\ref{prop: eta_convergence} follows:
\begin{corollary} As ${\eps \to 0}$,
\label{cor: tau_eps<infty}
\[
 \Pp \{  \tau_\eps^U < \tauh \}\to 0.
\]In particular,~\eqref{eq:exit_at_tauh_along_axis_1}  holds true.
\end{corollary}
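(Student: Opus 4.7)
The plan is to exploit the fact that, up to time $\tauh$, the first coordinate is trivially confined: $|Y_{\eps,1}(t)| \le \eps^{\alpha p}$ is smaller than $\delta$ for every $\eps$ small. So the only way $Y_\eps$ could exit $f(U) = (-\delta,\delta)^2$ before $\tauh$ is for the second coordinate to hit $\pm\delta$, and Lemma~\ref{lemma: y_convergence} is tailor-made to rule this out.

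Concretely, I would fix $\gamma>0$ and use Lemma~\ref{lemma: y_convergence} to choose $K>0$ and $\eps_0>0$ so that the event
\[
\Omega_\eps^0 = \Bigl\{\sup_{t\le\tauh}|Y_{\eps,2}(t) - e^{-\lambda_- t} y_2| \le K\eps^{\alpha p}\Bigr\}
\]
has probability at least $1-\gamma$ for all $\eps\in(0,\eps_0)$. Because $x\in U$, its image $(0,y_2) = f(x)$ lies in $(-\delta,\delta)^2$, so $|y_2|<\delta$. On $\Omega_\eps^0$, the triangle inequality together with $e^{-\lambda_- t}\le 1$ gives $|Y_{\eps,2}(t)| \le |y_2| + K\eps^{\alpha p} < \delta$ for every $t\le\tauh$ once $\eps$ is small enough, while $|Y_{\eps,1}(t)| \le \eps^{\alpha p} < \delta$ holds by definition of $\tauh$. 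Hence $Y_\eps(t)\in (-\delta,\delta)^2$ throughout $[0,\tauh]$ on $\Omega_\eps^0$, which forces $\tauh \le \tau_\eps^U$. Since $\gamma$ was arbitrary, this shows $\Pp\{\tau_\eps^U < \tauh\}\to 0$.

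For the identity~\eqref{eq:exit_at_tauh_along_axis_1}, I would argue on the event $\{\tauh\le\tau_\eps^U\}$, whose probability tends to $1$ by the preceding step. Continuity of $Y_{\eps,1}$ together with the definition of $\tauh$ forces $|Y_{\eps,1}(\tauh)| = \eps^{\alpha p}$ (in particular $\tauh<\infty$ on this event, since $Y_{\eps,1}$ is non-zero there). From the definition $\eta_\eps^+ = \eps^{-\alpha} e^{-\lambda_+\tauh} Y_{\eps,1}(\tauh)$ one reads off $\sgn(\eta_\eps^+) = \sgn(Y_{\eps,1}(\tauh))$, so $Y_{\eps,1}(\tauh) = \eps^{\alpha p}\sgn(\eta_\eps^+)$. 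There is no genuine obstacle here: the corollary is essentially an immediate consequence of Lemma~\ref{lemma: y_convergence} combined with the geometric observation that the image of the starting point lies strictly inside $(-\delta,\delta)^2$.
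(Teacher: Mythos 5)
Your argument is correct and matches what the paper has in mind: the paper does not spell out the deduction, merely remarking that the corollary is a simple consequence of Lemma~\ref{lemma: y_convergence}, and you have filled in exactly the intended reasoning. Before $\tauh$ the first coordinate is pinned inside $[-\eps^{\alpha p}, \eps^{\alpha p}]$ by definition, and the $O_\Pp(\eps^{\alpha p})$ control of $Y_{\eps,2}$ around $e^{-\lambda_- t} y_2$ together with $|y_2|<\delta$ keeps the second coordinate away from $\pm\delta$, so the exit from $(-\delta,\delta)^2$ cannot precede $\tauh$ with high probability. The deduction of~\eqref{eq:exit_at_tauh_along_axis_1} from $|Y_{\eps,1}(\tauh)| = \eps^{\alpha p}$ and the definition of $\eta_\eps^+$ is also correct.

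One small wrinkle in your write-up: the parenthetical ``in particular $\tauh<\infty$ on this event, since $Y_{\eps,1}$ is non-zero there'' is circular as stated, since the non-vanishing of $Y_{\eps,1}(\tauh)$ already presupposes $\tauh<\infty$. The clean justification is simply that $\tau_\eps^U<\infty$ almost surely (a standard fact for a non-degenerate diffusion exiting a bounded domain, which the paper assumes throughout), so on $\{\tauh\le\tau_\eps^U\}$ one automatically has $\tauh<\infty$. This does not affect the validity of the conclusion.
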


\begin{lemma}
\label{lemma: ito_convergence}
Let
\[
{N}_{0}^{+}(t) =\int_{0}^{t}e^{-\lambda _-s}\sgm_{1}(0,e^{-\lambda _{-}s}y_{2})dW.
\]
Then
\[
\sup_{t \leq \tauh} |{N}_\eps^+ (t) - {N}_0^+ (t) | \overset{L^2} {\longrightarrow}0,
 \quad \eps \to 0.
\]
\end{lemma}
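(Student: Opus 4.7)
The plan is to reduce the uniform convergence of the stochastic integrals to the convergence of their quadratic variations. Since $t\mapsto (N^+_\eps - N^+_0)(t\wedge\tauh)$ is a continuous martingale, Doob's $L^2$-maximal inequality together with It\^o's isometry gives
\[
\mathbf{E}\sup_{t\le\tauh}\bigl|N^+_\eps(t)-N^+_0(t)\bigr|^2 \;\le\; 4\,\mathbf{E}\int_0^{\tauh} e^{-2\lambda_+ s}\bigl|\sgm_1(Y_\eps(s))-\sgm_1(0,e^{-\lambda_- s}y_2)\bigr|^2\,ds,
\]
so it suffices to show that the right-hand side vanishes as $\eps\to 0$.

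Fix $\gamma>0$. The function $\sgm_1$ is bounded by some $M>0$, being the composition of bounded $\sigma$ with derivatives of the $C^2$ map $f$ on the relatively compact set $U'$. The tail contribution for $s\in[T,\infty)$ is therefore bounded by $4M^2\int_T^\infty e^{-2\lambda_+ s}\,ds$, which I would render smaller than $\gamma$ by choosing $T$ large. For the bulk $s\in[0,T\wedge\tauh]$ I would invoke the Lipschitz bound on $\sgm_1$ to get
\[
|\sgm_1(Y_\eps(s))-\sgm_1(0,e^{-\lambda_- s}y_2)|^2 \;\le\; L^2\bigl(Y_{\eps,1}(s)^2+|Y_{\eps,2}(s)-e^{-\lambda_- s}y_2|^2\bigr).
\]
Before $\tauh$, the first term is bounded by $\eps^{2\alpha p}$ directly from the definition of $\tauh$, while Lemma~\ref{lemma: y_convergence} provides $\sup_{s\le\tauh}|Y_{\eps,2}(s)-e^{-\lambda_- s}y_2| = O_\Pp(\eps^{\alpha p})$ for the second.

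The one subtle point is promoting the $O_\Pp$ statement to control under expectation, since $L^2$ convergence is asserted. This is handled by uniform boundedness: by Corollary~\ref{cor: tau_eps<infty} one may restrict to the event $\{\tauh\le\tau_\eps^U\}$, whose complement has vanishing probability and on which the deterministic bound $4M^2$ makes the contribution negligible; on the event itself, both $Y_\eps(s)$ and $(0,e^{-\lambda_- s}y_2)$ lie in the fixed compact set $\overline{U'}$, so the integrand on $[0,T]$ is deterministically dominated. A standard bounded-convergence argument then promotes convergence in probability of $\sup_{s\le\tauh}|Y_{\eps,2}(s)-e^{-\lambda_- s}y_2|^2$ to convergence of its expectation, yielding a bulk contribution below $\gamma$ for all sufficiently small $\eps$. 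Since $\gamma$ was arbitrary, the desired $L^2$ convergence follows.
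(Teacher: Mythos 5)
Your argument is correct and takes essentially the same route as the paper: both reduce the supremum to a deterministic integral via a maximal-inequality plus It\^o isometry (you use Doob's $L^2$ inequality, the paper uses BDG, which are interchangeable here), then control the integrand via Lemma~\ref{lemma: y_convergence}, the definition of $\tauh$, and the boundedness and Lipschitzness of~$\sgm_1$. The only stylistic difference is that you split the time domain into a bulk and a tail whereas the paper pulls the $\sup$ of the integrand out of the integral and uses $\int_0^\infty e^{-2\lambda_+ s}\,ds<\infty$ directly; your added remarks about the bounded-convergence step make explicit what the paper leaves implicit.
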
  
\begin{proof} BDG inequality implies that for some constants $C_1,C_2>0$,
\begin{align}\notag 
\mathbf{E}\sup_{t\leq \tauh}|{N}_{\epsilon
}^{+}(t)-{N}_{0}^{+}(t)|^{2} &\leq C_{1}\mathbf{E}\int_{0}^{\hat%
\tau _{\epsilon }}e^{-2\lambda _{+}s}|\sgm_1 (Y_{\epsilon,1}(s),Y_{\eps,2}(s))- (0,e^{-\lambda _{-}s}y_{2})|^{2}ds \\
&\leq C_2\mathbf{E}\sup_{t\leq \hat{\tau} _{\epsilon }}|\sgm_1
(Y_{\epsilon,1 }(s),Y_{\epsilon,2 }(s))-\sgm_1 (0,e^{-\lambda _{-}s}y_{2})|^{2}.
\label{eq:Gaussian_approx}
\end{align}%
From Lemma~\ref{lemma: y_convergence} and the definition 
of~$\tauh$, it follows that 
\begin{equation}
\sup_{t\leq\hat\tau_\eps}\left |(Y_{\eps,1}(t),Y_{\eps,2}(t))-(0,e^{-\lambda_- t}y_2)\right|=O_{\Pp}(\eps^{\alpha p} ).
\label{eq:approx_along_stable_manifold}
\end{equation}
The desired convergence follows now from \eqref{eq:Gaussian_approx}, 
\eqref{eq:approx_along_stable_manifold}, and the boundedness and Lipschitzness of $\sgm_1$.
\end{proof}

We are now in position to give the first rough asymptotics for the time~$\tauh$. From now on we restrict ourselves to the 
 event $\{\tau_\eps^U > \tauh\}$ since due to Corollary~\ref{cor: tau_eps<infty} its probability is arbitrarily high.
\begin{lemma}
\label{prop: tau_bar_convergence} As $\eps\to0$,
\[
\Pp  \left \{  \tauh > -\frac{\alpha}{\lambda_+} \log \eps \right \} \to 0.
\]
\end{lemma}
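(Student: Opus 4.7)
The plan is to evaluate Duhamel's formula~\eqref{eqn: x_duhamel} at $t=T_\eps:=-\alpha\lambda_+^{-1}\log\eps$, at which $e^{\lambda_+T_\eps}=\eps^{-\alpha}$, and show that on the hypothetical event $\{\tauh>T_\eps\}$ the quantity $Y_{\eps,1}(T_\eps)$ would be asymptotically equal to the non-degenerate random variable $\eta_0^+$, in contradiction with the constraint $|Y_{\eps,1}(T_\eps)|\leq\eps^{\alpha p}\to 0$ forced by the event. Corollary~\ref{cor: tau_eps<infty} reduces the statement to controlling $\Pp(E_\eps)$, where $E_\eps:=\{\tauh>T_\eps\}\cap\{\tauh\leq\tau_\eps^U\}$.

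On $E_\eps$, two complementary bounds on $|Y_{\eps,1}|$ are available for $s\leq T_\eps$: the defining constraint $|Y_{\eps,1}(s)|\leq\eps^{\alpha p}$, and, on an event of probability at least $1-\gamma$, the a~priori exponential bound $|Y_{\eps,1}(s)|\leq C_\gamma\eps^\alpha e^{\lambda_+s}$ supplied by Lemma~\ref{thm: estimate_non_linear}; the same lemma ensures that $|Y_{\eps,2}(s)|$ stays bounded in probability. Splitting the drift integral at the crossover time $s^*:=\alpha(1-p)\lambda_+^{-1}|\log\eps|$ (where the two bounds on $|Y_{\eps,1}|$ coincide, taking value $\eps^{\alpha p}$), and using $|H_1(y,\eps)|\leq K_1y_1^2|y_2|+K_2\eps^2$ from Theorem~\ref{Lemma: Def_H1&H2}, a direct computation of each piece produces
\begin{equation*}
\eps^{-\alpha}\int_0^{T_\eps}e^{-\lambda_+s}|H_1(Y_\eps(s),\eps)|\,ds = O(\eps^{\alpha p})\xrightarrow{\eps\to 0}0
\end{equation*}
on the intersection of $E_\eps$ with this high-probability event. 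Inserting this estimate together with $e^{\lambda_+T_\eps}=\eps^{-\alpha}$ into~\eqref{eqn: x_duhamel} yields $Y_{\eps,1}(T_\eps)=\chi_{\eps,1}+\eps^{1-\alpha}N_\eps^+(T_\eps)+o_\Pp(1)$ on this event.

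To conclude, the stochastic term is handled in the two cases separately. When $\alpha<1$, Lemma~\ref{lemma: stoch_est} gives $\eps^{1-\alpha}N_\eps^+(T_\eps)\to 0$ in probability, so on $E_\eps$ we have $|\chi_{\eps,1}|\leq(1+o(1))\eps^{\alpha p}$; passing to the limit and using $\Pp\{\chi_{0,1}=0\}=0$, which holds because hypothesis~\eqref{eqn: intial_hyp} combined with the fact that $\nabla f(x)b(x)$ is parallel to $e_2$ (as $f$ maps $\Wc^s$ to the $y_2$-axis) ensures $\chi_{0,1}\ne 0$ almost surely, we obtain $\Pp(E_\eps)\to 0$. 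When $\alpha=1$, the inequality $T_\eps\leq\tauh$ on $E_\eps$ allows Lemma~\ref{lemma: ito_convergence} together with $N_0^+(T_\eps)\to N^+$ almost surely to yield $N_\eps^+(T_\eps)\to N^+$ in probability on $E_\eps$, so $|\chi_{\eps,1}+N^+|=O_\Pp(\eps^p)$ on $E_\eps$; the limiting random variable $\eta_0^+=\chi_{0,1}+N^+$ has no atom at zero because $N^+$ is a non-degenerate Gaussian (thanks to uniform positivity of $a$) independent of $\chi_{0,1}$, so the convolution admits a density. The main technical obstacle is the two-regime drift estimate: neither bound on $|Y_{\eps,1}|$ alone suffices across the admissible range of $p$ in~\eqref{eqn: p_prop}, and only combining them at the crossover time $s^*$ produces the decisive $\eps^{\alpha p}$ factor that drives the argument.
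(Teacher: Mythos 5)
Your proof is correct, and it takes a genuinely different route from the paper. The paper introduces an auxiliary \emph{linear} process $u_\eps$ solving the SDE without the drift nonlinearity $H_1$, defines an intermediate stopping time $\tauw=\inf\{t:|u_\eps(t)|=\eps^{\alpha\delta_0}\}$ with $\delta_0<p$, shows $\Pp\{\tauw>-(1-\delta_0^2)\alpha\lambda_+^{-1}\log\eps\}\to 0$ via an explicit Duhamel identity for $u_\eps$, then bounds $\Delta_\eps=Y_{\eps,1}-u_\eps$ using Lemma~\ref{thm: estimate_non_linear} up to $\tauh\wedge\tauw$ and derives a contradiction on $\{\tauh>\tauw\}$ by comparing $|Y_{\eps,1}(\tauw)|\approx\eps^{\alpha\delta_0}$ against the defining constraint $<\eps^{\alpha p}$. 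Your proof instead works directly with $Y_{\eps,1}$ at the deterministic time $T_\eps$, avoiding the auxiliary process and the intermediate stopping time entirely. The decisive technical move you supply in their place is the two-regime drift estimate split at the crossover time $s^*=\alpha(1-p)\lambda_+^{-1}|\log\eps|$: your diagnosis is correct that the constraint $|Y_{\eps,1}|\leq\eps^{\alpha p}$ alone gives $O(\eps^{2\alpha p-\alpha})$, which fails for $p<1/2$ (allowed by~\eqref{eqn: p_prop}), while the a priori bound $O_\Pp(\eps^\alpha e^{\lambda_+s})$ alone gives only $O(1)$; splitting at $s^*$ gives $O(\eps^{\alpha p})$ on each piece, which is exactly what is needed. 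The paper's approach localizes the contradiction at the (random) time $\tauw\ll T_\eps$, so its nonlinear estimates are less delicate; your approach pushes all the way to $T_\eps$, which requires the sharper integral estimate but yields a shorter and more self-contained argument. Both the case split at $\alpha<1$ versus $\alpha=1$ and the use of the no-atom property of $\chi_{0,1}$ (respectively $\eta_0^+=\chi_{0,1}+N^+$), tracing back through $\nabla f(x)\xi_0$ and hypothesis~\eqref{eqn: intial_hyp}, are handled correctly.
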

\begin{proof}
Let $u_\eps$ be the solution to the following SDE:
\begin{align*}
du_\eps(t) &= \lambda_+ u_\eps(t)dt + \eps \sgm_1 (Y_\eps(t))dW(t),\\
u_\eps(0)&=\eps^\alpha \chi_{\eps,1}.
\end{align*}
Let us take $\delta_0\in(0,1)$ to be specified later and consider the following stopping time 
\[
\tauw=\inf \left \{ t: |u_\eps(t)|= \eps^{\alpha \delta_0} \right \}.
\] 
Duhamel's principle for $u_\eps$ writes as 
\begin{align*}
u_\eps(t) &= \eps^\alpha e^{\lambda_+ t}\chi_{\eps,1} + \eps e^{\lambda_+ t} {N}_\eps^+(t) \\
&= \eps^\alpha e^{\lambda_+ t } \wt{\eta}_\eps (t),
\end{align*}
with 
\begin{equation}
\wt{\eta}_\eps (t)= \chi_{\eps,1} + \eps^{1-\alpha} {N}_\eps^+ (t).
\label{eq:eta-tilde} 
\end{equation}
Hence, the definition of $\tauw$ implies $\eps^{\alpha \delta_0} = \eps^{\alpha} e^{\lambda_+ \tauw} |\wt{\eta}_\eps (\tauw)|$, so that 
\[
\tauw = -\frac{\alpha}{\lambda_+} (1-\delta_0 ) \log\eps -\frac{1}{\lambda_+} \log |\wt{\eta}_\eps (\tauw)|.
\]
Due to~\eqref{eq:eta-tilde} and Lemma~\ref{lemma: ito_convergence}, the distributions of $\frac{1}{\lambda_+} \log |\wt{\eta}_\eps (\tauw)|$ form a tight family. Therefore,
\begin{equation}
\label{eq:tauh_grows}
\lim_{\eps \to 0} \Pp \left \{  \tauw > -(1-\delta_0^2)\frac{\alpha}{\lambda_+} \log \eps \right \}=0.
\end{equation}
This fact allows us to use Lemma~\ref{thm: estimate_non_linear} to estimate $Y_\eps$
up to $\tauh \wedge \tauw$.
From~\eqref{eqn: x_duhamel}, the difference $\Delta_\eps=Y_{\eps,1}-u_\eps$ is given by
\[
\Delta_\eps (t) = e^{\lambda_+ t} \int_0^t e^{ -\lambda_+ s}  H_1(Y_\eps(s),\eps ) ds. 
\]
We can use~\eqref{eq:tauh_grows} to justify the application of Lemma~\ref{thm: estimate_non_linear} up to time $\tauh \wedge \tauw$. Then, we combine Theorem~\ref{Lemma: Def_H1&H2}, Lemma~\ref{thm: estimate_non_linear}, and the definition of $\tauh$ to see that 
\begin{align*}
\sup_{t\leq \tauh \wedge \tauw} e^{- \lambda_+ t} |H_1(Y_{\epsilon}(t), \eps)|&\leq K_1\sup_{t\leq \tauh \wedge \tauw} \left( \left(e^{-\lambda_+ t} |Y_{\eps,1}(t)|\right) |Y_{\eps,1}(t)|\cdot|Y_{\eps,2}(t)|\right) + K_2 \eps^2 \\
&=O_\Pp \left( \eps^{\alpha + \alpha p }  \right)\\
\end{align*}
and 
\[
e^{\lambda_+ \tauh \wedge \tauw } = O_\Pp \left ( \eps^{ -\alpha (1- \delta_0^2)  } \right).
\]
These two estimates together with~\eqref{eq:tauh_grows} imply 
\[
\sup_{t\leq \tauh \wedge \tauw } | \Delta_\eps (t) | = O_\Pp \left (  \eps^{ \alpha (p+\delta_0^2)  } |\log\eps|\right).
\]
On one hand,~\eqref{eq:tauh_grows} implies
\[ 
\Pp\left(\left \{  \tauh > -\frac{\alpha}{\lambda_+} \log 
\eps \right \} \cap \{\tauh \leq \tauw \}\right)\to 0.
\] 
On the other hand, if  $ \tauh > \tauw$ then
\[
|Y_{\eps,1} (\tauw)| = \left| \eps^{ \alpha \delta_0 } + O_\Pp ( \eps ^{ \alpha ( p + \delta _0^2 )} |\log\eps|) \right|,
\]
and
\[
 |Y_{\eps,1}(\tauw)|< \eps^{\alpha p}. 
\]
These relations contradict each other for sufficiently small $\eps$ if we choose $\delta_0 < p$.
So, this choice of $\delta_0$ guarantees that
 $\Pp \left \{  \tauh > \tauw \right \} \to 0$ implying the result. 
\end{proof}

\bigskip

\begin{proof}[Proof of Lemma~\ref{prop: eta_convergence}] 
Recall that we work on the high probability event $\{  \tauh < \tau_\eps^U \}$.  Hence, for each $\epsilon >0$, we have
the identity 
\begin{equation*}
\epsilon ^{\alpha p}=\epsilon ^{\alpha }e^{\lambda _{+}\hat{\tau}_{\epsilon }%
}|\eta _{\epsilon }^{+}|.
\end{equation*}%
Solving for $\tauh$ and then plugging it back into $%
Y_{\epsilon,1 }$, we get%
\begin{align}
\hat{\tau}_{\epsilon } &=-\frac{\alpha }{\lambda _{+}}(1-p)\log \epsilon -%
\frac{1}{\lambda _{+}}\log |\eta _{\epsilon }^{+}|,
\label{eqn: tau_explicit} \\
Y_{\epsilon,1 }(\hat\tau _{\epsilon }) &=\epsilon ^{\alpha p} \sgn(\eta
_{\epsilon }^{+}).  \notag
\end{align}
Using this information we are in position to get the asymptotic behavior of the random variables $\eta
_{\epsilon }^{\pm }$. First, from relation \eqref{eqn: x_duhamel} we get
\begin{equation}
\eta _{\epsilon }^{+}=\chi_{\epsilon,1 }+\epsilon ^{-\alpha }\int_{0}^{
{\tauh}}e^{-\lambda _{+}s}H_{1}(Y_{\epsilon
}(s),\eps)ds+\epsilon ^{1-\alpha }{N}_{\epsilon }^{+}(\hat{\tau _{\epsilon }}%
). \label{eqn: eta+}
\end{equation}
Using \eqref{eqn: tau_explicit} in \eqref{eqn: y_duhamel} we get%
\begin{align}
\eta _{\epsilon }^{-} &=|\eta _{\epsilon }^{+}|^{\lambda _{-}/\lambda
_{+}}(y_{2}+\epsilon ^{\alpha }\chi _{\epsilon,2 })+|\eta _{\epsilon
}^{+}|^{\lambda _{-}/\lambda _{+}}\int_{0}^{\hat{\tau} _{\epsilon }%
}e^{\lambda _{-}s} H_{2}(Y_{\epsilon }(s),\eps)ds  \notag \\
&+\epsilon ^{1-\alpha (1-p)\lambda _{-}/\lambda _{+}}{N}_{\epsilon
}^{-}(\hat{\tau} _{\epsilon }).  \label{eqn: eta-}
\end{align}%

The main part of the proof is based on representations \eqref{eqn: tau_explicit}--\eqref{eqn: eta-}.

Lemma~\ref{prop: tau_bar_convergence} allows us to use the estimates established in
Lemma~\ref{thm: estimate_non_linear} up to time~$\tauh$. In particular, now we can
conclude that the family
\begin{equation}
\left( \eps^{-\alpha} \sup_{t \leq \tauh} |Y_{\eps,2} (t) - e^{-\lambda_- t} y_2| \right)_{\eps>0}
\label{eq:y_slowly_growing} 
\end{equation}
is slowly growing thus improving Lemma~\ref{lemma: y_convergence}.

To obtain the desired convergence for $\eta_\eps^+$, we analyze the r.h.s.\ of~\eqref{eqn: eta+}
term by term. The convergence of the first term was one of our assumptions. For the second one,
we need to estimate $H_1(Y_\eps,\eps)$. 
 Using Lemma~\ref{thm: estimate_non_linear}, the boundness of~$Y_{\eps,2}$ and the definition of  
$\tauh$, we see that
\begin{equation}
\sup_{t\leq \hat{\tau}_\eps} e^{-\lambda_+ t} Y_{\eps,1}^2 (t) |Y_{\eps,2} (t)| = O_\Pp (\eps^{\alpha+\alpha p} ). \label{eqn: eta+_stoch}
\end{equation}
This estimate and Theorem~\ref{Lemma: Def_H1&H2} imply that
\begin{align}
\eps^{-\alpha} \int_0^{\tauh} e^{-\lambda_+ s}
H_1(Y_\eps(s),\eps)ds &\leq K_1 \eps^{-\alpha} \int_0^{\tauh} e^{-\lambda_+ s} Y_{\eps,1}^2 (s) |Y_{\eps,2}(s)|ds 
 + \frac{K_2}{\lambda_+} \eps^{2-\alpha} \notag \\
& =O_\Pp ( \eps^{\alpha p} |\log\eps| ).\notag
\end{align}
Let us estimate the third term in \eqref{eqn: eta+}. We can use the last estimate along with \eqref{eqn: eta+} and Lemma~\ref{lemma: ito_convergence} to conclude that 
the distributions of positive part of $\lambda_+^{-1} \log | \eta_\eps^+|$ form a tight family.
Therefore, \eqref{eqn: tau_explicit} implies that 
\[
\tauh  \overset{\Pp}{\to} \infty,\quad \eps \to 0. 
\]
Combined with It\^o isometry  and Lemma~\ref{lemma: ito_convergence}, this implies
\[
{N}_{\epsilon }^{+}(\hat\tau _{\epsilon })\overset{L^{2}}{%
\longrightarrow }N^+,\quad\epsilon \rightarrow 0,
\]
which completes the analysis of $\eta_\eps^+$ and, due to~\eqref{eqn: tau_explicit}, of $\tauh$.

\medskip

To obtain the convergence of $\eta_\eps^-$, we study~\eqref{eqn: eta-}.  
Combining \eqref{eq:y_slowly_growing}, the inequality 
\[
|Y_{\eps,1}(t)| Y_{\eps,2}^2 (t) \leq 2 |Y_{\eps,1} (t)| \left( |Y_{\eps,2}(t)-e^{-\lambda_- t}y_2|^2 + e^{ -2 \lambda_- t} y_2^2 \right ),
\] and the definition of $\tauh$ we see that for any $q\in(0,\alpha p)$,
\[
\sup_{t \leq \hat\tau_\eps }e^{\lambda_- t } |Y_{\eps,1}(t)| Y_{\epsilon,2 }^2(t) =O_\Pp \left( \eps^{\alpha p + \alpha - q} e^{\lambda_- \tauh }+\epsilon ^{\alpha p } \right ).
\]
Hence, as a consequence of Theorem~\ref{Lemma: Def_H1&H2} and~\eqref{eqn: tau_explicit} we have  
\begin{align*}
\int_{0}^{\tauh}e^{\lambda _{-}s}  H_{2}(Y_{\epsilon }(s),\eps)ds &=O_\Pp \left(\left(\epsilon ^{\alpha p -q + \alpha }e^{\lambda _{-}\tauh} + \epsilon ^{\alpha p }\right)|\log \epsilon |\right) \\
&=O_{\mathbf{P}} \left(\left(\epsilon ^{\alpha (1-(1-p)\lambda _{-}/\lambda _{+}) + (\alpha p - q)} + \epsilon ^{\alpha p}\right)|\log \epsilon | \right).
\end{align*}%
Combining this and Lemma~\ref{lemma: stoch_est} in \eqref{eqn: eta-} we obtain
\begin{align*}
\eta _{\epsilon }^{-} &=|\eta _{\epsilon }^{+}|^{\lambda _{-}/\lambda
_{+}}y_2+O_{\mathbf{P}}(\eps^\alpha)+O_{\mathbf{P}} \left(\left(\epsilon ^{\alpha (1-(1-p)\lambda _{-}/\lambda _{+}) + (\alpha p - q)}+\epsilon ^{\alpha p}\right)|\log \epsilon | \right) \\
& + O_{\mathbf{P}}\left(\epsilon ^{1-\alpha (1-p)\lambda _{-}/\lambda _{+} - q}\right)
\end{align*}
which finishes the proof of Lemma~\ref{prop: eta_convergence} by choosing $q$ small enough.
\end{proof}

\section{Proof of Lemma~\ref{Thm:after_tauh}}\label{sec:unstable_manifold}

Consider the solution to system~\eqref{eq:SDE_changed_coord1}--\eqref{eq:SDE_changed_coord2}
equipped with initial conditions~\eqref{eq:restart_at_tauh} satisfying~\eqref{eq:condition_for_theorem_along_unstable}.
Let us restrict the analysis to the arbitrary high probability event 
\[
\{ |\eta _{\epsilon }^{\pm }|\leq K_{\pm } \},
\]
for some constants $K_\pm>0$.

\begin{lemma}
\label{lemma: y_after_global} Let $p\in (0,1)$ satisfy~\eqref{eqn: p_prop}, and
let $(t_\eps)_{\eps>0} $ be a slowly growing family of stopping times. Consider $t_\eps'=t_\eps \wedge \tau_\eps^U$, then for any $\gamma >0$,
\begin{equation*}
\lim_{\epsilon \rightarrow 0}\mathbf{P}\left\{\sup_{t\leq t_\eps' }|Y_{\epsilon,2
}(t)|\leq (K_{-}+\gamma )\epsilon ^{\alpha (1-p)\lambda _{-}/\lambda
_{+}}\right\}=1.
\end{equation*}
\end{lemma}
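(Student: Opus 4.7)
The plan is a standard bootstrap based on Duhamel's principle applied to equation~\eqref{eq:SDE_changed_coord2}. Writing
\[
Y_{\eps,2}(t) = e^{-\lambda_- t} Y_{\eps,2}(0) + \int_0^t e^{-\lambda_-(t-s)} H_2(Y_\eps(s),\eps)\,ds + \eps N_\eps^-(t),
\]
where $N_\eps^-$ is defined in~\eqref{eqn: def_N-}, the first term is bounded in absolute value by $K_- \eps^{\alpha(1-p)\lambda_-/\lambda_+}$ by the assumption on $\eta_\eps^-$. The goal is to show that the remaining two terms are $o_\Pp(\eps^{\alpha(1-p)\lambda_-/\lambda_+})$.

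For the bootstrap, I would introduce $\beta_\eps = \inf\{t\ge 0:\ |Y_{\eps,2}(t)|> (K_-+\gamma)\eps^{\alpha(1-p)\lambda_-/\lambda_+}\}$ and aim to prove that $\Pp\{\beta_\eps \le t_\eps'\}\to 0$. Note that on $\{t \le t_\eps'\}$ we have $|Y_{\eps,1}(t)|\le \delta$ by definition of $\tau_\eps^U$. The key arithmetic observation is that condition~\eqref{eqn: p_prop} forces $\alpha(1-p)\lambda_-/\lambda_+ < 1$: indeed if $\lambda_+<\lambda_-$ then $p>1-\lambda_+/\lambda_-$ gives $(1-p)\lambda_-/\lambda_+<1$, and if $\lambda_+\ge \lambda_-$ the inequality is automatic since $(1-p)\lambda_-/\lambda_+\le 1$ and $p>0$. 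Since $t_\eps$ is slowly growing, Lemma~\ref{lemma: stoch_est} yields $\eps S_\eps^-(t_\eps)=O_\Pp(\eps^{1-r})$ for every $r>0$, so picking $r$ with $1-r>\alpha(1-p)\lambda_-/\lambda_+$ gives $\eps\sup_{t\le t_\eps'}|N_\eps^-(t)|=o_\Pp(\eps^{\alpha(1-p)\lambda_-/\lambda_+})$.

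For the drift contribution, on $\{t\le t_\eps'\wedge \beta_\eps\}$ I would use Theorem~\ref{Lemma: Def_H1&H2} together with $|Y_{\eps,1}|\le\delta$ and the bootstrap bound on $Y_{\eps,2}$ to get
\[
\int_0^t e^{-\lambda_-(t-s)}|H_2(Y_\eps(s),\eps)|\,ds \le \frac{K_1\delta^{\alpha_1^-}(K_-+\gamma)^{\alpha_2^-}}{\lambda_-}\eps^{\alpha_2^-\alpha(1-p)\lambda_-/\lambda_+}+\frac{K_2\eps^2}{\lambda_-}.
\]
Since observation~(1) after~\eqref{eqn: alpha_minus} gives $\alpha_2^-\ge 2$ and since $\alpha(1-p)\lambda_-/\lambda_+<1<2$, both terms on the right are $o(\eps^{\alpha(1-p)\lambda_-/\lambda_+})$, uniformly in $t$. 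Combining the three estimates, on a high-probability event $A_\eps$ with $\Pp(A_\eps)\to 1$, and for $\eps$ small enough, the right-hand side of the Duhamel identity is strictly smaller than $(K_-+\gamma)\eps^{\alpha(1-p)\lambda_-/\lambda_+}$ for every $t\le t_\eps'\wedge\beta_\eps$. By continuity of $Y_{\eps,2}$ this contradicts the definition of $\beta_\eps$ on $A_\eps\cap\{\beta_\eps\le t_\eps'\}$, so $\Pp\{\beta_\eps\le t_\eps'\}\le 1-\Pp(A_\eps)\to 0$, which is the desired conclusion. The only subtle point is closing the bootstrap, which reduces to the self-limiting feedback $\alpha_2^-\ge 2$ combined with the verification that the exponent $\alpha(1-p)\lambda_-/\lambda_+$ lies strictly below $1$; nothing beyond Lemma~\ref{lemma: stoch_est} and Theorem~\ref{Lemma: Def_H1&H2} is needed.
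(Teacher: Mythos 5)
Your proposal is correct and takes essentially the same route as the paper: introduce the stopping time $\beta_\eps$, expand $Y_{\eps,2}$ by Duhamel's principle, and close a bootstrap using the self-limiting nonlinearity from Theorem~\ref{Lemma: Def_H1&H2} together with the slow-growth estimate of Lemma~\ref{lemma: stoch_est}. The only cosmetic difference is bookkeeping: the paper groups the stochastic integral $N_\eps^-$ with the $\eps^2\Psi_2$ contribution into a single slowly growing process $M_\eps$ and then uses the coarser bound $|\hat H_2(y)|\le K_1|y_1|y_2^2$, whereas you fold $\eps^2\Psi_2$ into the $K_2\eps^2$ term of the $|H_2|$ bound and work with the sharper exponents $\alpha_1^-,\alpha_2^-$ and the fact $\alpha_2^-\ge 2$; both versions close the loop for the same arithmetic reason, namely that the first inequality of~\eqref{eqn: p_prop} forces $\alpha(1-p)\lambda_-/\lambda_+<1$.
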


\begin{proof}
Let $\gamma >0$. We recall that  $N_\eps^-$ is defined in~\eqref{eqn: def_N-} and introduce the process
\begin{equation}
M_{\epsilon }(t) ={N}_{\epsilon }^{-}(t)+\epsilon
\int_{0}^{t}e^{-\lambda_- (t-s)}\Psi _{2}(Y_{\epsilon }(s))ds, \label{eqn: M_def}
\end{equation}
where $\Psi_2$ was introduced in Theorem~\ref{Lemma: Def_H1&H2},
and the stopping time 
\begin{equation*}
\beta _{\epsilon }=\inf \left\{t:|Y_{\epsilon,2 }(t)|>(K_{-}+\gamma )\epsilon
^{\alpha (1-p)\lambda _{-}/\lambda _{+}}\right\}.
\end{equation*}%
Using the fact that $Y_{\epsilon,1 }$ is bounded, it is easy to see that
there is a constant $%
K_{\lambda _{-}}$ independent of $t$, so that for any $t\leq \beta_\eps\wedge t_\eps'$,
we have
\begin{equation*}
\int_{0}^{t}e^{-\lambda _{-}(t-s)}|Y_{\epsilon,1 }(s)|Y_{\epsilon,2}^2(s)ds\leq K_{\lambda _{-}}\epsilon ^{ 2\alpha
(1-p)\lambda _{-}/\lambda _{+}}.
\end{equation*}%
This estimate, along with Duhamel's principle and Theorem~\ref{Lemma: Def_H1&H2} implies that for some constant $C>0$ and any $t \leq \beta_\epsilon \wedge t_\eps'$,%
\begin{align*}
|Y_{\epsilon,2 }(t)| &\leq \epsilon ^{\alpha (1-p)\lambda _{-}/\lambda
_{+}}|\eta^-_{\epsilon }|+K_1\int_{0}^{t}e^{-\lambda _{-}(t-s)}|Y_{\eps,1}(s)|Y_{\epsilon,2 }^2 (s)ds+\epsilon \sup_{t \leq \beta_\eps }|M_{\epsilon }(t)| \\
&\leq \epsilon ^{\alpha (1-p)\lambda _{-}/\lambda _{+}}K_{-}+C \epsilon ^{2\alpha (1-p)\lambda _{-}/\lambda _{+}}+\epsilon
\sup_{t\leq \beta _{\epsilon }}|M_{\epsilon }(t)|.
\end{align*}%
Hence, using Lemma \ref{lemma: stoch_est} to estimate $M_\eps$, we obtain that
\begin{align*}
\mathbf{P}\{\beta _{\epsilon } < t_\eps' \} &=\mathbf{P}\left\{\sup_{t \leq \beta_{\epsilon } \wedge t_\eps' } |Y_{\epsilon,2 }(t)| \geq (K_{-}+\gamma )\epsilon ^{\alpha
(1-p)\lambda _{-}/\lambda _{+}} \right\} \\
&\leq \mathbf{P}\left\{C\epsilon ^{\alpha (1-p)\lambda_{-}/ \lambda _{+}}+\epsilon ^{1-\alpha (1-p)\lambda _{-}/\lambda
_{+}}\sup_{t\leq \beta _{\epsilon }}|M_{\epsilon }(t)|\geq \gamma
 \right\}
\end{align*}%
converges to $0$ as $\eps \rightarrow 0$ proving the lemma.
\end{proof}

\begin{lemma}
\label{lemma: x_after_global}Under the assumptions of lemma \ref{lemma:
y_after_global}, for any $\rho \in (0,\frac{\alpha p}{%
\lambda _{+}}] $, $\gamma >0$, and $C>0$, define $\rho_\eps~=(-\rho \log \epsilon+C )\wedge \tau_\eps^U $. Then,  we have
\begin{equation*}
\lim_{\epsilon \rightarrow 0}\mathbf{P}\left\{\sup_{t\leq\rho_\eps}|Y_{\epsilon,1 }(t)|e^{-\lambda _{+}t}\leq (1+\gamma )\epsilon ^{\alpha
p}\right\}=1.
\end{equation*}
\end{lemma}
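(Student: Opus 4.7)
The approach mirrors the proof of Lemma~\ref{lemma: y_after_global}: introduce an auxiliary stopping time, apply Duhamel's principle, and check that the nonlinear feedback is negligible on the timescale $\rho_\eps$. The key observation is that $\rho \lambda_+ \leq \alpha p$, so $e^{\lambda_+ \rho_\eps}$ is at most of order $\eps^{-\alpha p}$.

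First I would write Duhamel's principle~\eqref{eqn: x_duhamel} in the equivalent form
\[
e^{-\lambda_+ t} Y_{\eps,1}(t) = Y_{\eps,1}(0) + \int_0^t e^{-\lambda_+ s} H_1(Y_\eps(s),\eps)\, ds + \eps\, N_\eps^+(t),
\]
and recall that by hypothesis $|Y_{\eps,1}(0)| = \eps^{\alpha p}$ on the high-probability event we work with. Since $\alpha p < 1$ (as $p < 1$ and $\alpha \leq 1$), Lemma~\ref{lemma: stoch_est} gives $\eps\, \sup_{t\leq \rho_\eps} |N_\eps^+(t)| = O_\Pp(\eps) = o_\Pp(\eps^{\alpha p})$, so the stochastic term is harmless. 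The constant $K_2 \eps^2 / \lambda_+$ from $H_1$ is also $o(\eps^{\alpha p})$.

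Next I would perform the bootstrap: define
\[
\tilde \beta_\eps = \inf\{t \geq 0: |Y_{\eps,1}(t)| e^{-\lambda_+ t} > (1+\gamma)\eps^{\alpha p}\},
\]
and aim to show $\Pp\{\tilde\beta_\eps < \rho_\eps\} \to 0$. Restrict to the high-probability event from Lemma~\ref{lemma: y_after_global} on which $\sup_{t\leq \rho_\eps} |Y_{\eps,2}(t)| \leq (K_- + \gamma)\eps^{\alpha(1-p)\lambda_-/\lambda_+}$, which is legitimate since $\rho_\eps$ is slowly growing. For $t \leq \tilde\beta_\eps \wedge \rho_\eps$, the definition of $\tilde\beta_\eps$ combined with Theorem~\ref{Lemma: Def_H1&H2} yields
\[
\int_0^t e^{-\lambda_+ s} K_1 Y_{\eps,1}^2(s) |Y_{\eps,2}(s)|\, ds \leq K_1 (1+\gamma)^2 (K_-+\gamma) \eps^{2\alpha p + \alpha(1-p)\lambda_-/\lambda_+} \int_0^{\rho_\eps} e^{\lambda_+ s}\, ds.
\]
Using $\rho_\eps \leq -\rho \log \eps + C$ with $\rho \lambda_+ \leq \alpha p$, the time integral is at most $(e^{\lambda_+ C}/\lambda_+)\eps^{-\alpha p}$, so the whole nonlinear contribution is bounded by a constant times $\eps^{\alpha p + \alpha(1-p)\lambda_-/\lambda_+} = o(\eps^{\alpha p})$.

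Combining these estimates in Duhamel's formula, for all sufficiently small $\eps$ and every $t \leq \tilde\beta_\eps \wedge \rho_\eps$,
\[
|Y_{\eps,1}(t)| e^{-\lambda_+ t} \leq \eps^{\alpha p} + o(\eps^{\alpha p}) < (1+\gamma)\eps^{\alpha p},
\]
which contradicts $\tilde\beta_\eps \leq \rho_\eps$ on this event. Hence $\Pp\{\tilde\beta_\eps \leq \rho_\eps\} \to 0$, which is exactly the claim. The main obstacle is checking that the exponents balance, namely that the extra factor $\eps^{\alpha(1-p)\lambda_-/\lambda_+}$ gained from the $Y_{\eps,2}$ estimate strictly dominates the growth $e^{\lambda_+ \rho_\eps} \asymp \eps^{-\alpha p}$; this works precisely because $p < 1$ and $\lambda_- > 0$ give a strictly positive exponent to spare.
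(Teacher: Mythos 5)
Your proof is correct and follows essentially the same approach as the paper's: write Duhamel's principle in the form $e^{-\lambda_+ t}Y_{\eps,1}(t) = Y_{\eps,1}(0) + \int_0^t e^{-\lambda_+ s}H_1\,ds + \eps N_\eps^+(t)$, introduce the stopping time at level $(1+\gamma)\eps^{\alpha p}$, invoke Lemma~\ref{lemma: y_after_global} (noting $\rho_\eps$ is slowly growing) to control $Y_{\eps,2}$, and use Lemma~\ref{lemma: stoch_est} for the martingale term. The only minor divergence is how the nonlinear integrand is closed: you bound both factors of $Y_{\eps,1}^2(s)$ by $(1+\gamma)^2\eps^{2\alpha p}e^{2\lambda_+ s}$ via the stopping time and then pay $\eps^{-\alpha p}$ from $\int_0^{\rho_\eps}e^{\lambda_+ s}ds$ using $\rho\lambda_+\le\alpha p$; the paper instead splits $e^{-\lambda_+ s}Y_{\eps,1}^2(s) = \bigl(e^{-\lambda_+ s}|Y_{\eps,1}(s)|\bigr)\cdot|Y_{\eps,1}(s)|$, bounds the second factor by the domain constant, and uses $\rho_\eps\lesssim|\log\eps|$, picking up a harmless $|\log\eps|$ factor. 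Both routes yield the same $o(\eps^{\alpha p})$ bound because $\alpha(1-p)\lambda_-/\lambda_+>0$, so the difference is purely cosmetic.
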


\begin{proof}
Define the stopping time 
\begin{equation*}
\beta _{\epsilon }=\inf \left\{t:|Y_{\epsilon,1 }(t)|e^{-\lambda _{+}t}\geq
(1+\gamma )\epsilon ^{\alpha p}\right\}.
\end{equation*}%
As a consequence of Duhamel's principle and Theorem~\ref{Lemma: Def_H1&H2} we get the bound 
\begin{align*}
\sup_{t\leq \beta _{\epsilon }\wedge \rho_\eps }  |Y_{\epsilon,1}(t)|e^{-\lambda _{+}t} 
\leq &\epsilon ^{\alpha p}+K_{1}\int_{0}^{\beta _{\epsilon }\wedge \rho_\eps}e^{-\lambda _{+}s}Y_{\epsilon,1}^{2}(s)|Y_{\epsilon,2}(s)|ds\\
&\quad +\epsilon ^{2}K_{2}\lambda _{+}^{-1} +\epsilon S_{\epsilon }^{+}(\beta_\eps ). 
\end{align*}
This estimate together with Lemma~\ref{lemma: y_after_global}, Lemma~~\ref{lemma: stoch_est} and the definition of $\rho_\eps$ implies that for any small $\delta>0$ we can find a constant $K>0$, so that with probability bigger than $1-\delta$, the inequalities
\begin{align*}
\sup_{t\leq \beta _{\epsilon }\wedge \rho_\eps }  |Y_{\epsilon,1}(t)|e^{-\lambda _{+}t}
&\leq \epsilon ^{\alpha p}+K\epsilon ^{\alpha p+\alpha (1-p)\lambda
_{-}/\lambda _{+}}(\beta_\eps\wedge \rho_\eps)+K\eps \\
&\leq \epsilon ^{\alpha p}(1+2K\rho \epsilon ^{\alpha (1-p)\lambda
_{-}/\lambda _{+}}|\log \epsilon |+K \epsilon ^{1-\alpha p}),
\end{align*}
hold  for all $\epsilon >0$ small enough. 
Hence, for any small enough $\eps>0$,
\begin{align*}
\mathbf{P}\left\{\beta _{\epsilon } <\rho_\eps\right\} &=\mathbf{P}%
\left\{\sup_{t\leq \beta _{\epsilon }\wedge \rho_\eps}|Y_{\epsilon,1
}(t)|e^{-\lambda _{+}t}\geq (1+\gamma )\epsilon ^{\alpha p}\right\} \\
&\leq \mathbf{P}\left\{K\rho \epsilon ^{\alpha (1-p)\lambda _{-}/\lambda _{+}}|\log
\epsilon |+K\epsilon ^{1-\alpha p}\geq \gamma \right\} + \delta,
\end{align*}%
which implies the result.
\end{proof}

\medskip

The following is an important consequence of Lemma~\ref{lemma: y_after_global}:
\begin{corollary} With $\tau_\eps$ as in~\eqref{eqn: tau_def} it holds that
\[
\lim_{\eps \to 0} \Pp \{ \tau_\eps^U < \tau_\eps \}=0.
\]
In particular,~\eqref{eq:exit_through_unstabale} holds.
\end{corollary}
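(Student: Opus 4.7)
The plan is to establish the corollary in two complementary steps. Let
\[
 t_\eps^{*} = -\frac{\alpha p}{\lambda_+}\log\eps + \frac{1}{\lambda_+}\log(2\delta),
\]
a slowly growing deterministic quantity chosen so that $e^{\lambda_+ t_\eps^{*}}\eps^{\alpha p}=2\delta$. First I will show, via Duhamel's principle, that $\tau_\eps^U\leq t_\eps^{*}$ with probability tending to one. Then I will combine this deterministic upper bound with Lemma~\ref{lemma: y_after_global} (applied with $t_\eps=t_\eps^{*}$) to force the exit from $U$ to occur through the vertical sides $\{|Y_{\eps,1}|=\delta\}$, which is precisely the event $\{\tau_\eps^U=\tau_\eps\}$.

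For the first step, I start from the Duhamel representation~\eqref{eqn: x_duhamel} with the initial condition $Y_{\eps,1}(0)=\sgn(\eta_\eps^{+})\eps^{\alpha p}$ coming from~\eqref{eq:restart_at_tauh}:
\[
 Y_{\eps,1}(s) = e^{\lambda_+ s}\left[\sgn(\eta_\eps^{+})\eps^{\alpha p} + \int_0^s e^{-\lambda_+ \sigma}H_1(Y_\eps(\sigma),\eps)\,d\sigma + \eps N_\eps^{+}(s)\right].
\]
Lemma~\ref{lemma: stoch_est} yields $\eps|N_\eps^{+}(s)|=O_\Pp(\eps)=o_\Pp(\eps^{\alpha p})$ since $\alpha p<1$. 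For the nonlinear correction, I apply Lemma~\ref{lemma: x_after_global} with $\rho=\alpha p/\lambda_+$ and $C=\lambda_+^{-1}\log(2\delta)$, together with Lemma~\ref{lemma: y_after_global} at $t_\eps = t_\eps^{*}$, to obtain, on a high-probability event $A_\eps$, the a priori bounds
\[
|Y_{\eps,1}(\sigma)|\leq (1+\gamma)\eps^{\alpha p}e^{\lambda_+\sigma},\qquad |Y_{\eps,2}(\sigma)|\leq (K_-+\gamma)\eps^{\alpha(1-p)\lambda_-/\lambda_+}
\]
uniformly on $[0,t_\eps^{*}\wedge\tau_\eps^U]$. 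Inserting these into the polynomial bound $|H_1(y,\eps)|\leq K_1 y_1^{2}|y_2|+K_2\eps^{2}$ from Theorem~\ref{Lemma: Def_H1&H2} and integrating yields, on the same time interval,
\[
\left|\int_0^s e^{-\lambda_+\sigma}H_1(Y_\eps(\sigma),\eps)\,d\sigma\right| \leq C\eps^{\alpha p+\alpha(1-p)\lambda_-/\lambda_+} + K_2\eps^{2} s/\lambda_+=o_\Pp(\eps^{\alpha p}),
\]
because the extra exponent $\alpha(1-p)\lambda_-/\lambda_+$ is strictly positive. Combining these estimates, on $A_\eps$ one has $|Y_{\eps,1}(s)|\geq (1-o_\Pp(1))e^{\lambda_+ s}\eps^{\alpha p}$ for every $s\leq t_\eps^{*}\wedge\tau_\eps^U$. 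If the event $\{\tau_\eps^U>t_\eps^{*}\}$ occurred on $A_\eps$, evaluating at $s=t_\eps^{*}$ would force $|Y_{\eps,1}(t_\eps^{*})|\geq 2\delta(1-o(1))>\delta$, contradicting $Y_\eps(t_\eps^{*})\in U$. Hence $\Pp\{\tau_\eps^U>t_\eps^{*}\}\to 0$.

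For the second step, Lemma~\ref{lemma: y_after_global} with $t_\eps=t_\eps^{*}$ gives, with probability tending to one, the uniform bound $|Y_{\eps,2}(s)|\leq (K_-+\gamma)\eps^{\alpha(1-p)\lambda_-/\lambda_+}<\delta/2$ on $[0,t_\eps^{*}\wedge\tau_\eps^U]$ for all sufficiently small $\eps$. Intersecting with $\{\tau_\eps^U\leq t_\eps^{*}\}$ from the first step, at the exit time one has $|Y_{\eps,2}(\tau_\eps^U)|<\delta$; since $Y_\eps(\tau_\eps^U)\in\partial(-\delta,\delta)^{2}$, the only remaining possibility is $|Y_{\eps,1}(\tau_\eps^U)|=\delta$, i.e., $\tau_\eps^U=\tau_\eps$. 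This yields $\Pp\{\tau_\eps^U<\tau_\eps\}\to 0$, and~\eqref{eq:exit_through_unstabale} follows. The main obstacle is the self-consistent bookkeeping that guarantees the nonlinear contribution in Duhamel is genuinely of lower order than the ballistic main term: the upper bound of Lemma~\ref{lemma: x_after_global} on $Y_{\eps,1}$ is available only up to $t_\eps^{*}\wedge\tau_\eps^U$, and it is precisely the product structure of the resonant monomials bound in Theorem~\ref{Lemma: Def_H1&H2} (forcing an extra factor of $|Y_{\eps,2}|$ rather than just $|Y_\eps|^{3}$) that makes the estimate close.
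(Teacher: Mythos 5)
The paper states this corollary without proof, describing it only as ``an important consequence of Lemma~\ref{lemma: y_after_global}.'' Your argument is correct and supplies precisely the step that is left implicit: Lemma~\ref{lemma: y_after_global} only controls $Y_{\eps,2}$ on $[0,t_\eps\wedge\tau_\eps^U]$ for a \emph{slowly growing} family $t_\eps$, so one must first show that $\tau_\eps^U$ is itself, with high probability, dominated by such a time before the lemma can be applied at the exit time. You do this via a Duhamel lower bound on $|Y_{\eps,1}|$, closed self-consistently against the a priori upper bounds of Lemmas~\ref{lemma: x_after_global} and~\ref{lemma: y_after_global}, and then conclude by noting $|Y_{\eps,2}(\tau_\eps^U)|<\delta$ forces the exit through $\{|Y_{\eps,1}|=\delta\}$. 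This is the natural and, as far as one can tell, intended reconstruction. One small technical remark: the constant $C=\lambda_+^{-1}\log(2\delta)$ you feed into Lemma~\ref{lemma: x_after_global} must be positive, while $\delta$ is typically small; replacing it by $C=1\vee\lambda_+^{-1}\log(2\delta)$ (which only enlarges $\rho_\eps$ and thus the interval on which the a priori bound holds) repairs this at no cost to the argument.
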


From now on, we restrict our analysis to the high probability event $\{ \tau_\eps^U \geq \tau_\eps \}$.

Let $\theta _{\epsilon }^{+}=\epsilon ^{-\alpha p}e^{-\lambda _{+} \tau _\eps}Y _{\epsilon,1 }(\tau _{\epsilon })$.
Then, \eqref{eqn: tau_def} implies  
\begin{equation}
\tau _{\epsilon } =-\frac{\alpha p}{\lambda _{+}}\log \epsilon +\frac{1}{%
\lambda _{+}}\log \frac{\delta }{|\theta _{\epsilon }^{+}|},
\label{eqn: sigma_equal} 
\end{equation}
and
\[
Y _{\epsilon,1 }(\tau_{\epsilon }) =\delta \sgn\theta _{\epsilon }^{+}.
\]
Our analysis of these expressions will be based on the next formula which directly follows 
from Duhamel's principle: 
\begin{equation}
\theta _{\epsilon }^{+}=\sgn\eta_{\epsilon }^{+}+\epsilon ^{-\alpha
p}\int_{0}^{\tau _{\epsilon }}e^{-\lambda _{+}s} H_{1}(Y_\eps(s),\eps)ds+\epsilon ^{1-\alpha p}N_{\epsilon
}^{+}(\tau _{\epsilon }).  \label{eqn: theta+}
\end{equation}%

The main term in the r.h.s.\ of~\eqref{eqn: theta+} is $\sgn\eta_{\epsilon }^{+}$. We need to estimate the other two terms. Lemma~\ref{lemma: stoch_est} implies that $\epsilon ^{1-\alpha p} N_{\epsilon}^{+}(\tau _{\epsilon })$ converges to $0$ in probability as $\eps\to 0$. 
Let us now estimate the integral term. 
Relations~\eqref{eqn: sigma_equal} and~\eqref{eqn: theta+} imply that $(\tau_\eps)_{\eps>0}$ is slowly growing, and we can use 
Lemma~\ref{lemma: y_after_global} to derive
\begin{equation}
\sup_{t\leq \tau_{\epsilon }}|Y _{\epsilon,2 }(t)|=O_{\mathbf{P}}(\epsilon
^{\alpha (1-p)\lambda _{-}/\lambda _{+}}). 
\label{eqn:bound_on_nu_in_probability}
\end{equation}
We can now use Theorem~\ref{Lemma: Def_H1&H2} to conclude that
\begin{equation*}
\epsilon^{-\alpha p} \sup_{t \leq \tau_\epsilon } | H_1 (Y_\epsilon(t),\eps)|=O_{\mathbf{P}} ( \epsilon^{ \alpha(1-p)\lambda _{-}/\lambda _{+} -\alpha p}+\eps^{2-\alpha p}),
\end{equation*}
and (\ref{eqn: p_prop}) implies that the r.h.s.\ converges to $0$. Therefore,

\[
\epsilon^{-\alpha p} \int_{0}^{\tau _{\epsilon }}e^{-\lambda _{+}s} H_{1}(Y_{\epsilon}(s),\eps)ds
\stackrel{\Pp}{\longrightarrow}0.
\]
The above analysis of equation~\eqref{eqn: theta+} implies that if we define $\theta _{0}^{+}=\sgn\eta _{0}^{+}$, then
\begin{align}
 \theta_\eps^+&\stackrel{\mathop{Law}}{\longrightarrow} \theta_0^+,\label{eqn:convergence_of_theta}
\end{align}
which implies~\eqref{eq:tau_eps_convergence} due to~\eqref{eqn: sigma_equal}.
It remains to prove~\eqref{eq:asymptotics_for_Y_2_tau}.

Duhamel's principle along with (\ref{eqn: sigma_equal}) yields
\begin{equation}
Y_{\epsilon,2 }(\tau _{\epsilon })=\left( \frac{|\theta _{\epsilon }^{+}|}{
\delta }\right) ^{\lambda _{-}/\lambda _{+}}\epsilon ^{\alpha \lambda
_{-}/\lambda _{+}}
\eta _{\epsilon }^{-}+\int_{0}^{\tau _{\epsilon
}}e^{-\lambda _{-}(\tau_\eps-s)} H_{2}(Y _{\epsilon}(s),\eps)ds+\epsilon 
N_{\epsilon }^{-}(\tau _{\epsilon }).
\label{eqn:duhamel-at-tau-eps}
\end{equation}
In order to study the convergence of $N_\eps^-(\tau_\eps)$ we first give a preliminary result.
\begin{lemma}
\label{lemma: x_approx}
\begin{equation*}
\sup_{t\leq \tau _{\epsilon }}|Y_{\epsilon,1 }(t)-\epsilon ^{\alpha p}e^{\lambda _{+} t} \sgn\eta _{\epsilon }^{+}| \stackrel{\Pp}{\longrightarrow} 0, \quad \eps \to 0.
\end{equation*}%

\end{lemma}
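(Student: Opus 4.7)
The plan is to subtract from $Y_{\eps,1}$ the pure exponential $\eps^{\alpha p}e^{\lambda_+ t}\sgn\eta_\eps^+$ and show the difference vanishes uniformly over $[0,\tau_\eps]$ in probability. Indeed, Duhamel's formula~\eqref{eqn: x_duhamel} applied to the restarted process with initial value $Y_{\eps,1}(0)=\eps^{\alpha p}\sgn\eta_\eps^+$ expresses the difference as a drift integral plus a scaled stochastic integral:
\[
Y_{\eps,1}(t)-\eps^{\alpha p}e^{\lambda_+ t}\sgn\eta_\eps^+=\int_0^t e^{\lambda_+(t-s)}H_1(Y_\eps(s),\eps)\,ds+\eps e^{\lambda_+ t}N_\eps^+(t).
\]
It will suffice to control each of these two terms uniformly on $\{t\le\tau_\eps\}$.

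For the stochastic term, the identity $e^{\lambda_+\tau_\eps}=\eps^{-\alpha p}\delta/|\theta_\eps^+|$ read off from~\eqref{eqn: sigma_equal}, combined with the distributional convergence $\theta_\eps^+\to\sgn\eta_0^+$ of~\eqref{eqn:convergence_of_theta} (so $|\theta_\eps^+|$ stays bounded away from $0$ with high probability), yields $e^{\lambda_+\tau_\eps}=O_\Pp(\eps^{-\alpha p})$. Combined with $\sup_{t\le\tau_\eps}|N_\eps^+(t)|\le S_\eps^+(\tau_\eps)=O_\Pp(1)$ from Lemma~\ref{lemma: stoch_est}, one obtains
\[
\sup_{t\le\tau_\eps}\eps e^{\lambda_+ t}|N_\eps^+(t)|=O_\Pp(\eps^{1-\alpha p}),
\]
which tends to $0$ because $\alpha p<1$.

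For the drift term, I would plug into the polynomial bound $|H_1(y,\eps)|\le K_1 y_1^2|y_2|+K_2\eps^2$ of Theorem~\ref{Lemma: Def_H1&H2} the a priori estimates $|Y_{\eps,1}(s)|\le\delta$ for $s\le\tau_\eps$ and $\sup_{s\le\tau_\eps}|Y_{\eps,2}(s)|=O_\Pp(\eps^{\alpha(1-p)\lambda_-/\lambda_+})$ furnished by~\eqref{eqn:bound_on_nu_in_probability}, obtaining
\[
\sup_{s\le\tau_\eps}|H_1(Y_\eps(s),\eps)|=O_\Pp(\eps^{\alpha(1-p)\lambda_-/\lambda_+}).
\]
Pulling the factor $e^{\lambda_+ t}\le e^{\lambda_+\tau_\eps}=O_\Pp(\eps^{-\alpha p})$ out of the integral and bounding $\int_0^{\tau_\eps}e^{-\lambda_+ s}\,ds\le 1/\lambda_+$ then dominates the drift term by $O_\Pp(\eps^{\alpha((1-p)\lambda_-/\lambda_+-p)})+O_\Pp(\eps^{2-\alpha p})$. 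The only delicate point, and really the reason the statement holds, is that the first exponent is strictly positive iff $(1-p)\lambda_->p\lambda_+$, i.e.\ iff $p<\lambda_-/(\lambda_++\lambda_-)$, which is precisely the upper bound imposed in~\eqref{eqn: p_prop}; the second exponent is positive since $\alpha p<1$. Putting both estimates together yields the asserted convergence in probability.
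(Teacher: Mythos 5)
Your proof is correct. The Duhamel decomposition, the bound on the martingale term via $e^{\lambda_+\tau_\eps}=O_\Pp(\eps^{-\alpha p})$ from~\eqref{eqn: sigma_equal} together with tightness of $|\theta_\eps^+|^{-1}$ from~\eqref{eqn:convergence_of_theta}, and the treatment of the drift integral are all sound, and the exponents are computed correctly.

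The one genuine difference from the paper's (terse) proof is the bound you use on $Y_{\eps,1}$ inside the drift integral. The paper invokes Lemma~\ref{lemma: x_after_global}, which furnishes $|Y_{\eps,1}(s)|\leq (1+\gamma)\eps^{\alpha p}e^{\lambda_+ s}$ on $[0,\tau_\eps]$ with high probability. Inserting that into $|H_1|\leq K_1 Y_{\eps,1}^2|Y_{\eps,2}|+K_2\eps^2$ along with~\eqref{eqn:bound_on_nu_in_probability} gives a drift term of size $O_\Pp(\eps^{\alpha(1-p)\lambda_-/\lambda_+})+O_\Pp(\eps^{2-\alpha p})$, which vanishes without needing the upper bound on $p$. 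You instead use the crude bound $|Y_{\eps,1}(s)|\leq\delta$ on $[0,\tau_\eps]$; this loses a factor $\eps^{-\alpha p}$, giving $O_\Pp(\eps^{\alpha((1-p)\lambda_-/\lambda_+ - p)})+O_\Pp(\eps^{2-\alpha p})$, and then the upper bound $p<\lambda_-/(\lambda_++\lambda_-)$ in~\eqref{eqn: p_prop} is what rescues positivity of the exponent. Both are valid in context (the upper bound on $p$ is already in force throughout the proof of Lemma~\ref{Thm:after_tauh}); the paper's route is marginally sharper and more modular, yours is more self-contained in that it avoids Lemma~\ref{lemma: x_after_global}, at the cost of leaning on the $p$-hypothesis at this step.
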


\begin{proof}
The lemma follows from Duhamel's principle and Lemma~\ref{lemma: x_after_global}.
\end{proof}

The following result is essentially Lemma 8.9 from~\cite{nhn}. It holds true in our
setting since its proof is based only on the conclusion of Lemma~\ref{lemma: x_approx}. 

\begin{lemma}
\label{lemma: stoch_convergence}As $\epsilon \rightarrow 0$, 
\begin{equation*}
N_{\epsilon }^{-}(\tau_{\epsilon })\overset{Law}{%
\longrightarrow }N,
\end{equation*}
where $N$ is the Gaussian random variable in~\eqref{eq:theta}.
\end{lemma}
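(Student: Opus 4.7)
The plan is to rewrite $N_\eps^-(\tau_\eps)$ as $e^{-\lambda_-\tau_\eps}L_\eps(\tau_\eps)$ with $L_\eps$ an adapted martingale, and to identify the limit via convergence of the rescaled quadratic variation to $\sigma_\pm$, using the pathwise asymptotic description of $Y_\eps$ supplied by Lemmas~\ref{lemma: x_approx} and~\ref{lemma: y_after_global}.

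Setting $L_\eps(t)=\int_0^t e^{\lambda_- s}\sgm_2(Y_\eps(s))dW(s)$, we have $N_\eps^-(\tau_\eps)=e^{-\lambda_-\tau_\eps}L_\eps(\tau_\eps)$ and
\[
e^{-2\lambda_-\tau_\eps}\langle L_\eps\rangle(\tau_\eps)=\int_0^{\tau_\eps}e^{-2\lambda_-(\tau_\eps-s)}|\sgm_2(Y_\eps(s))|^2ds.
\]
First I would combine Lemma~\ref{lemma: x_approx} with the identity $\eps^{\alpha p}e^{\lambda_+\tau_\eps}|\theta_\eps^+|=\delta$ from~\eqref{eqn: sigma_equal} and with $\theta_\eps^+\to\sgn\eta_0^+$ from~\eqref{eqn:convergence_of_theta} to derive
\[
\sup_{s\leq\tau_\eps}\bigl|Y_{\eps,1}(s)-\sgn(\eta_\eps^+)\delta e^{-\lambda_+(\tau_\eps-s)}\bigr|\stackrel{\Pp}{\longrightarrow}0,
\]
and I would use Lemma~\ref{lemma: y_after_global} applied to the slowly growing family $(\tau_\eps)$ to get $\sup_{s\leq\tau_\eps}|Y_{\eps,2}(s)|\stackrel{\Pp}{\longrightarrow}0$. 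Substituting $u=\tau_\eps-s$ in the rescaled quadratic variation and combining the Lipschitz continuity of $\sgm_2$ with dominated convergence (justified by $\tau_\eps\stackrel{\Pp}{\longrightarrow}\infty$ and the exponential factor $e^{-2\lambda_- u}$) yields, on the event $\{\sgn\eta_0^+=\pm1\}$, the convergence
\[
e^{-2\lambda_-\tau_\eps}\langle L_\eps\rangle(\tau_\eps)\stackrel{\Pp}{\longrightarrow}\int_0^\infty e^{-2\lambda_- u}|\sgm_2(\pm\delta e^{-\lambda_+ u},0)|^2du=\sigma_\pm,
\]
the last equality obtained via the substitution $v=-u$.

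Finally I would apply the Dambis--Dubins--Schwarz representation $L_\eps(t)=\beta_\eps(\langle L_\eps\rangle(t))$ to conclude $N_\eps^-(\tau_\eps)\to N$ in distribution. The main obstacle here is that the DDS Brownian motion $\beta_\eps$ is not independent of $\tau_\eps$, so pure Brownian scaling does not immediately transfer the convergence of the rescaled quadratic variation into a Gaussian limit for $e^{-\lambda_-\tau_\eps}L_\eps(\tau_\eps)$. The standard workaround is a characteristic-function computation conditioned on $\sgn\eta_\eps^+$, combined with a martingale central limit theorem with random time change whose Lindeberg condition is automatic due to continuous paths and bounded integrands, exactly as in the proof of Lemma~8.9 of~\cite{nhn}.
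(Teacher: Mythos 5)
Your plan is correct and essentially reconstructs the argument of Lemma~8.9 in~\cite{nhn}, which is exactly what the paper's one-line proof defers to: both routes reduce the claim to the convergence of the rescaled quadratic variation $e^{-2\lambda_-\tau_\eps}\langle L_\eps\rangle(\tau_\eps)\to\sigma_{\sgn\eta_0^+}$ (obtained from the pathwise description of $Y_{\eps,1}$ via Lemma~\ref{lemma: x_approx} together with the $Y_{\eps,2}$ smallness) and then close with a martingale CLT under random time change. The only mild nuance is that your derivation explicitly invokes the bound on $Y_{\eps,2}$ (e.g.\ via Lemma~\ref{lemma: y_after_global} / estimate~\eqref{eqn:bound_on_nu_in_probability}), while the paper's remark attributes the whole thing to the conclusion of Lemma~\ref{lemma: x_approx}; your accounting is the more careful one, since $\sgm_2$ depends on both coordinates of $Y_\eps$.
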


We finish the proof of Lemma~\ref{Thm:after_tauh}. Recall that the process $M_\eps$ was defined in~\eqref{eqn: M_def} and introduce the stochastic processes
\begin{equation}
R_{\epsilon }(t) = \int_{0}^{t}e^{-\lambda _{-}(t-s)} \hat H_2 ( Y_\eps (s) )ds.  \label{eqn: R_def}
\end{equation}%
Note that~\eqref{eqn:duhamel-at-tau-eps} and \eqref{eqn: sigma_equal}  imply
\begin{align}
 Y_{\eps,2}(\tau_\eps)&=e^{-\lambda_-\tau_\eps}Y_{\eps,2}(0)+\int_0^{\tau_\eps} e^{-\lambda_-(\tau_\eps-s)} H_2(Y_{\epsilon}(s),\eps)ds+\eps N^-_\eps(\tau_\eps)
\notag\\
&=e^{-\lambda _{-}\tau _{\epsilon
}}\epsilon ^{\alpha (1-p)\lambda
_{-}/\lambda _{+}}\eta _{\epsilon }^{-}+\epsilon M_{\epsilon }(\tau_{\epsilon })+R_{\epsilon }(\tau_{\epsilon })  \notag \\
&=\eta _{\epsilon }^{-}\left( \frac{|\theta _{\epsilon }^{+}|}{\delta }%
\right) ^{\lambda _{-}/\lambda _{+}}\epsilon ^{\alpha \lambda _{-}/\lambda
_{+}}+\epsilon M_{\epsilon }(\tau_{\epsilon })+R_{\epsilon }(\tau_{\epsilon }).  \label{eqn: y_order}
\end{align}%
Relations \eqref{eq:condition_for_theorem_along_unstable} and~\eqref{eqn:convergence_of_theta} imply
\begin{equation}
\eta _{\epsilon }^{-}\left( \frac{|\theta _{\epsilon }^{+}|}{\delta }%
\right) ^{\lambda _{-}/\lambda _{+}}\overset{Law}{\longrightarrow }%
\left( \frac{|\eta _{0}^{+}|}{\delta }\right) ^{\lambda _{-}/\lambda
_{+}}y_{2}.
\label{eqn:term_from_initial_cond}
\end{equation}
Lemma~\ref{lemma: stoch_convergence} and estimate~\eqref{eqn:bound_on_nu_in_probability} imply
\begin{equation}
M_{\epsilon }(\tau _{\epsilon })\overset{Law}{\longrightarrow }{N},
\quad\epsilon \rightarrow 0.
\label{eqn:gaussian_term}
\end{equation}
Equations~\eqref{eqn:term_from_initial_cond} and~\eqref{eqn:gaussian_term}
describe the behavior of first two terms in (\ref{eqn: y_order})
and the proof of the lemma will be complete as soon as we show that
\begin{equation}
\epsilon ^{-\beta}R_{\epsilon }(\tau _{\epsilon })\overset{%
\mathbf{P}}{\longrightarrow }0,\quad \epsilon \rightarrow 0.
\label{eqn:R_conv_to_0_in_prob}
\end{equation}%

We can write the following rough estimate based on \eqref{eqn:bound_on_nu_in_probability} and Theorem~\ref{Lemma: Def_H1&H2}:
\begin{equation}
\sup_{t\leq \tau_{\epsilon }}|R_{\epsilon }(t)|=O_{\mathbf{P}%
}(\epsilon ^{2\alpha (1-p)\lambda _{-}/\lambda _{+}}).  \label{eqn: R_est_1}
\end{equation}
This is not sufficient for our purposes. We shall need a more detailed analysis instead.
First, note that 
\begin{equation*}
\sup_{t \leq \tau_\eps }|Y_{\epsilon,2 }(t)-\epsilon M_{\epsilon }(t)-R%
_{\epsilon }(t)|e^{\lambda _{-}t}=\epsilon ^{\alpha (1-p)\lambda
_{-}/\lambda _{+}}|\eta _{\epsilon }^{-}|=O_{\mathbf{P}}(\epsilon ^{\alpha
(1-p)\lambda _{-}/\lambda _{+}}).
\end{equation*}
Hence, for any $\gamma >0$ there is a $K_{\gamma }>0$ such that the event 
\begin{equation*}
D_{\epsilon }=\left\{\sup_{t\leq \tau_{\epsilon }}|Y _{\epsilon,2 }(t)-\epsilon M%
_{\epsilon }(t)-R_{\epsilon }(t)|e^{\lambda _{-}t}<K_{\gamma
}\epsilon ^{\alpha (1-p)\lambda _{-}/\lambda _{+}}\right\}
\end{equation*}%
has probability $\mathbf{P}(D_{\epsilon })> 1-\gamma $ for $%
\epsilon >0$ small enough. Moreover, using Theorem~\ref{Lemma: Def_H1&H2} we see that for some constant $K_\beta>0$,%
\begin{equation*}
|R_{\epsilon }(t)|\leq K_\beta\int_{0}^{t}e^{-\lambda
_{-}(t-s)}Y _{\epsilon,2 }^{2}(s)ds.
\end{equation*}%
Then, using the inequality $(a-b)^{2}\leq 2a^{2}+2b^{2}$ and defining 
$K_{\beta ,\gamma }=K_{\beta }K_{\gamma },$ we see that on  $D_{\epsilon }$ for each $t\leq \tau_{\epsilon }$,  
\begin{align} \notag
|R_{\epsilon }(t)| &
\le K_{\beta }e^{-\lambda_{-} t}\int_{0}^{t}(e^{\lambda
_{-}s}Y _{\epsilon,2 }(s))^2 e^{-\lambda_- s}ds
\\ \notag
& \leq 2K_{\beta ,\gamma }e^{-\lambda
_{-}t}\int_{0}^{t}e^{-\lambda _{-}s}\epsilon ^{2\alpha (1-p)\lambda
_{-}/\lambda _{+}}ds+2K_{\beta }\int_{0}^{t}e^{-\lambda _{-}(t-s)}|\epsilon 
M_{\epsilon }(s)+R_{\epsilon }(s)|^{2}ds \\
&\leq 2\frac{K_{\beta ,\gamma }}{\lambda _{-}}\epsilon ^{2\alpha
(1-p)\lambda _{-}/\lambda _{+}}e^{-\lambda _{-}t}+4\frac{K_{\beta }}{\lambda
_{-}}\epsilon ^{2} M_{\epsilon ,\infty }^{2}+4K_{\beta }e^{-\lambda
_{-}t}\int_{0}^{t}e^{\lambda _{-}s}R_{\epsilon }(s)^{2}ds, \label{eqn:R_eps}
\end{align}%
where  
$M_{\epsilon ,\infty }=\sup_{t\leq \tau _{\epsilon }}|M_{\epsilon
}(t)|,
$
so that (according to Lemma \ref{lemma: stoch_est}) $ M_{\epsilon ,\infty }$ is slowly growing.
Due to \eqref{eqn: R_est_1}  we can find a constant $K_{\gamma }^{\prime }>0$ (independent of $%
\epsilon >0$ and $t>0$) so that the event 
\begin{equation*}
D_{\epsilon }^{\prime }=D_{\epsilon }\cap \left\{\sup_{t\leq
\tau _{\epsilon }}|R_{\epsilon }(t)|\leq K_{\gamma }^{\prime
}\epsilon ^{\alpha (1-p)\lambda _{-}/\lambda _{+}}\right\}
\end{equation*}%
has probability $\mathbf{P}(D_{\epsilon }^{\prime })>1-\gamma $
for all $\epsilon >0$ small enough. Hence, multiplying both sides of~\eqref{eqn:R_eps} by
$e^{\lambda _{-}t}$, we see that for some constant $C_{\gamma }>0$ and all $t\le \tau_\eps$,
\begin{equation*}
e^{\lambda _{-}t}|R%
_{\epsilon }(t)|\mathbf{1}_{\mathcal{D}_{\epsilon }^{\prime }}\leq \alpha (t)+C_{\gamma }\epsilon ^{\alpha (1-p)\lambda
_{-}/\lambda _{+}}\int_{0}^{t}e^{\lambda _{-}s}|R_{\epsilon }(s)|%
\mathbf{1}_{\mathcal{D}_{\epsilon }^{\prime }}ds,
\end{equation*}%
where
\begin{equation}
\alpha (t)=C_{\gamma }\epsilon ^{2\alpha (1-p)\lambda _{-}/\lambda
_{+}}+C_{\gamma }\epsilon ^{2}M_{\epsilon ,\infty }^{2}e^{\lambda _{-}t}. \label{eqn: alpha_def}
\end{equation}
Using Gronwall's lemma and~\eqref{eqn: alpha_def} we get
\begin{eqnarray*}
\mathbf{1}_{\mathcal{D}_{\epsilon }^{\prime }}e^{\lambda _{-}t}|R%
_{\epsilon }(t)| &\leq &\alpha (t)+C_{\gamma }\epsilon ^{\alpha (1-p)\lambda
_{-}/\lambda _{+}}\int_{0}^{t}\alpha (s)e^{C_{\gamma }\epsilon ^{\alpha
(1-p)\lambda _{-}/\lambda _{+}}(t-s)}ds \\
&\leq &\alpha (t)+C_{\gamma }^{2}\epsilon ^{3\alpha (1-p)\lambda
_{-}/\lambda _{+}}t e^{C_{\gamma }\epsilon ^{\alpha (1-p)\lambda _{-}/\lambda
_{+}}t} \\
&&+\frac{C_{\gamma }^{2}}{\lambda _{-}}\epsilon ^{2+\alpha (1-p)\lambda
_{-}/\lambda _{+}}M_{\epsilon ,\infty }^{2} t e^{\lambda _{-}t+C_{\gamma
}\epsilon ^{\alpha (1-p)\lambda _{-}/\lambda _{+}}}.
\end{eqnarray*}%
Hence, 
\begin{eqnarray*}
\mathbf{1}_{\mathcal{D}_{\epsilon }^{\prime }}|R_{\epsilon }(t)|
&\leq &C_{\gamma }\epsilon ^{2\alpha (1-p)\lambda _{-}/\lambda
_{+}}e^{-\lambda _{-}t}(1+C_{\gamma }\epsilon ^{\alpha (1-p)\lambda
_{-}/\lambda _{+}}t e^{C_{\gamma }\epsilon ^{2\alpha (1-p)\lambda _{-}/\lambda
_{+}}t}) \\
&&+C_{\gamma }\epsilon ^{2}M_{\epsilon ,\infty }^{2}(1+\frac{C_{\gamma }}{%
\lambda _{-}}\epsilon ^{\alpha (1-p)\lambda _{-}/\lambda _{+}}t e^{C_{\gamma
}\epsilon ^{\alpha (1-p)\lambda _{-}/\lambda _{+}}}).
\end{eqnarray*}%
Using \eqref{eqn: sigma_equal}, we get that for any $q>0$,
\begin{align*}
\mathbf{1}_{\mathcal{D}_{\epsilon }^{\prime }}|R_{\epsilon
}(\tau _{\epsilon })|
&= O_\Pp \left(\epsilon ^{2\alpha (1-p)\lambda _{-}/\lambda
_{+}}e^{-\lambda _{-}\tau_\eps}+\epsilon ^{2}M_{\epsilon
,\infty }^{2}\right)
\\&
=O_\Pp\left(\epsilon ^{\alpha \lambda _{-}/\lambda _{+}+\alpha (1-p)\lambda
_{-}/\lambda _{+}}+\epsilon ^{2-q}\right),
\end{align*}%
so that~\eqref{eqn:R_conv_to_0_in_prob} follows, and the proof is complete by choosing $q$ small enough.

\chapter{Levinson Case} \label{ch: levinson}
In this chapter we study Levinson case as presented in Section~\ref{sec: unstable} of Chapter~\ref{ch: Intro}. We then apply the results obtained for this case to the $1$-dimensional diffusion conditioned on rare events as explained in Section~\ref{sec: 1d-time-reversal} of Chapter~\ref{ch: Intro}.

The chapter is organized as follows. In Section~\ref{sec:main-results} we state the main theorem for the Levinson case, postponing its proof to Section~\ref{sec:proof-levinson}. A approximation to the diffusion by the deterministic flow in finite time is presented in Section~\ref{sec:finite-levinson}. This  approximation is a key ingredient in all the arguments of Section ~\ref{sec:proof-levinson}.
In Section~\ref{sec:1-d-diffusion} we state the result on the diffusion conditioned on a rare event and derive it
from the main theorem and some auxiliary statements proven in Section~\ref{sec:aux-1-d}.

\section{Introduction}
\label{Sec: Notation}
In this section we consider the dynamics in $d$ dimensions. That is, we consider a $C^2$-smooth bounded vector field~$b$ in $\R^d$. The unperturbed dynamics is given by
 the deterministic flow $S=(S^t)_{t\in\R}$ generated by $b$.

The model has slight modifications from the classical exit problem. For this chapter, we introduce three components of perturbations of this deterministic flow. They all depend on
a small parameter $\eps>0$. 

The first component is white noise perturbation generated by the matrix $\eps \sigma$, where
$\sigma:\R^d \to \R^{d \times d}$ is a $C^2$-smooth bounded matrix valued function.

The second one is $\eps^{\alpha_1}\Psi_\eps$, where $\Psi_\eps$ is
a deterministic Lipschitz vector field on $\R^d$ for each $\eps$,  converging uniformly to 
a limiting Lipschitz vector field $\Psi_0$, and $\alpha_1$ is a positive scaling exponent.
These conditions ensure that the stochastic It\^o
equation  
\begin{equation}
dX_\eps(t) = \left( b( X_\eps(t) )+\eps^{\alpha_1} \Psi_\eps(X_\eps(t)) \right)dt + \eps \sigma(X_\eps(t))dW \label{eq: lev-PrincipalEquation}
\end{equation}
w.r.t.\ a standard $d$-dimensional Wiener process $W$ %
has a unique strong solution for any $\eps>0$ and all initial conditions.  

The last component of the perturbation is the initial condition satisfying
\begin{equation}
X_{\epsilon }(0)=x_{0}+\epsilon ^{\alpha_2 }\xi _{\epsilon },\quad\eps>0.
\label{eqn: initial_condition}
\end{equation}%
Here  $\alpha_2>0$, and $(\xi _{\epsilon })_{\epsilon >0}$ is
a family of random variables independent of $W$, such that for some random
variable $\xi _{0}$, $\xi _{\epsilon }\rightarrow \xi _{0}$ as $\epsilon
\rightarrow 0$ in distribution.

Let $M$ be a $C^2$-smooth hypersurface in $\R^d$.  If 
\begin{equation*}
\tau _{\epsilon }=\inf \left \{t\ge 0:X_{\epsilon
}(t)\in M \right \},
\end{equation*}
then on $\{\tau_\eps<\infty\}$ we have $X_\eps(\tau_\eps)\in M$. We are going to study
the exit problem from $M$ under the assumptions above. We use $M$ instead of $D$, since $M$ is assumed to be an hypersurface and we want to stick to the standard notation.
In this setting, we state the main theorem in the next section.

\section{Main result}\label{sec:main-results}
In this section we state the main theorem and its hypothesis. Let us start with the assumptions on the joint geometry of the vector field $b$ and the surface $M$. 
First we define
\[
T=\inf \left \{t>0: S^tx_0 \in M \right \},
\]
and assume that $0<T<\infty$. Secondly, we denote $z=S^Tx_0\in M$ and assume that 
$b(z)$ does not belong to the tangent
hyperplane $T_zM$. In other words, we assume that the positive orbit of $x_0$ intersects $M$ and
the crossing is transversal. The reader can check that this is equivalent to Condition~\ref{cond: levinson} in Section~\ref{sec: unstable} of Chapter~\ref{ch: Intro}.

In the case of $\xi_\eps\equiv 0$ and $\Psi \equiv 0$, Levinson's theorem states (see \cite{Levinson:MR0033433}, 
\cite[Chapter 2]{Freidlin--Wentzell-book}, and \cite[Chapter 2]{Freidlin-lectures:MR1399081}) that 
$X_\eps(\tau_\eps)\to z$ in probability as $\eps \to 0$. Levinson worked in the PDE context and 
showed how to obtain an expansion for the solution of the corresponding elliptic PDE depending 
on the small parameter~$\eps$.  
The main result of this note describes the limiting behavior of the correction $(\tau_\eps-T,X_\eps(\tau_\eps)-z)$ and extends  \cite[Theorem 2.3]{Freidlin--Wentzell-book} to the situation with generic
perturbation parameters $\xi_0,\Psi,\alpha_1$, and $\alpha_2$. This extension is essential since, as the analysis in~\cite{nhn}
shows, in the sequential study of entrance-exit distributions for multiple domains one has to consider nontrivial scaling laws for the initial conditions; also, considering nontrivial deterministic perturbations will allow us to study rare events, see 
Section~\ref{sec:1-d-diffusion}.

We need more notation. Due to the smoothness of $b$,
\begin{equation}
b(x)=b(y)+Db(y)(x-y)+Q_1(y,x-y),\quad x,y\in\R^d, \label{eqn: b_Taylor}
\end{equation} 
where 
\begin{equation}
|Q_1(u,v)|\leq K  |v|^2 \label{eqn: bound_Q1},
\end{equation}
 for some constant $K>0$ and any $u,v \in \R^d$.
We denote by $\Phi_x(t)$ the linearization of $S$ along the
orbit of $x$:
\begin{equation}
\frac{d}{dt}\Phi_x(t)=A(t)\Phi_x(t)\text{, \ }\Phi_x(0)=I, \label{eqn: Phi_def}
\end{equation}
where $A(t)=Db(S^tx)$ and $I$ is the identity matrix. 
 
Finally, for any vector $v\in \R^d$, we define $\pi_b v\in \R$ and $\pi_M v\in T_zM$ by
\[
 v=\pi_b v \cdot b(z)+ \pi_M v,
\]
i.e., $\pi_b$ is the (algebraic) projection onto $\Span(b(z))$ along $T_zM$ and 
$\pi_M$ is the (geometric) projection onto $T_zM$ along $\Span(b(z))$.

 \begin{theorem}
 \label{thm: Main-Levinson}  Let $\alpha=\alpha_1 \wedge \alpha_2\wedge 1$, and
\begin{align} \notag
\phi_0 (t) &= \mathbf{1}_{\{\alpha_2= \alpha\}}  \Phi_{x_0}(t) \xi_0 + \mathbf{1}_{\{\alpha_1= \alpha\}}\Phi_{x_0}(t) \int_0^t \Phi_{x_0} (s)^{-1} \Psi_0( S^s x) ds \\
& \quad +\mathbf{1}_{\{1=\alpha\}} \Phi_{x_0}(t)\int_{0}^{t}\Phi_{x_0}^{-1}(s)\sigma(S^s x_0)dW(s), \quad t>0.
\label{eqn: phi_0_def}
\end{align}
Then, in the setting introduced above, 
\begin{equation}
 \eps^{-\alpha}(\tau_\eps-T, X_\eps(\tau_\eps)-z){\to} (-\pi_b \phi_0(T), \pi_M \phi_0(T)). \label{eq:main_convergence_statement}
\end{equation}
in distribution.
If additionally we require that $\xi_\eps\to\xi_0$ in probability or that $\alpha_2>\alpha$, then the convergence in \eqref{eq:main_convergence_statement}
is also in probability.
\end{theorem}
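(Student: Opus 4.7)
The plan is to derive Theorem~\ref{thm: Main-Levinson} from Lemma~\ref{lemma: important}, which already provides the key expansion
\[
 X_\eps(t)=S^t x_0+\eps^\alpha \varphi_\eps(t),\qquad \varphi_\eps=\phi_\eps+r_\eps,
\]
valid uniformly on compact time intervals, with $r_\eps\to 0$ in probability and $\phi_\eps\to \phi_0$ in distribution in $C[0,T']$ (any $T'>T$) equipped with the uniform norm. The transversality hypothesis $b(z)\notin T_zM$ lets me pick, on a neighborhood of $z$, a $C^2$ defining function $F:\R^d\to\R$ with $M=F^{-1}(0)$ and $\langle \nabla F(z),b(z)\rangle\ne 0$. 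The unique decomposition of $v\in\R^d$ as $v=\pi_bv\cdot b(z)+\pi_Mv$ with $\pi_Mv\in T_zM$ then satisfies $\pi_bv=\langle\nabla F(z),v\rangle/\langle\nabla F(z),b(z)\rangle$, a fact I will use freely.

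The first step is to show $\tau_\eps\to T$ in probability. Fix $T'>T$ slightly larger than $T$; on $[0,T']$ the deterministic curve $S^\cdot x_0$ crosses $M$ only at $t=T$, and it does so transversally, so for any small $\eta>0$ the quantities $\inf_{t\le T-\eta}|F(S^tx_0)|$ and $-F(S^{T+\eta}x_0)\cdot\sgn\langle\nabla F(z),b(z)\rangle$ are both strictly positive. Combining this with the uniform approximation $\sup_{t\le T'}|X_\eps(t)-S^tx_0|=O_\Pp(\eps^\alpha)$ from Lemma~\ref{lemma: important} gives $\Pp\{|\tau_\eps-T|>\eta\}\to 0$.

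The second step is to localize and Taylor expand. Since $\tau_\eps$ is in a shrinking neighborhood of $T$ with probability tending to $1$, I can work on the event where $X_\eps(\tau_\eps)$ lies in the domain of $F$. Writing $F(X_\eps(t))=F(S^tx_0)+\eps^\alpha\langle\nabla F(S^tx_0),\varphi_\eps(t)\rangle+O(\eps^{2\alpha}|\varphi_\eps(t)|^2)$ and setting $t=\tau_\eps$, I expand $F(S^{\tau_\eps}x_0)=(\tau_\eps-T)\langle\nabla F(z),b(z)\rangle+O((\tau_\eps-T)^2)$. Solving the implicit equation $F(X_\eps(\tau_\eps))=0$ yields
\[
 \tau_\eps-T=-\eps^\alpha\frac{\langle\nabla F(z),\varphi_\eps(T)\rangle}{\langle\nabla F(z),b(z)\rangle}+o_\Pp(\eps^\alpha)=-\eps^\alpha\pi_b\varphi_\eps(T)+o_\Pp(\eps^\alpha),
\]
where I use continuity of $\varphi_\eps$ near $T$ and the tightness of $\varphi_\eps(T)$. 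For the exit location, I write $X_\eps(\tau_\eps)-z=(S^{\tau_\eps}x_0-S^Tx_0)+\eps^\alpha\varphi_\eps(\tau_\eps)=(\tau_\eps-T)b(z)+\eps^\alpha\varphi_\eps(T)+o_\Pp(\eps^\alpha)$, which after substituting the previous display becomes $\eps^\alpha\pi_M\varphi_\eps(T)+o_\Pp(\eps^\alpha)$. Passing to the limit via $\varphi_\eps(T)\to\phi_0(T)$ (in distribution, or in probability under the stronger hypotheses) gives the claimed joint convergence of $\eps^{-\alpha}(\tau_\eps-T,X_\eps(\tau_\eps)-z)$ to $(-\pi_b\phi_0(T),\pi_M\phi_0(T))$.

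The main obstacle is the first step, i.e.\ making sure that $\tau_\eps$ concentrates on the narrow window around $T$ where the linearization is valid, \emph{before} trying to extract the $\eps^\alpha$-scale correction. In the classical Freidlin--Wentzell setting one invokes the exponential estimate~\eqref{eqn: estimate_basic}, but here the perturbation~$\eps^{\alpha_1}\Psi_\eps$ and the scaling initial condition make that route unavailable; instead I will rely solely on the polynomial uniform control provided by Lemma~\ref{lemma: important} together with the transversality of the crossing. A minor additional subtlety is promoting convergence in distribution to convergence in probability in the two special cases indicated in the theorem: when $\xi_\eps\to\xi_0$ in probability every random ingredient in $\phi_\eps(T)$ converges in probability, while when $\alpha_2>\alpha$ the term involving $\xi_0$ drops out of $\phi_0$ altogether, so the joint limit is measurable with respect to $W$ alone and the convergence of $\varphi_\eps(T)$ is in probability as well.
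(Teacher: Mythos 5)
Your proof follows essentially the same route as the paper: the pathwise expansion $X_\eps(t)=S^tx_0+\eps^\alpha\varphi_\eps(t)$ from Lemma~\ref{lemma: important} (restated as Lemma~\ref{lemma: Bakhtin-Levinson} in the chapter), transversality to localize $\tau_\eps$ near $T$, and an implicit-equation/Taylor argument at the boundary to extract the $\eps^\alpha$-scale correction. The only structural difference is cosmetic: you use a scalar defining function $F$ with $M=F^{-1}(0)$, whereas the paper parametrizes $M$ locally as a graph over $T_zM$ in the $b(z)$ direction; the algebra with $\pi_b,\pi_M$ is the same either way, and your algebraic manipulations of the exit-time and exit-location expansions are correct.

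There are, however, two points you gloss over and the paper handles explicitly. (i) Since $F$ is only defined in a neighborhood of $z$, you cannot write $\inf_{t\le T-\eta}|F(S^tx_0)|$ for $t$ far from $T$; the paper's Lemma~\ref{lm:omega_eps_to_1} uses the global estimate $d(\{S^tx_0:0\le t\le T-\delta\},M)\ge c\delta$ for the early-exit case. (ii) More substantively, truncating the Taylor expansion of $F(S^{\tau_\eps}x_0)$ at first order requires $(\tau_\eps-T)^2=o_\Pp(\eps^\alpha)$, but the $O(1)$ localization $\Pp\{|\tau_\eps-T|>\eta\}\to0$ you establish gives no rate. You need a bootstrap: once $|\tau_\eps-T|<\eta_0$ with high probability and the linear coefficient $\langle\nabla F(z),b(z)\rangle$ is bounded away from zero by transversality, the implicit equation itself forces $|\tau_\eps-T|=O_\Pp(\eps^\alpha)$, which then makes the quadratic remainder $o_\Pp(\eps^\alpha)$. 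The paper avoids this circularity by proving $|\tau_\eps-T|\le\eps^\gamma$ for some $\gamma\in(\alpha/2,\alpha)$ directly in Lemma~\ref{lm:omega_eps_to_1}, via the constant-velocity approximation of Lemmas~\ref{lemma: rectification} and~\ref{lemma: X_before_after}, \emph{before} differentiating the boundary condition. Either route works, but as written the claim that ``solving the implicit equation yields'' the displayed first-order identity is unjustified; the bootstrap (or the paper's $\eps^\gamma$-window) must be made explicit.
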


\begin{remark}\label{rem:weakening-conditions-by-localization}
\rm The conditions of Theorem~\ref{thm: Main-Levinson} can be relaxed using the standard localization procedure. In fact, one needs to
require uniform convergence of $\Psi_\eps\to\Psi_0$ and regularity properties of $b$ and $\sigma$
only in some neighborhood of the set $\{S^tx_0:\ 0\le t\le T(x_0)\}$.
\end{remark}

\begin{remark}\rm In applications (see~\cite{nhn},\cite{nhn-ds}), the parameters $\alpha_1$ and $\alpha_2$ can be chosen so that
the r.h.s.\ of~\eqref{eq:main_convergence_statement} is nondegenerate. 
\end{remark}

\begin{remark}\rm
In the case where $d=1$, the hypersurface $M$ is just a point. Therefore, $\pi_M$ is identical zero and the only 
contentful information Theorem~\ref{thm: Main-Levinson} provides is the asymptotics of the exit time. 
\end{remark}

\section{A finite time approximation result}\label{sec:finite-levinson}

With high probability, at time $T$ the process $X_\eps$ is close to $z$ and the hitting
time~$\tau_\eps$ is close to $T$. The idea behind the proof of Theorem~\ref{thm: Main-Levinson} is that while the diffusion is close to~$z$, the process
may be approximated very well by motion with constant velocity~$b(z)$. 

In this section we prove the main ingredient to ensure this approximation. 
\begin{lemma} 
\label{lemma: Bakhtin-Levinson}
Let $X_\eps$ be the solution of the SDE~\eqref {eq: lev-PrincipalEquation} with initial condition~\eqref{eqn: initial_condition}. 
Let
\begin{align}
\Theta_\eps (t) &=\eps^{\alpha_2-\alpha}\Phi_{x_0}(t) \xi_\eps+  \eps^{\alpha_1-\alpha}\Phi_{x_0}(t) \int_0^t \Phi_{x_0} (s)^{-1} \Psi_0( S^s x_0) ds \notag \\
&\quad + \eps^{1-\alpha} \Phi_{x_0}(t) \int_0^t \Phi_{x_0} (s)^{-1}\sigma( S^s x_0) dW(s).  \label{eqn: SDE_Theta} 
\end{align} Then, 
\[
X_\eps(t)=S^tx_0 + \eps^\alpha \phi_\eps (t)
\]
holds almost surely for every $t>0$,
where $\phi_\eps(t)=\Theta_\eps(t)+ r_\eps(t)$, and $r_\eps$ converges to 0 uniformly over compact time intervals in probability.

If $\xi_\eps\to\xi_0$ in distribution, then for any $T>0$, 
$\phi_\eps \to \phi_0$  in distribution in $C[0,T]$ equipped with uniform norm,
 where $\phi_0$ is the stochastic process defined in~\eqref{eqn: phi_0_def}.

If $\xi_\eps\to\xi_0$ in probability or $\alpha_2>\alpha$, then  the uniform convergence for $\phi_\eps$
also holds in probability.
\end{lemma}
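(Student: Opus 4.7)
The plan is to define $\phi_\eps(t) = \eps^{-\alpha}(X_\eps(t) - S^tx_0)$, derive an SDE for it, and then apply variation of constants using the fundamental matrix $\Phi_{x_0}$ from~\eqref{eqn: Phi_def}. Subtracting the ODE $\frac{d}{dt}S^tx_0 = b(S^tx_0)$ from~\eqref{eq: lev-PrincipalEquation} and plugging in the Taylor expansion~\eqref{eqn: b_Taylor} at $y = S^tx_0$ yields
\begin{align*}
d\phi_\eps(t) &= A(t)\phi_\eps(t)\,dt + \eps^{-\alpha}Q_1(S^tx_0, \eps^\alpha\phi_\eps(t))\,dt \\
&\quad + \eps^{\alpha_1 - \alpha}\Psi_\eps(X_\eps(t))\,dt + \eps^{1-\alpha}\sigma(X_\eps(t))\,dW(t),
\end{align*}
with $\phi_\eps(0) = \eps^{\alpha_2-\alpha}\xi_\eps$. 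Duhamel's formula then gives
\begin{align*}
\phi_\eps(t) &= \eps^{\alpha_2-\alpha}\Phi_{x_0}(t)\xi_\eps + \eps^{\alpha_1-\alpha}\Phi_{x_0}(t)\int_0^t\Phi_{x_0}(s)^{-1}\Psi_\eps(X_\eps(s))\,ds \\
&\quad + \eps^{1-\alpha}\Phi_{x_0}(t)\int_0^t\Phi_{x_0}(s)^{-1}\sigma(X_\eps(s))\,dW(s) + E_\eps(t),
\end{align*}
where $E_\eps(t) = \Phi_{x_0}(t)\int_0^t\Phi_{x_0}(s)^{-1}\eps^{-\alpha}Q_1(S^sx_0, \eps^\alpha\phi_\eps(s))\,ds$.

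The next step is to compare term by term with $\Theta_\eps$. Setting $r_\eps = \phi_\eps - \Theta_\eps$, three error contributions appear: the remainder $E_\eps$, the drift discrepancy $\eps^{\alpha_1-\alpha}(\Psi_\eps(X_\eps) - \Psi_0(S^sx_0))$, and the diffusion discrepancy $\eps^{1-\alpha}(\sigma(X_\eps) - \sigma(S^sx_0))$. The $Q_1$ bound~\eqref{eqn: bound_Q1} gives $|\eps^{-\alpha}Q_1(S^sx_0, \eps^\alpha\phi_\eps(s))| \leq K\eps^\alpha|\phi_\eps(s)|^2$. Splitting $\Psi_\eps(X_\eps) - \Psi_0(S^sx_0)$ into $(\Psi_\eps - \Psi_0)(X_\eps)$ and $\Psi_0(X_\eps) - \Psi_0(S^sx_0)$, uniform convergence of $\Psi_\eps$ and the Lipschitz property bound the $\Psi$-discrepancy by $\eps^{\alpha_1-\alpha}\|\Psi_\eps - \Psi_0\|_\infty + L\eps^{\alpha_1}|\phi_\eps|$, both vanishing as $\eps\to0$ since $\alpha_1 \geq \alpha$ and $\alpha_1 > 0$. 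The $\sigma$-discrepancy is controlled analogously: its contribution to $r_\eps$ is a martingale whose quadratic variation is $O(\eps^2)$ times the integral of $|\phi_\eps|^2$, which after BDG gives an $\eps|\phi_\eps|$-type bound.

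The main obstacle is the quadratic self-interaction of $\phi_\eps$ through $E_\eps$, which requires localization. Fix $T>0$ and $R>0$, and introduce the stopping time $\tau_{\eps,R} = \inf\{t \geq 0 : |\phi_\eps(t)| \geq R\}\wedge T$. On $[0, \tau_{\eps,R}]$ the estimates above yield, via BDG and the boundedness of $\Phi_{x_0}(\cdot)$ and $\Phi_{x_0}(\cdot)^{-1}$ on $[0,T]$,
\begin{equation*}
\E\sup_{u\leq t}|r_\eps(u\wedge\tau_{\eps,R})|^2 \leq C_R\,\eps^{2(\alpha\wedge\alpha_1\wedge 1) - 2\alpha+2\alpha} \;+\; C_R\int_0^t\E\sup_{u\leq s}|r_\eps(u\wedge\tau_{\eps,R})|^2\,ds + o(1),
\end{equation*}
so Gronwall's inequality gives $\sup_{t\leq\tau_{\eps,R}}|r_\eps(t)|\to 0$ in probability for every fixed $R$. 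A separate tightness argument for $\sup_{t\leq T}|\Theta_\eps(t)|$, based on tightness of $(\xi_\eps)$, boundedness of $\sigma$ and $\Psi_0$ along the compact orbit $\{S^sx_0:\ 0\leq s\leq T\}$, and BDG for the Wiener integral, shows that $\Pp\{\sup_{t\leq T}|\Theta_\eps(t)| > R/2\}$ can be made arbitrarily small for large $R$. Combining, one deduces $\Pp\{\tau_{\eps,R} < T\} \to 0$ and hence $\sup_{t\leq T}|r_\eps(t)| \to 0$ in probability.

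For the distributional statement, observe that the three terms in~\eqref{eqn: SDE_Theta} carry prefactors $\eps^{\alpha_2-\alpha}$, $\eps^{\alpha_1-\alpha}$, and $\eps^{1-\alpha}$, while the integral factors themselves do not depend on $\eps$. Since $\alpha = \alpha_1\wedge\alpha_2\wedge 1$, each prefactor is either identically $1$ (when the corresponding exponent equals $\alpha$) or tends to $0$, matching exactly the indicator structure in~\eqref{eqn: phi_0_def}. Because $\xi_\eps$ is independent of $W$, the continuous mapping theorem gives $\Theta_\eps\to\phi_0$ in distribution in $C([0,T];\R^d)$. If $\xi_\eps\to\xi_0$ in probability, or if $\alpha_2>\alpha$ so that the $\xi_\eps$-contribution vanishes deterministically regardless, the convergence of $\Theta_\eps$ upgrades to convergence in probability; combined with $r_\eps \to 0$ in probability, this yields the in-probability conclusion for $\phi_\eps$.
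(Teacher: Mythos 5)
Your proposal is correct and follows essentially the same route as the paper: subtract the deterministic orbit, Taylor-expand $b$, $\Psi_\eps$, $\sigma$ along $S^tx_0$, apply Duhamel's formula with the fundamental matrix $\Phi_{x_0}$, and localize via a stopping time to control the quadratic remainder before showing $\Pp\{\text{stopping time} < T\}\to 0$ by tightness of $\Theta_\eps$. The only notable technical differences are cosmetic: the paper's stopping time caps $|\Delta_\eps^t|=\eps^\alpha|\phi_\eps(t)|$ at the $\eps$-dependent level $\eps^{\alpha\delta}$ with $\delta\in(1/2,1)$, whereas you cap $|\phi_\eps|$ at a fixed $R$; and your Gronwall step is in fact superfluous, since on $[0,\tau_{\eps,R}]$ all error terms admit direct uniform $o(1)$ bounds through $|\phi_\eps|\le R$ without any self-referential integral inequality in $r_\eps$ — but this does not affect the validity of the argument.
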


\begin{remark}\rm 
This lemma gives the first-order approximation for $X_\eps(t)$. Higher-order approximations in the spirit of~\cite{Blagoveschenskii:MR0139204}
are also possible. They can be used to refine Theorem~\ref{thm: Main-Levinson}.
\end{remark}

\begin{proof}
Let $\Delta_\eps^t= X_\eps (t) -S^t x_0$ and note that it satisfies the equation
\[
d\Delta_\eps^t= \left(  \left( b(X_\eps (t))-b(S^t x_0) \right) + \eps^{\alpha_1} \Psi_\eps(X_\eps (t)) \right)dt + \eps \sigma (X_\eps (t) ) dW(t),
\]
with initial condition $\Delta_\eps^0 = \eps^{\alpha_2} \xi_\eps$. We want to study the properties of this equation. We start with the difference in $b$.
Since $b$ is a $C^2$ vector field, we may write 
\begin{align} 
b(X_\eps (t))-b(S^t x_0) &=Db(S^t x_0) \Delta_\eps^t + Q_1(S^t x_0, \Delta_\eps^t).
 \label{eqn: expansionb}
\end{align}
Also, we can write 
\begin{equation} \label{eqn: expansion_Psi}
\Psi_\eps ( X_\eps (t) )=\Psi_0 ( S^t x_0)+Q_2 (S^t x_0,\Delta_\eps^t)+R_\eps(S^t x_0),
\end{equation} and
\begin{equation} \label{eqn: expansionSigma}
\sigma ( X_\eps (t) )=\sigma ( S^t x_0)+Q_3(S^t x_0,\Delta_\eps^t),
\end{equation}
where 
\[
 R_\eps(x)=\Psi_\eps(x)-\Psi_0(x)=o(1),\quad \eps\to 0,
\]
uniformly in $x$;
$Q_i:\R^d \times \R^d \to \R^d$, $i=2,3$ satisfies
\begin{equation}
|Q_i (u,v)|\leq K   |v|,\quad u,v\in\R^d  \label{eqn: bound_Q2}.
\end{equation}
We can assume that the constant $K>0$ in~\eqref{eqn: bound_Q1} and~\eqref{eqn: bound_Q2} 
is the same for simplicity of notation. 

Let $Q=Q_1+ \eps^{\alpha_1} Q_2+\eps^{\alpha_1}R_\eps$. Combine~\eqref{eqn: expansionb}, \eqref{eqn: expansion_Psi}, and \eqref{eqn: expansionSigma} to get 
\begin{align} \notag
d\Delta_\eps^t = &\left(  A(t)\Delta_\eps^t + \eps^{\alpha_1} \Psi_0 ( S^t x_0) + Q(S^t x_0,  \Delta_\eps^t) \right)dt \\
&\quad + \eps \left(  \sigma ( S^t x_0 ) +Q_3(S^t x_0,\Delta_\eps^t) \right) dW(t), \label{eqn: SDE_Delta} \\
\Delta_\eps^0 = &\eps^{\alpha_2} \xi_\eps.
\end{align}
Hence, applying Duhamel's principle to ~\eqref{eqn: SDE_Delta} and using~\eqref{eqn: SDE_Theta},  we get
\begin{align} \notag
\Delta_\eps^t & = \eps^\alpha \Theta_\eps (t)+ \Phi_{x_0} (t) \int_0^t \Phi_{x_0} (s)^{-1}Q(S^s x_0,  \Delta_\eps^s )ds\\ \notag
&\quad + \eps \Phi_{x_0} (t) \int_0^t \Phi_{x_0} (s)^{-1}Q_3(S^s x_0, \Delta_\eps^s )dW(s) \\ \label{eqn: expansion_X}
&= \eps^\alpha \Theta_\eps (t) + \Theta'_\eps (t),
\end{align} where $\Theta'_\eps$ is defined by~\eqref{eqn: expansion_X}.
A simple inspection of~\eqref{eqn: SDE_Theta} shows that $\left( \Theta_\eps \right)_{\eps>0}$ 
 converges in  distribution in $C(0,T)$ to the process $\phi_0 (t)$. This convergence is in probability if $\alpha_2 > {\alpha}$
or $\xi_\eps\to\xi_0$ in probability.
 Therefore, the lemma will follow with $\phi_\eps = \Theta_\eps + \eps^{-\alpha} \Theta'_\eps$ if we show that 
\begin{equation} \label{eqn: estimate_Theta'}
\eps^{-\alpha} \sup_{t \leq T} | \Theta'_\eps (t) | \overset{\Pp}{\longrightarrow} 0, \quad \eps \to 0.
\end{equation}
 For any $\delta \in (1/2,1)$, we introduce the stopping time
\[
l_\eps ( \delta )=\inf \left\{ t>0: |\Delta_\eps^t| \geq \eps^{\alpha \delta } \right \}.
\]
Now, $\Theta'_\eps=\Theta'_{\eps,1}+\eps \Theta'_{\eps,2}$, where
\[
\Theta'_{\eps,1} (t) = \Phi_{x_0} (t) \int_0^t \Phi_{x_0} (s)^{-1}Q(S^s x_0, \Delta_\eps^s)ds,
\] and
\[
\Theta'_{\eps,2} (t)=\eps \Phi_{x_0} (t) \int_0^t \Phi_{x_0} (s)^{-1}Q_3(S^s x_0,  \Delta_\eps^s )dW(s).
\]
Bounds~\eqref{eqn: bound_Q1}, and \eqref{eqn: bound_Q2} imply
\begin{equation}
\sup_{t\leq T\wedge l_\eps(\delta) }|\Theta'_{\eps,1} (t) | = O( \eps^{2 \alpha \delta}  + \eps^ {{\alpha_1}+\alpha \delta})+o(\eps^{\alpha_1})=o(\eps^{\alpha}). \label{eqn: Theta_prime_1}
\end{equation}
Likewise,~\eqref{eqn: bound_Q2} for $Q_3$ and BDG inequality imply that for any $\kappa>0$ there is a constant $K_\kappa$ such that
\begin{equation}
\Pp \left \{ \sup_{t\leq T\wedge l_\eps(\delta) }|\Theta'_{\eps,2} (t) | > K_\kappa \eps^ {1+\alpha\delta} \right \} < \kappa \label{eqn: Theta_prime_2}
\end{equation} for all $\eps>0$ small enough. Then, this together with~\eqref{eqn: Theta_prime_1} imply that 
\begin{equation}
\eps^{-\alpha\delta} \sup_{t \leq T \wedge l_\eps (\delta)} | \Theta'_\eps (t) | \overset{\Pp}{\longrightarrow} 0, \quad \eps \to 0.
\label{eq:theta-prime-small}
\end{equation}
Then, if $l_\eps (\delta) < T$ we use~\eqref{eqn: expansion_X} to get
\begin{align*}
1 &=\eps^{-\alpha\delta} \sup_{t\leq T \wedge l_\eps (\delta)} | \Delta_\eps^t | \\
& \leq \eps^{\alpha(1-\delta)} \sup_{t\leq  T \wedge l_\eps (\delta)} | \Theta_\eps (t) | +\eps^{-\alpha\delta} 
\sup_{t\leq  T \wedge l_\eps (\delta)}  | \Theta'_\eps (t) |.
\end{align*} The r.h.s. converges to 0 in probability due to~\eqref{eq:theta-prime-small} and the tightness of distributions of $\Theta_\eps$. Hence, $\Pp \{ l_\eps (\delta) < T \} \to~0$ as $\eps~\to~0$. Using $T$ instead of $T \wedge l_\eps ( \delta) $ in~\eqref{eqn: Theta_prime_1} and~\eqref{eqn: Theta_prime_2}, we see that with the choice of $\delta > 1/2$, \eqref{eqn: estimate_Theta'} follows and the proof is finished.
\end{proof}

\section{Proof of Theorem~\ref{thm: Main-Levinson}} \label{sec:proof-levinson}

As we said before, the core idea of the proof is to approximate the behavior of the process $\x$ with that of the deterministic flow in a small neighborhood of $z$. Let us start analyzing the process $X_\eps(t)-z$ for $t$ close to $T$. Let us first estimate the deviation of the flow $S$
from the motion with costant velocity~$b(z)$. Let 
\begin{equation}
r_\pm (t,x)= S^{\pm t}x-\left( x\pm tb(z) \right),\quad t>0,\ x\in\R^d. \label{eqn: def_r_pm}
\end{equation}
\begin{lemma}
\label{lemma: rectification}
There are constants $C_1$ and $C_2$ so that for any $t>0$ and $x\in\R^d$
\[
\sup_{s\leq t}\left|  r_\pm(s,x) \right| \leq C_1 e^{C_2 t }( t |x-z | + t^2). 
\]
\end{lemma}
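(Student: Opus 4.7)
The strategy is to express $r_\pm(t,x)$ as an integral of $b$-differences along the flow and then close the estimate with a one-step Gronwall argument on $|S^s x - z|$.

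\textbf{Step 1 (Integral representation).} From the defining ODE $\frac{d}{ds}S^s x = b(S^s x)$ and the identity $tb(z)=\int_0^t b(z)\,ds$, one has
\[
r_+(t,x) \;=\; S^t x - x - t b(z) \;=\; \int_0^t \bigl(b(S^s x) - b(z)\bigr)\,ds,
\]
so that by the global Lipschitz constant $L$ for $b$ (available from the standing assumptions),
\[
|r_+(s,x)| \;\le\; L\int_0^s |S^u x - z|\,du, \qquad s \le t.
\]

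\textbf{Step 2 (A priori control of $|S^s x - z|$).} Writing $u(s)=|S^s x - z|$ and using $|b(y)|\le |b(z)|+L|y-z|$,
\[
u(s) \;\le\; |x-z| + \int_0^s |b(S^r x)|\,dr \;\le\; |x-z| + s|b(z)| + L\int_0^s u(r)\,dr.
\]
Gronwall's inequality gives
\[
u(s) \;\le\; \bigl(|x-z| + s|b(z)|\bigr)\, e^{L s}, \qquad 0\le s\le t.
\]

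\textbf{Step 3 (Putting it together).} Plugging the bound from Step 2 into the integral estimate of Step 1,
\[
|r_+(s,x)| \;\le\; L\int_0^s \bigl(|x-z|+u|b(z)|\bigr)e^{L u}\,du \;\le\; L\, e^{L t}\Bigl(s\,|x-z| + \tfrac{s^2}{2}|b(z)|\Bigr),
\]
which, after taking $\sup_{s\le t}$ and choosing $C_1:=L\bigl(1\vee \tfrac{|b(z)|}{2}\bigr)$ and $C_2:=L$, yields exactly the claimed inequality for $r_+$.

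\textbf{Step 4 ($r_-$ by symmetry).} The backward flow $\tilde S^s x := S^{-s}x$ is generated by the $C^2$-smooth vector field $-b$, which is Lipschitz with the same constant $L$, and $-(-b(z))=b(z)$. Repeating Steps 1--3 verbatim with $b$ replaced by $-b$ gives the identical bound for $r_-(s,x)=\tilde S^s x - x + s b(z)$.

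The argument contains no real obstacle; the only point worth noticing is that one first needs the a priori growth estimate of Step 2 before the Lipschitz-based estimate of Step 1 can be closed, since otherwise $|S^s x - z|$ could a priori grow faster than linearly in time.
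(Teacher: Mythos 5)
Your proof is correct. Both you and the paper start from the identity $r_+(t,x)=\int_0^t\bigl(b(S^sx)-b(z)\bigr)\,ds$ and the Lipschitz bound $|r_+(t,x)|\le L\int_0^t|S^sx-z|\,ds$, and both close with Gronwall, but the two arguments apply Gronwall to different quantities. The paper substitutes $S^sx-z = r_+(s,x)+(x+sb(z)-z)$ directly into the integrand, producing a self-referential integral inequality for $|r_+|$, and applies Gronwall once to $r_+$ itself. You instead first run Gronwall on the flow deviation $u(s)=|S^sx-z|$ to get the a priori bound $u(s)\le(|x-z|+s|b(z)|)e^{Ls}$, and then integrate this into the Lipschitz estimate. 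Both yield the stated bound with $C_2=L$ and $C_1$ a fixed multiple of $L$. Your version is marginally more modular in that it isolates a reusable estimate on the flow itself, at the cost of invoking Gronwall on an auxiliary quantity; the paper's version is slightly more economical since one Gronwall application lands directly on $r_+$. The handling of $r_-$ via the backward flow generated by $-b$ is the same in both.
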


\begin{proof}
We prove the result for $r_+$. The analysis of $r_-$ is similar since $S^{-t}x$ is the solution to the ODE
\[
\frac{d}{dt} S^{-t}x = -b (S^{-t}x ).
\]
Let $L>0$ be the Lipschitz constant of $b$. The proof follows from the inequalities:
\begin{align*}
\left| r_+(t,x) \right| &\leq \int_0^t \left| b(S^sx)-b(z)\right|ds \\
& \leq L \int_0^t \left| S^sx-z\right|ds \\
& \leq L\int_0^t \left|r_+ (s,x) \right|ds + L\int_0^t \left | x + sb(z)-z\right| ds \\
& \leq L\int_0^t \left|r_+ (s,x) \right|ds + L\int_0^t |x-z|ds + L\int_0^t s|b(z)| ds \\
&\leq  L\int_0^t \left| r_+(s,x) \right|ds + L t |x-z| + t^2 L|b(z)|/2.
\end{align*}
The result follows as an application of Gronwall's lemma.
\end{proof}

\begin{lemma} \label{lemma: X_before_after}
Let $\gamma \in (\alpha/2, \alpha)$. Then, there are two a.s.-continuous stochastic processes $\Gamma_{\eps,\pm}$ such that
\[
\sup_{t\in[0,\eps^\gamma]} |\Gamma_{\eps,\pm} (t)| \stackrel{\Pp}{\longrightarrow} 0, \quad \eps \to 0,
\]
and almost surely for any $t\in[0,\eps^\gamma]$
\begin{equation}
X_\eps (T-t)=z-tb(z)+\eps^\alpha \left( \phi_\eps (T-t) +\Gamma_{\eps,-}(t) \right) \label{eqn: X_before}
\end{equation} and 
\begin{equation}
X_\eps (T+t)=z+tb(z)+\eps^\alpha \left( \Phi_z (t) \phi_\eps (T) +\Gamma_{\eps,+}(t) \right). \label{eqn: X_after}
\end{equation}
\end{lemma}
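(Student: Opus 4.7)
The starting point is the representation $X_\eps(t)=S^tx_0+\eps^\alpha\phi_\eps(t)$ from Lemma~\ref{lemma: Bakhtin-Levinson}, combined with the observation that $S^{T\pm t}x_0=S^{\pm t}z$. By Lemma~\ref{lemma: rectification}, with $x=z$, we have $S^{\pm t}z=z\pm tb(z)+r_\pm(t,z)$ with $|r_\pm(t,z)|\le C_1 e^{C_2 t}t^2$. For $t\le \eps^\gamma$ with $\gamma>\alpha/2$, this gives $\sup_{t\le\eps^\gamma}\eps^{-\alpha}|r_\pm(t,z)|=O(\eps^{2\gamma-\alpha})\to 0$. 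This immediately yields the \emph{before} formula~\eqref{eqn: X_before} with the choice $\Gamma_{\eps,-}(t)=\eps^{-\alpha}r_-(t,z)$, which is deterministic and continuous in $t$.

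For the \emph{after} formula, define $\Gamma_{\eps,+}(t)=\eps^{-\alpha}r_+(t,z)+\phi_\eps(T+t)-\Phi_z(t)\phi_\eps(T)$, which is a.s.-continuous. After the identity $S^{T+t}x_0-z-tb(z)=r_+(t,z)$, the remaining task is to show that $\sup_{t\le\eps^\gamma}|\phi_\eps(T+t)-\Phi_z(t)\phi_\eps(T)|\to 0$ in probability. The decomposition $\phi_\eps=\Theta_\eps+r_\eps$ from Lemma~\ref{lemma: Bakhtin-Levinson} reduces this to handling $\Theta_\eps(T+t)-\Phi_z(t)\Theta_\eps(T)$ and a remainder involving $r_\eps$.

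The key algebraic input is the cocycle identity $\Phi_{x_0}(T+t)=\Phi_z(t)\Phi_{x_0}(T)$ (immediate from uniqueness of solutions to the linearized ODE). Applying it inside the explicit formula~\eqref{eqn: SDE_Theta} for $\Theta_\eps$ and splitting each integral as $\int_0^{T+t}=\int_0^T+\int_T^{T+t}$ gives
\[
\Theta_\eps(T+t)-\Phi_z(t)\Theta_\eps(T)=\Phi_z(t)\Phi_{x_0}(T)\Bigl[\eps^{\alpha_1-\alpha}\!\int_T^{T+t}\!\!\Phi_{x_0}(s)^{-1}\Psi_0(S^sx_0)ds+\eps^{1-\alpha}\!\int_T^{T+t}\!\!\Phi_{x_0}(s)^{-1}\sigma(S^sx_0)dW(s)\Bigr].
\]
The deterministic integral is $O(\eps^{\alpha_1-\alpha+\gamma})$ using boundedness of $\Psi_0(S^\cdot x_0)$ and $\Phi_{x_0}^{-1}$ on $[0,T+1]$, and $\alpha_1-\alpha\ge 0$ by definition of $\alpha$. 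For the martingale term, Doob's/BDG inequality together with the boundedness of $\sigma(S^\cdot x_0)$ and $\Phi_{x_0}^{-1}$ yield $\sup_{t\le\eps^\gamma}|\int_T^{T+t}\Phi_{x_0}(s)^{-1}\sigma(S^sx_0)dW(s)|=O_\Pp(\eps^{\gamma/2})$, so the whole term is $O_\Pp(\eps^{1-\alpha+\gamma/2})$, which vanishes since $\alpha\le 1$ and $\gamma>0$.

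The residual $r_\eps(T+t)-\Phi_z(t)r_\eps(T)$ is handled by bounding it crudely by $|r_\eps(T+t)|+|\Phi_z(t)||r_\eps(T)|$; since $\Phi_z$ is continuous with $\Phi_z(0)=I$ (hence uniformly bounded on $[0,\eps^\gamma]$) and $\sup_{s\le T+1}|r_\eps(s)|\stackrel{\Pp}{\to}0$ by Lemma~\ref{lemma: Bakhtin-Levinson}, this vanishes uniformly on $[0,\eps^\gamma]$ in probability. The only delicate step in the argument is this stochastic-integral estimate on a shrinking window; everything else is deterministic rectification and the cocycle identity.
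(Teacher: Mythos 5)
Your argument is correct, and for the ``after'' part it takes a genuinely different route than the paper does. The paper invokes the strong Markov property: it treats $\tilde X_\eps(t)=X_\eps(T+t)$ as a fresh solution of the SDE started from $z+\eps^\alpha\phi_\eps(T)$ driven by $\tilde W(t)=W(T+t)-W(T)$, and then re-applies Lemma~\ref{lemma: Bakhtin-Levinson} to this shifted system, which hands over the representation $\tilde X_\eps(t)=S^tz+\eps^\alpha\bigl(\Phi_z(t)\phi_\eps(T)+\theta_\eps(t)\bigr)$ with a new remainder $\theta_\eps$ whose smallness on $[0,\eps^\gamma]$ is read off (somewhat implicitly) from its explicit form. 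You instead stay with the original representation $X_\eps(T+t)=S^{T+t}x_0+\eps^\alpha\phi_\eps(T+t)$ and make the cocycle identity $\Phi_{x_0}(T+t)=\Phi_z(t)\Phi_{x_0}(T)$ explicit in~\eqref{eqn: SDE_Theta}, which isolates the tail integrals $\int_T^{T+t}$ and reduces everything to the estimate on a shrinking window. The two are morally the same (the strong Markov restart is the probabilistic incarnation of the cocycle property), but your version is more elementary and self-contained: it avoids invoking the strong Markov property, avoids re-verifying the hypotheses of Lemma~\ref{lemma: Bakhtin-Levinson} for the restarted process (in particular, that $\phi_\eps(T)$ is independent of the shifted noise and has the right convergence), and makes the smallness of $\theta_\eps$ an explicit computation rather than a consequence of the lemma's proof. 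The paper's route is shorter on the page because it reuses the lemma wholesale, but your explicit bookkeeping (deterministic tail $O(\eps^{\alpha_1-\alpha+\gamma})$, martingale tail $O_\Pp(\eps^{1-\alpha+\gamma/2})$ by BDG, and the crude bound on $r_\eps(T+t)-\Phi_z(t)r_\eps(T)$) fills in exactly the details the paper leaves to the reader.
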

\begin{proof}
Due to Lemma~\ref{lemma: Bakhtin-Levinson},  the flow property, and~\eqref{eqn: def_r_pm} we have 
\begin{align*}
X_\eps (T-t) &= S^{T-t}x_0 + \eps^\alpha \phi_\eps (T-t) \\
&= S^{-t}z + \eps^\alpha \phi_\eps (T-t) \\
&=z-tb(z) + r_-(t,z)  + \eps^\alpha \phi_\eps (T-t).
\end{align*}
The first estimate with $\Gamma_{\eps,-} (t) = \eps^{-\alpha} r_-(t,z)$ follows from Lemma~\ref{lemma: rectification} for $x=z$.

Due to Strong Markov property and Lemma~\ref{thm: Main-Levinson} the process $\tilde X_\eps (t) = X_\eps (t + T)$ is a solution of the initial value problem
\begin{align*}
d\tilde X_\eps(t) &= (b( \tilde X_\eps(t))+\eps^{\alpha_1}\Psi_\eps(\tilde X_\eps(t)))dt + \eps \sigma (\tilde X_\eps(t)) d\tilde W, \\
\tilde X_\eps (0) &= X_\eps (T)=z+\eps^\alpha \phi_\eps (T),
\end{align*}
with respect to the Brownian Motion $\tilde W(t)=W(t+T)-W(T)$. So, again, applying Lemma~\ref{thm: Main-Levinson} to this shifted equation, we obtain $\tilde X_\eps (t)=S^tz+\eps^\alpha \hat \phi_\eps (t)$, where, for $t>0$
\begin{align*}
\hat \phi_\eps (t) = \Phi_z (t) \phi_\eps (T) + \theta_\eps (t),
\end{align*}
and 
\[
\theta_\eps (t)= \eps^{1-\alpha}   \Phi_z (t) \int_0^t \Phi_z (s)^{-1}\sigma (S^s z)d\tilde W(s)  
+ \eps^{\alpha_1-\alpha}\Phi_z(t) \int_0^t \Phi_z (s)^{-1} \Psi_0( S^s z) ds + \tilde r_\eps (t),
\] where $\tilde r_\eps$ converges to 0 uniformly over compact time intervals in probability.
Then due to~\eqref{eqn: def_r_pm},
\begin{align*}
\tilde X_\eps (t) &=S^tz+\eps^\alpha (\Phi_z (t) \phi_\eps (t) + \theta_\eps (t) ) \\
&= z+tb(z) + r_+(t,z)+\eps^\alpha (\Phi_z (t) \phi_\eps (t) + \theta_\eps (t) ).
\end{align*}
Hence, with $\Gamma_{\eps,+} (t)=\theta_\eps (t) + \eps^{-\alpha} r_+(t,z)$ the result is a consequence of Lemma~\ref{lemma: rectification}.
\end{proof}

\bigskip

Let us now parametrize, locally around $z$, the hypersurface $M$ as a graph of a $C^2$-function $F$ over $T_zM$, i.e.,
$y\mapsto z+y+F(y)\cdot b(z)$ gives a $C^2$-parametrization of a neighborhood of $z$ in $M$ by a neighborhood of $0$ in $T_zM$. Moreover, $DF(0)=0$ so that
$|F(y)|=O(|y|^2)$, $y\to 0$.
With this definition, it is clear that, for $w\in \R^d$ with $w-z$ small enough, $w\in M$ if and only if $\pi_b(w-z)=F(\pi_M(w-z))$.

Let us define
\begin{align*} 
\Omega_{1,\eps}&=\bigl\{\tau_\eps = \inf \{  t\geq 0: \pi_b \left(X_\eps(t)-z \right) = F\left( \pi_M \left(X_\eps(t)-z \right)  \right)\}\bigr\},\\
\Omega_{2,\eps}&= \left \{  |\tau_\eps - T| \leq \eps^\gamma \right \},\\
\Omega_\eps&=\Omega_{1,\eps}\cap \Omega_{2,\eps}.
\end{align*} 

\begin{lemma}\label{lm:omega_eps_to_1} $\Pp(\Omega_{\eps})\to 1$ as $\eps\to 0$.
\end{lemma}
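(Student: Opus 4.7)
The plan is to establish $\Pp(\Omega_{2,\eps})\to 1$ first, from which $\Pp(\Omega_{1,\eps})\to 1$ will follow by a local argument. Introduce the $C^2$ function $g(x)=\pi_b(x-z)-F(\pi_M(x-z))$ defined on a neighborhood $U_z$ of $z$ where the parametrization of $M$ is valid, so that $\{g=0\}\cap U_z = M\cap U_z$. By shrinking $U_z$, one can arrange that $S^tx_0\in U_z$ only for $t\in(T-\delta_0,T]$ for some $\delta_0>0$.

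For the upper bound $\tau_\eps\leq T+\eps^\gamma$, I would apply Lemma~\ref{lemma: X_before_after} at $t=\eps^\gamma$. Using $\pi_b b(z)=1$, $\pi_M b(z)=0$, and $|F(y)|=O(|y|^2)$ near $0$, one obtains
\[
 g(X_\eps(T\pm\eps^\gamma))=\pm\eps^\gamma+O_\Pp(\eps^\alpha),
\]
which has sign $\pm 1$ with probability tending to $1$, since $\gamma<\alpha$. Combined with continuity of $t\mapsto g(X_\eps(t))$ and the fact that $X_\eps(t)\in U_z$ on $[T-\eps^\gamma,T+\eps^\gamma]$ (as $|X_\eps(t)-z|=O_\Pp(\eps^\gamma)$ there), the intermediate value theorem produces a zero of $g\circ X_\eps$, hence a point of $M$, in this interval. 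For the matching lower bound, transversality at $T$ implies $\mathop{\mathrm{dist}}(S^tx_0,M)\geq c(T-t)$ for $t\in[T-\delta_0,T]$ and $\mathop{\mathrm{dist}}(S^tx_0,M)\geq c'>0$ for $t\in[0,T-\delta_0]$, so $\mathop{\mathrm{dist}}(S^tx_0,M)\geq c_1\eps^\gamma$ uniformly on $[0,T-\eps^\gamma]$. Lemma~\ref{lemma: Bakhtin-Levinson} yields $\sup_{t\leq T}|X_\eps(t)-S^tx_0|=O_\Pp(\eps^\alpha)$, and since $\gamma<\alpha$ the diffusion stays at positive distance from $M$ on $[0,T-\eps^\gamma]$ with high probability, proving $\Pp(\Omega_{2,\eps})\to 1$.

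To then deduce $\Omega_{1,\eps}$: on $\Omega_{2,\eps}$, $X_\eps(\tau_\eps)\in U_z$ so $g(X_\eps(\tau_\eps))=0$, while for any $t<\tau_\eps$ with $X_\eps(t)\in U_z$ we have $X_\eps(t)\notin M$, forcing $g(X_\eps(t))\neq 0$. The main technical obstacle will be ruling out that $g(X_\eps(t))=0$ is satisfied at some earlier $t$ where $X_\eps(t)\notin U_z$. I would address this by extending $F$ to a global $C^2$ function on $T_zM$ chosen so that $g$ is bounded away from zero on a tube around the deterministic orbit outside a small neighborhood of $T$; passing this strict inequality to $X_\eps$ via the uniform approximation from Lemma~\ref{lemma: Bakhtin-Levinson} (together with $\gamma<\alpha$) then gives $g(X_\eps(t))\neq 0$ for all $t<\tau_\eps$ with high probability, so the infimum in the definition of $\Omega_{1,\eps}$ coincides with $\tau_\eps$.
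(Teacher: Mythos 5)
Your proof is correct and uses the same ingredients as the paper's: the expansion from Lemma~\ref{lemma: X_before_after} together with $\pi_b b(z)=1$, $\pi_M b(z)=0$ and $|F(y)|=O(|y|^2)$ for the upper bound $\tau_\eps\le T+\eps^\gamma$, and the distance estimate $d(\{S^tx_0:0\le t\le T-\delta\},M)\ge c\delta$ combined with Lemma~\ref{lemma: Bakhtin-Levinson} for the lower bound. You reorder the logic — establishing $\Pp(\Omega_{2,\eps})\to 1$ first via a two-sided intermediate value argument for $g\circ X_\eps$ and then deducing $\Omega_{1,\eps}$, with explicit attention to extending $F$ off the local chart so that $g$ has no spurious zeros along the orbit away from $z$ — whereas the paper tersely asserts $\Pp(\Omega_{1,\eps})\to 1$ and then controls $\{\tau_\eps>T+\eps^\gamma\}\cap\Omega_{1,\eps}$ by a one-sided sign estimate at $T+\eps^\gamma$; this is a worthwhile clarification of the localization step but not a genuinely different route.
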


\begin{proof}
The definition of $F$ and Lemma~\ref{thm: Main-Levinson} imply that  as $\eps\to 0$, $\Pp(\Omega_{1,\eps})\to 1$.

We use~\eqref{eqn: X_after} to conclude that
\[
\pi_b \left( X_\eps (T+\eps^\gamma) -z \right)=\eps^\gamma  \left( 1+ \eps^{\alpha-\gamma} \pi_b \left( \Phi_z(\eps^\gamma) \phi_\eps(T) + \Gamma_{\eps,+} (\eps^\gamma) \right) \right), 
\]
and 
\[
F \left( \pi_M \left( X_\eps (T+\eps^\gamma) -z \right) \right)=F \left( \eps^\alpha \pi_M \left(  \Phi_z(\eps^\gamma) \phi_\eps(T) + \Gamma_{\eps,+} (\eps^\gamma) \right) \right).
\]
Since  $|F(x)|= O(|x|^2)$, these estimates imply that
\begin{multline*}
\limsup_{\eps \to 0} \Pp\left( \left\{  \tau_\eps > T+\eps^\gamma \right \}\cap \Omega_{1,\eps} \right)\\ \le \limsup_{\eps \to 0} \Pp \left\{  \pi_b \left( X_\eps (T+\eps^\gamma) -z \right) 
\le F \left( \pi_M \left( X_\eps (T+\eps^\gamma) -z \right) \right) \right \} =0.
\end{multline*}
It remains to prove
\begin{equation}
\lim_{\eps \to 0} \Pp \left\{  \tau_\eps <T-\eps^\gamma \right \}=0.
\label{eq:rough_lower_estimate_exit_time}
\end{equation}
Let us denote the Hausdorff distance between sets by $d(\cdot,\cdot)$. Then
an obvious estimate
\[
d(\{S^tx_0: 0\le t\le T-\delta\},M)\ge c\delta 
\]
 holds true for some $c>0$ and all sufficiently small $\delta>0$. Now~\eqref{eq:rough_lower_estimate_exit_time} follows from Lemma~\ref{thm: Main-Levinson},
and the proof is complete
\end{proof}

\begin{lemma} \label{prop: tau_conv}
Define $\tau_\eps^\prime = \tau_\eps - T $. Then,
\[
\eps^{-\alpha} \tau_\eps^\prime +  \pi_b \phi_\eps (T) \stackrel{\Pp}{\longrightarrow} 0, \quad \eps \to 0.
\]
\end{lemma}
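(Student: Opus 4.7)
The plan is to work on the high-probability event $\Omega_\eps$ from Lemma~\ref{lm:omega_eps_to_1}, where the defining equation for $M$ near $z$ is available and $|\tau_\eps'|\le\eps^\gamma$. On $\Omega_\eps$ one has the identity
\begin{equation}
 \pi_b(X_\eps(\tau_\eps)-z)=F(\pi_M(X_\eps(\tau_\eps)-z)), \label{eq:defining_M}
\end{equation}
with $|F(y)|=O(|y|^2)$ as $y\to 0$. The idea is to plug the expansions of Lemma~\ref{lemma: X_before_after} into~\eqref{eq:defining_M}, project onto $\Span(b(z))$, and solve for $\tau_\eps'$.

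First consider the event $\Omega_\eps\cap\{\tau_\eps'\ge 0\}$. Setting $t=\tau_\eps'\in[0,\eps^\gamma]$ in~\eqref{eqn: X_after} and applying $\pi_b$ and $\pi_M$ yields
\begin{align*}
 \pi_b(X_\eps(\tau_\eps)-z)&=\tau_\eps'+\eps^\alpha\pi_b\bigl(\Phi_z(\tau_\eps')\phi_\eps(T)+\Gamma_{\eps,+}(\tau_\eps')\bigr),\\
 \pi_M(X_\eps(\tau_\eps)-z)&=\eps^\alpha\pi_M\bigl(\Phi_z(\tau_\eps')\phi_\eps(T)+\Gamma_{\eps,+}(\tau_\eps')\bigr).
\end{align*}
Plugging these into~\eqref{eq:defining_M} and using $F(\eps^\alpha\cdot)=O(\eps^{2\alpha})$ (since the argument is $O_\Pp(1)$ by the tightness of $(\phi_\eps)$ in $C([0,T])$ and the smallness of $\Gamma_{\eps,+}$), one gets
\begin{equation*}
 \eps^{-\alpha}\tau_\eps'+\pi_b\phi_\eps(T)=\pi_b\bigl[(I-\Phi_z(\tau_\eps'))\phi_\eps(T)\bigr]-\pi_b\Gamma_{\eps,+}(\tau_\eps')+O_\Pp(\eps^\alpha).
\end{equation*}
Since $\Phi_z$ is continuous with $\Phi_z(0)=I$ and $\sup_{t\le\eps^\gamma}|\tau_\eps'|\to 0$, the deterministic factor $I-\Phi_z(\tau_\eps')$ tends to $0$, while $\pi_b\phi_\eps(T)$ is tight; thus the first term on the right is $o_\Pp(1)$. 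The second term is $o_\Pp(1)$ by Lemma~\ref{lemma: X_before_after}.

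On $\Omega_\eps\cap\{\tau_\eps'<0\}$ we instead use~\eqref{eqn: X_before} with $t=-\tau_\eps'\in(0,\eps^\gamma]$. An analogous computation produces
\begin{equation*}
 \eps^{-\alpha}\tau_\eps'+\pi_b\phi_\eps(T)=\pi_b\bigl[\phi_\eps(T)-\phi_\eps(\tau_\eps)\bigr]-\pi_b\Gamma_{\eps,-}(-\tau_\eps')+O_\Pp(\eps^\alpha).
\end{equation*}
Here the main step is to show $\phi_\eps(T)-\phi_\eps(\tau_\eps)\to 0$ in probability. This follows from the weak convergence of $(\phi_\eps)$ in $C([0,T])$ (which implies tightness and hence a uniform control on the modulus of continuity): for every $\delta>0$ there exists $\eta>0$ with $\limsup_{\eps\to 0}\Pp\{\omega_{\phi_\eps}(\eta)>\delta\}<\delta$, and combined with $|\tau_\eps-T|\le\eps^\gamma\to 0$ on $\Omega_\eps$ this gives the desired convergence.

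Combining both cases with $\Pp(\Omega_\eps)\to 1$ yields $\eps^{-\alpha}\tau_\eps'+\pi_b\phi_\eps(T)\stackrel{\Pp}{\to}0$. The main (mild) obstacle is the asymmetry between the two cases: on $\{\tau_\eps'\ge 0\}$ the representation uses $\phi_\eps(T)$ evaluated at the fixed time $T$ (so one only needs continuity of $\Phi_z$), whereas on $\{\tau_\eps'<0\}$ the representation uses $\phi_\eps$ evaluated at the random time $\tau_\eps$, which is why the tightness-based modulus-of-continuity control is needed there.
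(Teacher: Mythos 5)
Your proposal is correct and follows essentially the same route as the paper: restrict to the high-probability event $\Omega_\eps$, split according to the sign of $\tau_\eps'$, substitute the expansions from Lemma~\ref{lemma: X_before_after} into the defining equation $\pi_b(X_\eps(\tau_\eps)-z)=F(\pi_M(X_\eps(\tau_\eps)-z))$, apply $\pi_b$, and absorb the $F$ contribution and the $\Gamma_{\eps,\pm}$ terms into an $o_\Pp(1)$ remainder. The paper carries out the identical decomposition using the indicators $\one_{A_\eps}$, $\one_{B_\eps}$ and reduces to the two uniform convergences $\sup_{t\le\eps^\gamma}|(I-\Phi_z(t))\phi_\eps(T)|\to 0$ and $\sup_{t\le\eps^\gamma}|\phi_\eps(T)-\phi_\eps(T+t)|\to 0$, which you justify via continuity of $\Phi_z$ and the modulus-of-continuity/tightness argument in $C[0,T]$.
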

\begin{proof}
 Let us define $A_\eps = \left \{  0 \leq \tau_\eps^\prime  \leq \eps^\gamma \right \}\cap\Omega_{1,\eps}$ and 
$B_\eps = \left \{  -\eps^\gamma \leq \tau_\eps^\prime  < 0 \right \}\cap\Omega_{1,\eps}$, so that 
$\Omega_\eps = A_\eps \cup B_\eps$.
We can use~\eqref{eqn: X_after} and the definition of $\Omega_{1,\eps}$ to get
\begin{align*}
\one_{A_\eps} \tau_\eps^\prime + \one_{A_\eps} \eps^\alpha  \pi_b \left( \Phi_z (\tau_\eps^\prime) \phi_\eps (T) +\Gamma_{\eps,+}(\tau_\eps^\prime) \right) = 
\one_{A_\eps}F\left( \eps^\alpha \pi_M \left( \Phi_z (\tau_\eps^\prime) \phi_\eps (T) +\Gamma_{\eps,+}(\tau_\eps^\prime) \right) \right).
\end{align*}
This implies
\begin{align} 
\notag
\one_{A_\eps} \eps^{-\alpha} \tau_\eps^\prime &=\eps^{-\alpha} \one_{A_\eps} F\left( \eps^\alpha \pi_M \left( \Phi_z (\tau_\eps^\prime) \phi_\eps (T) +\Gamma_{\eps,+}(\tau_\eps^\prime) \right) \right)
\\ \notag &\quad-\one_{A_\eps} \pi_b \left( \Phi_z (\tau_\eps^\prime) \phi_\eps (T) +\Gamma_{\eps,+}(\tau_\eps^\prime) \right) \\
\notag &= -\one_{A_\eps} \pi_b \left( \Phi_z (\tau_\eps^\prime) \phi_\eps (T)  \right) + r_{\eps,1} \\
&= -\one_{A_\eps} \pi_b  \phi_\eps (T)  +\one_{A_\eps} \pi_b \left( (I-\Phi_z (\tau_\eps^\prime) )\phi_\eps (T)  \right)+r_{\eps,1}, \label{eqn: tau_on_A}
\end{align}
where $r_{\eps,1}$ is a random variable that converges to $0$ in probability as $\eps \to 0$.

Likewise, since $\tau_\eps = T - (-\tau_\eps^\prime)$ and $\one_{B_\eps} \tau_\eps^\prime \le 0$, we can use ~\eqref{eqn: X_before} and the definition of $\Omega_{1,\eps}$ to see that 
\[
\one_{B_\eps}\tau_\eps^\prime + \one_{B_\eps}\eps^\alpha \pi_b \left ( \phi_\eps (T+\tau_\eps^\prime) + \Gamma_{\eps,-} (-\tau_\eps^\prime) \right ) = \one_{B_\eps} F \left (\eps^\alpha\left ( \phi_\eps (T+\tau_\eps^\prime) + \Gamma_{\eps,-} (-\tau_\eps^\prime) \right ) \right ).
\]
Hence, proceeding as before, we see that 
\begin{align*}
\one_{B_\eps} \eps^{-\alpha} \tau_\eps^\prime &= -\one_{B_\eps} \pi_b \phi_\eps (T+\tau_\eps^\prime) + r_{\eps,2} \\
&= -\one_{B_\eps} \pi_b \phi_\eps (T)+\one_{B_\eps} \pi_b \left( \phi_\eps (T)-\phi_\eps(T+\tau_\eps^\prime) \right)+r_{\eps,2}
\end{align*}
for some random variable $r_{\eps,2}$ such that $r_{\eps,2} \to 0$ in probability as $\eps \to 0$. Adding this identity and~\eqref{eqn: tau_on_A}, we see that on $\Omega_{\eps}$
\[
 \eps^{-\alpha} \tau_\eps^\prime=-\pi_b \phi_\eps (T)+\one_{A_\eps} \pi_b \left( ( I-\Phi_z (\tau_\eps^\prime) ) \phi_\eps (T)  \right)+\one_{B_\eps} \pi_b \left( \phi_\eps (T)-\phi_\eps(T+\tau_\eps^\prime) \right)+r_{\eps,1}+r_{\eps,2}.
\]
Due to Lemma~\ref{lm:omega_eps_to_1}, to finish the proof it is sufficient to notice that as $\eps \to 0$
\begin{equation}
\label{eqn: aux_1}
\sup_{0\leq t \leq \eps^\gamma } \left| (I-\Phi_z (t) ) \phi_\eps (T)  \right| \stackrel{\Pp}{\longrightarrow} 0,
\end{equation} and
\begin{equation} \label{eqn: aux_2}
\sup_{0\leq t \leq \eps^\gamma } \left| \phi_\eps (T)-\phi_\eps(T+t) \right|\stackrel{\Pp}{\longrightarrow} 0.
\end{equation}
\end{proof}

Lemma~\ref{prop: tau_conv} takes care of the time component in Theorem~\ref{thm: Main-Levinson}. We shall consider the spatial component now.

 Let $A_\eps$ and $B_\eps$ be as in the proof of Lemma~\ref{prop: tau_conv}. Then, \eqref{eqn: X_after} implies
\begin{multline}
\one_{A_\eps} \left (  X_\eps (\tau_\eps) - z \right)\eps^{-\alpha} = \one_{A_\eps}\eps^{-\alpha} \tau_\eps^\prime b(z) + \one_{A_\eps} \left(  \Phi_z(\tau_\eps^\prime)\phi_\eps(T) + \Gamma_{\eps,+}(\tau_\eps^\prime) \right) \\
= \one_{A_\eps} \left( \eps^{-\alpha}\tau_\eps^\prime b(z) + \phi_\eps (T) \right) + \one_{A_\eps} \left[  (\Phi_z(\tau_\eps^\prime)- I )\phi_\eps(T) + \Gamma_{\eps,+}(\tau_\eps^\prime) \right] \label{eqn: spacial_after}
\end{multline}
Likewise, from~\eqref{eqn: X_before} we get that  
\begin{multline}
\one_{B_\eps} \left (  X_\eps (\tau_\eps) - z \right)\eps^{-\alpha} = \one_{B_\eps}\eps^{-\alpha} \tau_\eps^\prime b(z) + \one_{B_\eps} \left(  \phi_\eps(T+\tau_\eps^\prime) + \Gamma_{\eps,-}(-\tau_\eps^\prime) \right) \\
= \one_{B_\eps} \left(\eps^{-\alpha} \tau_\eps^\prime b(z) + \phi_\eps (T) \right) + \one_{B_\eps} \left[  (\phi_\eps(T+\tau_\eps^\prime)-\phi_\eps(T) ) + \Gamma_{\eps,-}(-\tau_\eps^\prime) \right] \label{eqn: spacial_before}.
\end{multline}
Adding~\eqref{eqn: spacial_after} and~\eqref{eqn: spacial_before} and proceding as in the proof of Lemma~\ref{prop: tau_conv} we see that
\[
\left (  X_\eps (\tau_\eps) - z \right)\eps^{-\alpha} - \pi_M \phi_\eps (T)= \left(\eps^{-\alpha} \tau_\eps^\prime +\pi_b \phi_\eps (T) \right)b(z) + \rho_\eps,
\] where, due to~\eqref{eqn: aux_1},~\eqref{eqn: aux_2} and Lemma~\ref{lemma: X_before_after}, $\rho_\eps \to 0$ in probability as $\eps \to 0$.  From this expression and Lemma~\ref{prop: tau_conv} we get that 
\[
\left (  X_\eps (\tau_\eps) - z \right)\eps^{-\alpha} - \pi_M \phi_\eps (T) \overset{\Pp}{\longrightarrow} 0, \quad \eps \to 0.
\]
Then, using this and the convergence in Lemma~\ref{prop: tau_conv}
\[
\eps^{-\alpha}(\tau_\eps-T, X_\eps(\tau_\eps)-z)=R_\eps + G(\phi_\eps (T) ),
\]
where $R_\eps$ is a random variable such that $R_\eps \to 0$ in probability as $\eps \to 0$. $G$ is the continuous function $x \mapsto (-\pi_b x, \pi_M x )$. Hence, Theorem~\ref{thm: Main-Levinson} follows from the convergence in~Lemma~\ref{thm: Main-Levinson}.

\section{Conditioned diffusions in 1 dimension}\label{sec:1-d-diffusion}

In this section we apply Theorem~\ref{thm: Main-Levinson} to the analysis of the exit time of conditioned diffusions in 1-dimensional situation for the large deviation case.

Suppose, for each $\eps>0$, $X_\eps$ is a weak solution of the following SDE:
\begin{align*}
 dX_\eps(t)&=b(X_\eps(t))dt+\eps\sigma(X_\eps(t)) dW(t),\\
 X_\eps(0)&=x_0,
\end{align*}
where $b$ and $\sigma$ are $C^1$ functions on $\R$, such that $b(x)<0$ and $\sigma(x)\ne 0$ for all $x$ in an interval $[a_1,a_2]$ 
containing $x_0$. We introduce 
\[
 \tau_\eps=\inf\{t\ge0:\ X_\eps(t)=a_1\ \text{\rm or}\ a_2\}
\]
and  $B_\eps=\{X_\eps(\tau_\eps)=a_2\}$. Since $b<0$, $B_\eps$ is a rare event since $\lim_{\eps\to0}\Pp(B_\eps)=0$. More precise estimates
on the asymptotic behavior of $\Pp(B_\eps)$ can be obtained in terms of large deviations. However, here we study the diffusion
$X_\eps$ conditioned on the rare event $B_\eps$.

Let $T(x_0)$ denote the time it takes for the solution of $\dot x=-b(x)$ starting at $x_0$ to reach $a_2$. Given that $b<0$ on the hole interval $[a_1,a_2]$, a simple calculation shows that 
\[
T(x_0)=-\int_{x_0}^{a_2}\frac{1}{b(x)}dx.
\]

\begin{theorem} \label{thm: conditioned} Conditioned on $B_\eps$, the distribution of $\eps^{-1}(\tau_\eps-T(x_0))$ converges weakly to a centered 
Gaussian
distribution with variance 
\[
- \int_{x_0}^{a_2} \frac{ \sigma^2 (y) }{ b^3 (y) } dy.
\]
\end{theorem}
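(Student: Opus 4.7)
The plan hinges on combining the preceding lemma, which gives the SDE satisfied by the conditioned process, with a Laplace-type expansion of the corrected drift, and then invoking Theorem~\ref{thm: Main-Levinson}. The starting observation is that since $b<0$ on $[a_1,a_2]$, the function $\phi(x)=-\int_{a_1}^x b(y)/\sigma^2(y)\,dy$ is strictly increasing, so that $h_\eps(x)=e^{(2/\eps^2)\phi(x)}$ attains its maximum on $[a_1,x]$ at the right endpoint. Two integrations by parts with respect to $h_\eps$ yield the asymptotic
\[
\int_{a_1}^x h_\eps(y)\,dy \,=\, \frac{\eps^2}{2\phi'(x)}\,h_\eps(x)\bigl(1+O(\eps^2)\bigr),
\]
uniformly in $x$ on compact subsets of $(a_1,a_2]$, the boundary term at $a_1$ being exponentially negligible.

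Substituting this expansion into the formula for $b_\eps$ and using $\phi'(x)=-b(x)/\sigma^2(x)$ gives
\[
b_\eps(x) \,=\, b(x) + 2\sigma^2(x)\phi'(x)\bigl(1+O(\eps^2)\bigr)\,=\, -b(x) + \eps^2\Psi_\eps(x),
\]
where $\Psi_\eps\to\Psi_0$ uniformly on compact subsets of $(a_1,a_2]$, for an explicit Lipschitz $\Psi_0$. Hence, conditionally on $B_\eps$, the process $X_\eps$ is a diffusion whose drift is a smooth $O(\eps^2)$-perturbation of the vector field $\tilde b:=-b$, whose deterministic flow $\tilde S^t x_0$ reaches $a_2$ at the finite time $T(x_0)$ and crosses $\{a_2\}$ transversally.

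Having recognized this Levinson structure, I would apply Theorem~\ref{thm: Main-Levinson} with base drift $\tilde b=-b$, perturbation exponent $\alpha_1=2$, deterministic initial data ($\xi_\eps\equiv 0$), and target hypersurface $M=\{a_2\}$. The scaling exponent becomes $\alpha=\alpha_1\wedge\alpha_2\wedge 1=1$, so only the Brownian integral term contributes to the limit $\phi_0(T(x_0))$. A direct integration of the variational equation $\dot\Phi_{x_0}(t)=-b'(\tilde S^tx_0)\Phi_{x_0}(t)$ gives $\Phi_{x_0}(t)=b(\tilde S^tx_0)/b(x_0)$, and hence
\[
\phi_0(T(x_0)) \,=\, b(a_2)\int_0^{T(x_0)}\frac{\sigma(\tilde S^s x_0)}{b(\tilde S^s x_0)}\,dW(s),
\]
which is a centered Gaussian. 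In one dimension $\pi_b v = v/\tilde b(a_2) = -v/b(a_2)$, so Theorem~\ref{thm: Main-Levinson} yields $\eps^{-1}(\tau_\eps-T(x_0))\to -\pi_b\phi_0(T(x_0))$ in distribution, a centered Gaussian whose variance, computed by It\^o isometry followed by the change of variables $u=\tilde S^sx_0$, $ds=du/(-b(u))$, equals precisely $-\int_{x_0}^{a_2}\sigma^2(y)/b^3(y)\,dy$.

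The main obstacle is ensuring that the Laplace expansion delivers the regularity and uniform convergence of $\Psi_\eps\to\Psi_0$ required by Theorem~\ref{thm: Main-Levinson}: the remainder in the expansion involves $1/\phi'$ and its derivatives, which are only controlled away from $a_1$. This is handled by the localization sanctioned in Remark~\ref{rem:weakening-conditions-by-localization}: the relevant orbit $\{\tilde S^t x_0: 0\le t\le T(x_0)\}$ lies in $[x_0,a_2]\subset(a_1,a_2]$, so one only needs the expansion and the hypotheses on $\Psi_\eps$ in a neighborhood of this orbit, where everything is uniformly bounded. A brief verification, using Lemma~\ref{lemma: Bakhtin-Levinson} applied to the conditioned SDE, shows that with high probability the conditioned diffusion remains in this neighborhood up to~$\tau_\eps$, so the localization is harmless.
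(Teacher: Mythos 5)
Your proposal follows the paper's route almost exactly: use the conditioned-drift lemma, expand the correction to the drift via Laplace/integration-by-parts asymptotics to recognize the Levinson structure with base drift $-b$, invoke Theorem~\ref{thm: Main-Levinson}, compute $\Phi_{x_0}(t)=b(\tilde S^t x_0)/b(x_0)$, and finish with It\^o isometry and a change of variables. The variance computation and the role of $\pi_b$ in dimension one are handled correctly, and the appeal to Remark~\ref{rem:weakening-conditions-by-localization} for localizing the drift expansion away from $a_1$ matches the paper's reasoning.

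One point is overclaimed. You set $\alpha_1=2$ and assert that $\Psi_\eps\to\Psi_0$ uniformly for an \emph{explicit Lipschitz} $\Psi_0$. The paper only assumes $b,\sigma\in C^1$, so $\Phi(x)=2\int_{a_1}^x b/\sigma^2$ is only $C^2$, and the Laplace expansion gives $J_{\eps,2}(x)=1+O(\eps^2)$ uniformly but not an $\eps^2$-term with an identifiable, let alone Lipschitz, coefficient; hence the existence of the limit $\Psi_0$ you write down is not guaranteed under the stated regularity. The paper sidesteps this cleanly by choosing $\beta\in(1,2)$ and setting $\Psi_{\eps,\beta}=\eps^{-\beta}(b_\eps+b)$, so that $\Psi_{\eps,\beta}\to 0$ uniformly and $\Psi_0\equiv 0$ is trivially Lipschitz. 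Since $\alpha_1=\beta>1=\alpha$ in both your version and the paper's, the perturbation term drops out of $\phi_0$ regardless, so the final formula is unaffected — but to meet the hypotheses of Theorem~\ref{thm: Main-Levinson} as stated you should take $\alpha_1\in(1,2)$ rather than $\alpha_1=2$, or else add a $C^2$ assumption on $b,\sigma$.
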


To prove this theorem, we will need two auxiliary statements. Their proofs are given in Section~\ref{sec:aux-1-d}.

\begin{lemma} \label{lemma: conditioned_drift}Conditioned on $B_\eps$, the process $X_\eps$ is a diffusion with the same
diffusion coefficient as the unconditioned process, and with the
drift coefficient given by 
\[
b_\eps(x)= b(x)+\eps^2\sigma^2(x)\frac{h_\eps(x)}{\int_{a_1}^x h_\eps(y)dy},
\]
where
\begin{equation}
 \label{eq:h_eps}
 h_\eps(x)=\exp\left\{-\frac{2}{\eps^2}\int_{a_1}^x \frac{b(y)}{\sigma^2(y)}dy\right\}.
\end{equation}
\end{lemma}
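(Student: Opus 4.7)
The plan is to identify the conditioned process as a Doob $h$-transform of $X_\eps$ and then compute the $h$-function explicitly by solving a Dirichlet ODE.

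First, I would set $u_\eps(x)=\Pp_x(B_\eps)=\Pp_x\{X_\eps(\tau_\eps)=a_2\}$. The generator of $X_\eps$ restricted to $C^2(a_1,a_2)$ is $L_\eps f=\frac{\eps^2\sigma^2}{2}f''+bf'$. Standard Dynkin/Feynman--Kac arguments (using that $b$ and $\sigma$ are $C^1$ and $\sigma$ is nondegenerate on $[a_1,a_2]$, so the associated martingale problem is well posed and $u_\eps$ is regular up to the boundary) show that $u_\eps\in C^2(a_1,a_2)\cap C[a_1,a_2]$ and solves the Dirichlet boundary value problem
\begin{equation*}
\frac{\eps^2\sigma^2(x)}{2}u_\eps''(x)+b(x)u_\eps'(x)=0,\quad x\in(a_1,a_2),\qquad u_\eps(a_1)=0,\ u_\eps(a_2)=1.
\end{equation*}
Writing $v=u_\eps'$ and integrating the first-order linear ODE $v'/v=-2b/(\eps^2\sigma^2)$ gives $u_\eps'(x)=C h_\eps(x)$ with $h_\eps$ as in~\eqref{eq:h_eps}. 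Matching the two boundary conditions fixes
\begin{equation*}
u_\eps(x)=\frac{\int_{a_1}^{x}h_\eps(y)dy}{\int_{a_1}^{a_2}h_\eps(y)dy},\qquad \frac{u_\eps'(x)}{u_\eps(x)}=\frac{h_\eps(x)}{\int_{a_1}^{x}h_\eps(y)dy}.
\end{equation*}

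Next I would carry out the $h$-transform. Since $L_\eps u_\eps=0$, the process $M_\eps(t)=u_\eps(X_\eps(t\wedge\tau_\eps))$ is a bounded $\Pp_x$-martingale whose terminal value is $\mathbf{1}_{B_\eps}$. For any bounded stopping time $\tau\le\tau_\eps$ and any $\mathcal{F}_\tau$-measurable bounded functional $F$, conditioning on $B_\eps$ yields
\begin{equation*}
\E_x\!\left[F\,\mathbf{1}_{B_\eps}\right]=\E_x\!\left[F\,\E_{X_\eps(\tau)}(\mathbf{1}_{B_\eps})\right]=\E_x\!\left[F\,u_\eps(X_\eps(\tau))\right],
\end{equation*}
so the law of the conditioned process on $\mathcal{F}_\tau$ is the measure with density $u_\eps(X_\eps(\tau))/u_\eps(x)$ with respect to $\Pp_x$. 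This is exactly the Doob $h$-transform with $h=u_\eps$. Applying Girsanov's theorem to this exponential change of measure (or, equivalently, applying It\^o's formula to $\log u_\eps(X_\eps)$ up to $\tau_\eps$), the Brownian motion $W$ under $\Pp_x$ is replaced by a Brownian motion $\widetilde W$ under the conditional law, with
\begin{equation*}
dW(t)=d\widetilde W(t)+\eps\sigma(X_\eps(t))\frac{u_\eps'(X_\eps(t))}{u_\eps(X_\eps(t))}dt.
\end{equation*}
Substituting into the original SDE gives
\begin{equation*}
dX_\eps(t)=\left(b(X_\eps(t))+\eps^2\sigma^2(X_\eps(t))\frac{u_\eps'(X_\eps(t))}{u_\eps(X_\eps(t))}\right)dt+\eps\sigma(X_\eps(t))d\widetilde W(t),
\end{equation*}
which, together with the explicit formula for $u_\eps'/u_\eps$ derived above, is precisely the claimed drift $b_\eps$ with unchanged diffusion coefficient.

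The only genuinely delicate point is the $h$-transform step at the boundary $a_1$, where $u_\eps$ vanishes: the Girsanov density $u_\eps(X_\eps(t))/u_\eps(x)$ is bounded on $\{t<\tau_\eps\}$ since $u_\eps\le 1$, but one has to verify that the conditioned process never touches $a_1$, which follows from the fact that the $h$-transformed drift $b_\eps$ blows up to $+\infty$ as $x\downarrow a_1$ (because $\int_{a_1}^x h_\eps\to 0$ while $h_\eps(a_1)>0$), producing an entrance boundary at $a_1$. This qualitative statement can be checked directly from Feller's boundary classification applied to $b_\eps$ and $\eps\sigma$ on $(a_1,a_2]$, which closes the argument.
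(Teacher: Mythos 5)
Your proposal is correct and arrives at the same formula for $b_\eps$, but it takes a somewhat different route from the paper's. Both you and the paper start identically: set $u_\eps(x)=\Pp_x(B_\eps)$, observe that it solves the Dirichlet problem $L_\eps u_\eps=0$, $u_\eps(a_1)=0$, $u_\eps(a_2)=1$, and integrate the resulting first-order ODE for $u'_\eps$ to get $u_\eps(x)=\int_{a_1}^x h_\eps /\int_{a_1}^{a_2}h_\eps$. Where you diverge is in passing from $u_\eps$ to the conditioned dynamics. The paper works at the level of the generator: writing $\E_x[f(X_\eps(t))\mid B_\eps]=u_\eps^{-1}(x)\E_x f(X_\eps(t))u_\eps(X_\eps(t))+R_\eps$ with $R_\eps=o(t)$, it takes $t\to0$ to obtain $\bar L_\eps f=u_\eps^{-1}L_\eps(fu_\eps)$ and then simply expands the product to read off the drift and the unchanged diffusion coefficient. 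You instead work at the level of path measures: you identify the density $u_\eps(X_\eps(\tau))/u_\eps(x)$ on $\mathcal{F}_\tau$ (i.e.\ the Doob $h$-transform structure), and then invoke Girsanov to shift $W$ and read the new drift off the transformed SDE. The two are of course two faces of the same $h$-transform; the paper's version is more self-contained (only the Markov property and a bare generator computation), while yours outsources the heavy lifting to Girsanov. One small thing you add that the paper leaves implicit: the observation that after the transform $a_1$ becomes an entrance boundary, so the conditioned process indeed lives on $(a_1,a_2]$ up to its exit at $a_2$. That is a worthwhile sanity check, though it is not logically needed for the lemma as the paper uses it downstream (Lemma~\ref{lemma: b_approx} is applied only on $[x_0-\delta,a_2+\delta]$, safely away from $a_1$).
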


Further analysis requires understanding the limiting behavior of $b_\eps$. This is the purpose of the next lemma:
\begin{lemma} \label{lemma: b_approx} There is $\delta>0$ such that
\[
\limsup_{\eps \to 0}  \eps^{-2}\left(\sup_{x \in [x_0-\delta, a_2+\delta]} | b_\eps(x) + b (x) | \right) < \infty.
\]
\end{lemma}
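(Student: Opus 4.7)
The plan is to carry out a uniform Laplace-style expansion for the integral $\int_{a_1}^x h_\eps(y)\,dy$, treating $h_\eps(y)=\exp(F(y)/\eps^2)$ where $F(y)=-2\int_{a_1}^y b(z)/\sigma^2(z)\,dz$. Since $b<0$ on $[a_1,a_2]$ and $b$ is $C^1$, we may fix $\delta>0$ small enough that $b<0$ still holds on $[a_1,a_2+\delta]$ and $x_0-\delta>a_1$. On this interval $F'(y)=-2b(y)/\sigma^2(y)$ is bounded below by a positive constant and $F''$ is bounded, so $F$ is strictly increasing and $h_\eps$ is sharply peaked at the right endpoint $y=x$.

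The key step is a single integration by parts. Writing $e^{F/\eps^2} = (\eps^2/F')(e^{F/\eps^2})'$ and integrating yields
\begin{equation*}
I_\eps(x):=\int_{a_1}^x h_\eps(y)\,dy
= \frac{\eps^2}{F'(x)}h_\eps(x) - \frac{\eps^2}{F'(a_1)}
+ \eps^2 \int_{a_1}^x \frac{F''(y)}{F'(y)^2}\,h_\eps(y)\,dy.
\end{equation*}
Since $|F''/(F')^2|$ is uniformly bounded by some $C>0$ on $[a_1,a_2+\delta]$, the remainder integral is bounded in absolute value by $C I_\eps(x)$. Rearranging gives $I_\eps(x)(1+O(\eps^2))=(\eps^2/F'(x))h_\eps(x)-\eps^2/F'(a_1)$. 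For $x\geq x_0-\delta$ the factor $h_\eps(x)$ grows like $\exp(c/\eps^2)$ for some $c>0$, so the lower boundary term $\eps^2/F'(a_1)$ is exponentially negligible compared to the leading contribution. Consequently
\begin{equation*}
I_\eps(x) = \frac{\eps^2}{F'(x)}\,h_\eps(x)\,\bigl(1+O(\eps^2)\bigr),
\end{equation*}
with the error uniform in $x\in[x_0-\delta,a_2+\delta]$.

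Inverting yields $h_\eps(x)/I_\eps(x) = (F'(x)/\eps^2)(1+O(\eps^2))$, so that
\begin{equation*}
b_\eps(x)+b(x) = 2b(x)+\sigma^2(x)F'(x)\bigl(1+O(\eps^2)\bigr)
= \sigma^2(x)F'(x)\cdot O(\eps^2),
\end{equation*}
using $\sigma^2 F' = -2b$ to cancel the leading terms. Since $\sigma^2$ and $F'$ are bounded on $[x_0-\delta,a_2+\delta]$, the conclusion follows.

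The only point requiring care is the uniformity of the remainder bound: this hinges on obtaining a uniform lower bound on $F'$ and a uniform upper bound on $|F''/(F')^2|$, which is exactly what the strict inequality $b<0$ on the enlarged interval $[a_1,a_2+\delta]$, together with the $C^1$ regularity of $b$ and $\sigma$, provides. No iterative bootstrapping is needed once one observes that the single integration by parts already produces a relative error controlled by $\eps^2 I_\eps(x)$, and that the exponential growth of $h_\eps(x)$ away from $a_1$ wipes out the lower-endpoint artifact.
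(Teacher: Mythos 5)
Your proof is correct, and it takes a genuinely different route from the paper's. The paper proves this bound by the classical split-and-expand Laplace method: it fixes $\beta\in(1,2)$, writes $\int_{a_1}^x h_\eps = I_{\eps,1}(x)+I_{\eps,2}(x)$ with the split point at $x-\eps^\beta$, shows $I_{\eps,1}$ is exponentially small, and then estimates $I_{\eps,2}$ by a second-order Taylor expansion of $\Phi=-F$ around $x$ combined with a change of variables, tracking the error uniformly in $x$. You instead perform a single integration by parts on $\int e^{F/\eps^2}$, writing $e^{F/\eps^2}=(\eps^2/F')\,(e^{F/\eps^2})'$, which directly isolates the leading boundary term $(\eps^2/F'(x))h_\eps(x)$ and leaves a remainder integral bounded by $C\eps^2 I_\eps(x)$; rearranging and using that $h_\eps(x)\ge e^{c/\eps^2}$ on $[x_0-\delta,a_2+\delta]$ kills the lower-boundary artifact and yields $h_\eps/I_\eps = (F'/\eps^2)(1+O(\eps^2))$ uniformly. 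Both approaches rest on the same ingredients (strict negativity of $b$ on a slightly enlarged interval, $C^1$ regularity of $b$ and $\sigma$, nonvanishing of $\sigma$, giving uniform bounds on $F'$ from below and on $F''$ from above), but your integration-by-parts argument avoids the $\eps^\beta$-scale splitting and the change of variables entirely, producing the uniform $O(\eps^2)$ relative error in one stroke; it is shorter and arguably cleaner for exactly this estimate, while the paper's split decomposition is the form that extends most directly to higher-order asymptotic expansions should one need them.
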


\begin{remark} \rm Although we need the condition that $b(x)<0$ for all $x \in [a_1,a_2]$ for Theorem~\ref{thm: conditioned}
 to hold, Lemmas~\ref{lemma: conditioned_drift} and~\ref{lemma: b_approx} hold independently of the sign properties of~$b$. 
\end{remark}

\begin{proof}[Proof of Theorem~\ref{thm: conditioned}]
Let us fix $\beta \in (1,2)$. Lemmas~\ref{lemma: conditioned_drift} and~\ref{lemma: b_approx} imply that
$X_\eps$ conditioned on $B_\eps$, up to $\tau_\eps$ satisfies an SDE of the form 
\[
d X_\eps (t)= \left( -b(X_\eps(t)) + \eps^{\beta} \Psi_{\eps,\beta} ( X_\eps (t) ) \right) dt + \eps \sigma ( X_\eps (t) ) d\tilde W(t),
\]
for some Brownian Motion $\tilde W$ and with $\Psi_{\eps, \beta} \to 0$ uniformly as $\eps \to 0$. We can assume that after time $\tau_\eps$,
this process still follows the same equation at least up to the time it hits $x_0 -\delta$ or $a_1+\delta$.

So, having the dynamics from $\dot x=-b(x)$ as the underperturbed dynamics, we can apply Theorem~\ref{thm: Main-Levinson}  (taking into account Remark~\ref{rem:weakening-conditions-by-localization}) to see that 
\begin{equation} \label{eqn: conv_example}
\eps^{-1} ( \tau_\eps - T(x_0) ) \stackrel{\Pp}{\longrightarrow} 
 - \frac{1}{b(a_2)}
\Phi_{x_0}(T( x_0) )\int_{0}^{T (x_0)}\Phi_{x_0}^{-1}(s)\sigma(S^s x_0)d\tilde W(s), \quad \eps\to0,
\end{equation}
where $S^t x_0$ is the flow generated by the vector field $-b$, the time $T(x_0)$ solves $S^{T(x_0)}x_0=a_2$, and
$\Phi_{x_0}$ is the linearization of $S$ near the orbit of $x_0$.
The limit is clearly a centered Gaussian random variable. To compute its variance we must first solve
\[
\frac{d}{dt}\Phi_{x_0} (t) = - b^\prime (S^t x_0 ) \Phi_{x_0} (t), \quad \Phi_{x_0} (0)=1.
\]
 The solution to this linear ODE is 
\[
\Phi_{x_0} (t)= \exp \left \{- \int_0^t b^\prime (S^s x_0 )ds \right \},
\]
so that after the change of variables $u=S^s x_0$ in the integral, we get
\[
\Phi_{x_0} (t)= \frac{b(S^t x_0)}{b(x_0)}.
\]
Using this expression and It\^o isometry for the limiting random variable in~\eqref{eqn: conv_example}, we get that the variance of such random variable is
\[
\int_0^{T(x_0)} \frac{ \sigma^2 ( S^tx_0 ) }{ b^2 ( S^t x_0 ) } dt.
\]
We can now use the change of variable $u=S^s x_0$ to get the expression in Theorem~\ref{thm: conditioned}.
\end{proof}

\subsection{Proof of Lemmas~\ref{lemma: conditioned_drift} and~\ref{lemma: b_approx}}\label{sec:aux-1-d}

\begin{proof}[Proof of Lemma~\ref{lemma: conditioned_drift}] Let us find the generator of the conditioned diffusion. To that end we denote the generator of the original diffusion
by $L_\eps$:
\begin{equation}
\label{eq:generator1}
 L_\eps f(x)=b(x)f'(x)+\frac{\eps^2}{2}\sigma^2(x)f''(x)=\lim_{t\to 0}\frac{\E_x f(X_\eps)-f(x)}{t},
\end{equation}
where $f$ is any bounded $C^2$-function with bounded first two derivatives and $\E_x$ denotes expectation with respect to the
measure $\Pp_x$,
the element of the Markov family describing the Markov process emitted from point $x$.

Let us denote $u_\eps(x)=\Pp_x(B_\eps)$. This function solves the following boundary-value problem for the 
backward Kolmogorov equation:
\begin{align*}
L_\eps u_\eps(x)=0,\quad u_\eps(a_1)=0,\quad
u_\eps(a_2)=1. 
\end{align*}
Using~\eqref{eq:generator1}, it is easy to check that a unique solution is given by
\[
 u_\eps(x)=\frac{\int_{a_1}^xh_\eps(y)dy}{\int_{a_1}^{a_2}h_\eps(y)dy},
\]
where $h_\eps$ is defined in~\eqref{eq:h_eps}.

Now we can compute the generator $\bar L_\eps$ of the conditioned flow. For any smooth and bounded function $f\in C^2$ with
bounded first two derivatives, we can write
\begin{align*}
\E_x[f(X_\eps)|B_\eps]&=u^{-1}_\eps(x)\E_x f(X_\eps(t))\one_{B_\eps}
\\&=u_\eps^{-1}(x)\E_x f(X_\eps(t))\one_{B_\eps}\one_{\{\tau_\eps\ge t\}}+R_\eps
\\&=u_\eps^{-1}(x)\E_x\E_x [f(X_\eps(t))\one_{B_\eps}\one_{\{\tau_\eps\ge t\}}|\Fc_t]+R_\eps
\\&=u_\eps^{-1}(x)\E_x f(X_\eps(t)) \Pp_{X_\eps(t)}(B_\eps)+R_\eps
\\&=u_\eps^{-1}(x)\E_x f(X_\eps(t)) u_\eps(X_\eps(t))+R_\eps,
\end{align*}
where
\[
|R_\eps|=u_\eps^{-1}(x)|\E_x f(X_\eps)\one_{B_\eps}\one_{\{\tau_\eps<t\}}|\le C(x)\Pp\{\tau_\eps<t\}=o(t)
\]
for some $C(x)>0$. Therefore, we
obtain
\begin{align*}
\bar L_\eps f(x)&=\lim_{t\to 0} \frac{\E_x[f(X_\eps(t))|B_\eps]-f(x)}{t}
\\ &=\lim_{t\to 0}\frac{u_\eps^{-1}(x)\E_x f(X_\eps(t)) u_\eps(X_\eps(t))-f(x)}{t}
\\ &=\frac{1}{u_\eps(x)}\lim_{t\to 0}\frac{\E_x f(X_\eps(t)) u_\eps(X_\eps(t))-f(x)u_\eps(x)}{t}
\\ &= \frac{1}{u_\eps(x)} L_\eps(fu_\eps) (x)
\\ &= \left(b(x)+\eps^2\sigma^2(x)\frac{u'_\eps(x)}{u_\eps(x)}\right)f'(x)+\eps^2\frac{\sigma^2(x)}{2}f''(x).
\\ &= \left(b(x)+\eps^2\sigma^2(x)\frac{h_\eps(x)}{\int_{a_1}^x h_\eps(y)dy}\right)f'(x)+\eps^2\frac{\sigma^2(x)}{2}f''(x),
\end{align*}
completing the proof.
\end{proof}

\begin{proof}[Proof of Lemma~\ref{lemma: b_approx}] The proof is a variation of Laplace's method. 
Let 
\begin{equation}
\Phi (x)=2 \int_{a_1}^x \frac{ b(y)}{ \sigma^2 (y) } dy,\quad x\ge a_1, \label{eqn: def_Phi}
\end{equation}
so that $h_\eps (x) = e^{ -\Phi (x) / \eps^2 }$.  We take any $\beta \in (1,2)$ and break the integral of $h_\eps$ in two parts:
\[
\int_{a_1}^x e^{ -\Phi (y) /\eps^2 } dy = I_{\eps,1} (x) + I_{\eps,2} (x),
\]
where
\begin{equation} \label{eqn: I_eps,1}
I_{\eps,1} (x) =\int_{a_1}^{ x-\eps^\beta} e^{ -\Phi (y) /\eps^2 } dy,
\end{equation}
and 
\begin{equation} \label{eqn: I_eps,2}
I_{\eps,2} (x) =  \int_{ x-\eps^\beta} ^ x e^{ -\Phi (y) /\eps^2 } dy.
\end{equation}
The idea is to prove that $I_{\eps,1}$ is exponentially smaller than $I_{\eps,2}$ and then estimate~$I_{\eps,2}$. 

We start with some preliminaries for the function $\Phi$. Since both $b$ and $\sigma$ are $C^1$ and $\sigma \neq 0 $ in $[a_1, a_2 ]$ we conclude that $\Phi$ is a $C^2$ function so that  we can find a function $R:\R \times \R \to \R$ and a number $\delta_0>0$ such that for every $x,y \in [a_1,a_2+\delta_0]$, we have the expansion
\begin{equation} \label{eqn: Taylor_phi}
\Phi (y)= \Phi (x) + \Phi ^ \prime (x) (y -x) + R(x, y-x ),
\end{equation}
and 
\begin{equation}
|R(x,v)| \leq K_1 |v|^2,\quad x \in [a_1, a_2+\delta_0], v \in \R, \label{eqn: bound_R}
\end{equation}
for some $K_1>0$.

To estimate $I_{\eps,1}$, we introduce 
\[J_{\eps,1} (x) = \frac{e^{\Phi(x) / \eps^2}}{\eps^2\sigma^2(x)} I_{\eps, 1 } (x),\quad x \in [a_1, a_2+\delta_0].\]
Since $\Phi$ is decreasing, we have that for some constant $K_2>0$ independent of $x \in [a_1, a_2+\delta_0]$,
\begin{align} 
J_{\eps,1} (x) \leq \frac{K_2}{\eps^2}e^ { ( \Phi(x)-\Phi(x-\eps^\beta) )  / \eps^2}. \label{eqn: estimate_2}
\end{align}
Since $\beta<2$ and $\Phi^\prime$ is negative and bounded away from zero, we conclude that there is
$\alpha(\eps)$ such that $\alpha(\eps)=o (\eps ^ 2 )$ as $\eps\to 0$ and 
\begin{equation}
\label{eq:sup_estimate-on-J_1}
 \sup_{x \in [a_1, a_2+\delta_0]} J_{\eps,1} (x) \leq \alpha(\eps).
\end{equation}

We now estimate $I_{\eps,2}$. Using expansion~\eqref{eqn: Taylor_phi} and the change of variables $u=- \Phi(x) (y-x)/\eps^2$, we get
\begin{align} \notag
I_{\eps,2}(x)&= e^{ -\Phi (x) /\eps^2 } \int_{ x- \eps^\beta}^x e^{ -\Phi ^ \prime (x) (y -x)/\eps^2 - R(x, y-x ) /\eps^2 } dy \\ \notag
&=-\frac{  \eps^2  } {\Phi^\prime (x) } e^{ -\Phi (x) /\eps^2 }  \int_{\Phi^\prime(x)/\eps^{2-\beta}}^0 e^{ u - R(x,-\eps^2 u/ \Phi^\prime(x) ) /\eps^2 } du \\
&=-\frac{  \eps^2 \sigma^2 (x) } {2 b(x) } e^{ -\Phi (x) /\eps^2 } J_{\eps,2} (x), \label{eqn: Laplace_int}
\end{align}
where we use~\eqref{eqn: def_Phi} to compute the derivative of $\Phi$, and we define $J_{\eps,2}$ by~\eqref{eqn: Laplace_int}.
Hence, combining ~\eqref{eqn: estimate_2} with the definition of $b_\eps$ and~\eqref{eqn: Laplace_int}, we get 
\begin{align*}
b_\eps (x) = b(x) + \frac{1}{J_{\eps,1}(x) - \frac{1}{2 b(x)} J_{\eps,2} (x) } .
\end{align*}
Due to \eqref{eq:sup_estimate-on-J_1}, the proof will be complete once we prove that for sufficiently small $\delta>0$,
\[
\limsup_{\eps \to 0}  \eps^{-2}\left(\sup_{x \in [x_0-\delta, a_2+\delta]} | J_{\eps,2} (x) -1 | \right) < \infty.
\]

Note that for any $\delta\in(0,x_0-a_1)$, some constant $K_3=K_3(\delta)>0$ and all $x \in [x_0-\delta,a_2+\delta]$,
\begin{align} \notag
|J_{\eps,2} (x)-1| &=\Bigl| \int_{\Phi^\prime (x)/\eps^{2-\beta} }^0 e^u (1 - e^{ - R(x,-\eps^2 u/ \Phi^\prime(x) ) /\eps^2 } )du  \\ \notag
& \quad +\int_{-\infty}^{\Phi^\prime (x)/\eps^{2-\beta} } e^u du\Bigr| \\
&\leq  \int_{\Phi^\prime (x)/\eps^{2-\beta}}^0 e^u | 1 - e^{ - R(x,-\eps^2 u/ \Phi^\prime(x) ) /\eps^2 } |du  + e^{- K_3/\eps^{2-\beta} }. \label{eqn: approx_int}
\end{align}
Using~\eqref{eqn: bound_R} we see that for some constant $K_4>0$ independent of $x \in [x_0-\delta,a_2+\delta]$ and $u \in \R$, 
\[
| R(x,-\eps^2 u/ \Phi^\prime(x) ) |/\eps^2 \leq K_4 \eps^2 u^2.
\]
In particular, 
\[
\sup_{x \in [x_0-\delta,a_2+\delta]}\sup_{ u \in [\Phi^\prime (x)/\eps^{2-\beta} , 0] } | R(x,-\eps^2 u/ \Phi^\prime(x) ) |/\eps^2 \leq K_4 \eps^{2 ( \beta -1) }.
\]
Since $\beta>1$, the r.h.s.\ converges to $0$ and we can apply a basic Taylor estimate which implies that for all $\eps>0$ small enough,
\[
 \sup_{x \in [x_0-\delta,a_2+\delta]} \sup_{ u \in [\Phi^\prime (x)/\eps^{2-\beta} , 0] } | 1 - e^{ - R(x,-\eps^2 u/ \Phi^\prime(x) ) /\eps^2 } | \leq K_5 \eps^2 u^2,
\]
for some $K_5>0$. Using this fact in the integral of~\eqref{eqn: approx_int}, we can find a constant $K_6=K_6(\delta)>0$ such that
\[
\sup_{x \in [x_0-\delta, a_2+\delta]}|J_{\eps,2}(x)-1| \leq K_6 \eps^2 + e^{-K_3/\eps^{2-\beta}}, 
\]
which finishes the proof.
\end{proof}

\chapter{Conclusion} \label{ch: conclusion}

This chapter is devoted to give further discussion of the topics covered in this text. In Section~\ref{sec: conc-normalform} we made some comments related to the application of normal forms. In Section~\ref{sec: conc-saddle} we comment about the exit problem in the case where the deterministic flow has a unique saddle point.
In Section~\ref{sec: conc_scal} we present an open problem related to scaling limit and show a possible relation with Chapter~\ref{ch: levinson}.

\section{Normal Forms}~\label{sec: conc-normalform}

In this thesis, a transformation (normal form transformation) is used to conjugate the original equation for $\x$ into a non-linear perturbation of the linearized equation. This is done so we can avoid an approximation step between our original equation and its linearization. There is evidence that a similar methodology has been in the mind of the researchers since the publication of~\cite{DaySPA}. A concrete conjugation of the original equation into the linearized system was used in~\cite{nhn}. Although this result was successful, it required certain assumptions that are removed in this work (in the $2$-dimensional setting) by conjugating to a non-linear system instead. As far as the author knows, it is the first time a program of this nature has been successful.

Inspired by~\cite{GentzBook}, in~\cite{Forgoston1} a normal form transformation was applied to an epidemiological model. Contrastingly to our case, this is a specific equation and not an abstract setting. In~\cite{NormalFormsArnold} the normal form theory is presented for stochastic differential equations in the abstract setting. Although the transformation is presented, no estimates are computed as in this work. Further the development of normal form theory in~\cite{NormalFormsArnold} is not complete. For example, it excludes the one-resonant case.

Although normal form theory has proved to be a powerful tool in dynamical systems, in probability is still not clear how powerful the theory really is.  In this text we use the very explicit shape of the nonlinearity in the normal form to obtain specific estimates that successfully lead to a complete solution of the problem in Chapter~\ref{ch: Saddle}. As far as the author knows, this work is the first time in which normal form theory is applied in an abstract setting and is used to obtain tight estimates that lead to a solution of a probabilistic problem. The approach presented has some further generalizations in which the application of normal forms may be useful. We give a brief presentation about the possible complications that may be found. 
 
\section{Escape from a Saddle: further generalizations.} \label{sec: conc-saddle}
 
In this work we have studied the exit problem for small noise diffusions. In particular, we have shown the existence of possible asymmetries in the case in which the flow generated by the drift admits a saddle point. The proof is restricted to the $2$-dimensional setting. Let us discuss about this particular restriction.

Our method of proof was to transform the original equation into a very specific non-linear equation known as normal form. Then, we obtained several estimates that intensively uses the smallness of the noise and the specific form of the nonlinearity in the normal form. Let us recall the form of the nonlinearity.

In our case, the nonlinearity in the normal form is given by a finite sum of resonant monomials (see Section~\ref{sec: Normal_Forms} ) of the form $(x_1^{ \alpha_1^+}x_2^{ \alpha_2^+},x_1^{ \alpha_1^-}x_2^{ \alpha_2^-} )$, where $(\alpha_1^\pm,\alpha_2^\pm) \in \Z^2$ satisfy the resonance relations
\[
\alpha_1^\pm \lambda_+ - \alpha_2^\pm \lambda_- = \pm \lambda_\pm,
\]
of some order $r=\alpha_1^\pm + \alpha_2^\pm \geq 2$.
If we were to generalize the argument in Chapter~\ref{ch: Saddle} to the $d$-dimensional case, we would need to take into account the particular form that the nonlinearity would have in the normal form. Indeed, there are two points to consider:
\begin{enumerate}
\item \label{item: conc-res-form} The resonant monomials of order $r \geq 2$ are of the form 
\[
(x_1^{ \alpha_{1,1} } \cdots x_d^{ \alpha_{1,d} }, ... ,x_1^{ \alpha_{d,1} } \cdots x_d^{ \alpha_{d,d} } ),
\]
where the vector $\alpha_i = ( \alpha_{i,1},...,\alpha_{i,d} ) \in \Z^d$ satisfies
\begin{align*}
\alpha_{i,1} \lambda_1 + ... + \alpha_{i,d} \lambda_d &= \lambda_i \\
\alpha_{i,1} + ... + \alpha_{i,d} &=r ,  
\end{align*}
for each $i=1,...,d$. Here $\lambda_1,...,\lambda_d$ are the eigenvalues of the matrix $\nabla b (0 )$.

\item \label{item: conc-nonl} According to ~\cite[Theorem~3,Section~2]{IIlyashenko}, the nonlinearity $N$, after being transformed by a normal form transformation of degree $R>1$, will be of the form 
\[
N(x)=P(x)+Q(x),
\]
where $P$ is a finite sum of resonant monomials, and $Q$ is a correction of order $|x|^{R+1}$ (as $|x|\to0$) when the vector of eigenvalues $\lambda=(\lambda_1,...,\lambda_d)$ is not one-resonant and identically $0$ when $\lambda$ is one-resonant.
\end{enumerate}

The first point implies that to obtain the exponents $\alpha_{i,j}$ more combinatorial work than the one put in Section~\ref{sec: saddle-changevar} is needed. Still this is not the biggest difficulty. The biggest difficulty relies on the lack of structure of the correction $Q$ in the case $\lambda$ is not one-resonant. Indeed, to have any hope that our techniques in Chapter~\ref{ch: Saddle} work, we require at least that whenever $\alpha_{i,k} \neq 0$ for some $k < i$, then $\alpha_{i,j} \neq 0$ for some $k < j\leq d$ (this is in the case we order the eigenvalues as usual: $\Re \lambda_1 \geq ... \geq \Re \lambda_d$). There is no guarantee that a condition of this form holds in the not one-resonant case. In conclusion, a higher dimensional analogue for the saddle case can be obtained using the techniques presented in this theses only in the one-resonant case. This is so, unless the particular structure that the eigenvalues $\lambda_1,...,\lambda_d$ have in this case implies that a normal form transformation can be chosen so that the non-linearity of the transformed drift is a finite sum of resonant monomials with no correction. As far as the author knows, this is an unsolved issue in normal form theory.

There are still some results to be filled in order to complete the case in which the deterministic flow has a saddle, and hence the case in which it admits an heteroclinic network. This result is worthwhile pursuing since the implications of the asymmetry found in Chapter~\ref{ch: Saddle} has very interesting analogues in higher dimensions as famous chaotic systems (such as the Lorentz system) in higher dimensions exhibit homoclinic behavior (see~\cite[Chapters 27,30 and 31]{Wiggins} for further examples).

\subsection{A non-smooth transformation alternative}

As discussed in Section~\ref{sec: saddle-changevar}, it is possible to conjugate a nonlinear equation to a linear one. The restriction for It\^o equations is that this transformation has to be at least $C^2$. Recent results have extended It\^o's formula for functions with less smoothness. The first result of this nature is the well known Tanaka's formula~\cite[Chapter IV]{Protter}, which relies on the existence of local time for one dimensional semimartingales to extend the range of applicability of It\^o's formula to convex functions. For higher dimensional semimartingales there is no local time, so there was no immediate high dimensional analogue for Tanaka's formula. For a long time Tanaka's formula remain the more general change of variables (in terms of smoothness requirements) known. Recent studies have established change of variables for higher dimensional semimartingales with less smoothness~\cite{RussoC1},~\cite{RussoChapter}, ~\cite{FollmerProtter}, ~\cite{Eisenbaum}. Let us give a brief (and informal) comment about this setting.

Consider $f:\R^d \to \R$ to be a continuously differentiable function. Let $Z$ be a semimartingale in $\R^d$ with $Z(t)=V(t)+M(t)$, $M$ being a martingale and $V$ a stochastic process with bounded variation paths. Then~\cite{RussoC1},~\cite{RussoChapter}, ~\cite{FollmerProtter}, ~\cite{Eisenbaum} agree that if the quadratic covariation $[f(Z),Z^j]$ is well defined for every $1 \leq j \leq d$, then It\^o's formula holds:
\[
f(Z(t))=f(Z(0)) + \int_0^t \nabla f (Z(s))dZ(s) + \frac{1}{2} \sum_{i,j=1}^d [\partial_{x_j} f(Z),Z^j] (t).
\]
We recall the definition of quadratic covariation (see~\cite[Section V.5]{Protter}):
\begin{definition}
Let $H$ and $J$ be two continuous stochastic processes in $\R$. The quadratic covariation of $H$ and $J$, denoted as $[H,J]$, is, when it exist, the continuous process of finite variation over compacts, such that for any sequence $\sigma_n$ of random partitions tending to the identity,
\begin{equation} \label{eqn: cocn-quad}
[H,J]=H(0)J(0)+\lim_{n \to \infty } \sum_i ( H^{T^n_{i+1}}-H^{T^n_{i}} )( H^{T^n_{i+1}}-H^{T^n_{i}} ),
\end{equation}
uniformly over compacts in probability. Here, for any random $S>0$, the process $H^S$ is short for $t \mapsto H(t \wedge S)$, and $\sigma_n$ is the sequence $0=T^n_0 \leq ... \leq T^n_{k_n}$, where $\sup_{i} (T^n_{i+1} -T^n_i ) \to 0$, $k_n \to \infty$, and, $T^n_{k_n} \to \infty$ as $n\to \infty$.
\end{definition}

For our diffusion process $\x$, there are several problems to consider. One is to show that $[\partial_{x_j} f(\x),\x^i]$ is well defined. The other, is to prove that 
\begin{equation}\label{eqn: C1est}
\eps^{-1} [\partial_{x_j} f(\x),\x^i] \stackrel{\Pp}{\longrightarrow} 0, \quad \eps \to 0.
\end{equation}
Once this is established, the result in Chapter~\ref{ch: Saddle} follows immediately. 

In order to show that $[\partial_{x_j} f(\x),\x^i]$ is well defined, the proposal in~\cite{RussoC1},~\cite{RussoChapter}, ~\cite{FollmerProtter} is to use the theory of reversible diffusions proposed in~\cite{NualartTimeReversal}. Indeed, assume for a moment that we know that, for a fixed $T>0$, $\hat{X}_\eps (t)=\x(T-t)$ is a diffusion. Then, observe that~\eqref{eqn: cocn-quad} for $H=\partial_{x_j} f(\x)$ and $J=\x^i$ can be written as 
\begin{equation} \label{quad}
[\partial_{x_j} f(\x),\x^i](t)=-\int_0^t \partial_{x_j} f(\x(s))d \x^i (s) - \int_{T-t}^T \partial_{x_j} f(\hat{X} (s) )d \hat{X}^i_\eps (s),
\end{equation}
where both integrals are It\^o integrals with respect to different filtrations. In order to use this formula to prove~\eqref{eqn: C1est} the first attempt may be to get the generator of $\hat{X}_\eps$. Under several assumptions (the most important one being the ellipticity of the noise) in~\cite{NualartTimeReversal} it is proved that, for a fixed time $T>0$, $X(T-\cdot)$ is also a diffusion with the same diffusion matrix and with drift $\hat{b}=-b(x)+ \nabla \log p_{T-t,T}(x,\x(T))$, where $p_{T-t,T}$ is the transition density of the Markov process $\x$ (which existence is proved in~\cite{NualartTimeReversal}).  Hence in order to establish~\eqref{eqn: C1est} we first would need to have a bound in $\eps$ of $\nabla \log p_{T-t,T}$. This quantity is of interest in control theory~\cite{FlemingBook}, but there is, as far as the author knows,no reference to an estimate in $\eps>0$. Another option that avoids this estimate is to extend the filtration $\mathcal{F}^W$ to the minimum complete filtration that includes $\mathcal{F}^W$ such that $\x(T)$ is measurable and write Doob-Meyer decomposition for the process $\x$ with respect to this filtration. 

This is still undergoing work, that is promising not only because it allows to prove the results included in this thesis, but also because it uses several tools of modern stochastic analysis. 

\section{Scaling limits} \label{sec: conc_scal}

In this thesis we proved a scaling limit for the exit problem for two cases, the case in which the flow $S$ has a unique saddle and the Levinson case. The idea will be to prove scaling limits for more general systems. In particular, recall that if the quasipotential has a unique minimizer $z$, then the exit point $\x(\tau_\eps)$ converges to it in probability as $\eps \to 0$. By a scaling limit, we mean find an $\alpha>0$ such that the distribution of $\eps^{-\alpha } ( z - \x( \tau_\eps^D ) )$ is tight. 

Let $V:D \times \partial D \to [0,\infty)$ be the quasipotential given by~\eqref{eqn: quasipotential}:
\[
V(x,y) = \inf_{T>0} \left \{ I_T^x ( \varphi) : \varphi (T)=y, \varphi( [0,T] ) \subset D \cup \partial D \right \}
\]
In order to state the claim, given $x_0 \in D$, let $\mathcal{M}_{x_0} \subset \partial D$ be the set of minimizers of $y \mapsto V(x_0,y)$. The claim is the following:

\begin{claim}
Suppose $\mathcal{M}_{x_0}$ is finite $\mathcal{M}_{x_0}=\{e_1,...,e_q \}$. There is a probability distribution $\nu$ over $\mathcal{M}$, a random number $\alpha \in (0,1]$ and a family of random variables $(\xi_\eps)_{\eps>0}$ such that the exit can be written as
\[
\x ( \tau_\eps )= \nu_1 e_1 + ... + \nu_q e_q + \eps^\alpha \xi_\eps.
\]
Further, there is a random variable $\xi_0$ so that $\xi_\eps \to \xi_0$ in distribution as $\eps \to 0$. 
\end{claim}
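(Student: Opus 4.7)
The plan is to reduce the general claim to the two building blocks that have already been established: the saddle-point scaling theorem (Theorem~\ref{thm: MioNon}) and the Levinson-type scaling theorem (Theorem~\ref{thm: Intro-Main-Levinso}). First I would invoke Freidlin--Wentzell theory to identify the geometric structure associated with each minimizer $e_i\in\Mc_{x_0}$: the quasipotential is attained along an optimal path (or a finite collection thereof) $\gamma_i$ joining $x_0$ to $e_i$. Generically, $\gamma_i$ decomposes into alternating pieces that either follow $S$ forward (``downhill'' against the drift, along stable and unstable manifolds of intermediate saddles) or run against $S$ (``uphill'' Freidlin--Wentzell instantons, each of which is itself a heteroclinic orbit of an auxiliary Hamiltonian flow, and each of which ends at a critical point of $b$ or at a point of $\partial D$ transversal to $b$). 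In this way every $\gamma_i$ is read off as a finite concatenation of three elementary segments: deterministic transport away from a hyperbolic critical point along $\Wc^s$, a passage near a saddle, and a terminal transversal exit from a subdomain.

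Second, I would formalize the iterative random Poincar\'e map $\Upsilon$ sketched in Section~\ref{sec: Hetero_Current}, defining at each equilibrium visited along the $\gamma_i$'s a map $\Pi_{V_k} \to \Pi_{\partial V_k}$ obtained either from Theorem~\ref{thm: MioNon} (when $V_k$ is a neighborhood of a saddle) or from Theorem~\ref{thm: Intro-Main-Levinso} (when $V_k$ is a Levinson neighborhood along a deterministic segment ending transversally at $\partial V_k$). The crucial compatibility point here is that both theorems take as input a random initial condition of the form $x+\eps^\alpha\xi_\eps$ with $\xi_\eps\Rightarrow\xi_0$ satisfying the non-degeneracy~\eqref{eqn: intial_hyp}, and produce an exit of the form $y+\eps^{\alpha'}\xi'_\eps$ of the same form, so the composition of the maps along a $\gamma_i$ is well defined and can be applied inductively. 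The strong Markov property glues the consecutive applications, and at each saddle the exit chooses one of the two local heteroclinic continuations with probabilities dictated by the sign of an explicit limiting Gaussian-type variable, which is precisely the mechanism generating the tree-branching in $\nu$.

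Third, I would combine these ingredients into the representation
\begin{equation*}
X_\eps(\tau_\eps^D) = \sum_{i=1}^{q} \one_{A_{\eps,i}} e_i + \eps^{\alpha}\xi_\eps,
\end{equation*}
where $A_{\eps,i}$ is the event that the sequence of random branchings chosen by $\x$ follows the tube of $\gamma_i$; setting $\nu_i = \one_{A_{0,i}}$ where $A_{0,i}$ is the limiting branching event (constructed jointly from the limit Gaussians produced at every saddle along the way), one obtains $\nu_i\in\{0,1\}$ with $\Pp\{\nu_i=1\}$ computable from the iterated Poincar\'e map. The exponent $\alpha=\alpha(\gamma_i)$ on the event $A_{\eps,i}$ is determined by the product of ratios $\beta$ of~\eqref{eq:alpha_0} accumulated at successive saddles along $\gamma_i$ together with the Levinson exponent $1$ at the final transversal exit, exactly as in the case-by-case analysis of Section~\ref{sec: two-nodes}; and $\xi_0$ is the distributional limit (under Freidlin--Wentzell concentration on $\bigcup_i\{$tube of $\gamma_i\})$ of the output of the iterated Poincar\'e map, which is a measurable function of a finite collection of Gaussian random variables generated at the saddles and at the final Levinson exit. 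Tightness of $\xi_\eps$ and the convergence $\xi_\eps \Rightarrow \xi_0$ follow from the convergence statements in Theorems~\ref{thm: MioNon} and~\ref{thm: Intro-Main-Levinso} together with the continuous mapping theorem applied to the finite composition.

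The main obstacle will be the geometric classification of the optimal paths $\gamma_i$ and the verification that Theorems~\ref{thm: MioNon} and~\ref{thm: Intro-Main-Levinso} actually cover every segment encountered. In full generality the minimizers of $V(x_0,\cdot)$ may hit $\partial D$ at points where $b$ is tangent to $\partial D$, may pass through non-hyperbolic or resonant critical points of $b$ in dimension $d\geq 3$ (where one-resonance is no longer automatic, as discussed in Section~\ref{sec: conc-saddle}), or may even reach $\partial D$ after infinitely many saddle visits; in such degenerate situations the building blocks of this thesis are insufficient and additional hypotheses (e.g., transversality at $\partial D$, hyperbolicity and one-resonance at every critical point on $\bigcup_i\gamma_i$, and finiteness of the concatenation length) must be imposed. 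Under such non-degeneracy assumptions, however, the inductive scheme outlined above produces a complete proof, and in fact an explicit description of $(\nu,\alpha,\xi_0)$ in terms of the data $(b,\sigma,D,x_0)$.
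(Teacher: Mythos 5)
The statement you are proving is not a theorem of the thesis: it appears in the Conclusion as a \emph{Claim}, i.e., as a conjecture accompanied only by a programmatic sketch. The thesis explicitly says that its results imply the claim \emph{only} in the case where $S$ admits a planar heteroclinic network (where the saddle-point theorem, the Levinson theorem, and an iterated Poincar\'e-map argument close the loop), and then proposes a different strategy for the general case. So you should not expect to find a proof to compare against; the question is whether your plan matches the thesis's proposed plan and whether it would actually close the argument.

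Here is the genuine gap. In your first paragraph you correctly observe that a quasipotential-minimizing path $\gamma_i$ generically alternates between ``downhill'' segments (following $S$ forward) and ``uphill'' instanton segments (running against $S$). But the three elementary segments you then reduce to --- deterministic transport along $\Wc^s$, saddle passages, and terminal transversal exits --- are all downhill. Neither Theorem~\ref{thm: MioNon} nor Theorem~\ref{thm: Intro-Main-Levinso} says anything about an uphill segment, where the process must go against the drift $b$ over a macroscopic distance; those theorems are precisely the ones where the deterministic flow already carries you across. If $x_0$ lies in the heteroclinic invariant set the only cost is incurred at the saddles and your decomposition works (this is exactly the setting of Chapter~\ref{ch: Saddle}); but for a generic $x_0$ --- e.g., in the stable case $0\in D\subset$ basin of attraction of $0$ --- the entire optimal path is an instanton and none of your building blocks apply. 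This is not a degeneracy that ``additional hypotheses'' can exclude: it is the main case.

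The thesis's own sketch in Section~\ref{sec: conc_scal} is built to address exactly this. The proposal there is to take the stable equilibrium as the base case, condition on the rare exit event, and observe --- in the spirit of Section~\ref{sec: 1d-time-reversal} and Lemma~\ref{lemma: conditioned_drift} --- that the conditioned process is again a diffusion with the same diffusion matrix and a drift of the form $-b(x)+\eps^2\varphi_\eps(t,x)$. Under the conditioning, the uphill instanton becomes a downhill trajectory of the \emph{reversed} vector field, and Theorem~\ref{thm: Main-Levinson} (with the perturbation term $\eps^{\alpha_1}\Psi_\eps$ that was built into it precisely for this purpose) applies. That conditioning/time-reversal step is the key idea that your proposal is missing, and without it the iteration of Poincar\'e maps cannot be started from a generic $x_0$. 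So your proposal is correct and close to the paper's argument in the heteroclinic special case, but for the stated general Claim it takes a route that cannot reach the conclusion; the thesis's own (unproven) plan goes through the conditioned-diffusion mechanism instead.
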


The results of this thesis imply this claim in the case the flow $S$ admits an heteroclinic network (see Section~\ref{sec: Intro-PlanarHetero}). The proof was done by solving two simple cases (saddle point and Levinson case) and then using a Poincar\'e distributional map argument for each critical point in the network. 

Here we shall proceed similarly: start from simple cases with random initial conditions so that a Poincar\'e argument can be applied. The proposal is to choose as the base case the well developed stable case~\cite{Freidlin--Wentzell-book}: $0 \in D$ and $D$ is contained in the basin of attraction of $0$. It is known that if the domain $D$ is attracted to the origin and $M_{x_0}=\{e\}$ then $\x(\tau_\eps) \to e$ in probability. Moreover, if there is a unique extreme trajectory $\varphi_0$ (the one that realizes the minimum in $V$) then for every $\delta>0$,
\[
\lim_{ \eps \to 0} \Pp_{x_0} \left \{ \sup_{ \theta_\eps \leq t \leq \tau_\eps } | \x(t) - \varphi_0 ( t - \theta_\eps + \theta_0 ) |< \delta  \right \}=0,
\]
where $\theta_\eps$ ($\theta_0$) is the last time $\x$ ($\varphi_0$) hits a ball of arbitrary small (but fixed) radius around the origin. From the perspective introduced in Chapter~\ref{ch: levinson}, consider the process conditioned on exit close to $e$. This process is a semimartingale with the same diffusion matrix as the original process, but with a drift of the form $b_\eps = -b(x) + \eps^2 \varphi_\eps (t,x)$, where $\varphi_\eps$ is uniformly bounded. Hence, our results in the Levinson case of Chapter~\ref{ch: levinson} apply. 

Once this result is established, we can follow the same pattern as in~\cite{Freidlin--Wentzell-book} to study possible asymmetric behavior in metastable process. With this development we can show that the idea of random Poincar\'e maps apply to a general dynamical system.

\appendix
\chapter{Large Deviations} \label{ch: appendix}

Large deviation theory is a mixture of probability theory, analysis, variational calculus, point set topology among others. This theory has been used for different purposes. In Section~\ref{sec: intro_FW} we discussed  the role played by large deviation theory in the development of Freidlin-Wentzell theory. The purpose of this chapter is to provide a quick reference to large deviation theory as needed to understand Section~\ref{sec: intro_FW}.

We present the general theory of large deviations. The theory was first formulated in the right degree of abstraction by Varadhan~\cite{Varadhan66}, we follow~\cite{Vares} in this exposition. In Section~\ref{sec: app-LDP} we begin with the basic definitions. In Section~\ref{app: FW-LDP} we present the large deviation results related to diffusion processes.

\section{Large Deviations Principle (LDP) } \label{sec: app-LDP}

Let $\mathcal{X}$ be a Polish metric space with metric function $d:\mathcal{X} \times \mathcal{X} \to [0,\infty)$. By a probability measure on $\mathcal{X}$, we mean a probability measure on the Borel sigma algebra on $\mathcal{X}$. We will give the general definition of large deviation principle for a family of probability measures on $\mathcal{X}$. First, recall the following definition.
\begin{definition} The function $f: \mathcal{X} \to [-\infty,\infty]$ is lower semi-continuous if it satisfies any of the following equivalent properties:
\begin{enumerate}
\item $\liminf_{ n \to \infty } f (x_n) \geq f(x)$ for all sequences $(x_n)_{n\in \N} \subset \mathcal{X}$ and all points $x \in \mathcal{X}$ such that $x_n \to x$ in $\mathcal{X}$.
\item For all $x \in \mathcal{X}$, $\lim_{ \delta \to 0} \inf_{ y \in B_\delta (x) } f(y)=f(x)$, where $B_\delta(x)=\{ y\in \mathcal{X}: d(x,y)<\delta \}$.
\item $f$ has closed level sets, that is, $f^{-1}( [ -\infty,c] )=\{ x \in \mathcal{X}: f(x) \leq c  \}$ is closed for all $c \in \R$.
\end{enumerate}
\end{definition}

Here are the key definitions of large deviation theory:
\begin{definition}
The function $I:\mathcal{X}\to[0,\infty]$ is called a rate function if 
\begin{enumerate}
\item $I \not \equiv \infty$,
\item $I$ is lower semi-continuous,
\item $I$ has compact level sets.
\end{enumerate}
\end{definition}

\begin{definition} \label{def: app-LDP}
A family of probability measures $(\Pp)_{\eps>0}$ on $\mathcal{X}$ is said to satisfy , as $\eps \to 0$,the large deviation principle (LDP) with rate $\alpha_\eps \to 0$ and rate function $I$ if
\begin{enumerate}
\item $I$ is a rate function,
\item $\limsup_{\eps \to 0 } \alpha_\eps \log \Pp_\eps ( C ) \leq -I ( C )$, for every $C \subset \mathcal{X}$ closed,
\item $\liminf_{\eps \to 0 } \alpha_\eps \log \Pp_\eps ( O ) \geq -I ( O )$, for every $O \subset \mathcal{X}$ open.
\end{enumerate}
Here the bounds are in terms of the set function defined by
\[
I(S)=\inf_{ s \in S } I(x), \quad S \subset \mathcal{X}.
\]
\end{definition}

The goal of large deviation theory is to build up an arsenal of theorems based on these two definitions. We will not describe most of this theorems, since they are out of the scope for the present text. The interested reader is invited to consult the standard monographs on the subject~\cite[Chapter 4]{DemboZeitouni},~\cite[Chapter III]{DenHollanderLDP},~\cite[Chapter 2]{Vares}. The only theorem that we cite is the so called contraction principle. First, we give some remarks
\begin{remark}
\begin{enumerate}
\item It is a standard exercise to show that once the large deviation principle is satisfied, the rate function $I$ is unique.
\item In Definition~\ref{def: app-LDP} it is crucial to make a difference between open and closed sets. Naively, one might try to replace the second and third conditions with the stronger requirement that
\[
\lim_{ \eps \to 0} \alpha_\eps \Pp_{\eps} (S) = -I(S), \quad S \subset \mathcal{X}.
\]
However, there are examples that show that this would be far too restrictive. 
\end{enumerate}
\end{remark}

We now present the contraction principle:
\begin{theorem} \label{them: app-cp}
Let $( \Pp )_{\eps >0} $ be a family of probability measures on $\mathcal{X}$ that satisfies the LDP, as $\eps \to 0$, with rate function $\alpha_\eps$ and with rate function $I$. Let $\mathcal{Y}$ be a Polish space, $T:\mathcal{X} \to \mathcal{Y}$ a continuous map, and $\mathbb{Q}_\eps=\Pp_\eps \circ T^{-1}$ an image probability measure. Then, the family $(\mathbb{Q}_\eps)_{ \eps>0 }$ satisfies the LDP on $\mathcal{Y}$ with rate $\alpha_\eps$ and with rate function $J$ given by 
\[
J(y)=\inf_{x\in \mathcal{X}: T(x)=y } I(x),
\]
with the convention $\inf_\emptyset I = \infty$.
\end{theorem}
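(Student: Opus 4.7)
The plan is to verify the three conditions of Definition~\ref{def: app-LDP} for $(\mathbb{Q}_\eps)_{\eps>0}$ and the rate function $J$, leveraging the identity $\mathbb{Q}_\eps(A)=\Pp_\eps(T^{-1}(A))$ together with the continuity of $T$.

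First I would show that $J$ is a rate function. Non-triviality is immediate: if $I(x_0)<\infty$, then $J(T(x_0))\le I(x_0)<\infty$. For the compact level sets, I claim that $\{J\le c\}=T(\{I\le c\})$ for every $c\in\R$. The inclusion $\supseteq$ is obvious. For $\subseteq$, if $J(y)\le c$, pick a minimizing sequence $(x_n)$ in $T^{-1}(y)$ with $I(x_n)\to J(y)$; then eventually $I(x_n)\le c+1$, so $(x_n)$ lies in the compact set $\{I\le c+1\}$ and admits a convergent subsequence $x_{n_k}\to x^*$. Continuity of $T$ gives $T(x^*)=y$, and lower semi-continuity of $I$ gives $I(x^*)\le \liminf I(x_{n_k})=J(y)\le c$, so $y\in T(\{I\le c\})$. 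Since $\{I\le c\}$ is compact and $T$ is continuous, $T(\{I\le c\})$ is compact, hence closed. Closed level sets give lower semi-continuity of $J$.

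Next I would establish the two LDP bounds via the key identity
\begin{equation*}
I(T^{-1}(A))=\inf_{x\in T^{-1}(A)}I(x)=\inf_{y\in A}\inf_{x\in T^{-1}(y)}I(x)=\inf_{y\in A}J(y)=J(A),
\end{equation*}
valid for any $A\subset \mathcal{Y}$. For a closed set $C\subset \mathcal{Y}$, continuity of $T$ implies $T^{-1}(C)$ is closed in $\mathcal{X}$, so the LDP upper bound for $\Pp_\eps$ yields
\begin{equation*}
\limsup_{\eps\to0}\alpha_\eps\log\mathbb{Q}_\eps(C)=\limsup_{\eps\to0}\alpha_\eps\log\Pp_\eps(T^{-1}(C))\le -I(T^{-1}(C))=-J(C).
\end{equation*}
For an open set $O\subset \mathcal{Y}$, $T^{-1}(O)$ is open and the LDP lower bound for $\Pp_\eps$ gives the symmetric inequality with $\liminf$.

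The proof is essentially a bookkeeping exercise, and the only nontrivial step is the compactness of $\{J\le c\}$; this is where the fact that $I$ has \emph{compact} (not merely closed) level sets is essential, since it guarantees that the minimizing sequence constructed above admits a convergent subsequence. Without this, $J$ might have closed but non-compact level sets and would fail to be a rate function in the sense of our definition. Everything else follows from the continuity of $T$ and elementary manipulations with infima.
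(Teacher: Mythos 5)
Your proof is correct and is the standard argument for the contraction principle. The paper itself states this theorem without proof (pointing the reader to standard monographs such as Dembo--Zeitouni and Den Hollander), so there is no proof in the paper to compare against; your argument — verifying the rate-function properties via the identity $\{J\le c\}=T(\{I\le c\})$ and deriving the bounds from $J(A)=I(T^{-1}(A))$ with preimages of closed/open sets being closed/open — is exactly what those references do.
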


\section{Freidlin-Wentzell LDP} \label{app: FW-LDP}

In this section we present the large deviation results that Freidlin-Wentzell theory is based on.

Given $T>0$, let $W(t), t\in[0,T]$, be a standard Brownian motion in $R^d$. Consider the process $W_\eps (t) = \eps W(t)$, and let $\Pp_\eps^W$ be the probability measure induced by $W_\eps$ on $C([0,T];\R^d)$, the space of all continuous functions $\varphi:[0,T] \to \R^d$ equipped with the supremum norm topology. We first state the LDP for $W_\eps$ derived by Schilder~\cite{SchidlerLDP}:
\begin{theorem} \label{thm: app-LDP-W}
The family of probability measures $(\Pp_\eps^W)_{\eps >0}$ on $C( [0,T];\R^d )$ satisfy a LDP with rate $\eps^2$ and with rate function
\[
J_T ( \phi) = \left \{
\begin{tabular}{ l c l }
$\frac{1}{2} \int_0^T |\dot{\phi}(s)|^2 ds$  & , &  $\phi \in H^1$ \\
$\infty$ & , & otherwise \\
\end{tabular} \right.
\]
Here $H^1$ is the space of absolutely continuous functions with square integrable derivative.
\end{theorem}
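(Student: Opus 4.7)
The plan is to verify the three conditions of Definition~\ref{def: app-LDP} in turn. First I would check that $J_T$ is a good rate function on $C([0,T];\R^d)$. Its sublevel sets $\{J_T\leq c\}$ consist of functions $\phi\in H^1$ with $\phi(0)=0$ and, by Cauchy--Schwarz, $|\phi(t)-\phi(s)|\leq \sqrt{2c}\sqrt{|t-s|}$. Hence these sublevel sets are uniformly bounded and equicontinuous, so Arzel\`a--Ascoli gives precompactness in $C([0,T];\R^d)$; closedness follows because $J_T$ equals the squared norm on the Cameron--Martin Hilbert space, which is lower semi-continuous for uniform convergence via the standard $L^2$ weak-compactness argument applied to the derivatives. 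Therefore level sets are compact and $J_T$ is lower semi-continuous.

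For the lower bound, I would use the Cameron--Martin/Girsanov change of measure. Fix $\phi\in H^1$ with $\phi(0)=0$, and an open $O\ni\phi$. Setting $\tilde W(t)=W(t)-\phi(t)/\eps$, Girsanov's theorem shows that $\tilde W$ is a standard Brownian motion under a measure $\tilde\Pp$ with an explicit density. A direct computation (substituting $dW=d\tilde W+\eps^{-1}\dot\phi\,dt$) yields
\[
\Pp(\eps W \in O) \;=\; e^{-J_T(\phi)/\eps^2}\,\mathbf{E}\!\left[\mathbf{1}_{\{\eps W+\phi\in O\}}\exp\!\left(-\tfrac{1}{\eps}\int_0^T\dot\phi(s)\,dW(s)\right)\right].
\]
Choosing $B_\delta(\phi)\subset O$, the event $\{\eps W+\phi\in O\}$ contains $\{\sup_{[0,T]}|\eps W|\leq\delta\}$, whose probability tends to $1$. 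Approximating $\phi$ by $C^2$-smooth drifts (dense in $H^1$) and integrating by parts, $\int_0^T\dot\phi\,dW=\dot\phi(T)W(T)-\int_0^T\ddot\phi\,W\,ds$, which on the high-probability set $\{\sup|\eps W|\leq\delta\}\cap\{|\int_0^T\dot\phi\,dW|\leq K\}$ is bounded by $K$. Thus $\eps^2\log\Pp(\eps W\in O)\geq -J_T(\phi)-K\eps+\eps^2\log\Pp(\cdot)$, whence the $\liminf$ gives $-J_T(\phi)$, and density of smooth drifts extends this to all $\phi\in H^1$.

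For the upper bound, I would first establish exponential tightness: for each $L>0$ there is a compact $K_L\subset C([0,T];\R^d)$ with $\Pp(\eps W\notin K_L)\leq e^{-L/\eps^2}$. This follows from classical Gaussian tail estimates for the modulus of continuity of Brownian motion, together with Arzel\`a--Ascoli applied to balls in the H\"older metric. Given exponential tightness, it suffices to prove the upper bound for sets of the form $C\cap K_L$ with $C$ closed. I would then reduce to finite-dimensional projections $\pi_n\phi=(\phi(t_1),\ldots,\phi(t_n))$: since $\pi_n(\eps W)$ is a centered Gaussian vector with covariance $\eps^2 \min(t_i,t_j)I_d$, it satisfies a Cram\'er-type upper bound with rate function $I_n(x)=\tfrac12\sum_{i}|x_i-x_{i-1}|^2/(t_i-t_{i-1})$, and $I_n\circ\pi_n(\phi)\uparrow J_T(\phi)$ under mesh refinement by a standard Riemann-sum/lower semi-continuity argument. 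Combining the finite-dimensional upper bound with continuity of $\pi_n$ on $K_L$ and letting $L\to\infty$ gives the desired upper bound on arbitrary closed sets.

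The main obstacle is the delicate step in the lower bound, where the exponential factor $e^{-J_T(\phi)/\eps^2}$ is matched against a stochastic integral $\eps^{-1}\int_0^T\dot\phi\,dW$ whose standard deviation is of order $\eps^{-1}$, so that a naive application of Jensen's inequality is exactly off by the desired rate. Controlling this term requires localizing on a high-probability event where the integral is $O(1)$ rather than $O(\eps^{-1})$, which is achieved cleanly only after integration by parts on a smooth approximation of $\phi$; propagating this back to general $\phi\in H^1$ via density is where most of the technical care is needed.
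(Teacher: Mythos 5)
The paper does not prove this statement: it cites Schilder's theorem and then uses it as a black box to derive the Freidlin--Wentzell LDP via the contraction principle, so there is no in-paper proof to compare against.

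On its own merits your sketch follows the standard proof of Schilder's theorem and the ingredients are all correct. The one place where the argument as written is tangled is the lower bound. Once you have restricted to $\{\sup_{[0,T]}|\eps W|\le \delta\}\cap\{|\int_0^T\dot\phi\,dW|\le K\}$, the integration by parts is superfluous: the second constraint already bounds $\int_0^T\dot\phi\,dW$ by $K$, and for $K$ large that intersection has probability bounded away from zero uniformly in $\eps$, so $\eps^2\log\Pp(\cdot)\to 0$ while the factor $e^{-K/\eps}$ contributes only $-K\eps\to 0$ to the rate; no density argument over smooth $\phi$ is then needed, the estimate holds for every $\phi\in H^1$ directly. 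If instead you want the integration-by-parts route, note that on $\{\sup|\eps W|\le\delta\}$ the bound on $\dot\phi(T)W(T)-\int_0^T\ddot\phi\,W\,ds$ is of order $C(\phi)\,\delta/\eps$, not a constant $K$; that is still fine, since the contribution to $\eps^2\log$ is $-C(\phi)\delta$, which one sends to zero because $\delta$ is free once $B_\delta(\phi)\subset O$. A third common shortcut is to observe that $\{\sup|\eps W|\le\delta\}$ is symmetric under $W\mapsto -W$ and use Jensen's inequality to kill the stochastic-integral term outright. Any one of these closes the gap cleanly; mixing the first two, as your draft does, is redundant but not incorrect. Finally, the paper's displayed formula for $J_T$ omits the constraint $\phi(0)=0$; you correctly supply it in your argument.
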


The simple case in which the process $\x$ is the strong solution of
\[
d \x (t) = b ( \x (t ) ) dt + \eps d W (t)
\]
is a consequence of Theorem~\ref{thm: app-LDP-W} and the Contraction Principle~\ref{them: app-cp}. Indeed, let $F:C([0,T];\R^d) \to C([0,T];\R^d)$ be the map defined by $f=F(g)$, where $f$ is the unique solution of
\[
f(t)=\int_0^T b (f(s) ) ds + g (t).
\]
Then, after noticing that $F$ is continuous and some calculation, Theorem~\ref{thm: app-LDP-W} implies the following result:
\begin{corollary}
The law of $\x$ on $C( [0,T];\R^d)$ satisfies a LDP with rate $\eps^2$ and with rate function
\[
J_T^\prime =\left \{
\begin{tabular}{ l r l }
$\frac{1}{2} \int_0^T |\dot{\phi}(s)-b(s)|^2 ds$  & , & $\phi \in H^1$ \\
$\infty$ & , & otherwise \\
\end{tabular} \right. .
\]
\end{corollary}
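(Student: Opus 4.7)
The plan is to realize $X_\eps$ as a continuous image of the rescaled Brownian path $W_\eps$ on $C([0,T];\R^d)$, and then to deduce the LDP by invoking the Contraction Principle (Theorem~\ref{them: app-cp}) together with Schilder's theorem (Theorem~\ref{thm: app-LDP-W}). First, I would define $F:C([0,T];\R^d)\to C([0,T];\R^d)$ by $F(g)=f$, where $f$ is the unique solution of the integral equation
\[
f(t)=\int_0^t b(f(s))\,ds+g(t),\qquad t\in[0,T].
\]
Existence and uniqueness of $f$ for each $g$ follow from a standard Picard iteration based on the global Lipschitz property of $b$ assumed in Section~\ref{Sec: Intro-Notation}. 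Because the SDE in the corollary has additive noise, this map reproduces the pathwise solution, so that $X_\eps=F(W_\eps)$ holds almost surely.

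The main analytic input is to verify that $F$ is continuous, in fact Lipschitz, with respect to the uniform norm. Given $g_1,g_2\in C([0,T];\R^d)$ and the corresponding $f_i=F(g_i)$, the triangle inequality together with the Lipschitz bound on $b$ yields
\[
|f_1(t)-f_2(t)|\le L\int_0^t |f_1(s)-f_2(s)|\,ds+\|g_1-g_2\|_\infty,
\]
so Gronwall's lemma gives $\|f_1-f_2\|_\infty\le e^{LT}\|g_1-g_2\|_\infty$. Moreover the inverse map $F^{-1}(f)(t)=f(t)-\int_0^t b(f(s))\,ds$ is evidently continuous as well, so $F$ is a homeomorphism. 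Applying the Contraction Principle to $\Pp_\eps^X=\Pp_\eps^W\circ F^{-1}$ then yields the LDP for $X_\eps$ at rate $\eps^2$ with rate function $J'_T(\phi)=\inf_{F(g)=\phi} J_T(g) = J_T(F^{-1}\phi)$.

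Finally, I would compute $J_T(F^{-1}\phi)$ explicitly. If $\phi\in H^1$, then $g=F^{-1}\phi$ is also in $H^1$ with $\dot g(s)=\dot\phi(s)-b(\phi(s))$ almost everywhere, so
\[
J'_T(\phi)=\tfrac12\int_0^T|\dot\phi(s)-b(\phi(s))|^2\,ds.
\]
Conversely, if $\phi\notin H^1$, then $F^{-1}\phi$ cannot be in $H^1$ either, since $\phi(t)=g(t)+\int_0^t b(\phi(s))\,ds$ and the second term is Lipschitz (hence absolutely continuous) as $b$ is bounded; therefore $J_T(F^{-1}\phi)=+\infty$, matching the formula in the statement. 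There is no serious obstacle in this argument: the only substantive step is the Gronwall-based continuity of $F$, which is entirely routine given the uniform Lipschitz assumption on $b$.
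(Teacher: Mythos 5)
Your proposal is correct and follows essentially the same route the paper sketches: define the It\^o map $F$, observe $X_\eps=F(W_\eps)$ by additivity of the noise, check continuity of $F$ via Gronwall, and apply the Contraction Principle (Theorem~\ref{them: app-cp}) to Schilder's LDP (Theorem~\ref{thm: app-LDP-W}); your write-up simply fills in the continuity and rate-function calculations that the paper leaves as ``after noticing that $F$ is continuous and some calculation,'' and also silently corrects two typos in the source (the upper limit $\int_0^t$ rather than $\int_0^T$ in the definition of $F$, and $b(\phi(s))$ rather than $b(s)$ in the displayed rate function).
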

Now, consider $\x$ to be the solution of our typical SDE
\[
d\x (t) = b( \x (t) ) dt + \eps \sigma ( \x (t ) ) dW(t).
\]
As said on Section~\ref{sec: intro_FW} a LDP for this process is the base of Freidlin-Wentzell theory. It turns out that to obtain a LDP for the law of $\x$ the contraction principle does not apply. Instead, raw approximations have to be made. We state the theorem without a proof (see ~\cite[Section 4.2]{DemboZeitouni} or ~\cite[chapter 3]{Freidlin--Wentzell-book} for a proof).

\begin{theorem} [Freidlin-Wentzell~\cite{Freidlin--Wentzell-book} ]
Let $H_{0,T}^1$ be the space of all absolutely continuous functions from $[0,T]$ to $\R^d$ with square integrable derivatives. Define the functional $I_T^x$ by
\begin{equation}  \label{eqn: LDP_I}
I_T^x (\varphi)= \frac{1}{2} \int_0^T \langle \stackrel{\cdot}{\varphi} (s)-b(\varphi(s) ), a^{-1} (\varphi (s) ) ( \stackrel{\cdot}{\varphi} (s)-b(\varphi(s) ) ) \rangle ds,
\end{equation}
if $\varphi \in H_{0,T}^1$ and $\varphi(0)=x$, and $\infty$ otherwise. Here $b$ is the drift in~\eqref{eqn: Ito_additive} and $a=\sigma^T \sigma$, with $\sigma$ the diffusion matrix in~\eqref{eqn: Ito_additive}.

Then for each $x \in \R^d$ and $T>0$ the family $(\Pp_x^\eps)_{\eps>0}$ satisfies a Large Deviation Principle on $C([0,T];\R^d)$ equipped with uniform norm at rate $\eps^2$ with good rate function $I_T^x$.
\end{theorem}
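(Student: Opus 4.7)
The plan is to combine Schilder's theorem (Theorem~\ref{thm: app-LDP-W}) with a suitable approximation argument, since the contraction principle cannot be invoked directly: the It\^o map $W \mapsto X_\eps$ is not continuous with respect to uniform topology. First I would recall Schilder, which gives the LDP for $\eps W$ at rate $\eps^2$ with rate function $\tfrac12\int_0^T|\dot\phi(s)|^2 ds$ on $H^1_{0,T}$. Next I would construct, for each $n\ge 1$, a discretized process $X_\eps^n$ that freezes the diffusion coefficient on the partition $\{t_k^n = kT/n\}$: on $[t_k^n,t_{k+1}^n]$,
\begin{equation*}
X_\eps^n(t)=X_\eps^n(t_k^n)+\int_{t_k^n}^t b(X_\eps^n(s))\,ds+\eps\sigma(X_\eps^n(t_k^n))(W(t)-W(t_k^n)),
\end{equation*}
with $X_\eps^n(0)=x$. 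Since $\sigma$ is frozen on each interval, $X_\eps^n$ is a continuous functional $F_n$ of $\eps W$ on $C([0,T];\R^d)$, so the contraction principle (Theorem~\ref{them: app-cp}) immediately transfers Schilder's LDP to an LDP for the law of $X_\eps^n$ at rate $\eps^2$ with an explicit rate function $I_{T,n}^x$ obtained by minimization over preimages.

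The central technical step is exponential equivalence between $X_\eps$ and $X_\eps^n$: I would show that for every $\delta>0$,
\begin{equation*}
\limsup_{n\to\infty}\limsup_{\eps\to 0}\eps^2\log\Pp\Bigl\{\sup_{t\le T}|X_\eps(t)-X_\eps^n(t)|>\delta\Bigr\}=-\infty.
\end{equation*}
This will be the main obstacle. The argument is a Gronwall-plus-martingale estimate: the difference $X_\eps-X_\eps^n$ satisfies an integral equation driven by the drift difference (controlled by Lipschitzness of $b$) and a stochastic integral whose integrand is $\sigma(X_\eps(s))-\sigma(X_\eps^n(t_k^n))$. The latter is bounded by the modulus of continuity of $\sigma\circ X_\eps$, which is in turn controlled via exponential Bernstein-type martingale inequalities applied to the Wiener process on each subinterval $[t_k^n,t_{k+1}^n]$. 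Using boundedness of $b$ and $\sigma$, one obtains a bound of the form $\exp(-c(\delta)n/\eps^2)$ for the relevant probability, which yields the desired exponential negligibility as $n\to\infty$.

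Given exponential equivalence, the standard principle (see \cite[Theorem 4.2.13]{DemboZeitouni}) transfers the LDP from $X_\eps^n$ to $X_\eps$ in the limit, with the rate function being the $\Gamma$-limit of the $I_{T,n}^x$. I would then identify this limit as $I_T^x$ in~\eqref{eqn: LDP_I} by a direct variational computation: each $I_{T,n}^x(\varphi)$ is a Riemann-sum-like quadratic form in the increments of $\varphi-\int b(\varphi)$, using $a^{-1}$ frozen at partition points, and these converge for $\varphi\in H^1_{0,T}$ by uniform continuity of $a^{-1}$ along the trajectory together with dominated convergence, while diverging to $+\infty$ for $\varphi\notin H^1_{0,T}$ as required.

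Finally, I would verify that $I_T^x$ is a good rate function: lower semicontinuity follows because the map $\varphi\mapsto \dot\varphi-b(\varphi)$ is weakly closed on $H^1_{0,T}$ and the integrand is convex in the velocity; compactness of sublevel sets $\{I_T^x\le c\}$ follows from the uniform positive definiteness of $a$, which bounds $|\dot\varphi|^2$ by a constant multiple of $|\dot\varphi-b(\varphi)|^2+|b(\varphi)|^2$, yielding an $H^1$ bound on the sublevel set, equicontinuity, and hence relative compactness in $C([0,T];\R^d)$ via Arzel\`a--Ascoli.
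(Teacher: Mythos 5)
The paper does not actually prove this theorem: in the appendix on Freidlin--Wentzell large deviations it is stated explicitly ``without a proof'' with a pointer to Dembo--Zeitouni, Section~4.2, and Freidlin--Wentzell, Chapter~3. Your sketch follows the Dembo--Zeitouni route almost exactly: Schilder's theorem, discretization by freezing the diffusion coefficient on a partition so that the noise enters only through Wiener increments and the It\^o map becomes a genuinely continuous functional (hence the contraction principle applies), then exponential equivalence of $X_\eps$ and $X_\eps^n$ via a Gronwall-plus-exponential-martingale estimate, and finally passage to the limit. This is the correct and standard strategy, and your explanation of why the contraction principle cannot be applied to the It\^o map directly is the right diagnosis.

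Two points deserve a more careful treatment than your outline gives them. First, the identification of the limiting rate function should not be described as a $\Gamma$-limit in the ordinary sense: the relevant transfer tool is the exponential-approximation theorem for LDPs (Dembo--Zeitouni Theorem~4.2.16 and its corollaries), which, beyond exponential equivalence, requires a two-sided compatibility between $I_{T,n}^x$ and $I_T^x$ on open balls, uniformly over the sublevel sets. Pointwise convergence of the discrete rate functions on $H^1_{0,T}$ plus divergence off $H^1_{0,T}$ is not sufficient for the lower bound; one needs a recovery-sequence-type statement with control over the rate, and that argument uses the uniform positive definiteness of $a$ in an essential way. Second, your exponential equivalence estimate $\exp(-c(\delta)n/\eps^2)$ needs the bound on the modulus of continuity of the frozen term to be uniform in $k$ and in $\eps$; this is where the boundedness and Lipschitzness of $b$ and $\sigma$ enter, and also where, in the presence of the drift, a localization or a priori bound on $\sup_t|X_\eps|$ is implicitly needed. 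These are both standard but nontrivial, and are precisely the ``raw approximations'' alluded to in the text when it remarks that the contraction principle does not apply directly.
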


\chapter{Appendix to Section~\ref{subsec: sketch} } \label{ch: app-sec}

The purpose of this appendix is to present any technical material left out in Chapter~\ref{ch: Intro} Section~\ref{subsec: sketch}. This appendix (in contrast with Appendix~\ref{ch: appendix}) contains original material in the simple case discussed on Section~\ref{ch: app-sec}. Let us recall the setting from that section.

Given two positive numbers $\lambda_\pm >0$, consider the diffusion $\x=(x_\eps^1,x_\eps^2)$
\begin{align*}
d\x (t) &= { \rm{diag} }( \lambda_+, - \lambda_- ) \x (t) dt + \eps dW(t).
\end{align*}
Let $\delta>0$ and $D=(-\delta,\delta) \times (-\delta,\delta)\subset \R^2$. We study the exit problem of $\x$ from $D$. We start the diffusion $\x$ inside $D$: $\x (0 ) = (0,x_0) \in D$.

Recall that in Section~\ref{subsec: sketch}, we used It\^o's formula in each coordinate to write Duhamel principle for $x_\eps^1$ and $x_\eps^2$. Here we rewrite identities~\eqref{eqn: Duh_Intro1} and~\eqref{eqn: Duh_Intro2} for easier reference:
\begin{align} \label{eqn: app-Duh_Intro1}
x_\eps^1 (t) &= \eps e^{ \lambda_+ t } \int_0^t e^{-\lambda_+ s} dW_1 (s), \\ \label{eqn: app-Duh_Intro2}
x_\eps^2 (t) &= e^{-\lambda_- t } x_0 +  \eps  \int_0^t e^{-\lambda_- ( t- s) } dW_2 (s).
\end{align}
Let $\mathcal{N}(t)$ denote the stochastic integral in~\eqref{eqn: app-Duh_Intro1}.

Recall that $\tau_\eps^\delta$ is defined as
\[
\tau_\eps^\delta = \inf \left\{ t>0: |x_\eps^1 (t)| \geq \delta \right \}.
\]
First we prove that $\tau_\eps^\delta$ is finite with probability $1$. This is a general fact that can be found in the literature, for example in~\cite[Proposition 1.8.2]{BassBook}, but we chose to prove it directly from Duhamel principle. We do this in order to stress the importance of such a representation in our setting. Without any further discussion, we go into the results.

\begin{lemma}
For every $\delta>0$ and $\eps >0$, $\tau_\eps^\delta < \infty$ $\Pp-$a.s.
\end{lemma}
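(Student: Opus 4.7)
The plan is to exploit the explicit Duhamel representation~\eqref{eqn: app-Duh_Intro1}, which factors $x_\eps^1$ into a deterministic exponentially growing factor and a martingale with summable quadratic variation. Writing $x_\eps^1(t) = \eps e^{\lambda_+ t}\mathcal{N}(t)$, the condition $|x_\eps^1(t)| \geq \delta$ is equivalent to $|\mathcal{N}(t)| \geq \delta e^{-\lambda_+ t}/\eps$. Since the right-hand side decays to $0$ as $t \to \infty$, it suffices to show that $|\mathcal{N}(t)|$ stays bounded away from $0$ (in fact, converges to a nonzero limit) for almost every sample path.

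First, I would observe that $\mathcal{N}(t) = \int_0^t e^{-\lambda_+ s} dW_1(s)$ is a square-integrable continuous martingale whose quadratic variation
\[
\langle \mathcal{N}\rangle_t = \int_0^t e^{-2\lambda_+ s}\,ds = \frac{1 - e^{-2\lambda_+ t}}{2\lambda_+}
\]
is uniformly bounded by $(2\lambda_+)^{-1}$. Hence $\mathcal{N}$ is bounded in $L^2$ and the martingale convergence theorem provides a random variable $\mathcal{N}_\infty$ with $\mathcal{N}(t) \to \mathcal{N}_\infty$ almost surely and in $L^2$. Alternatively, one can apply the Dambis--Dubins--Schwarz time change to identify $\mathcal{N}_\infty$ as a centered Gaussian random variable with variance $(2\lambda_+)^{-1}$.

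The second step is the key distributional observation: since $\mathcal{N}_\infty$ is Gaussian with strictly positive variance, $\Pp\{\mathcal{N}_\infty = 0\} = 0$. Consequently, on a set of full probability there exists a random time $T_0(\omega) < \infty$ and a random constant $c(\omega) > 0$ with $|\mathcal{N}(t,\omega)| \geq c(\omega)$ for all $t \geq T_0(\omega)$. On this set,
\[
|x_\eps^1(t,\omega)| \geq \eps\, c(\omega)\, e^{\lambda_+ t} \xrightarrow[t\to\infty]{} \infty,
\]
so by continuity of sample paths the value $\delta$ is attained in finite time and $\tau_\eps^\delta(\omega) < \infty$.

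The argument involves no hard estimates; the only nontrivial point is the a.s.\ non-degeneracy of the limiting martingale, and this is free from the Gaussianity (equivalently, non-atomicity at $0$) of the limit. I do not anticipate an obstacle beyond checking that the martingale convergence theorem applies, which is immediate from the $L^2$-boundedness of $\mathcal{N}$.
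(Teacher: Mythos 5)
Your proof is correct, and it takes a genuinely different route from the paper's. The paper bounds $\Pp\{\tau_\eps^\delta>n\}$ directly by a quantitative Gaussian estimate at the single time $n$: it shows $\Pp\{\tau_\eps^\delta>n\}\leq \Pp\{e^{\lambda_+n/4}|\mathcal{N}(n)|<1\}$ and then observes that $\mathcal{N}(n)$ is a centered Gaussian with variance bounded below, so this last probability is $\frac{1}{\sqrt{2\pi}}\int_{-\alpha_n}^{\alpha_n}e^{-r^2/2}\,dr$ with $\alpha_n\to 0$. You instead invoke the $L^2$-martingale convergence theorem: $\mathcal{N}(t)\to\mathcal{N}_\infty$ a.s.\ with $\mathcal{N}_\infty$ Gaussian and non-degenerate, so $|\mathcal{N}(t)|$ is eventually bounded away from $0$ on almost every path, and then $|x_\eps^1(t)|=\eps e^{\lambda_+t}|\mathcal{N}(t)|\to\infty$ a.s. Your argument is softer and actually proves more (a.s.\ divergence of $x_\eps^1$, not just finiteness of the hitting time), and it avoids the explicit inequalities; the paper's approach is more elementary and self-contained, which fits its stated aim of proving the claim ``directly from Duhamel's principle'' without importing convergence theorems. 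Both are sound; yours is the cleaner qualitative argument, the paper's the more hands-on quantitative one.
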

\begin{proof}
Let $n \in \N$, it is enough to show that $\Pp \{ \tau_\eps^\delta > n \} \to 0$ as $n \to \infty$. Observe that~\eqref{eqn: app-Duh_Intro1} implies that
\begin{align*}
\Pp \{ \tau_\eps^\delta > n \} &= \Pp \left\{ \sup_{ t \in [0,n] } \eps e^{ \lambda_+ t } | \mathcal{N} (t) | < \delta \right \} \\
&\leq \Pp \left\{ \sup_{ t \in [n/2,n] } \eps e^{ \lambda_+ t } | \mathcal{N} (t) | < \delta   \right \} \\
&\leq \Pp \left\{ \eps e^{ \lambda_+ n / 2 } \sup_{ t \in [n/2,n] }  | \mathcal{N} (t) | < \delta   \right \}.
\end{align*}
Here the last two inequalities follow from the properties of the supremum and the exponential function respectively. Take $n_0 \in \N$ such that $ \eps^{-1} \delta< e^{\lambda_+ n /4 }$, for every $n \geq n_0$. Then, for every $n \geq n_0$,
\begin{align*}
\Pp \left\{ \eps e^{ \lambda_+ n / 2 } \sup_{ t \in [n/2,n] }  | \mathcal{N} (t) | < \delta   \right \} & \leq
\Pp \left\{ \eps e^{ \lambda_+ n / 2 } \sup_{ t \in [n/2,n] }  | \mathcal{N} (t) | < \delta , e^{ \lambda_+ n / 4 } \sup_{ t \in [n/2,n] }  | \mathcal{N} (t) | \geq 1  \right \} \\
& \quad + \Pp \left\{ e^{ \lambda_+ n / 4 } \sup_{ t \in [n/2,n] }  | \mathcal{N} (t) | < 1  \right \} \\
& \leq \Pp \left\{ e^{ \lambda_+ n / 4 } \sup_{ t \in [n/2,n] }  | \mathcal{N} (t) | < 1  \right \}.
\end{align*}
The proof will be finished as soon as we can show that the last probability converges to $0$. To see this, note that, for every $t>0$, the random variable $\mathcal{N}(t)$ is a zero mean gaussian random variable with variance
$ \int_0^t e^{ - 2 \lambda_+  t} ds =\frac{1 - e^{ - 2 \lambda_+ t } } { 2 \lambda_+ }. $
Denote
\[
\alpha_n = e^{- \lambda_+ n / 4 } \sqrt { \frac{ 2 \lambda_+ }{1 - e^{ - 2 \lambda_+ n } } }.
\]
The result follows since, $\alpha_n \to 0$, as $n \to \infty$, and
\begin{align*}
\Pp \left\{ e^{ \lambda_+ n / 4 } \sup_{ t \in [n/2,n] }  | \mathcal{N} (t) | < 1  \right \}
& \leq \Pp \left\{ e^{ \lambda_+ n / 4 }  | \mathcal{N} (n)| < 1  \right \} \\
& =\frac{1}{ \sqrt{2 \pi} } \int_{ - \alpha_n }^{\alpha_n} e^{ - r^2/2 } dr.
\end{align*}
\end{proof}

\begin{lemma} \label{lemma: app-tauinfty}
For any $\delta > 0$,
\[
\tau_\eps^\delta \stackrel{\Pp}{\longrightarrow} \infty,
\]
as $\eps \to 0$.
\end{lemma}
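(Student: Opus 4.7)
The plan is to reduce convergence in probability to infinity to the statement that $\Pp\{\tau_\eps^\delta \leq T\} \to 0$ as $\eps \to 0$ for every fixed $T > 0$. To exploit the explicit Duhamel representation~\eqref{eqn: app-Duh_Intro1}, I would observe that
\[
\{\tau_\eps^\delta \leq T\} = \left\{\sup_{t \in [0,T]} |x_\eps^1(t)| \geq \delta\right\} = \left\{\sup_{t \in [0,T]} \eps e^{\lambda_+ t} |\mathcal{N}(t)| \geq \delta\right\},
\]
where $\mathcal{N}(t) = \int_0^t e^{-\lambda_+ s} dW_1(s)$ is defined independently of $\eps$. Bounding $e^{\lambda_+ t}$ by $e^{\lambda_+ T}$ on $[0,T]$ gives
\[
\Pp\{\tau_\eps^\delta \leq T\} \leq \Pp\left\{\sup_{t \in [0,T]} |\mathcal{N}(t)| \geq \delta \eps^{-1} e^{-\lambda_+ T}\right\}.
\]

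The remaining step is to show that the right-hand side tends to $0$ as $\eps \to 0$. Since $(\mathcal{N}(t))_{t \in [0,T]}$ is a continuous $L^2$-bounded martingale with quadratic variation $\langle \mathcal{N} \rangle_T = (1 - e^{-2\lambda_+ T})/(2\lambda_+)$, Doob's maximal inequality (or, alternatively, the Burkholder--Davis--Gundy inequality) yields
\[
\Pp\left\{\sup_{t \in [0,T]} |\mathcal{N}(t)| \geq K\right\} \leq \frac{1}{K^2} \mathbf{E}|\mathcal{N}(T)|^2 = \frac{1 - e^{-2\lambda_+ T}}{2\lambda_+ K^2}
\]
for any $K > 0$. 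Applying this with $K = \delta \eps^{-1} e^{-\lambda_+ T}$, which tends to infinity as $\eps \to 0$, the bound converges to $0$, completing the proof.

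There is no real obstacle here; the argument is essentially bookkeeping once one notes that the explicit representation~\eqref{eqn: app-Duh_Intro1} isolates the $\eps$-dependence entirely as a prefactor multiplying a process whose distribution does not depend on $\eps$. The only place one has to be slightly careful is ensuring that the supremum is controlled by an $\eps$-independent random variable, which is why one multiplies out $e^{\lambda_+ t} \leq e^{\lambda_+ T}$ before applying Doob's inequality rather than working directly with the exponentially weighted supremum.
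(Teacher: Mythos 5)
Your proof is correct and follows essentially the same route as the paper: apply the Duhamel representation~\eqref{eqn: app-Duh_Intro1}, bound $e^{\lambda_+ t}\le e^{\lambda_+T}$ on $[0,T]$, and then control $\sup_{t\le T}|\mathcal N(t)|$ by a second-moment maximal inequality together with It\^o isometry, yielding a bound of order $\eps^2$. The only cosmetic difference is that you invoke Doob's maximal inequality directly, whereas the paper combines Chebyshev with the Burkholder--Davis--Gundy inequality; these are interchangeable here, and your event decomposition $\{\tau_\eps^\delta\le T\}=\{\sup_{t\in[0,T]}|x_\eps^1(t)|\ge\delta\}$ is in fact stated more cleanly than in the paper (which has the inequality reversed in a typo).
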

\begin{proof}
Let $\delta>0$. It is enough to prove that $\Pp \{ \tau_\eps^\delta > T \} \to 0$, $\eps \to 0$, for any $T>0$. Use Duhamel principle~\eqref{eqn: app-Duh_Intro1} to get
\begin{align*}
\Pp \{ \tau_\eps^\delta > T \} &= \Pp \left \{ \sup_{ t \leq T \wedge \tau_\eps^\delta } | x_\eps^1 (t) | > \delta \right \} \\
&\leq  \Pp \left \{ \eps e^{ \lambda_+ T } \sup_{ t \leq T \wedge \tau_\eps^\delta } \left | \int_0^t e^{ - \lambda_+ s} ds \right | > \delta \right \}.
\end{align*}
The last inequality, Chebyshev inequality~\cite[Section 3.2]{ChungBook}, BDG inequality~\cite[Proposition 3.3.28]{Karatzas--Shreve} and It\^o isometry~\cite[Proposition 2.10]{Karatzas--Shreve} imply that for some constant $C_1$,
\begin{align*}
\Pp \{ \tau_\eps^\delta > T \} & \leq C_1 e^{2 \lambda_+ T} \eps^2 \mathbf{E} \int_0^{T \wedge \tau_\eps^\delta}  e^{ -2 \lambda_+ s} dW(s) \\
& \leq \frac{ C_1 } { \lambda_+ } e^{2 \lambda_+ T} \eps^2.
\end{align*}
This proves our result.
\end{proof}

The last technical step in this appendix is about the convergence of the random variable $\mathcal{N}_\eps$. Recall that $\mathcal{N}_\eps = \mathcal{N} ( \tau_\eps^\delta ) $ and
\[
\mathcal{N} = \int_0^\infty e^{ - \lambda_+ s } dW(s).
\]
\begin{lemma}
As $\eps \to 0$, $\mathcal{N}_\eps \to \mathcal{N}$ in probability.
\end{lemma}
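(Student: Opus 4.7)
The plan is to combine Lemma~\ref{lemma: app-tauinfty} (which says $\tau_\eps^\delta \to \infty$ in probability) with a martingale tail estimate for the process $t \mapsto \mathcal{N}(t)=\int_0^t e^{-\lambda_+ s}dW_1(s)$. The key observation is that $\mathcal{N}(t)$ is a square integrable martingale whose quadratic variation $\int_0^t e^{-2\lambda_+ s}ds$ is bounded by $1/(2\lambda_+)$; hence $\mathcal{N}(t)\to \mathcal{N}$ almost surely and in $L^2$ as $t\to\infty$. Moreover, by Doob's $L^2$ inequality applied to the martingale $t\mapsto \mathcal{N}(t)-\mathcal{N}(T)$ and It\^o isometry,
\begin{equation*}
\mathbf{E}\sup_{t\geq T}|\mathcal{N}(t)-\mathcal{N}(T)|^2 \leq 4\int_T^\infty e^{-2\lambda_+ s}ds = \frac{2 e^{-2\lambda_+ T}}{\lambda_+},
\end{equation*}
which, together with $\mathcal{N}(T)\to \mathcal{N}$ in $L^2$, yields $\sup_{t\geq T}|\mathcal{N}(t)-\mathcal{N}|\to 0$ in probability as $T\to\infty$.

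From here the proof is a two-step approximation argument. Fix $\eta>0$ and $\gamma>0$. First, choose $T=T(\eta,\gamma)$ large enough that
\begin{equation*}
\Pp\left\{\sup_{t\geq T}|\mathcal{N}(t)-\mathcal{N}|>\eta\right\}<\gamma/2.
\end{equation*}
Then, by Lemma~\ref{lemma: app-tauinfty}, choose $\eps_0>0$ so small that $\Pp\{\tau_\eps^\delta<T\}<\gamma/2$ for all $\eps<\eps_0$. Since on the event $\{\tau_\eps^\delta\geq T\}$ we have $|\mathcal{N}_\eps-\mathcal{N}|=|\mathcal{N}(\tau_\eps^\delta)-\mathcal{N}|\leq \sup_{t\geq T}|\mathcal{N}(t)-\mathcal{N}|$, we obtain
\begin{equation*}
\Pp\{|\mathcal{N}_\eps-\mathcal{N}|>\eta\}\leq \Pp\{\tau_\eps^\delta<T\}+\Pp\left\{\sup_{t\geq T}|\mathcal{N}(t)-\mathcal{N}|>\eta\right\}<\gamma,
\end{equation*}
for all $\eps<\eps_0$. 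Since $\eta,\gamma>0$ were arbitrary, this gives $\mathcal{N}_\eps\to \mathcal{N}$ in probability.

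There is no real obstacle here; the one subtle point is that $\tau_\eps^\delta$ is a stopping time whose law depends on $\eps$, so one cannot literally pass to the limit inside the stochastic integral. The device above circumvents this by uncoupling $\eps$ from the martingale tail estimate: we first control the tail of the deterministic-time martingale $\mathcal{N}(t)$ using a fixed $T$, and only then invoke the probabilistic statement $\tau_\eps^\delta\geq T$ with high probability. This is why Lemma~\ref{lemma: app-tauinfty} is precisely the input needed.
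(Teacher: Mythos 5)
Your proof is correct and reaches the same conclusion by a closely related but not identical route. The paper applies It\^o isometry directly to $\mathbf{E}|\mathcal{N}_\eps-\mathcal{N}|^2=\mathbf{E}\int_{\tau_\eps^\delta}^\infty e^{-2\lambda_+ s}\,ds$, then splits the integral on $\{\tau_\eps^\delta \ge T_\gamma\}$ versus $\{\tau_\eps^\delta < T_\gamma\}$ and invokes Lemma~\ref{lemma: app-tauinfty} to control the second event, thereby getting $L^2$ convergence and hence convergence in probability. You instead first establish the uniform tail estimate $\sup_{t\ge T}|\mathcal{N}(t)-\mathcal{N}|\to 0$ in probability via Doob's $L^2$ maximal inequality, and then handle the random time $\tau_\eps^\delta$ purely at the level of events: on $\{\tau_\eps^\delta\ge T\}$ you pass to the deterministic supremum. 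This cleanly avoids invoking It\^o isometry at a stopping time (which, in the paper, tacitly relies on optional stopping for the square-integrable martingale $\mathcal{N}$), at the small price of yielding only convergence in probability rather than $L^2$ convergence --- but that is exactly what the lemma asserts, so nothing is lost. Both arguments hinge on the same two ingredients: Lemma~\ref{lemma: app-tauinfty} and the integrability of $e^{-2\lambda_+ s}$ on $[0,\infty)$.
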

\begin{proof}
The lemma is a consequence of It\^o isometry and Lemma~\ref{lemma: app-tauinfty}. Let $\gamma>0$ and $T_\gamma=-( 2 \lambda_+ )^{-1} \log ( \gamma \lambda_+ )>0$. Due to Lemma~\ref{lemma: app-tauinfty} we can find $\eps_0>0$ such that
\begin{equation} \label{eqn: app-gamma-big}
\Pp \{ \tau_\eps^\delta > T_\gamma \} \leq   \gamma \lambda_+,
\end{equation}
for every $\eps \in (0, \eps_0 )$. Use It\^o isometry and~\eqref{eqn: app-gamma-big} to obtain
\begin{align*}
\mathbf{E} | \mathcal{N}_\eps - \mathcal{N} |^2 &= \mathbf{E} \int_{\tau_\eps^\delta}^\infty e^{- 2 \lambda_+ s} ds \\
& \leq \frac{ e^{- 2 \lambda_+ T_\gamma} } {2 \lambda_+ } + \frac{1}{ 2 \lambda_+ } \Pp \{ \tau_\eps^\delta < T_\gamma \} \\
& \leq \gamma,
\end{align*}
for every $\eps \in (0, \eps_0)$. The result follows since $\gamma>0$ is arbitrary and $L^2$ convergence implies convergence in probability.
\end{proof}

\bibliography{happydle}{}
\bibliographystyle{plain}

\end{document}